\documentclass[a4paper,10pt,reqno]{amsart}
\pdfoutput=1 %for arxiv margins
\usepackage{babel}
\usepackage{microtype}
\usepackage{mathtools}
\usepackage{amsmath,amsfonts,amsthm,amsfonts,amssymb,stmaryrd}
\usepackage[colorlinks=true, linkcolor=blue, citecolor=red]{hyperref}
\usepackage{enumitem}
\usepackage{mathpazo} %font
\usepackage{array, booktabs}
\usepackage{multirow}
\newcolumntype{M}[1]{>{\centering\arraybackslash}m{#1}}
\usepackage{tikz}
\usepackage{setspace,kantlipsum} %vertical space between lines, see after \begin{document}

\theoremstyle{plain}
\newtheorem*{theorem*}{Theorem}
\newtheorem{theorem}{Theorem}[section]
\newtheorem{proposition}{Proposition}[section]
\newtheorem{lemma}{Lemma}[section]
\newtheorem{corollary}{Corollary}[section]
\theoremstyle{remark}
\newtheorem{digression}{Digression}[section]
\newtheorem{definition}{Definition}[section]
\newtheorem{remark}{Remark}[section]
\newtheorem{example}{Example}[section]
\newtheorem*{sketchproof}{Sketch of the proof}

\title{Topics in Representation Theory and Riemannian Geometry}
\author{Giovanni Russo}
\address{Mathematics Area, SISSA - Scuola Internazionale Superiore di Studi Avanzati, via Bonomea 265, 34136 Trieste (TS), Italy}
\email{girusso@sissa.it}

\raggedbottom %avoids vbox badness and moves white space at the bottom of each page
\begin{document}
\begin{spacing}{1.135}

\begin{abstract}
These are notes for a Ph.D.\ course I held at SISSA, Trieste, in the Winter 2025.
We review well-known topics in Riemannian geometry where Lie groups play a fundamental role.
Part of the theory of compact connected Lie groups, their invariants, and representations is discussed, with particular emphasis on low dimensional examples.
We go through a number of applications in Riemannian geometry, in particular the classification of Riemannian holonomy groups, and the first construction of exceptional holonomy metrics.
Some more recent advances in the field of Riemannian geometry with symmetries are mentioned.
\end{abstract}

\maketitle
\pagenumbering{gobble}
\tableofcontents
\newpage
\pagenumbering{arabic}
\section*{Introduction}

These notes cover some selected topics in the theory of Lie groups and in Riemannian geometry.
The goal is to introduce the reader to certain aspects of differential geometry where Lie groups play a special role, and ultimately discuss some relevant geometries behind Einstein's theory of gravity, particle physics, string theory, and more generally $M$-theory.
The material is classical, and is thus particularly suitable for young researchers.

Let me summarise the main idea guiding these lectures.
Consider a vector bundle $E \to M$ over a smooth manifold $M$ with standard fibre $V$.
A section $f$ is a map $f \colon M \to E$ with values in $V$ pointwise.
For instance, $f$ may be a tensor field, or a spinor field on $M$.
Suppose a compact connected Lie group $G$ acts linearly on $V$, i.e.\ $V$ is a representation of $G$.
Then decomposing $V$ into the direct sum of irreducible $G$-invariant summands singles out the irreducible $G$-invariant components of $f$.
This process allows one to understand the behaviour of $f$ with respect to the action of $G$, and yields classification results.
This general idea is hereby illustrated on some examples, which serve merely as a motivation for the time being.
The technical language to fully understand them is explained somewhere in these notes.

Riemannian geometries can be distinguished by the behaviour of their curvature tensor.
If $(M,g)$ is a Riemannian $n$-dimensional manifold, the curvature tensor $R$ of $g$ is a section of the symmetric tensor bundle $S^2\Lambda^2 T^*M \to M$, where $T^*M$ is the cotangent bundle of $M$.
Pointwise, $R$ takes values in a subspace $\mathcal T \subset S^2\Lambda^2 \mathbb R^n$ (where $\mathbb R^n$ models each cotangent space of $M$), and $\mathcal T$ comes as a representation of the orthogonal group $\mathrm{O}(n)$.
A decomposition of $\mathcal T$ into irreducible $\mathrm{O}(n)$-summands then gives us complete information on the irreducible orthogonal invariant components of $R$.
We will see how $\mathcal T$ decomposes into the direct sum of three irreducible $\mathrm{O}(n)$-invariant summands
\[\mathcal T = \mathbb R \oplus \mathcal Z \oplus \mathcal W,\]
whence $R$ splits into the sum of three invariant tensors.
Einstein manifolds are those Riemannian manifolds where the $\mathcal Z$-component of $R$ vanishes.
Concretely, this is equivalent to saying that the Ricci tensor $\mathrm{Ric}$ of $g$ satisfies $\mathrm{Ric} = \lambda g$, for $\lambda$ a constant.
Riemannian manifolds with constant sectional curvature correspond to the vanishing of the $\mathcal Z$- and the $\mathcal W$-component of $R$.
Further, if one has $n=2m$ and $(M,g)$ comes with an isometric almost complex structure $J$, then $\mathcal T$ is acted on by the unitary group $\mathrm{U}(m)$, and the above $\mathrm{O}(n)$-decomposition of $\mathcal T$ can be refined. 
The former result for the orthogonal group is described in Berger--Gauduchon--Mazet \cite{berger-gauduchon-mazet} or in Besse \cite{besse, besse2}, whereas the latter for the unitary group is due to Tricerri--Vanhecke \cite{tricerri-vanhecke}.

Similar representation theoretic considerations yield a plethora of geometries of interest, both in mathematics and physics.
Let us mention two examples.

On an almost Hermitian $2m$-manifold $(M,g,J)$, with $m>1$, the Levi-Civita connection $\nabla$ preserves $g$ but not necessarily $J$, i.e.\ $\nabla J$ is not identically zero in general.
One can define the fundamental two-form $\omega \coloneqq g(J{}\cdot{},{}\cdot{})$, and working with $\nabla \omega$ or $\nabla J$ is then equivalent.
Observe that $\nabla \omega$ is a section of the tensor bundle $(T^*M)^{\otimes 3} \to M$. 
The symmetries of $J$ with respect to the metric imply that pointwise $\nabla \omega$ takes values in the space $\mathbb R^{2m} \otimes \mathfrak{u}(m)^{\perp}$:
here $\mathfrak{u}(m)^{\perp}$ is the orthogonal complement of the Lie algebra $\mathfrak{u}(m) \subset \mathfrak{so}(2m)$ inside $\mathfrak{so}(2m)$ with respect to the Killing form on $\mathfrak{so}(2m)$.
The space $\mathbb R^{2m} \otimes \mathfrak{u}(m)^{\perp}$ comes with a linear action of the unitary group $\mathrm{U}(m)$.
A decomposition of $\mathbb R^{2m} \otimes \mathfrak{u}(m)^{\perp}$ into the direct sum of irreducible $\mathrm{U}(m)$-representations gives all possible irreducible unitary invariant components of $\nabla \omega$.
It turns out there are four irreducible $\mathrm{U}(m)$-invariant summands:
\[\mathbb R^{2m} \otimes \mathfrak{u}(m)^{\perp} = \mathcal W_1 \oplus \mathcal W_2 \oplus \mathcal W_3 \oplus \mathcal W_4.\]
Correspondingly, one has a list of sixteen almost Hermitian geometries, as the $\mathcal W_i$-component of $\nabla \omega$ may be zero or not.
This result is due to Gray--Hervella \cite{gray-hervella}.
If all $\mathcal W_i$-components of $\nabla \omega$ vanish, one says $(M,g,J)$ is K\"ahler.
Equivalently $(M,g,J)$ has holonomy contained in $\mathrm{U}(m)$.
If the $\mathcal W_i$-components of $\nabla \omega$ vanish for $i \neq 1$, one says that $(M,g,J)$ is nearly K\"ahler.
We will touch upon nearly K\"ahler geometry in these notes.
The nearly K\"ahler condition amounts to saying that $\nabla \omega$ is totally skew-symmetric.
This geometry is of interest particularly for $m=3$, when nearly K\"ahler (non-K\"ahler) metrics are Einstein (see Gray \cite{gray3}) and play a role in string theory as well.

A similar process can be applied to seven-dimensional manifolds equipped with a $\mathrm G_2$-structure.
The latter is a three-form $\varphi$ pointwise linearly equivalent to a standard one on $\mathbb R^7$, whose stabiliser in $\mathrm{GL}(7,\mathbb R)$ is by definition the compact Lie group $\mathrm G_2$.
The form $\varphi$ induces a Riemannian metric $g$ and an orientation on the manifold, in particular we have the Levi-Civita connection $\nabla$.
Studying the symmetries of $\nabla \varphi$ as above, one finds again sixteen classes of $\mathrm G_2$ geometries.
The condition $\nabla \varphi=0$ is equivalent to saying that $g$ has holonomy contained in $\mathrm G_2$, and forces the metric to be Ricci flat, i.e.\ $\mathrm{Ric}=0$, cf.\ Bonan \cite{bonan}.
One also says that $\varphi$ is parallel when $\nabla \varphi = 0$.
We will discuss in particular the class of nearly parallel $\mathrm G_2$-manifolds, i.e.\ those satisfying $\nabla \varphi = \lambda \star \varphi$ for some constant $\lambda$, and $\star$ is the Hodge star operator induced by $\varphi$.
These are again Einstein manifolds (see Bryant \cite{bryant1}) and have a role in string theory.

With these notes, I would like to introduce the reader to the language behind this sort of results, and describe certain situations where all possible geometries are in a sense \lq\lq governed\rq\rq\ by the representation theory of some compact Lie group.
There are many such instances, which are of course much beyond the scope of this work.
The algebraic theory needed to get into these examples is normally very special of the Lie groups considered.
Seminal work in this direction was done by Weyl \cite{weyl}, who studied the classical Lie groups, their invariants and representations. 
More general material on the representation theory of Lie groups and Lie algebras is well documented, however we will not need much of it.
Foundational geometric applications are found in the French literature, e.g.\ Besse \cite{besse2}.
More recent applications will be given in a number of references in the final section of these lecture notes.

A milestone in the direction we want to pursue is Berger's classification of holonomy groups of certain Riemannian manifolds.
This provides a way to divide Riemannian manifolds into classes based on their holonomy group.
The result is of representation theoretic nature, and is published in Berger's Ph.D.\ thesis in 1955 \cite{berger}.
The list of holonomy groups found by Berger in the Riemannian case is essentially a short list of subgroups of some orthogonal groups.
This motivated the search for examples of Riemannian manifolds with special holonomy that lasted until the end of the 1990s.
The problem that took longer to be solved was to find metrics with holonomy $\mathrm G_2$ and $\mathrm{Spin}(7)$, the so-called exceptional cases.
In the last part of these notes we present the work of Bryant on the construction of the first examples of (incomplete) metrics with exceptional holonomy \cite{bryant}.
This relies deeply on the representation theory of $\mathrm G_2$ and $\mathrm{Spin}(7)$.
It also establishes connections between the world of exceptional holonomy manifolds and nearly K\"ahler manifolds in dimension $6$, and nearly parallel $\mathrm G_2$ manifolds in dimension $7$.

The notes are structured as follows. 
Sections \ref{sec:representation-theory}--\ref{sec:invariant-theory} are on algebraic preliminaries. 
We discuss Lie groups and Lie algebras, restrict to certain classical ones, and illustrate various aspects of their representation theory.
We give a number of examples, e.g.\ orthogonal and unitary groups, and look in particular at $\mathrm G_2$ and $\mathrm{Spin}(7)$.
Section \ref{sec:fibre-bundles-and-connections} is a brief summary of the theory of principal bundles, associated vector bundles, connections, and $G$-structures.
The notion of holonomy group is first presented here.
In Section \ref{sec:riemannian-geometry}, we introduce Riemannian geometry and look at applications of the representation theory of orthogonal groups.
In particular, we discuss the orthogonal decomposition of the Riemannian curvature tensor, and describe a classification of types of torsion tensors of linear connections based on the orthogonal group.
In Section \ref{sec:holonomy} we specialise the presentation of the holonomy group to the Riemannian case.
We see how the holonomy group can be computed by using the so-called General Holonomy Principle.
Next, we discuss symmetric spaces and Berger's classification of Riemannian holonomy groups, from which a list of so-called integrable geometries follows.
Lastly, in Section \ref{sec:non-integrable-geometries}, we look at non-integrable geometries, and discuss in particular nearly K\"ahler and nearly parallel $\mathrm G_2$ manifolds.
As already mentioned, these are behind the construction of the first manifolds with exceptional holonomy by Bryant, which we present in the final part.

\section*{Acknowledgments}

As mentioned in the abstract, these lecture notes grew out of a Ph.D.\ course I held at SISSA, Trieste, in the Winter 2025.
My first thanks go to the students who attended the course, for stimulating questions and comments.
I thank Antonio Lerario for encouraging me to write down these lectures.
Further, I thank Giovanni Bazzoni, Beatrice Brienza, Diego Conti, Anna Fino, Oliver Goertsches, and Antonio Lerario for useful comments on these notes.

I am partially supported by the PRIN 2022 Project (2022K53E57) - PE1 - \lq\lq Optimal transport: new challenges across analysis and geometry\rq\rq\ funded by the Italian Ministry of University and Research.
I am also partially supported by INdAM--GNSAGA.

\newpage
\section*{Notations and conventions}

\begin{itemize}
\item The set $\mathbb N$ is for natural numbers, $\mathbb Z$ is for integer numbers, $\mathbb R$ is for real numbers, $\mathbb C$ for complex numbers, $\mathbb H$ for quaternions, $\mathbb O$ for octonions.
We use the notation $\mathbb Z_p$ for integers modulo $p$.
We write $\mathbb R_{>0}$ for the positive real numbers.
\item We write $\mathfrak{X}(M)$ for the space of vector fields over any manifold $M$, and $\Omega(M)$ for the space of differential forms.
We use the notation $\mathfrak{X}^{inv}(M)$ and $\Omega^{inv}(M)$ for invariant vector fields and differential forms on $M$.
\item If $f$ is a smooth map, then $f_*$ denotes the pushforward of $f$, and $f^*$ is the pullback of $f$.
\item Lie algebras and their subspaces are denoted by Fraktur typefaces $\mathfrak g$, $\mathfrak h$, and so on.
\item The symbol $=$ is also used for isomorphisms (in any category).
\item In general, the symbol $\subset$ means \lq\lq included or equal to\rq\rq. 
We specify strict inclusions case by case.
\item The symbol $\mathrm{id}$ is used together with different subscripts to denote an identity matrix or an identity map.
\item A tensor of type $(m,n)$ is $m$ times covariant and $n$ times contravariant. 
In other words, a tensor of type $(m,n)$ takes $m$ vectors as input and returns an $n$-vector.
\item Let $\alpha$ and $\beta$ be any two tensors. 
We write $\odot$ for the symmetric product \[\alpha \odot \beta = \alpha \otimes \beta + \beta \otimes \alpha.\]
We write $\wedge$ for the skew-symmetric product \[\alpha \wedge \beta = \alpha \otimes \beta - \beta \otimes \alpha.\]
\item A fibration is a short exact sequence of objects
\[1 \longrightarrow K \stackrel{f}\longrightarrow N \stackrel{g}\longrightarrow M \longrightarrow 1.\]
This means $f$ is injective, $\ker g = \mathrm{Im}f = K$, and $g$ is surjective.
\item The symbol $\mathfrak{S}_{X_1,\dots,X_n}$ denotes a cyclic permutation over $X_1,\dots,X_n$.
\item If $\alpha$ is a covariant tensor, $\nabla$ a connection, we set \[\nabla \alpha(X,{}\cdot{},\dots,{}\cdot{}) \coloneqq \nabla_X\alpha.\]
\item We write $\delta_{ij}$ for the Kronecker delta, i.e.\ $\delta_{ij}=0$ for $i \neq j$, and $\delta_{ii}=1$.
\item We write $S_p$ for the group of permutations of $p$ elements.
\item We use the standard notation $\pi_i(M)$ for the $i$-th homotopy group of $M$.
The notation $\pi_i(M,p)$ is used for the $i$-th homotopy group of $M$ at $p$.
\item The standard notation $H^k(M,\mathbb R)$ denotes the $k$-th cohomology group of $M$ with real coefficients.
\end{itemize}

\newpage
\section{Representation theory}
\label{sec:representation-theory}

A \emph{representation} of a group is a group action on a vector space preserving the linear structure.
A group acts on many vector spaces, and may act on vector spaces of the same dimension in different ways.
One of the main goals of the theory is to detect the so-called \emph{irreducible} representations of a group, which are building blocks for all other representations.
In geometry, one is often interested in representations of \emph{Lie groups} and their \emph{Lie algebras}, particularly compact Lie groups.
The story goes roughly as follows. 
Compact connected Lie groups contain tori which are maximal with respect to inclusion, and are thus called \emph{maximal tori}.
The dual Lie algebra of a fixed maximal torus contains a lattice of points called \emph{weights}.
A theorem of Weyl then states that isomorphism classes of irreducible complex representations of certain compact connected Lie groups are in one-to-one correspondence with the so-called \emph{dominant weights}.
Since maximal tori are conjugate, this correspondence does not depend on the choice of a maximal torus.

The goal of this section is to give a quick and informal summary of the above picture for certain compact connected Lie groups.
We introduce ideas, tools, and terminology which will be used throughout. 

\subsection{Lie groups and representations}
\label{subsec:generalities}

Details on the material in this section can be mainly found in Adams \cite{adams}, Br\"ocker--tom Dieck \cite{brocker-tomdieck}, Fulton--Harris \cite{fulton-harris}, and Hall \cite{hall}.
We give specific references where needed.
\begin{definition}
A \emph{Lie group} is a smooth manifold with a group structure such that group multiplication and inversion are smooth maps.
A \emph{homomorphism} of Lie groups is a group homomorphism which is also smooth.
\end{definition}
Unless otherwise stated, all Lie groups will be finite-dimensional and real.
A Lie group $G$ acts on itself by $(g,h) \mapsto gh$, where $g,h \in G$. 
For a fixed $g \in G$, write 
\begin{align*}
L_g & \colon G \to G, \qquad L_g(h) \coloneqq gh, \\
R_g & \colon G \to G, \qquad R_g(h) \coloneqq hg.
\end{align*}
The maps $L_g$ and $R_g$ are called \emph{left} and \emph{right translation} respectively.
Both $L_g$ and $R_g$ are diffeomorphisms.
The composition $L_g \circ R_g{}^{-1}$ is called \emph{conjugation}, and is an automorphism of $G$.
\begin{definition}
A vector field $X \in \mathfrak{X}(G)$ on a Lie group $G$ is \emph{left-invariant} if it is preserved by left translations, i.e.\ $(L_g)_*X=X$ for all $g \in G$.
\end{definition}
The commutator of left-invariant vector fields is left-invariant: let $X$ and $Y$ be left-invariant vector fields on $G$, then for $p \in G$ one computes
\begin{equation*}
[X,Y]_{gp} = [(L_g)_*X,(L_g)_*Y]_{gp} = (L_g)_*[X,Y]_p, \qquad g \in G,
\end{equation*}
and hence $[X,Y]$ is left-invariant. 

The values of a left-invariant vector field are determined by its value at the identity $e_G \in G$, so the space $\mathfrak{X}^{inv}(G)$ of left-invariant vector fields on $G$ can be identified with the tangent space $T_{e_G}G$.
It follows that the $\mathfrak{X}^{inv}(G) \subset \mathfrak{X}(G)$ is finite-dimensional.
The commutator of vector fields imposes the additional algebraic structure of Lie algebra on $\mathfrak{X}^{inv}(G)$.

\begin{definition}
\label{def:lie-algebra-abstract}
A \emph{Lie algebra} $\mathfrak a$ is a vector space equipped with a skew-symmetric bilinear map $[{}\cdot{},{}\cdot{}]$ satisfying the Jacobi identity
\[[X,[Y,Z]]+[Y,[Z,X]]+[Z,[X,Y]]=0, \qquad X,Y,Z \in \mathfrak a.\]
The map $[{}\cdot{},{}\cdot{}]$ is called \emph{Lie bracket}.
A \emph{homomorphism} of Lie algebras is a linear map preserving the Lie brackets, i.e.\ a linear map $f \colon (\mathfrak a_1, [{}\cdot{},{}\cdot{}]_1) \to (\mathfrak a_2, [{}\cdot{},{}\cdot{}]_2)$ such that $[f(X),f(Y)]_2 = f([X,Y]_1)$, for all $X,Y \in \mathfrak a_1$.
\end{definition}
\begin{definition}
\label{def:lie-algebra-lie-group}
The \emph{Lie algebra} $\mathfrak g$ of a Lie group $G$ is the vector space of left-invariant vector fields on $G$, equipped with the Lie bracket induced by the commutator of vector fields.
\end{definition}
\begin{remark}
Since the Lie algebra $\mathfrak g$ of a Lie group $G$ can be identified with the tangent space $T_{e_G}G$, one computes $\mathfrak g$ by differentiating smooth curves in $G$ passing through the identity.
\end{remark}
Here are some examples of Lie groups and their Lie algebras.
\begin{example}
The \emph{group of automorphisms} $\mathrm{Aut}(V) = \{A \in \mathrm{End}(V): \det A \neq 0\}$ of a finite-dimensional vector space $V$. 
The Lie algebra of $\mathrm{Aut}(V)$ is the space of endomorphisms $\mathrm{End}(V)$, equipped with the Lie bracket $[A,B] = A\circ B-B\circ A$.
\end{example}
\begin{example}
After choosing a basis of $V$, $\mathrm{End}(V)$ can be identified with the space $M(n,\mathbb K)$ of $n\times n$ matrices with coefficients in $\mathbb K$ (where $\mathbb K\in \{\mathbb R, \mathbb C, \mathbb H\}$).
Similarly, $\mathrm{Aut}(V)$ can be identified with a general linear group. 
The \emph{general linear group} $\mathrm{GL}(n,\mathbb K)$ is the group of invertible $n\times n$ matrices with coefficients in $\mathbb K$.
The Lie algebra $\mathfrak{gl}(n,\mathbb K)$ is $\mathrm{End}(\mathbb K^n)$ with the Lie bracket as in the previous point.
See also Remark \ref{rmk:quaternionic-general-linear-group}.
\end{example}
\begin{example}
The \emph{special linear group} $\mathrm{SL}(V) \subset \mathrm{Aut}(V)$ of linear transformations preserving any volume form.
It can be identified with the group $\mathrm{SL}(n,\mathbb K)$ of matrices in $\mathrm{GL}(n,\mathbb K)$ with determinant $1$.
Its Lie algebra $\mathfrak{sl}(n,\mathbb K) \subset \mathfrak{gl}(n,\mathbb K)$ is given by traceless $n\times n$ matrices with coefficients in $\mathbb K$.
\end{example}
\begin{example}
If $V=\mathbb R^n$ and comes equipped with a Euclidean structure, the \emph{orthogonal group} $\mathrm{O}(V)$ is the Lie group of linear transformations of $V$ preserving the Euclidean structure.
It can be identified with the subgroup $\mathrm{O}(n) \subset \mathrm{GL}(n,\mathbb R)$ of matrices whose inverse is the transpose.
Its Lie algebra $\mathfrak{o}(n)$ is given by elements in $\mathfrak{gl}(n,\mathbb R)$ which are skew-symmetric with respect to the Euclidean structure, i.e.\ $A^T+A=0$.
\end{example}
\begin{example}
\label{ex:non-compact-symplectic-group}
If $V=\mathbb R^{2n}$ and is equipped with a non-degenerate skew-symmetric bilinear form $\omega$, the \emph{symplectic group} $\mathrm{Sp}(V)$ is the Lie group of linear transformations of $V$ preserving $\omega$.
It can be identified with the subgroup $\mathrm{Sp}(2n,\mathbb R) \subset \mathrm{SL}(2n,\mathbb R)$ of $2n \times 2n$ real matrices $A$ such that $A^T\Omega A = \Omega$, where $\Omega$ is a $2n \times 2n$ matrix representing $\omega$ (see Remark \ref{rmk:non-compact-symplectic-group}).
Its Lie algebra $\mathfrak{sp}(2n,\mathbb R)$ is given by $2n \times 2n$ matrices $A$ such that $A^T\Omega+\Omega A=0$.
\end{example}
\begin{example}
If $V=\mathbb C^n$ and comes equipped with a Hermitian structure, the \emph{unitary group} $\mathrm{U}(V)$ is the Lie group of linear transformations of $V$ preserving the Hermitian structure.
It can be identified with the subgroup $\mathrm{U}(n)\subset \mathrm{GL}(n,\mathbb C)$ of matrices whose inverse is the conjugate transpose. 
Its Lie algebra $\mathfrak{u}(n)$ is given by elements in $\mathfrak{gl}(n,\mathbb C)$ which are skew-Hermitian with respect to the Hermitian structure, i.e.\ $\overline A{}^T+A=0$.
\end{example}
\begin{remark}
\label{rmk:quaternionic-general-linear-group}
When $V$ is a real vector space, we write $\mathrm{GL}_+(V)$ for the subgroup of transformations in $\mathrm{GL}(V)$ with positive determinant.
We have an inclusion $M(n,\mathbb C) \hookrightarrow M(2n,\mathbb R)$ given by
\begin{equation}
\label{eq:monomorphism-c}
A+iB \mapsto \begin{pmatrix} A & -B \\ B & A \end{pmatrix}, \qquad A,B \in M(n,\mathbb R).
\end{equation}
This induces a Lie group monomorphism $\mathrm{GL}(n,\mathbb C) \hookrightarrow \mathrm{GL}_+(2n,\mathbb R)$.
To check that this map is well-defined, view $\mathrm{GL}(2n,\mathbb R)$ in $\mathrm{GL}(2n,\mathbb C)$ and compute
\begin{align*}
\det_{\mathbb R} \begin{pmatrix} A & -B \\ B & A \end{pmatrix} & = \det_{\mathbb R} \Biggl(\begin{pmatrix} -i\mathrm{id}_n & i\mathrm{id}_n \\ \mathrm{id}_n & \mathrm{id}_n \end{pmatrix}\begin{pmatrix} A-iB & 0 \\ 0 & A+iB \end{pmatrix}\begin{pmatrix} \frac{i}{2}\mathrm{id}_n & \frac12\mathrm{id}_n \\ -\frac{i}{2}\mathrm{id}_n & \frac12 \mathrm{id}_n \end{pmatrix}\Biggr) \\
& = |\negthinspace\det_{\mathbb C} (A+iB)|^2 >0.
\end{align*}
Note that matrices in the image of the map \eqref{eq:monomorphism-c} are exactly those matrices $X$ in $M(2n,\mathbb R)$ such that $XJ=JX$, where
\begin{equation}
\label{eq:symplectic-matrix}
J \coloneqq \begin{pmatrix} 0 & -\mathrm{id}_n \\ \mathrm{id}_n & 0 \end{pmatrix}.
\end{equation}
Similarly, there is an inclusion $M(n,\mathbb H) \hookrightarrow M(2n,\mathbb C)$, defined by
\begin{equation}
\label{eq:monomorphism-h}
A+jB \mapsto \begin{pmatrix} A & -\overline B \\ B & \overline A \end{pmatrix}, \qquad A,B \in M(n,\mathbb C).
\end{equation}
Since quaternions do not commute, the determinant of a matrix with quaternionic entries can be defined as the complex determinant of the corresponding matrix in $M(2n,\mathbb C)$.
The group $\mathrm{GL}(n,\mathbb H)$ is then the subgroup of $\mathrm{GL}(2n,\mathbb C)$ of matrices in the image of the map \eqref{eq:monomorphism-h}, see also \cite{aslaksen}.
Note that matrices $X$ in the image of the map \eqref{eq:monomorphism-h} are exactly those matrices $X$ in $M(2n,\mathbb C)$ such that $XJ=J\overline X$.
\end{remark}
\begin{remark}
\label{rmk:non-compact-symplectic-group}
Let $V=\mathbb R^{2n}$ be equipped with a non-degenerate skew-symmetric bilinear form $\omega$.
Up to a change of basis (see e.g.\ \cite{dasilva}), $\omega$ is represented by the matrix
\begin{equation}
\label{eq:omega}
\Omega \coloneqq
\begin{pmatrix}
0 & -\mathrm{id}_n \\
\mathrm{id}_n & 0
\end{pmatrix}.
\end{equation}
It is clear that $\Omega^2 = -\mathrm{id}_{2n}$, so $\Omega^{-1} = -\Omega$. 
Observe that $\omega^n = \omega \wedge \dots \wedge \omega$ is a volume form, so a matrix $A \in \mathrm{Sp}(2n,\mathbb R)$ must have determinant $1$.
Also, note that if $A^T\Omega A = \Omega$, then clearly $(A^T)^{-1}\Omega A^{-1} = \Omega$.
Now taking the inverse of both sides gives $A\Omega^{-1}A^T = \Omega^{-1}$, and hence $A\Omega A^T = \Omega$ as $\Omega^{-1} = -\Omega$.
This shows $\mathrm{Sp}(2n,\mathbb R)$ is stable under conjugate transpose.
\end{remark}
The above examples share two properties, i.e.\ they are linear and reductive in the following sense (cf.\ Knapp \cite{knapp}).
\begin{definition}
A group that can be realised as a closed subgroup of a general linear group is called \emph{linear}.
A closed connected linear group of real or complex matrices is called \emph{reductive} if it is stable under conjugate transpose. 
\end{definition}
\begin{remark}
An example of Lie group that cannot be realised as a matrix group is the metaplectic group \cite{weissman}.
An example of a subgroup in $\mathrm{GL}(2,\mathbb R)$ which is not reductive is given by matrices of the form
\[\begin{pmatrix} 1 & a \\ 0 & 1\end{pmatrix}, \qquad a \in \mathbb R.\]
\end{remark}
We will be mostly interested in compact, connected Lie groups, which can be safely thought of as linear reductive groups.
In fact, the following result can be shown.
\begin{theorem}[Knapp \cite{knapp}]
\label{thm:knapp1}
Any compact connected Lie group can be realised as a linear connected reductive group.
\end{theorem}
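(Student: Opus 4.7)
The plan is to produce a faithful finite-dimensional unitary representation of $G$ and then observe that its image is automatically closed in some $\mathrm{GL}(n,\mathbb{C})$ and automatically stable under conjugate transpose. First I would invoke the Peter--Weyl theorem: the matrix coefficients of finite-dimensional continuous representations of a compact Lie group $G$ separate the points of $G$, in the sense that for every $g\in G$ with $g\neq e_G$ there exists a finite-dimensional representation $\rho_g$ with $\rho_g(g)\neq \mathrm{id}$. Using compactness, one can cover $G\setminus\{e_G\}$ by finitely many open sets on which some fixed $\rho_{g_i}$ already acts nontrivially, and then the direct sum $\rho\coloneqq \bigoplus_i \rho_{g_i}$ is a faithful finite-dimensional representation $\rho\colon G\to \mathrm{GL}(n,\mathbb{C})$ for some $n$.

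Next I would average an arbitrary Hermitian inner product on $\mathbb{C}^n$ against the Haar measure of $G$ to obtain a $\rho(G)$-invariant Hermitian form, so that, after a change of basis, $\rho(G)\subset \mathrm{U}(n)\subset \mathrm{GL}(n,\mathbb{C})$. Since $G$ is compact Hausdorff and $\rho$ is a continuous injective homomorphism into a Hausdorff Lie group, the image $\rho(G)$ is compact, hence closed in $\mathrm{GL}(n,\mathbb{C})$, and $\rho$ is a homeomorphism onto its image; hence $\rho$ is an isomorphism of Lie groups onto a closed subgroup of $\mathrm{GL}(n,\mathbb{C})$. Connectedness of $\rho(G)$ follows from connectedness of $G$ because continuous images of connected spaces are connected.

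It remains to verify reductivity. For any $A\in \rho(G)\subset \mathrm{U}(n)$, unitarity gives $\overline{A}{}^T=A^{-1}$, and $A^{-1}\in \rho(G)$ because $\rho(G)$ is a subgroup. Therefore $\rho(G)$ is stable under conjugate transpose, which is precisely the definition of a reductive linear group given just above the statement.

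The main obstacle is the first step: producing sufficiently many finite-dimensional representations to separate points of $G$. This is the genuinely deep input, relying on the spectral theory of compact self-adjoint convolution operators on $L^2(G)$ with respect to Haar measure, together with the decomposition of $L^2(G)$ into finite-dimensional $G$-isotypic components. Once Peter--Weyl is granted, the subsequent ingredients (Haar averaging to land in $\mathrm{U}(n)$, closedness of compact subgroups, and the tautology $\overline{A}{}^T=A^{-1}$ inside $\mathrm{U}(n)$) are essentially formal.
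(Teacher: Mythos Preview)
The paper does not actually prove this theorem; it is stated with a citation to Knapp and then the text moves on. Your outline is the standard Peter--Weyl argument and is essentially what one finds in Knapp or Br\"ocker--tom Dieck.

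There is, however, one genuine slip. You write that ``using compactness, one can cover $G\setminus\{e_G\}$ by finitely many open sets'' on which some $\rho_{g_i}$ is nontrivial. But $G\setminus\{e_G\}$ is open in $G$, not compact, so you cannot extract a finite subcover directly. The standard repair is to work instead with the closed normal subgroups $K_g=\ker\rho_g$: their intersection over all $g\neq e_G$ is $\{e_G\}$, and closed subgroups of a compact Lie group satisfy the descending chain condition (dimensions eventually stabilise, and then the number of connected components, which is finite, must drop), so some finite intersection $K_{g_1}\cap\cdots\cap K_{g_m}$ is already trivial. Equivalently, first pick finitely many $\rho_{g_i}$ so that $\bigcap_i\ker d\rho_{g_i}=0$ in $\mathfrak g$ (possible since $\mathfrak g$ is finite-dimensional); the resulting direct sum has discrete, hence finite, kernel, and you then adjoin one more representation for each remaining nonidentity element of that kernel. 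Everything after this point in your proposal---averaging the Hermitian form to land in $\mathrm{U}(n)$, closedness of the image from compactness, and stability under conjugate transpose from $\overline{A}{}^T=A^{-1}$---is correct as written.
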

The \emph{exponential map} $\exp \colon \mathfrak g \to G$ acts via $\exp(tX) \coloneqq \theta_X(t)$, $t \in \mathbb R$, where $\theta_X$ is the one-parameter subgroup of $G$ corresponding to $X$.
In the linear case, the exponential map has the standard form \[\exp(A) = \sum_{k=0}^{\infty} \frac{A^k}{k!}.\]

Besides $\mathrm{U}(n)$, compact connected examples of linear reductive groups coming in families are the following.
\begin{example}
The \emph{special orthogonal group} $\mathrm{SO}(n) \subset \mathrm{O}(n)$, the group of isometries of the Euclidean space $\mathbb R^n$ preserving any volume form. Its Lie algebra $\mathfrak{so}(n)$ coincides with the Lie algebra $\mathfrak{o}(n)$.
\end{example}
\begin{example}
The \emph{special unitary group} $\mathrm{SU}(n) \subset \mathrm{U}(n)$ of unitary transformations of the Hermitian space $\mathbb C^n$ preserving a complex volume form. Its Lie algebra $\mathfrak{su}(n)$ is given by traceless elements in $\mathfrak{u}(n)$. 
\end{example}
\begin{example}
\label{ex:compact-symplectic-group}
The \emph{compact symplectic group} $\mathrm{Sp}(n) \subset \mathrm{GL}(n,\mathbb H)$ of isometries of the (left or right) vector space $\mathbb H^n$, equipped with its standard symplectic scalar product.
Its Lie algebra $\mathfrak{sp}(n)$ is given by quaternionic skew-Hermitian matrices.
\end{example}
\begin{remark}
For any $n\times n$ matrix $A$, the matrix $A-(\mathrm{Tr}(A)/n)\mathrm{id}_n$ is traceless. 
If $A \in \mathfrak{u}(n)$, writing $A = (A-(\mathrm{Tr}(A)/n)\mathrm{id}_n)+(\mathrm{Tr}(A)/n)\mathrm{id}_n$ gives the decomposition $\mathfrak{gl}(n,\mathbb R) = \mathfrak{sl}(n,\mathbb R) \oplus \mathbb R$.
In particular, $\mathfrak{u}(n) = \mathfrak{su}(n) \oplus \mathbb R$
\end{remark}
\begin{remark}
\label{rmk:alternative-def-spn}
By Remark \ref{rmk:quaternionic-general-linear-group}, the unitary group $\mathrm{U}(n)$ can be defined as the subgroup of $\mathrm{SO}(2n)$ of matrices $A$ such that $AJ=JA$, where $J$ is as in \eqref{eq:symplectic-matrix}. 
We then have an isomorphism
\begin{equation}
\label{eq:unitary-group}
\mathrm{U}(n) = \mathrm{GL}(n,\mathbb C) \cap \mathrm{SO}(2n),
\end{equation}
where $\mathrm{GL}(n,\mathbb C)$ is viewed as a subgroup of $\mathrm{GL}(2n,\mathbb R)$.
Similarly, the compact group $\mathrm{Sp}(n)$ can also be defined as the subgroup of $\mathrm{U}(2n)$ of matrices $A$ such that $AJ = J\overline A$.
We then have an alternative formulation
\begin{equation}
\label{def:alternative-spn}
\mathrm{Sp}(n) = \mathrm{GL}(n,\mathbb H) \cap \mathrm{U}(2n),
\end{equation}
where $\mathrm{GL}(n,\mathbb H)$ is viewed as a subgroup of $\mathrm{GL}(2n,\mathbb C)$.
\end{remark}
\begin{remark}
The symplectic group $\mathrm{Sp}(n)$ in Example \ref{ex:compact-symplectic-group} is compact, whereas the symplectic group $\mathrm{Sp}(2n,\mathbb R)$ in Example \ref{ex:non-compact-symplectic-group} is not.
For instance, $\mathrm{Sp}(2,\mathbb R) = \mathrm{SL}(2,\mathbb R)$. 
We avoid confusion by using the notation $\mathrm{Sp}(n)$ for the compact symplectic group, and $\mathrm{Sp}(2n,\mathbb R)$ for the non-compact (real) one.
\end{remark}
We will need specific facts on certain spin groups, whose general theory is treated in e.g.\ Br\"ocker--tom Dieck \cite{brocker-tomdieck} or Lawson--Michelsohn \cite{lawson-michelsohn}.
Whilst the groups $\mathrm{SU}(n)$ and $\mathrm{Sp}(n)$ are simply connected, $\mathrm{SO}(n)$ is not in general.
For $n\geq 3$, the fundamental group $\pi_1(\mathrm{SO}(n))$ equals $\mathbb Z_2$. 
So, for $n\geq 3$, $\mathrm{SO}(n)$ admits a universal double cover $\mathrm{Spin}(n)$ which is a connected, simply connected, compact Lie group, and the Lie algebras $\mathfrak{so}(n)$ and $\mathfrak{spin}(n)$ coincide. 
For $3 \leq n \leq 6$, there are the following so-called \lq\lq accidental\rq\rq\ isomorphisms
\begin{itemize}
\item $\mathrm{Spin}(3) = \mathrm{SU}(2) = \mathrm{Sp}(1)$,
\item $\mathrm{Spin}(4) = \mathrm{SU}(2) \times \mathrm{SU}(2) = \mathrm{Sp}(1) \times \mathrm{Sp}(1)$, 
\item $\mathrm{Spin}(5) = \mathrm{Sp}(2)$ (see \cite{cadek-vanzura}),
\item $\mathrm{Spin}(6) = \mathrm{SU}(4)$ (see \cite{postnikov}),
\end{itemize}
and there are inclusions $\mathrm{Spin}(3) \subset \mathrm{Spin}(4) \subset \mathrm{Spin}(5) \subset \mathrm{Spin}(6)$.
We now look at the first two isomorphisms, the last two will only be mentioned at times.
\begin{example}
\label{ex:su(2)-sp(1)}
Take $\mathrm{SU}(2)$, the group of $2 \times 2$ unitary matrices with determinant $1$.
An explicit computation shows that matrices in $\mathrm{SU}(2)$ are of the form
\begin{equation*}
\begin{pmatrix}
a & -\overline b \\
b & \overline a
\end{pmatrix}, \quad a,b \in \mathbb C, \quad |a|^2+|b|^2=1.
\end{equation*}
By mapping any such element to $a+bj$, we get an isomorphism $\mathrm{SU}(2)=\mathrm{Sp}(1)$, the latter being the group of unit quaternions.
Topologically, $\mathrm{Sp}(1)$ is a three-sphere $S^3$, and thus $\mathrm{SU}(2)$ is compact, connected, and simply connected.
If $x \in \mathrm{Im}\mathbb H$ is a purely imaginary quaternion, and $q \in \mathrm{Sp}(1)$, then $qxq^{-1}$ is a purely imaginary quaternion.
This gives an action of $\mathrm{SU}(2)$ on $\mathbb R^3 = \mathrm{Im}\mathbb H$ preserving the standard scalar product, and thus there is a Lie group homomorphism $\mathrm{SU}(2) \to \mathrm{SO}(3)$.
This map turns out to be surjective, and $\mathbb Z_2 = \{\pm \mathrm{id}\} \subset \mathrm{SU}(2)$ acts trivially.
By universality of $\mathrm{Spin}(3)$, the groups $\mathrm{SU}(2)$ and $\mathrm{Spin}(3)$ are isomorphic.
\end{example}
\begin{example}
\label{ex:spin(4)}
The isomorphism $\mathrm{Spin}(4)=\mathrm{SU}(2) \times \mathrm{SU}(2)$ can be established by observing that $\mathrm{Sp}(1) \times \mathrm{Sp}(1)$ acts on $\mathbb H=\mathbb R^4$ via $(p,q)\cdot x = px\overline q$ preserving the scalar product $\langle x,y\rangle = \mathrm{Re}(\overline xy)$.
This gives an exact sequence \[1 \to \mathbb Z_2 \to \mathrm{SU}(2) \times \mathrm{SU}(2) \to \mathrm{SO}(4) \to 1.\]
By universality of $\mathrm{Spin}(4)$, we again deduce that $\mathrm{Spin}(4)$ and $\mathrm{SU}(2) \times \mathrm{SU}(2)$ are isomorphic.
\end{example}

On the other hand, for $n=2$ the group $\mathrm{SO}(n) = \mathrm{SO}(2)$ is isomorphic to a circle $S^1$. 
In fact we have
\[\mathrm{SO}(2) = \Biggl\{\begin{pmatrix} \cos(2\pi \alpha) & -\sin(2\pi \alpha) \\ \sin(2\pi \alpha) & +\cos(2\pi \alpha) \end{pmatrix} \in M(2,\mathbb R): \alpha \in \mathbb R\Biggr\},\] 
so $\mathfrak{so}(2)$ is isomorphic to $\mathbb R$.
A product of circles is a \emph{torus} $T = S^1\times \dots \times S^1$, so its Lie algebra is isomorphic to some $\mathbb R^n$ with Lie bracket vanishing identically.
Note that tori are the only compact, connected, Abelian Lie groups, as connected Abelian Lie groups are of the form $T^k \times \mathbb R^n$ for some $k$, $n$.

We will also deal with the compact groups $\mathrm G_2$ and $\mathrm{Spin}(7)$. 
The former can be realised as a subgroup of $\mathrm{SO}(7)$, the latter as a subgroup of $\mathrm{SO}(8)$.
The group $\mathrm G_2$ is the only \lq\lq exceptional\rq\rq\ case we will deal with, in the sense that its Lie algebra corresponds to the \emph{exceptional} Lie algebra $\mathfrak g_2$.

Some aspects of the representation theory we are about to present rely on the simplicity of the Lie groups considered.
Let us recall this terminology.
\begin{definition}
Let $G$ be a group. 
A subgroup $H \subset G$ is called \emph{normal} when $gHg^{-1} \subset H$ for all $g \in G$.
The \emph{centre} $Z(G)$ of $G$ is the subgroup of elements $g \in G$ such that $gh=hg$ for all $h \in G$.
\end{definition}
\begin{remark}
We see that a subgroup $H \subset G$ is normal when it is preserved by the conjugation map.
Note that $gh=hg$ if and only if $ghg^{-1}=h$, so $Z(G)$ is the kernel of the homomorphism $G \to \mathrm{Aut}(G)$, $g \mapsto L_g \circ R_g{}^{-1}$.
\end{remark}
\begin{definition}
Let $\mathfrak g$ be a Lie algebra.
A subspace $\mathfrak a \subset \mathfrak g$ is called an \emph{ideal} when $[X,\mathfrak a] \subset \mathfrak a$ for all $X \in \mathfrak g$.
The \emph{centre} $\mathfrak z(\mathfrak g)$ of $\mathfrak g$ is the subspace of elements $X$ such that $[X,Y]=0$ for all $Y \in \mathfrak g$.
\end{definition}
In general, each finite-dimensional real Lie algebra (as in Definition \ref{def:lie-algebra-abstract}) is the Lie algebra of a simply connected Lie group.
In particular, there is a one-to-one correspondence between ideals of a Lie algebra and connected normal subgroups of the attached Lie group.
Also, if $G$ is a connected Lie group, the Lie algebra of its centre is the centre of its Lie algebra.
The following two definitions then correspond to each other.
\begin{definition}
A Lie algebra is \emph{semisimple} if it is non-Abelian and possesses no Abelian ideal other than the trivial one.
A Lie algebra is \emph{simple} if it is non-Abelian and possesses no non-zero proper ideals.
\end{definition}
\begin{definition}
A connected Lie group is called \emph{semisimple} if it is non-Abelian and possesses no Abelian connected normal subgroup other than the trivial one.
A connected Lie group is called \emph{simple} if it is non-Abelian and has no non-trivial connected normal subgroups.
\end{definition}
\begin{remark}
An abstract group is sometimes called \emph{simple} if it does not admit any proper normal subgroup.
We will stick to our previous definitions throughout.
\end{remark}
\begin{remark}
A semisimple Lie algebra is necessarily centreless, but the converse is not true.
For instance, the two-dimensional Lie algebra with basis $E_1,E_2$ satisfying $[E_1,E_2]=E_1$ is centreless but not semisimple.
It can be shown that a Lie algebra is semisimple if and only if it is a direct sum of simple Lie algebras.
\end{remark}
In the following, $G$ will be a connected, compact, real Lie group. 
As mentioned in the introduction to this section, the representation theory of $G$ goes through the representation theory of a maximal torus.
\begin{definition}
A \emph{torus} of $G$ is a compact connected Abelian subgroup.
A \emph{maximal torus} of $G$ is a torus which is not properly contained in any other torus.
\end{definition}

The following are examples of so-called \emph{standard} (diagonal or block diagonal) maximal tori.
\begin{example}
A maximal torus of $\mathrm{SO}(2n)$ is given by block matrices of the form
\[\begin{pmatrix}
\cos (2\pi \theta_1) & -\sin (2\pi \theta_1) & & \\
\sin (2\pi \theta_1) & +\cos (2\pi \theta_1) & & \\
& & \ddots \\
& & & \cos (2\pi \theta_n) & -\sin (2\pi \theta_n) \\
& & & \sin (2\pi \theta_n) & +\cos (2\pi \theta_n)
\end{pmatrix}.\]
For $\mathrm{SO}(2n+1)$ one can take a similar block matrix, then a $1$ at the bottom right and zeros elsewhere.
\end{example}
\begin{example}
\label{ex:diag-max-torus-unitary-group}
A (diagonal) maximal torus of $\mathrm{U}(n)$ is given by diagonal matrices of the form
\[\begin{pmatrix}
e^{2\pi i\theta_1} & & \\
& \ddots & \\
& & e^{2\pi i\theta_n}
\end{pmatrix}.\]
For $\mathrm{SU}(n)$, one has the same diagonal form with the condition $\theta_1+\dots+\theta_n=0$.
\end{example}
\begin{example}
\label{ex:max-torus-spn}
A (diagonal) maximal torus of $\mathrm{Sp}(n)$ is given by diagonal matrices of the form
\[\begin{pmatrix}
e^{2\pi i\theta_1} & & \\
& \ddots & \\
& & e^{2\pi i\theta_n}
\end{pmatrix}.\]
\end{example}
\begin{theorem}
\label{thm:max-tori}
Any two maximal tori in a compact connected Lie group are conjugate.
Every element of a compact connected Lie group is contained in a maximal torus.
\end{theorem}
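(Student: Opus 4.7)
The plan is to reduce both assertions to the single claim
\begin{center}
(*) \ every element of $G$ is conjugate to an element of the fixed maximal torus $T$.
\end{center}
Once (*) is in hand, the second assertion is immediate: if $x = g t g^{-1}$ with $t \in T$, then $x$ lies in the maximal torus $gTg^{-1}$. For the first assertion, let $T'$ be another maximal torus and pick a \emph{topological generator} $t' \in T'$, i.e.\ an element whose cyclic powers are dense in $T'$ (such an element exists in any compact torus $(\mathbb R/\mathbb Z)^n$ by a Kronecker-type density argument: the set of $(\theta_1,\dots,\theta_n)$ with $1,\theta_1,\dots,\theta_n$ linearly independent over $\mathbb Q$ is a dense $G_\delta$). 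Applying (*) to $t'$ yields $g \in G$ with $g^{-1}t'g \in T$; taking closures of powers gives $g^{-1}T'g \subset T$, and since $g^{-1}T'g$ is itself a maximal torus we conclude $g^{-1}T'g = T$.

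For (*) I would invoke the Lefschetz fixed point theorem on the flag manifold $G/T$. Given $x \in G$, consider the left translation
\[
L_x \colon G/T \longrightarrow G/T, \qquad gT \longmapsto xgT.
\]
Because $G$ is path connected, any continuous path in $G$ from $e_G$ to $x$ induces a homotopy from $\mathrm{id}_{G/T}$ to $L_x$, so the Lefschetz number $\Lambda(L_x)$ equals the Euler characteristic $\chi(G/T)$. Taking as input the key classical fact
\[
\chi(G/T) \;=\; |W|, \qquad W \coloneqq N_G(T)/T,
\]
which is finite and strictly positive, the Lefschetz fixed point theorem produces a fixed point $gT \in G/T$ of $L_x$. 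The fixed point condition $xgT = gT$ reads $g^{-1}xg \in T$, which is exactly (*).

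The hard part is the computation $\chi(G/T) = |W| > 0$. A clean proof typically goes through a Bruhat-type cell decomposition of $G/T$ whose cells are indexed by elements of the Weyl group and have only even real dimensions (so they all contribute $+1$ to $\chi$), or alternatively through the Hopf--Samelson computation of the Poincar\'e polynomial of $G/T$ via the principal $T$-bundle $T \to G \to G/T$ together with Hopf's theorem on the rational cohomology of compact Lie groups. In a lecture-notes exposition I would import this as a black box from Br\"ocker--tom Dieck, and spend the visible part of the proof on the reduction to (*) and the Lefschetz argument above, which are conceptually the heart of the statement.
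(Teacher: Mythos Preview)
The paper does not actually prove this theorem: immediately after the statement it simply refers the reader to Br\"ocker--tom Dieck \cite[Chapter IV, Section 1]{brocker-tomdieck}. So there is nothing in the paper to compare your argument against line by line.

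Your proposal is correct and is one of the classical routes, essentially Weil's proof via the Lefschetz fixed point theorem on the flag manifold $G/T$. The reduction of both assertions to the single claim $(\ast)$ is clean, and the use of a topological generator of $T'$ to deduce conjugacy of maximal tori from $(\ast)$ is exactly right. The only input you black-box is $\chi(G/T)=|W|>0$; this is legitimate and non-circular, since finiteness of $W=N_G(T)/T$ follows from the Lie-algebra fact that $\mathfrak t$ is its own centraliser in $\mathfrak g$, and the Euler characteristic can then be read off either from a Bruhat/Schubert cell decomposition or from the identity $\chi(G/T)=\chi((G/T)^T)=|W|$ for torus actions. For comparison, the argument in Br\"ocker--tom Dieck that the paper cites is a cousin of yours: instead of Lefschetz on $G/T$, they show that the conjugation map $G/T \times T \to G$, $(gT,t)\mapsto gtg^{-1}$, has mapping degree $|W|\neq 0$ and is therefore surjective, which is again $(\ast)$. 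Both approaches ultimately hinge on the same numerical input $|W|>0$; yours packages it through homotopy invariance of the Lefschetz number, theirs through homotopy invariance of the degree.
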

For a proof, see \cite[Chapter IV, Section 1]{brocker-tomdieck}.
So, up to conjugation, a maximal torus in $\mathrm{SO}(n)$, $\mathrm{SU}(n)$, and $\mathrm{Sp}(n)$, can be assumed to be of the above standard form.
\begin{remark}
Let $A \in \mathrm{Sp}(n)$ be any element. 
By Theorem \ref{thm:max-tori}, there is a matrix $B \in \mathrm{Sp}(n)$ such that $BAB^{-1}$ is diagonal as in Example \ref{ex:max-torus-spn}.
Then $\det A = 1$ (cf.\ Remark \ref{rmk:quaternionic-general-linear-group}).
In fact, there is an isomorphism $\mathrm{Sp}(n) = \mathrm{GL}(n,\mathbb H) \cap \mathrm{SU}(2n)$.
\end{remark}
As a consequence of Theorem \ref{thm:max-tori}, the following is well-posed.
\begin{definition}
The \emph{rank} of a compact connected Lie group $G$ is the dimension of any maximal torus in $G$.
\end{definition}
\begin{example}
The ranks of $\mathrm{SO}(n)$, $\mathrm{U}(n)$, and $\mathrm{Sp}(n)$ are easily deduced from the above examples of maximal tori.
The rank of $\mathrm G_2$ is $2$ \cite{cartan, killing}. 
The rank of $\mathrm{Spin}(7)$ is $3$, as $\mathrm{SO}(7)$ has rank $3$.
\end{example}
\begin{proposition}
Let $G$ be a compact connected Lie group.
The intersection of all maximal tori in $G$ is the centre $Z(G)$.
Further, $G$ is semisimple if and only if the centre $Z(G)$ is finite.
\end{proposition}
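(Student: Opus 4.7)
The plan is to handle the two assertions separately, leaning on Theorem \ref{thm:max-tori} for the first and on the torus structure of compact connected Abelian Lie groups for the second.

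For the intersection-of-tori claim, I would prove the two inclusions directly. For $Z(G) \subset \bigcap_T T$: any central $z \in Z(G)$ lies in some maximal torus $T$ by Theorem \ref{thm:max-tori}; any other maximal torus $T'$ is conjugate to $T$, say $T' = g T g^{-1}$, and centrality of $z$ gives $z = g z g^{-1} \in T'$. Conversely, if $x \in \bigcap_T T$, pick any $g \in G$ and use Theorem \ref{thm:max-tori} to find a maximal torus $T$ containing $g$; then $x$ also lies in $T$ by hypothesis, and since $T$ is Abelian we get $xg = gx$. As $g$ was arbitrary, $x \in Z(G)$.

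For the semisimplicity equivalence, I would first observe that $Z(G)$ is always a closed subgroup of $G$ (intersection of centralisers of individual elements), so its identity component $Z(G)^0$ is a closed, connected, Abelian, normal Lie subgroup. If $G$ is semisimple, $Z(G)^0$ must be trivial, hence $Z(G)$ is discrete; combined with compactness (closed subgroup of compact $G$), this forces $Z(G)$ to be finite.

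For the converse I would argue by contrapositive: assume $G$ is not semisimple and extract a non-trivial connected Abelian normal Lie subgroup $N \subset G$. Such $N$, being compact connected Abelian, is a torus $T^k$ with $k \geq 1$. Conjugation defines a Lie group homomorphism $\varphi \colon G \to \mathrm{Aut}(N)$. The crucial step, which I expect to be the main technical obstacle, is to show that $\mathrm{Aut}(T^k) \cong \mathrm{GL}(k,\mathbb Z)$ is a discrete group; granting this, connectedness of $G$ forces $\varphi$ to be the trivial homomorphism, so $N \subset Z(G)$ and the centre contains a non-trivial torus, hence is infinite. Discreteness of $\mathrm{Aut}(T^k)$ would itself be justified by lifting continuous homomorphisms $T^k \to T^k$ to $\mathbb R$-linear endomorphisms of $\mathbb R^k$ preserving the integer lattice $\mathbb Z^k$, which leaves only countably many possibilities with no room for a continuous family.
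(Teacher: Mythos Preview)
Your proof is correct, and for the first assertion and the forward direction of the second it matches the paper essentially verbatim. The genuine difference lies in the converse direction of the semisimplicity criterion. You argue via the discreteness of $\mathrm{Aut}(T^k) \cong \mathrm{GL}(k,\mathbb Z)$ to force the conjugation action of $G$ on a normal torus $N$ to be trivial, hence $N \subset Z(G)$. The paper instead reuses the first part of the proposition: a connected Abelian normal subgroup $H$ is a torus sitting in some maximal torus $T$; normality gives $H = gHg^{-1} \subset gTg^{-1}$ for every $g$, so $H$ lies in the intersection of all maximal tori, which by the first assertion equals $Z(G)$. The paper's route is shorter and more self-contained, avoiding the external fact about automorphisms of tori; your route is a standard alternative and perfectly valid, but the lattice-preservation argument you flag as the ``main technical obstacle'' is unnecessary once you notice that the first half of the proposition already does the work.
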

\begin{proof}
Let $T$ be any maximal torus in $G$.
By Theorem \ref{thm:max-tori}, the intersection of all maximal tori is the set $X \coloneqq \bigcap_{g \in G} gTg^{-1}$.
Let $h \in Z(G)$. 
Since $h$ is contained in a maximal torus, and maximal tori are conjugate, $h$ sits in all maximal tori.
This shows $Z(G) \subset X$.
Let now $h \in X$, so that $h \in gTg^{-1}$ for all $g \in G$.
Let $\ell \in G$ be any other element.
Since $\ell$ and $h$ sit in $aTa^{-1}$ for some $a \in G$, they must commute, so $h$ is central.
Therefore, $X \subset Z(G)$, whence the first assertion.

Now, a connected Abelian normal subgroup of $G$ must be a torus $H$, which is contained in a maximal torus.
The normality condition forces $H$ to be in the intersection of all maximal tori, so $H$ is central.
So if $Z(G)$ is finite, $H$ must be trivial, and hence $G$ is semisimple.
Conversely, if $G$ is semisimple, either $Z(G)$ is trivial or it is not connected.
In the latter case, the connected component of the identity is a connected Abelian normal subgroup, and hence must be trivial.
It follows that $Z(G)$ is discrete.
Since $G$ is compact, $Z(G)$ is finite.
\end{proof}
Up to conjugation, the centres of $\mathrm{SO}(n)$, $\mathrm{SU}(n)$, $\mathrm{U}(n)$, and $\mathrm{Sp}(n)$ lie in a standard maximal torus, and are thus easy to compute by hand.
It follows that the groups $\mathrm{SO}(n)$, $\mathrm{SU}(n)$, $\mathrm{Sp}(n)$ have finite centre (the only exception being $\mathrm{SO}(2)$), so they are semisimple.
We will see an alternative way to compute the centres of $\mathrm{SU}(n)$ and $\mathrm{Sp}(n)$ by using Schur's Lemma, see Remark \ref{rmk:unitary-group-not-semisimple}.
The group $\mathrm G_2$ has trivial centre, whereas the centre of $\mathrm{Spin}(7)$ is isomorphic to $\mathbb Z_2$.
These two facts will be seen in Section \ref{subsec:on-g2-spin7}.
So both $\mathrm G_2$ and $\mathrm{Spin}(7)$ are semisimple.
We note that the unitary group $\mathrm{U}(n)$ is in general not semisimple, as its centre is a circle (see also Remark \ref{rmk:unitary-group-not-semisimple} below).
\begin{remark}
The special orthogonal group $\mathrm{SO}(2)$ is not simple, as it is Abelian.
However, the groups $\mathrm{SO}(n)$ for $n$ odd are simple, and $\mathrm{SO}(n)/\mathbb Z_2$ is simple for $n > 4$ even.
The group $\mathrm{SO}(4)$ is not simple.
Indeed, $\mathfrak{so}(4) = \mathfrak{spin}(4) = \mathfrak{su}(2) \oplus \mathfrak{su}(2)$, and $\mathfrak{su}(2)$ is simple by a direct calculation of its ideals.
The groups $\mathrm{SU}(n)$, $\mathrm{Sp}(n)$, $\mathrm G_2$, and $\mathrm{Spin}(7)$ (and more generally $\mathrm{Spin}(n)$ for $n \geq 8$) are simple.
\end{remark}

\begin{definition}
\label{def:killing-form}
The \emph{Killing form} on a Lie algebra $\mathfrak g$ is the map \[(X,Y) \mapsto \mathrm{Tr}(\mathrm{ad}_X \circ \mathrm{ad}_Y),\] where $X,Y \in \mathfrak g$ and $\mathrm{ad}_X(Z) \coloneqq [X,Z]$.
\end{definition}
We are interested in the Killing form when $\mathfrak g$ comes from a compact connected Lie group which is semisimple.
In this case, it is possible to show that the Killing form is negative-definite (see Cartan's criterion \cite[Chapter V, Remark 7.13]{brocker-tomdieck}).
The Killing form will be suitably scaled if needed.

Choose a maximal torus $T$ in $G$.
Let $\mathfrak g$ be the Lie algebra of $G$, and $\mathfrak t$ be the Lie algebra of $T$.
Then $\mathfrak t \subset \mathfrak g$ gets the structure of a Euclidean space, as a suitable multiple of the Killing form restricts to a real-valued positive-definite scalar product on $\mathfrak t \times \mathfrak t$.

The differential of the exponential map at $0 \in \mathfrak g$ is the identity map, and hence the exponential map is a diffeomorphism of a neighbourhood of $0 \in \mathfrak g$ onto a neighbourhood of the identity of $G$.
Therefore, $\ker(\exp)$ must be discrete.
By the Baker--Campbell--Hausdorff formula (see e.g.\ \cite{fulton-harris}), we have $\exp(A+B) = \exp(A)\exp(B)$ when $[A,B]=0$.
So the kernel of the restriction of $\exp$ to $\mathfrak t \subset \mathfrak g$ is a discrete subgroup of $\mathfrak t$, and has maximal rank.

\begin{definition}
The kernel of the exponential map $\exp \colon \mathfrak t \to T$ is called the \emph{integer lattice}.
Its $\mathbb Z$-dual in $\mathfrak t^*$ is the \emph{weight lattice}.
\end{definition}

\begin{example}
\label{ex:su(2)-max-torus}
A maximal torus $T$ in $\mathrm{SU}(2)$ is given by the subgroup of diagonal matrices $\mathrm{diag}(e^{2\pi i\alpha},e^{-2\pi i\alpha})$, and is isomorphic to a circle $S^1$.
The Lie algebra of $T$ is given by matrices of the form $\mathrm{diag}(i\alpha,-i\alpha)$, $\alpha \in \mathbb R$.
The kernel of $\exp \colon i\mathbb R \to S^1$, $\exp(2\pi i \alpha) = e^{2\pi i \alpha}$, is isomorphic to $\mathbb Z$, as well as the weight lattice.
\end{example}
\begin{example}
\label{ex:su(3)-max-torus}
A maximal torus $T$ in $\mathrm{SU}(3)$ is given by the subgroup of diagonal matrices $\mathrm{diag}(e^{2\pi i\theta},e^{2\pi i \varphi},e^{-2\pi i(\theta+\varphi)})$, and is isomorphic to a torus $T^2$.
The Lie algebra of $T$ is given by matrices of the form $\mathrm{diag}(i\theta,i\varphi,-i(\theta+\varphi))$, $\theta,\varphi \in \mathbb R$.
The integer lattice and the weight lattice are then isomorphic to $\mathbb Z^2$.
\end{example}

We now wish to understand the representation theory of $T$, then study the representation theory of $G$.
We start by giving basic definitions.
\begin{definition}
A complex \emph{representation} of a Lie group $G$, or a \emph{$G$-module}, is a Lie group homomorphism $\pi \colon G \to \mathrm{Aut}(V)$, where $V$ is a complex vector space, and automorphisms of $V$ are complex-linear.
\end{definition}
We may simply say that $V$ is a representation of $G$ when the action is understood, or that $G$ acts linearly on $V$. 
One sometimes says that $V$ alone is a \emph{representation space} of $G$.
A real (resp.\ quaternionic) representation is defined analogously by taking $V$ real (resp.\ quaternionic), and $\mathrm{Aut}(V)$ to be given by $\mathbb R$-linear (resp.\ $\mathbb H$-linear) automorphisms.

We take the occasion to define a few standard representations that will be often used.
Let $V$ be a $G$-representation, and let $\rho_1 \colon G \to \mathrm{Aut}(V)$ be the corresponding homomorphism.
Then $G$ acts on the dual $V^*$ in the following way.
Let $\alpha \in V^*$, and $v$ be any vector in $V$. Then $g \in G$ acts on $\alpha$ via
\begin{equation}
\label{eq:dual-action}
(\rho_1^*(g)\alpha)(v) \coloneqq \alpha(\rho_1(g^{-1})v) = \alpha(\rho_1(g)^{-1}v).
\end{equation}
In this way, the pairing between $V$ and $V^*$ is preserved, namely \[(\rho_1^*(g)\alpha)(\rho_1(g)v) = \alpha(v).\]
If $\rho_2\colon G \to \mathrm{Aut}(W)$ is another representation of $G$, then $G$ acts on $V \oplus W$ via $\rho_1\oplus \rho_2$, defined by \[(g,v\oplus w) \mapsto \rho_1(g)v \oplus \rho_2(g)w.\]
Further, $G$ acts on $V \otimes W$ via $\rho_1 \otimes \rho_2$ defined by \[(g,v \otimes w) \mapsto \rho_1(g)v \otimes \rho_2(g)w.\]
Other representations of $G$, e.g.\ on subspaces of the tensor algebra over $V$, are easily deduced by the actions above.
We normally simplify the notations in the following way.
Say $G$ acts linearly on $V$, then $G$ acts on $V^*$ via $(g,\alpha) \mapsto \alpha(g^{-1}{}\cdot{})$.
If $V$ and $W$ are two representations of $G$, then $G$ acts on $v\oplus w \in V \oplus W$ via $(g,v\oplus w) \mapsto gv\oplus gw$, and so on.

\begin{definition}
A representation of $G$ is called \emph{irreducible} if it contains no non-trivial $G$-invariant subspace.
\end{definition}
\begin{definition}
Let $V$ and $W$ be representations of a Lie group $G$.
A linear map $f \colon V \to W$ is said to be \emph{equivariant}, or \emph{intertwining}, when it preserves the Lie group actions, i.e.\ 
\[f(gv) = gf(v), \qquad g \in G, v \in V.\]
Two representations $V$ and $W$ are \emph{equivalent}, or \emph{isomorphic}, if there is an equivariant linear bijection $f \colon V \to W$.
\end{definition}
Choose any Hermitian product $\langle{}\cdot{},{}\cdot{}\rangle$ on $V$. 
Since in our general discussion $G$ is compact, one defines the averaged Hermitian product $\int_G \langle g{}\cdot{},g{}\cdot{}\rangle d\mu_G$, and this is then $G$-invariant (cf.\ e.g.\ \cite{brocker-tomdieck} for integration on Lie groups and Haar measure).
Therefore, it is not restrictive to look at Hermitian representations on which $G$ acts via unitary transformations. 
Such representations always decompose uniquely (up to isomorphism) into orthogonal direct sums of irreducible ones.

\begin{theorem}[Schur's Lemma]
\label{thm:schur-lemma}
Let $V,W$ be irreducible complex representations of a Lie group $G$, and let $f \colon V \to W$ be an equivariant linear map.
Then either $f=0$ or $f$ is an isomorphism. Furthermore, if $V=W$, then $f = \lambda \mathrm{id}_V$, with $\lambda \in \mathbb C$.
\end{theorem}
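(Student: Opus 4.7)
The plan is to exploit that, for any equivariant linear map, both the kernel and the image are $G$-invariant subspaces, and then squeeze using the irreducibility hypothesis. The second statement will then reduce to the first via the fact that $\mathbb C$ is algebraically closed.

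First I would check that $\ker f \subset V$ is $G$-invariant: if $v \in \ker f$ and $g \in G$, then by equivariance $f(gv) = g f(v) = 0$, so $gv \in \ker f$. An analogous one-line check shows that $\mathrm{Im}\, f \subset W$ is $G$-invariant. Since $V$ is irreducible, $\ker f$ is either $\{0\}$ or $V$; since $W$ is irreducible, $\mathrm{Im}\, f$ is either $\{0\}$ or $W$. If $\ker f = V$ then $f = 0$, and we are done. Otherwise $\ker f = \{0\}$, in which case $\mathrm{Im}\, f \neq \{0\}$ and hence $\mathrm{Im}\, f = W$ by irreducibility of $W$; so $f$ is a linear bijection, i.e.\ an isomorphism.

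For the case $V = W$, the representation space $V$ is finite-dimensional over $\mathbb C$ (as is implicit in our setting), so $f$ is an endomorphism of a finite-dimensional complex vector space and admits an eigenvalue $\lambda \in \mathbb C$. Consider $h \coloneqq f - \lambda\, \mathrm{id}_V$. Since $\mathrm{id}_V$ is trivially equivariant, so is $h$, and its kernel contains the non-zero $\lambda$-eigenspace of $f$. Applying the first part of the lemma to $h \colon V \to V$, the only possibility compatible with $\ker h \neq \{0\}$ is $h = 0$, whence $f = \lambda\, \mathrm{id}_V$.

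The argument itself is very short, so I do not anticipate a genuine technical obstacle; the only delicate point is that the last statement depends crucially on $\mathbb C$ being algebraically closed and on $V$ being finite-dimensional, both of which are needed to guarantee the existence of an eigenvalue. Over $\mathbb R$ the conclusion would fail, as is seen for instance from the standard action of $\mathrm{SO}(2)$ on $\mathbb R^2$, where any non-identity rotation is equivariant but not a real scalar multiple of the identity.
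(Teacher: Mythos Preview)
Your proof is correct and follows essentially the same approach as the paper: invariance of $\ker f$ and $\mathrm{Im}\,f$ gives the first part, and an eigenvalue of $f$ combined with the first part applied to $f-\lambda\,\mathrm{id}_V$ gives the second. Your closing remark on the roles of algebraic closedness and finite-dimensionality is also in line with the paper's Remark following the theorem.
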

\begin{proof}
Note that $\ker f$ and $\mathrm{Im}f$ are $G$-invariant, so they must be trivial subspaces of $V$ and $W$ respectively. 
The first assertion then follows.
If $V=W$, note that the characteristic polynomial of $f$ admits a root $\lambda$. 
Then $\ker(f-\lambda \mathrm{id}_V)$ is non-zero and $G$-invariant, and hence must coincide with $V$.
\end{proof}
\begin{remark}
\label{rmk:schur-lemma}
Unlike the last statement, the first conclusion in Schur's Lemma does not make any use of the fact that $V$ and $W$ are complex vector spaces.
Indeed, this part of Schur's Lemma can be carried over to representations over the reals or the quaternions.
More on real and quaternionic representations is discussed in subsection~\ref{subsec:real-quaternionic-representations} below.
The second part of Schur's Lemma can also be carried over to real representations when $f$ is known to admit a real eigenvalue.
\end{remark}

\begin{remark}
\label{rmk:unitary-group-not-semisimple}
As an application of Schur's Lemma, one can show that the unitary group $\mathrm{U}(n)$ is not semisimple. 
Take $A \in \mathrm{U}(n)$ to be a central element, so that $A \colon \mathbb C^n \to \mathbb C^n$ given by left multiplication is an equivariant map.
Since $\mathbb C^n$ is an irreducible representation of $\mathrm{U}(n)$, $A$ must be a multiple of the identity $A = \lambda \mathrm{id}$.
Since $A$ is unitary, $\lambda$ must have unit length.
It follows that the centre of $\mathrm{U}(n)$ is isomorphic to a circle $S^1$.
The same process applied to $A \in \mathrm{SU}(n)$ gives that $A=\lambda \mathrm{id}$, and $\lambda$ is an $n$-th root of unity, so the centre of $\mathrm{SU}(n)$ is isomorphic to $\mathbb Z_n$.
A similar process yields that the centre of $\mathrm{Sp}(n) \subset \mathrm{U}(2n)$ is isomorphic to $\mathbb Z_2$.
\end{remark}

\begin{digression}[Torus representations]
\label{rmk:irreps-torus}
Let us take a torus $T = S^1 \times \dots \times S^1$ ($n$ times) with $S^1 \subset \mathbb C$ the group of complex numbers of unit length.
Let $V$ be an irreducible complex representation of $T$, and take $t \in T$.
Then $t$ gives a linear map $t\colon V \to V$ which is equivariant (because $T$ is Abelian), and hence by Schur's Lemma $t$ acts as a complex multiple of the identity.
Irreducibility forces $V$ to be one-dimensional, so our representation really is a smooth homomorphism $T \to \mathrm{U}(1)=S^1$, a \emph{group character}.
One can argue that all such maps are of the form \[(e^{2\pi i\theta_1},\dots,e^{2\pi i \theta_n}) \mapsto e^{2\pi i (m_1\theta_1+\dots+m_n\theta_n)}, \qquad m_i \in \mathbb Z,\]
The $n$-tuples $(m_1,\dots,m_n) \in \mathbb Z^n$ are essentially the weights of $T$. 
In fact, they can be viewed as linear forms in $\mathfrak t^*$ mapping $(\theta_1,\dots,\theta_n) \in \mathfrak t$ to $m_1\theta_1+\dots+m_n\theta_n$, and take integral values on the integer lattice.
We deduce that there is a one-to-one correspondence between weights of $T$ and irreducible $T$-modules.
\end{digression}
We now start looking at the representation theory of $G$.
Let $\pi \colon G \to \mathrm{U}(V)$ be a unitary representation of $G$. 
Let $T \subset G$ be a maximal torus. 
In Digression~\ref{rmk:irreps-torus} we have seen that irreducible representations of a torus are (complex) one-dimensional, so $V$ decomposes under the action of $T$ into $T$-invariant irreducible subspaces
\begin{equation}
\label{eq:weight-space-decomposition}
V = \bigoplus_{\lambda} V_{\lambda},
\end{equation}
where the $\lambda$'s are weights of $T$. The non-empty subspaces $V_{\lambda}$ are called \emph{weight spaces}, and the \emph{multiplicity} of $\lambda$ as a weight is the number of times the summand $V_{\lambda}$ occurs in the sum \eqref{eq:weight-space-decomposition}.
The set of $\lambda$'s for $V$ is called the \emph{weight system} of $V$, and the decomposition~\eqref{eq:weight-space-decomposition} is called \emph{weight space decomposition}.

\begin{definition}
Let $\mathfrak g$ be a Lie algebra. A complex \emph{representation of $\mathfrak g$} is a Lie algebra homomorphism $\mathfrak g \to \mathfrak{gl}(V)$, where $V$ is a complex vector space.
\end{definition}
The differential $d\pi \colon \mathfrak g \to \mathfrak{u}(V)\subset \mathfrak{gl}(V)$ of $\pi \colon G \to \mathrm{U}(V)$ at the identity defines a representation of $\mathfrak g$.

We now wish to understand how the Lie algebra $\mathfrak g$ acts on the dual space $V^*$.
Let $X = \dot{\gamma}(0)$, $\gamma(0)=e_G$, for some curve $\gamma \colon (-\epsilon,\epsilon) \to G$, with $\epsilon>0$, passing through the identity $e_G$ of the group at $t=0$. Then if $\alpha \in V^*$ we have
\[d\pi(X)(\alpha) = \frac{d}{dt}(\pi(\gamma(t))\alpha)\Bigr|_{\substack{t=0}} = \frac{d}{dt}(\alpha(\pi(\gamma(t))^{-1}{}\cdot{}))\Bigr|_{\substack{t=0}}=-\alpha(d\pi(X){}\cdot{}).\]
Similarly, if $(V,\rho_1)$ and $(W,\rho_2)$ are two representation of $G$, then $G$ acts via $\rho = \rho_1 \otimes \rho_2$ on the tensor product $V \otimes W$. The induced representation of $\mathfrak g$ on $V \otimes W$ is given by the Leibniz rule and is
\begin{equation}
\label{eq:action-lie-algebra}
d\rho(X)(v \otimes w) = d\rho_1(X)v \otimes w+v\otimes d\rho_2(X)w.
\end{equation}
One can extend the action to other representations in a similar fashion.
\begin{digression}[The adjoint representation]
\label{rmk:roots}
As any Lie group, $G$ acts on its Lie algebra $\mathfrak g$ via the \emph{adjoint representation} $\mathrm{Ad} \colon G \to \mathrm{Aut}(\mathfrak g)$ defined as
\[g \in G \mapsto \mathrm{Ad}_g \in \mathrm{Aut}(\mathfrak g), \quad \mathrm{Ad}_g(X)=gXg^{-1}, \qquad X \in \mathfrak g.\]
Note that the differential $\mathrm{ad} \colon \mathfrak g \to \mathrm{End}(\mathfrak g)$ of $\mathrm{Ad} \colon G \to \mathrm{Aut}(\mathfrak g)$ at the identity is the Lie bracket in $\mathfrak g$.
Take $X = \dot{\gamma}(0)$, with $\gamma \colon (-\epsilon,\epsilon) \to G$, $\epsilon>0$, a curve through $e_G$ at $t=0$.
Then the action of $\mathrm{ad}_X = d(\mathrm{Ad})_{e_G}(X)$ on a vector $Y \in \mathfrak g$ is
\begin{align*}
\mathrm{ad}_X(Y) = \frac{d}{dt}\left(\gamma(t)Y\gamma(t)^{-1}\right)\Bigr|_{\substack{t=0}} = XY-YX = [X,Y].
\end{align*}
This is consistent with the notation used in Definition \ref{def:killing-form}.
Restricting $\mathrm{Ad}$ to a maximal torus $T$ with Lie algebra $\mathfrak t$ gives a decomposition 
\begin{equation}
\label{eq:root-space-decomposition}
\mathfrak g = \mathfrak t \oplus \bigoplus_{\alpha} \mathfrak g_{\alpha},
\end{equation}
where each $\mathfrak g_{\alpha}$ is an irreducible real two-dimensional torus-representation. 
Any element $t \in T$ acts on $\mathfrak g_{\alpha}$ via a special orthogonal rotation in $\mathrm{SO}(2) = S^1$ by an angle normalised as $2\pi \alpha(t)$.
The fact that $\mathfrak t$ coincides with the subspace of $\mathfrak g$ on which $T$ acts trivially follows by the the fact that $T \subset G$ is maximal.
The maps $\alpha \colon T \to S^1$ are given by weights $\alpha \in \mathfrak t^*$, and no $\alpha$ vanishes.
In this special case, the weights $\pm \alpha$ are called the \emph{roots} of $G$, and the relative weight spaces are called \emph{root spaces}. 
The decomposition~\eqref{eq:root-space-decomposition} is called \emph{root space decomposition}, and is of course a special case of \eqref{eq:weight-space-decomposition}.
The set of roots of $\mathfrak g$ is normally denoted by $\Delta_{\mathfrak g}$, or simply by $\Delta$ when there is no danger of confusion.
A trivial dimension count gives $\dim G - \dim T$ is even.
\end{digression}
\begin{example}
\label{ex:su2-roots}
Let again $G = \mathrm{SU}(2)$. The Lie algebra $\mathfrak{su}(2)$ is defined by traceless skew-Hermitian matrices, so a basis is given by
\[H = \begin{pmatrix} i & 0 \\ 0 & -i \end{pmatrix}, \qquad X = \begin{pmatrix} 0 & 1 \\ -1 & 0\end{pmatrix}, \qquad Y = \begin{pmatrix} 0 & i \\ i & 0\end{pmatrix}.\]
One checks the structure equations 
\[[H,X]=2Y, \qquad [X,Y]=2H, \qquad [H,Y]=-2X.\] 
If $t=\mathrm{diag}(e^{2\pi i \theta},e^{-2\pi i\theta})$ is an element in the diagonal maximal torus $T \subset G$ (cf.\ Example~\ref{ex:su(2)-max-torus}), one computes $tHt^{-1} = H$ and
\begin{align*}
tXt^{-1} & = +\cos(2\pi(2\theta))X+\sin(2\pi (2\theta))Y, \\
tYt^{-1} & = -\sin(2\pi(2\theta))X+\cos(2\pi(2\theta))Y,
\end{align*}
so that $t$ acts on $\mathrm{Span}_{\mathbb R}\{X,Y\}$ via a proper rotation by an angle $2\theta$.
Note that $T$ acts trivially on $H$: indeed $H$ generates the Lie algebra $\mathfrak t$ of $T$.
So the set of roots $\Delta$ is just given by the vectors $\pm 2e_1$, with $e_1$ a unit length generator of the real line with respect to a positive-definite rescaling of the Killing form, cf.\ Figure~\ref{fig:roots-su(3)}. 
\end{example}
\begin{example}
\label{ex:su3-roots}
Let now $G=\mathrm{SU}(3)$, and consider the matrices
\begin{alignat*}{3}
E_1 & = \begin{pmatrix} 0 & +1 & 0 \\ -1 & 0 & 0 \\ 0 & 0 & 0 \end{pmatrix}, && \quad E_2 =\begin{pmatrix} 0 & +i & 0 \\ +i & 0 & 0 \\ 0 & 0 & 0 \end{pmatrix}, && \quad E_3 =\begin{pmatrix} 0 & 0 & +1 \\ 0 & 0 & 0 \\ -1 & 0 & 0 \end{pmatrix}, \\
E_4 & =\begin{pmatrix} 0 & 0 & +i \\ 0 & 0 & 0 \\ +i & 0 & 0 \end{pmatrix}, && \quad E_5 =\begin{pmatrix} 0 & 0 & 0 \\ 0 & 0 & +1 \\ 0 & -1 & 0 \end{pmatrix}, && \quad E_6 =\begin{pmatrix} 0 & 0 & 0 \\ 0 & 0 & +i \\ 0 & +i & 0 \end{pmatrix}.
\end{alignat*}
These six matrices and \[E_7=\mathrm{diag}(i,i,-2i), \qquad E_8=\mathrm{diag}(i,-i,0),\] are a basis for the Lie algebra $\mathfrak{su}(3)$.
The Killing form can be rescaled so as to have \[(E_i,E_j) = -\mathrm{Tr}(E_iE_j)/2.\] 
We denote by $X^\flat$ the dual vector of $X$ with respect to the Killing form, i.e.\ $X^{\flat}(X) = (X,X)$. 
A computation yields $(E_7,E_7)=3$, $(E_8,E_8)=1$, and $(E_7,E_8)=0$. So if we define 
\[
e_7 \coloneqq E_7/\sqrt 3, \qquad e_8 \coloneqq E_8,
\]
the restriction of the scalar product to the span of $e_7$ and $e_8$ is the standard scalar product.
A maximal (diagonal) torus $T \subset \mathrm{SU}(3)$ is given by matrices 
\[\mathrm{diag}\Bigl(e^{2\pi i\left(\alpha+\beta\right)},e^{2\pi i \left(\alpha-\beta\right)}, e^{2\pi i \left(-2\alpha \right)}\Bigr),\]
which agrees with the choice of $E_7$, $E_8$ up to a constant factor $2\pi$.
The integer lattice is given by vectors $((m+n)/2)E_7+((m-n)/2)E_8$, with $m,n$ integers.
An analogous computation to that in the previous example yields three $T$-invariant subspaces on which $T$ acts respectively via multiplication by \[e^{2\pi i (2\beta)}, \quad e^{2\pi i (3\alpha-\beta)}, \quad e^{2\pi i (3\alpha+\beta)}.\]
Then we get three linear forms $\lambda_i = a_iE_7^{\flat}+b_iE_8^{\flat}$, and the constraints
\begin{align*}
\lambda_1(\alpha E_7+\beta E_8) & = 3a_1\alpha+b_1\beta = 2\beta, \\
\lambda_2(\alpha E_7+\beta E_8) & = 3a_2\alpha+b_2\beta = 3\alpha-\beta, \\
\lambda_3(\alpha E_7+\beta E_8) & = 3a_3\alpha+b_3\beta = 3\alpha+\beta.
\end{align*}
It follows that $\lambda_1 = 2E_8^{\flat}$, $\lambda_2 = E_7^{\flat}-E_8^{\flat}$, $\lambda_3 = E_7^{\flat}+E_8^{\flat}$.
In terms of the orthonormal basis $e_7,e_8$ the roots are
\[\pm (0,2), \qquad \pm (\sqrt 3,-1), \qquad \pm (\sqrt 3,+1).\]
Note that these six points are the vertices of a regular hexagon inside $\mathfrak t^*$, and
\[(0,2) = 2(0,1), \qquad (\sqrt 3,-1) = 2\left(\frac{\sqrt3}{2},-\frac12\right), \qquad (\sqrt 3,+1) = 2\left(\frac{\sqrt3}{2},+\frac12\right),\]
cf.\ Figure~\ref{fig:roots-su(3)}.
\end{example}
\begin{figure}
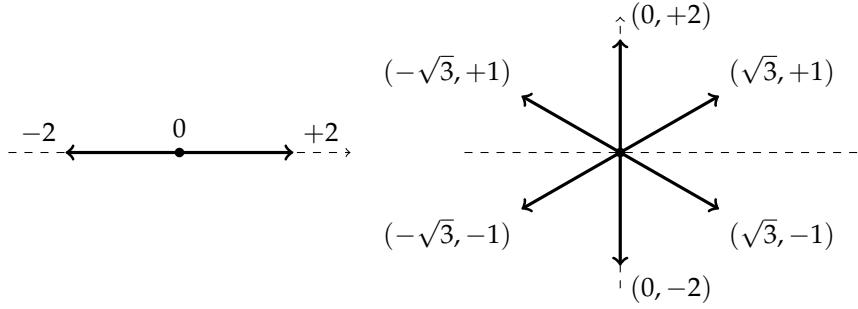

  \tikzpicture
  [scale=1.5]
  \coordinate (suss) at (-4.5,0.5);
  \coordinate (sudx) at (-1.5,0.5);
  \coordinate (gg) at (0.8660254038,-0.7);
  \coordinate (ss) at (0.8660254038,+1.7);
  \coordinate (sx) at (-0.5,0.5);
  \coordinate (dx) at (+3,0.5);
  \coordinate (CC) at (0.8660254038,0.5);
  \coordinate (G) at (-4,0.5);
  \coordinate (H) at (-2,0.5);
  \coordinate (Z) at (-3,0.5);
  \coordinate (A) at (0,0);
  \coordinate (B) at (0.8660254038,-0.5); % \coordinate (B) at (0.98803162,-0.5);
  \coordinate (C) at (1.7320508076,0); % \coordinate (C) at (1.976,0);
  \coordinate (D) at (1.7320508076,1); % \coordinate (D) at (1.976,1);
  \coordinate (E) at (0.8660254038,1.5); % \coordinate (E) at (0.98803162,1.5);
  \coordinate (F) at (0,1);
  \draw[<->][very thick][black] (G) -- (H);
  \draw[<->][very thick][black] (A) -- (D);
  \draw[<->][very thick][black] (B) -- (E);
  \draw[<->][very thick][black] (C) -- (F);
  \draw[->][dashed][black] (sx) -- (dx);
  \draw[->][dashed][black] (gg) -- (ss);
  \draw[->][dashed][black] (suss) -- (sudx);
  \fill (A)  node[below left] {$(-\sqrt 3,-1)$};
  \fill (B) node[below right] {$(0,-2)$};
  \fill (C) node[below right] {$(\sqrt 3,-1)$};
  \fill (D) node[above right] {$(\sqrt 3,+1)$};
  \fill (E) node[above right] {$(0,+2)$};
  \fill (F) node[above left] {$(-\sqrt 3,+1)$};
  \fill (G) node[above left] {$-2$};
  \fill (H) node[above right] {$+2$};
  \fill (Z)  circle [radius=1.2pt] node[above][circle] {$0$};
  \fill (CC) circle [radius=1.2pt] node[right][circle] {};
  \endtikzpicture
  \caption{Roots of $\mathrm{SU}(2)$ (left) and $\mathrm{SU}(3)$ (right).}
  \label{fig:roots-su(3)}
\end{figure}
\begin{remark}
A computation of the Lie brackets in Example~\ref{ex:su3-roots} shows that the Lie subalgebra generated by $E_1,E_2,E_8$ is isomorphic to $\mathfrak{su}(2)$.
So there is a Lie monomorphism $\mathfrak{su}(2) \hookrightarrow \mathfrak{su}(3)$, which is the differential of a Lie group immersion $\mathrm{SU}(2) \hookrightarrow \mathrm{SU}(3)$.
The monomorphism $\mathfrak{su}(2) \hookrightarrow \mathfrak{su}(3)$ can be easily written down in matrix form.
This can also be seen by the root space decomposition of $\mathfrak{su}(3)$, as $E_1,E_2$ generate a root space of $\mathfrak{su}(3)$ and $E_8$ lies in a maximal Abelian subalgebra.
In general, from the root system of a Lie group one can read off the presence of Lie subgroups, cf.\ Dynkin \cite{dynkin}.
\end{remark}

From now on, we assume $G$ is also simple and simply connected.
This amounts to saying that representations of $G$ correspond to representations of its Lie algebra, and simplifies our exposition.
In particular, it simplifies our presentation of the fundamental weights and Weyl's correspondence, Theorem \ref{thm:weyl1}.
We refer to \cite{adams, brocker-tomdieck, hall} for a more general picture.

We discuss more on weights, but we first need some extra terminology on roots.
Consider the roots $\Delta$ of $G$ (cf.\ Digression~\ref{rmk:roots}).
Define a subset $\Delta^+ \subset \Delta$ such that 
\begin{enumerate}
\item for each root $\alpha \in \Delta$, only one of the roots $\pm \alpha$ sits in $\Delta^+$,
\item for any distinct $\alpha,\beta \in \Delta^+$ such that $\alpha+\beta \in \Delta$, then $\alpha+\beta \in \Delta^+$.
\end{enumerate}
Such a subset is not unique, and we call a choice of $\Delta^+$ a set of \emph{positive roots}.
\begin{definition}
A positive root is called \emph{simple} if it cannot be written as the sum of two elements of $\Delta^+$.
\end{definition}
A set of positive simple roots is a basis for $\mathfrak t^*$.
\begin{definition}
A weight $\lambda$ is called \emph{dominant} if $(\lambda,\alpha_i) \geq 0$ for all positive simple roots $\alpha_i$.
\end{definition}
The set of dominant weights lies in the closure of a \emph{fundamental} (dual) \emph{Weyl chamber}, which depends on the choice of positive simple roots.
More generally, a Weyl chamber is defined roughly in the following way. 
For any root, consider the hyperplane orthogonal to it. 
The set of all such hyperplanes disconnects $\mathfrak t^*$ into connected components, which one calls \emph{Weyl chambers}.
The fundamental Weyl chamber contains weights that are dominant with respect to the choice of positive simple roots.
\begin{example}
In Example \ref{ex:su2-roots}, a choice of a positive (simple) root is $+2e_1$, so dominant weights correspond to the natural numbers, cf.\ Figure \ref{fig:weights-su(3)}.
\end{example}
\begin{example}
In Example \ref{ex:su3-roots}, a choice of positive roots is \[(0,2), \quad (\sqrt 3,\pm 1),\] and the positive simple roots are then $(0,2)$, $(\sqrt 3,-1)$.
Identify points in $\mathfrak t^*$ with pairs $(x,y)$. The closure of the fundamental dual Weyl chamber for this choice of simple roots is defined by the region \[\left\{(x,y) \in \mathfrak t^*: y \geq 0, y \leq \sqrt 3 x\right\}.\]
Dominant weights correspond to weights in the fundamental Weyl chamber, cf.\ Figure \ref{fig:weights-su(3)}.
Here the dominant weights are found by a calculation based on the integer lattice in Example \ref{ex:su3-roots}.
An easier way to describe dominant weights will be given later, when we introduce the so-called \emph{fundamental} weights.
\end{example}
\begin{figure}
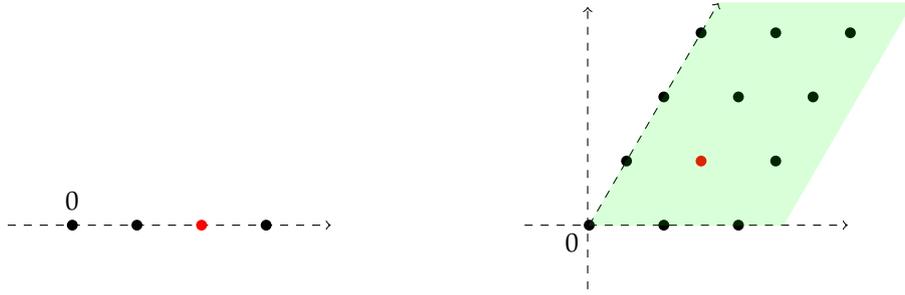

  \tikzpicture
  [scale=1.7]
  \coordinate (weightsu3-0) at (1,0.5);
  \coordinate (weightsu3-1) at (1.2886751346,1);
  \coordinate (weightsu3-2) at (1.5773502692,1.5);
  \coordinate (weightsu3-3) at (1.8660254038,2);
  \coordinate (weightsu3-4) at (1.5773502692,0.5);
  \coordinate (weightsu3-5) at (1.8660254038,1);
  \coordinate (weightsu3-6) at (2.1547005384,1.5);
  \coordinate (weightsu3-7) at (2.443375673,2);
  \coordinate (weightsu3-8) at (2.1547005384,0.5);
  \coordinate (weightsu3-9) at (2.443375673,1);
  \coordinate (weightsu3-10) at (2.7320508076,1.5);
  \coordinate (weightsu3-11) at (3.0207259422,2);
  \coordinate (point-horizontal-axis) at (2.5,0.5);
  \coordinate (point-north-east) at (3.5,2.232);
  \coordinate (linesu3) at (2,2.232);
  \coordinate (suss) at (-3.5,0.5);
  \coordinate (sudx) at (-1,0.5);
  \coordinate (sudxdx) at (-1.5,0.5);
  \coordinate (gg) at (0.98803162,0);
  \coordinate (ss) at (0.98803162,+2.2);
  \coordinate (sx) at (0.5,0.5);
  \coordinate (dx) at (+3,0.5);
  \coordinate (CC) at (0.98803162,0.5);
  \coordinate (G) at (-3,0.5);
  \coordinate (G1) at (-2.5,0.5);
  \coordinate (G2) at (-2,0.5);
  \coordinate (H) at (-2,0.5);
  \coordinate (Z) at (-3,0.5);
  \coordinate (A) at (0,0);
  \coordinate (B) at (0.98803162,-0.5);
  \coordinate (C) at (1.976,0);
  \coordinate (D) at (1.976,1);
  \coordinate (E) at (0.98803162,1.5);
  \coordinate (F) at (0,1);
  \draw[->][dashed][black] (sx) -- (dx); % asse orizzontale SU(3)
  \draw[->][dashed][black] (gg) -- (ss); % asse verticale SU(3)
  \draw[->][dashed][black] (suss) -- (sudx);
  \draw[->][dashed][black] (weightsu3-0) -- (linesu3);
  \fill (G) circle node[above left] {};
  \fill (G1) circle [radius=1.2pt] node[above][circle] {};
  \fill[red] (G2) circle [radius=1.2pt] node[above][circle] {};
  \fill (G2) node[above][circle] {};
  \fill (Z)  circle [radius=1.2pt] node[above][circle] {$0$};
  \fill (sudxdx) circle [radius=1.2pt] node[above][circle] {};
  \fill (weightsu3-0) circle [radius=1.2pt] node[below left][circle] {$0$};
  \fill (weightsu3-1) circle [radius=1.2pt] node[above][circle] {};
  \fill (weightsu3-2) circle [radius=1.2pt] node[above][circle] {};
  \fill (weightsu3-3) circle [radius=1.2pt] node[above][circle] {};
  \fill (weightsu3-4) circle [radius=1.2pt] node[above][circle] {};
  \fill[red] (weightsu3-5) circle [radius=1.2pt] node[above left][circle] {};
  \fill (weightsu3-5) node[above left][circle] {};
  \fill (weightsu3-6) circle [radius=1.2pt] node[above][circle] {};
  \fill (weightsu3-7) circle [radius=1.2pt] node[above][circle] {};
  \fill (weightsu3-8) circle [radius=1.2pt] node[above][circle] {};
  \fill (weightsu3-9) circle [radius=1.2pt] node[above][circle] {};
  \fill (weightsu3-10) circle [radius=1.2pt] node[above][circle] {};
  \fill (weightsu3-11) circle [radius=1.2pt] node[above][circle] {};
  \draw [fill, opacity=.15, green] (weightsu3-0) -- (point-horizontal-axis) -- (point-north-east) -- (linesu3) -- cycle;
  \endtikzpicture
  \caption{Dominant weights of $\mathrm{SU}(2)$ (left) and $\mathrm{SU}(3)$ (right). The red dots correspond to dominant roots. The shaded green region is the fundamental dual Weyl chamber for the positive simple roots.}
  \label{fig:weights-su(3)}
\end{figure}
\begin{remark}
In connection with Example~\ref{ex:su3-roots}, let \[\alpha_1 \coloneqq (0,2), \quad \alpha_2 \coloneqq (\sqrt 3,+1), \quad \alpha_3 \coloneqq (\sqrt 3,-1).\]
One computes $(\alpha_i,\alpha_i)=4$, $i=1,2,3$, and $(\alpha_1,\alpha_2)=-(\alpha_1,\alpha_3)=(\alpha_2,\alpha_3)=2$.
Note that the projection of each $\alpha_i$ on another $\alpha_j$ is always half an integer.
In general, it can be either an integer or half an integer.
This is known as \emph{integrality condition}: if $\alpha$ and $\beta$ are simple roots, we always have
\[\langle \alpha,\beta\rangle \coloneqq 2\frac{(\alpha,\beta)}{(\alpha,\alpha)} \in \mathbb Z.\]
These integers are called \emph{Cartan integers}. 
Integrality is a rather strong condition forcing the possible angles between two roots to be in a rather small set of values.
An analysis of all possibilities yields all possible sets of roots for any group of given rank, and consequently a classification of semisimple Lie algebras \cite{dynkin, fulton-harris, humphreys}.
\end{remark}
The set of Weyl chambers has a natural symmetry group, which is finite. 
In order to state some of its properties, let us briefly recall some terminology.
\begin{definition}
Let $G$ be a group acting on a set $X$, and let $x \in X$.
The \emph{orbit} of $x$ is the set $Gx\coloneqq \{gx \in X: g \in G\}$.
The \emph{stabiliser} of $x$ is the set $G_x\coloneqq \{g \in G: gx=x\}$.
The \emph{orbit space} is the quotient $X/G$, the set of all orbits.
\end{definition}
\begin{definition}
The action of a group $G$ on a set $X$ is 
\begin{enumerate}
\item \emph{effective}, or \emph{faithful}, if every non-trivial element of $G$ acts non-trivially on at least one point in $X$;
\item \emph{free}, when all stabilisers are trivial, i.e.\ $gx=x$ for any $x \in X$ implies $g=e_G$;
\item \emph{transitive}, when for any $x,y \in X$ there is $g \in G$ such that $gx=y$, i.e.\ the orbit space $X/G$ is given by a single coset.
\end{enumerate}
\end{definition}
We have the following special case, which is well-known.
\begin{proposition}[Orbit--Stabiliser Theorem]
\label{thm:orbit-stabiliser}
Let $G$ be a finite group acting on a set $X$, and $x \in X$ be any point.
Then $|Gx| = |G|/|G_x|$, i.e.\ the order of the orbit of $x$ multiplied by the order of its stabiliser is the cardinality of $G$.
\end{proposition}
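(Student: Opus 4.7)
The plan is to establish a bijection between the set of left cosets $G/G_x$ and the orbit $Gx$, from which the stated equality follows by Lagrange's theorem (which gives $|G/G_x| = |G|/|G_x|$).

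First, I would verify that $G_x$ is indeed a subgroup of $G$, so that the coset space $G/G_x$ makes sense: closure follows from $(gh)x = g(hx) = gx = x$ whenever $g,h \in G_x$, and the inverse $g^{-1}$ of a stabilising element fixes $x$ since $x = g^{-1}(gx) = g^{-1}x$. This is a routine but necessary preliminary.

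The core step is to define the map
\[\phi \colon G/G_x \longrightarrow Gx, \qquad \phi(gG_x) \coloneqq gx,\]
and show it is a well-defined bijection. Well-definedness and injectivity are handled simultaneously by the equivalence $gG_x = hG_x \iff h^{-1}g \in G_x \iff h^{-1}gx = x \iff gx = hx$, where the first biconditional is a standard fact about cosets and the rest uses only the definition of $G_x$. Surjectivity is immediate from the definition of the orbit: every element of $Gx$ is $gx$ for some $g \in G$, which is the image of $gG_x$.

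The main obstacle, such as it is, lies in making sure that the biconditional used for well-definedness and injectivity is justified cleanly; nothing deeper is at stake. Once $\phi$ is a bijection, finiteness of $G$ gives $|Gx| = |G/G_x|$, and Lagrange's theorem (here only for the subgroup $G_x \subset G$) yields $|G/G_x| = |G|/|G_x|$, finishing the proof. Note that the argument never uses finiteness of $G$ until the last step, so a more general version of the statement holds for arbitrary groups in the sense of an equality of cardinalities $|Gx| = [G : G_x]$.
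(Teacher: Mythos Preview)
Your proof is correct and is the standard argument. The paper, however, states this proposition without proof (it is introduced as ``well-known''), so there is nothing to compare against; your coset-bijection approach together with Lagrange's theorem is exactly what one would expect here.
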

When $X$ is a manifold, we have the following result, which will be needed later (see \cite[Chapter I, Theorem 4.3 and Proposition 4.6]{brocker-tomdieck}).
\begin{proposition}
\label{prop:homogeneous-spaces}
Let $G$ be a Lie group and $X$ be a smooth manifold acted on by $G$ transitively. 
Let $H \subset G$ be the stabiliser of a point in $X$.
Then $G/H$ is a smooth manifold and there is a diffeomorphism $X=G/H$.
\end{proposition}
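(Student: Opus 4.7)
The plan is to invoke Cartan's closed subgroup theorem and then factor the orbit map through $G/H$. Fix $x_0 \in X$ with stabiliser $H$, and consider the smooth orbit map $\phi \colon G \to X$, $g \mapsto g \cdot x_0$. Since $H = \phi^{-1}(x_0)$ is the preimage of a point under a continuous map, $H$ is closed in $G$, and Cartan's closed subgroup theorem then guarantees that $H$ is an embedded Lie subgroup and that $G/H$ carries a unique smooth structure for which the projection $\pi \colon G \to G/H$ is a surjective smooth submersion.

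Next I would note that $\phi$ is constant on left $H$-cosets, since $\phi(gh) = g \cdot (h \cdot x_0) = g \cdot x_0$ for all $h \in H$. The universal property of smooth submersions then yields a unique smooth map $\bar\phi \colon G/H \to X$ satisfying $\phi = \bar\phi \circ \pi$. This $\bar\phi$ is $G$-equivariant with respect to the natural left actions, surjective by transitivity of the action on $X$, and injective since $g \cdot x_0 = g' \cdot x_0$ forces $g^{-1}g' \in H$, i.e.\ $gH = g'H$. Hence $\bar\phi$ is a smooth $G$-equivariant bijection.

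The main obstacle is upgrading $\bar\phi$ to a diffeomorphism, as smoothness, bijectivity, and equivariance do not individually control the differential. By $G$-equivariance together with the transitivity of both actions, the rank of $\mathrm d \bar\phi$ is constant on $G/H$. Surjectivity combined with Sard's theorem forbids this rank from being strictly less than $\dim X$, so $\bar\phi$ is a submersion. A submersion $M \to N$ satisfies $\dim M \geq \dim N$, and strict inequality would produce positive-dimensional fibres, contradicting injectivity; hence $\mathrm d \bar\phi$ is an isomorphism at every point, $\bar\phi$ is a local diffeomorphism, and being bijective it is a diffeomorphism. This Sard-plus-equivariance step is the conceptual crux, as it sidesteps any explicit identification of $\ker \mathrm d \phi_e$ with $\mathfrak h$.
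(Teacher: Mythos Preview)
Your argument is correct. The paper does not actually prove this proposition; it only points to Br\"ocker--tom Dieck \cite[Chapter I, Theorem 4.3 and Proposition 4.6]{brocker-tomdieck} immediately before stating it. The textbook argument there follows the same outline as yours: factor the orbit map through $G/H$ using the submersion $\pi$, check that the induced map $\bar\phi$ is a $G$-equivariant bijection, observe that equivariance forces constant rank, and conclude that $\bar\phi$ is a diffeomorphism.

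The only minor difference is in how one extracts full rank from constant rank plus bijectivity. You invoke Sard's theorem to rule out rank strictly below $\dim X$; the more common textbook phrasing applies the local normal form from the constant rank theorem directly (a constant-rank map with rank $r<\dim X$ has image locally contained in an $r$-dimensional slice, hence cannot be onto an open set). Both routes tacitly use second countability of $G/H$, which is standard in this setting. Either way the conceptual content is the same, and your write-up is a complete proof.
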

\begin{remark}
Note that points in the same $G$-orbit have conjugate stabilisers.
Take $p,q \in X$ and assume $p=gq$ for some $g \in G$. 
Let $h$ be an element in the stabiliser of $p$.
Then
\[q=g^{-1}p=g^{-1}hp=g^{-1}hgq,\]
so the map $h \mapsto g^{-1}hg$ maps the stabiliser of $p$ to the stabiliser of $q$.
One then speaks of equivalence classes of stabilisers up to conjugation, rather than the stabiliser of a specific point.
\end{remark}
\begin{definition}
\label{def:homogeneous-space}
A space $X$ acted on by a group $G$ transitively is called \emph{homogeneous}.
The stabiliser of any point is called \emph{generic} or \emph{principal}.
\end{definition}

Let $\alpha$ be a root of $G$, and let $\sigma_{\alpha}$ be the reflection across the hyperplane orthogonal to $\alpha$:
\[\sigma_{\alpha}(\beta) \coloneqq \beta-2\frac{(\beta,\alpha)}{(\alpha,\alpha)}\alpha, \qquad \beta \in \mathfrak t^*.\]
\begin{definition}
The \emph{Weyl group} $W(G)$ of the root system of $G$ is the group generated by all reflections $\sigma_{\alpha}$, for $\alpha$ a root of $G$.
\end{definition}
It turns out that $W(G)$ acts freely and transitively on the set of Weyl chambers, and maps weights to weights.
By Proposition \ref{thm:orbit-stabiliser}, it follows that the number of Weyl chambers is the same as the order of the Weyl group, which is then finite.
The Weyl group of the root system of $G$ is isomorphic to the Weyl group of $G$, defined as follows.
\begin{definition}
The \emph{Weyl group} of $G$ is defined as $W(G) \coloneqq N(T)/T$, where $T \subset G$ is any maximal torus, and $N(T) = \{h \in G: hTh^{-1} \subset T\}$ its normaliser.
\end{definition}
In virtue of Theorem \ref{thm:max-tori}, the latter definition does not depend on the choice of a maximal torus.
For instance, $W(\mathrm{SU}(2))$ is the symmetric group of two elements, so has order $2$, cf.\ Example \ref{ex:su2-roots}.
The group $W(\mathrm{SU}(3))$ is the symmetric group of three elements, and has order $6$, cf.\ Example \ref{ex:su3-roots}. 
These two results can be seen by a direct calculation, but in general it is hard to compute Weyl groups (e.g.\ the Weyl group of the exceptional Lie group $E_8$ has 696,729,600 elements). 
See \cite{brocker-tomdieck} for more examples.

The set of dominant weights may be written as $\mathbb N \omega_1+\dots+\mathbb N\omega_n$, where $\omega_1,\dots,\omega_n$ are the \emph{fundamental weights} defined by 
\[\frac{2(\omega_i,\alpha_j)}{(\alpha_j,\alpha_j)} \coloneqq \delta_{ij},\]
for $\alpha_j$ simple roots.
Fundamental weights define coordinates for each dominant weight.

\begin{example}
\label{ex:fundamental-reps-su3}
It is clear that the only fundamental weight of $\mathrm{SU}(2)$ corresponds to the natural number $+1$.
The fundamental weights for $\mathrm{SU}(3)$ with the orthonormal data given in Example \ref{ex:su3-roots} are $(1/\sqrt 3,1), (2/\sqrt 3,0)$.
Note that both weights lie on the walls of a fundamental Weyl chamber.
The dominant weights are then of the form \[n\left(\frac{1}{\sqrt 3},1\right)+m\left(\frac{2}{\sqrt 3},0\right), \qquad m,n \in \mathbb N.\]
Dominant and fundamental weights of $\mathrm{SU}(2)$ and $\mathrm{SU}(3)$ are depicted in Figure~\ref{fig:fundamental-weights-su(3)}.
\end{example}
\begin{figure}
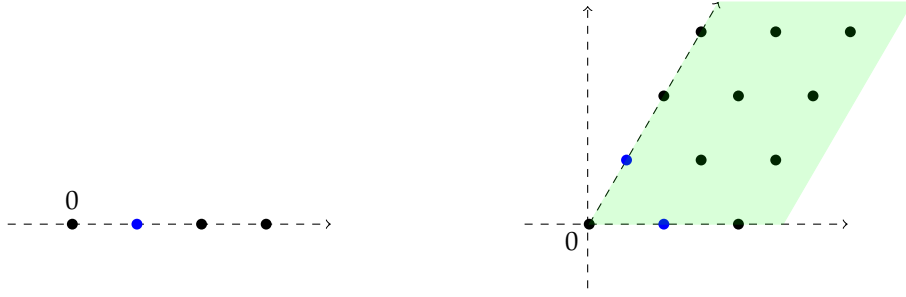

  \tikzpicture
  [scale=1.7]
  \coordinate (weightsu3-0) at (1,0.5);
  \coordinate (weightsu3-1) at (1.2886751346,1);
  \coordinate (weightsu3-2) at (1.5773502692,1.5);
  \coordinate (weightsu3-3) at (1.8660254038,2);
  \coordinate (weightsu3-4) at (1.5773502692,0.5);
  \coordinate (weightsu3-5) at (1.8660254038,1);
  \coordinate (weightsu3-6) at (2.1547005384,1.5);
  \coordinate (weightsu3-7) at (2.443375673,2);
  \coordinate (weightsu3-8) at (2.1547005384,0.5);
  \coordinate (weightsu3-9) at (2.443375673,1);
  \coordinate (weightsu3-10) at (2.7320508076,1.5);
  \coordinate (weightsu3-11) at (3.0207259422,2);
  \coordinate (point-horizontal-axis) at (2.5,0.5);
  \coordinate (point-north-east) at (3.5,2.232);
  \coordinate (linesu3) at (2,2.232);
  \coordinate (suss) at (-3.5,0.5);
  \coordinate (sudx) at (-1,0.5);
  \coordinate (sudxdx) at (-1.5,0.5);
  \coordinate (gg) at (0.98803162,0);
  \coordinate (ss) at (0.98803162,+2.2);
  \coordinate (sx) at (0.5,0.5);
  \coordinate (dx) at (+3,0.5);
  \coordinate (CC) at (0.98803162,0.5);
  \coordinate (G) at (-3,0.5);
  \coordinate (G1) at (-2.5,0.5);
  \coordinate (G2) at (-2,0.5);
  \coordinate (H) at (-2,0.5);
  \coordinate (Z) at (-3,0.5);
  \coordinate (A) at (0,0);
  \coordinate (B) at (0.98803162,-0.5);
  \coordinate (C) at (1.976,0);
  \coordinate (D) at (1.976,1);
  \coordinate (E) at (0.98803162,1.5);
  \coordinate (F) at (0,1);
    \draw[->][dashed][black] (sx) -- (dx); % asse orizzontale SU(3)
  \draw[->][dashed][black] (gg) -- (ss); % asse verticale SU(3)
  \draw[->][dashed][black] (suss) -- (sudx);
  \draw[->][dashed][black] (weightsu3-0) -- (linesu3);
  \fill (G) circle node[above left] {};
  \fill[blue] (G1) circle [radius=1.2pt] node[above][circle] {};
  \fill (G2) circle [radius=1.2pt] node[above][circle] {};
  \fill (Z)  circle [radius=1.2pt] node[above][circle] {$0$};
  \fill (sudxdx) circle [radius=1.2pt] node[above][circle] {};
  \fill (weightsu3-0) circle [radius=1.2pt] node[below left][circle] {$0$};
  \fill[blue] (weightsu3-1) circle [radius=1.2pt] node[above][circle] {};
  \fill (weightsu3-2) circle [radius=1.2pt] node[above][circle] {};
  \fill (weightsu3-3) circle [radius=1.2pt] node[above][circle] {};
  \fill[blue] (weightsu3-4) circle [radius=1.2pt] node[above][circle] {};
  \fill (weightsu3-5) circle [radius=1.2pt] node[above left][circle] {};
  \fill (weightsu3-5) node[above left][circle] {};
  \fill (weightsu3-6) circle [radius=1.2pt] node[above][circle] {};
  \fill (weightsu3-7) circle [radius=1.2pt] node[above][circle] {};
  \fill (weightsu3-8) circle [radius=1.2pt] node[above][circle] {};
  \fill (weightsu3-9) circle [radius=1.2pt] node[above][circle] {};
  \fill (weightsu3-10) circle [radius=1.2pt] node[above][circle] {};
  \fill (weightsu3-11) circle [radius=1.2pt] node[above][circle] {};
  \draw [fill, opacity=.15, green] (weightsu3-0) -- (point-horizontal-axis) -- (point-north-east) -- (linesu3) -- cycle;
  \endtikzpicture
  \caption{Dominant weights of $\mathrm{SU}(2)$ (left) and $\mathrm{SU}(3)$ (right). The blue dots correspond to the fundamental weights.}
  \label{fig:fundamental-weights-su(3)}
\end{figure}
There is a partial order on the set of weights. 
\begin{definition}
For two weights $\alpha$ and $\beta$, we write $\alpha \leq \beta$ ($\beta$ is \emph{higher} than $\alpha$) if and only if $\beta-\alpha$ is either a sum of positive roots or zero.
\end{definition}
The next theorem illustrates the profound connection between dominant weights and irreducible representations of a given compact, connected, simply connected, simple Lie group. 
It is known as the \lq\lq Theorem of the highest weight\rq\rq.
\begin{theorem}[Weyl \cite{weyl1}]
\label{thm:weyl1}
There is a bijection $V \mapsto \lambda_V$ between isomorphism classes of irreducible complex representations of $G$ and dominant weights of $G$.
\end{theorem}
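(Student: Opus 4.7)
The plan is to establish the two directions of the bijection separately: given an irreducible complex $G$-module $V$, attach to it a canonical dominant weight $\lambda_V$ and show the assignment is well-defined on isomorphism classes and injective; then show every dominant weight is realised this way. Throughout I would pass to the complexification $\mathfrak g_{\mathbb C} = \mathfrak g \otimes_{\mathbb R} \mathbb C$ and refine the decomposition in Digression~\ref{rmk:roots} by splitting each real two-dimensional root space as $\mathfrak g_{\mathbb C,\alpha} \oplus \mathfrak g_{\mathbb C,-\alpha}$, with one-dimensional eigenspaces for $\mathrm{ad}_{\mathfrak t_{\mathbb C}}$ spanned by \emph{raising} and \emph{lowering} operators $e_{\pm\alpha}$. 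For every simple root $\alpha_i$, the triple $e_{\alpha_i}, e_{-\alpha_i}, h_{\alpha_i} \coloneqq [e_{\alpha_i},e_{-\alpha_i}]$ spans a copy of $\mathfrak{sl}(2,\mathbb C)$, and this is the essential algebraic tool.

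Given a finite-dimensional irreducible $V$, the weight space decomposition under $\mathfrak t_{\mathbb C}$ is finite, so the partial order admits a maximal element $\lambda$; any non-zero $v_\lambda \in V_\lambda$ must be annihilated by every $e_\alpha$ with $\alpha \in \Delta^+$, else $e_\alpha v_\lambda$ would have strictly higher weight. The subspace generated from $v_\lambda$ by successive lowering operators is $\mathfrak g_{\mathbb C}$-invariant, hence equals $V$ by irreducibility. This yields two consequences at once: $V_\lambda$ is one-dimensional, so $\lambda_V \coloneqq \lambda$ is well-defined; and an equivariant map between irreducibles is determined by the image of a highest weight vector, giving injectivity. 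Dominance of $\lambda_V$ follows by restricting to each $\mathfrak{sl}(2,\mathbb C)$-triple attached to a simple root $\alpha_i$ and invoking the classification of its finite-dimensional modules: the eigenvalue of $h_{\alpha_i}$ on $v_\lambda$ must be a non-negative integer, which is exactly the condition $2(\lambda,\alpha_i)/(\alpha_i,\alpha_i) \in \mathbb N$.

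The main obstacle is surjectivity: for a prescribed dominant weight $\lambda$, one must exhibit a complex $G$-representation with highest weight $\lambda$. One algebraic route is to form the Verma module $M(\lambda)$ induced from a one-dimensional representation of the Borel subalgebra $\mathfrak t_{\mathbb C} \oplus \bigoplus_{\alpha \in \Delta^+}\mathfrak g_{\mathbb C,\alpha}$, pass to its unique irreducible quotient $L(\lambda)$, and then argue that dominance of $\lambda$ together with the symmetry under the Weyl group of the weight system of $L(\lambda)$ forces $L(\lambda)$ to be finite-dimensional; simple-connectedness of $G$ then lifts this $\mathfrak g_{\mathbb C}$-module to a $G$-module. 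Closer to Weyl's original strategy, one writes down the candidate character formula attached to $\lambda$, applies the Weyl integration formula to verify it has $L^2$-norm $1$ over the compact group $G$, and concludes from the Peter--Weyl theorem that it is the character of an honest irreducible representation. Either path rests on the combinatorics of the root system together with compactness of $G$, and this finite-dimensionality step is where the real work lies.
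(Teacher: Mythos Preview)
The paper does not actually prove Theorem~\ref{thm:weyl1}; it is stated with attribution to Weyl and followed only by an informal discussion of its content: that every irreducible $V$ has a unique highest weight $\lambda_V$ of multiplicity one, that all other weights have the form $\lambda_V-\alpha$ for $\alpha$ a sum of positive roots, and that constructing the irreducible from a given dominant weight is non-trivial in general, with a pointer to Verma modules in Knapp~\cite{knapp}. So there is no proof in the paper to compare your proposal against.

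That said, your outline is the standard proof and is correct. The injectivity argument via raising operators and the cyclic highest weight vector is exactly the mechanism behind the paper's remarks on the structure of the weight system; your dominance argument via the $\mathfrak{sl}(2,\mathbb C)$-triples attached to simple roots is the usual one; and your two routes to surjectivity (Verma module quotient versus Weyl character formula combined with Peter--Weyl) are precisely the two classical approaches, the first of which the paper itself points to. Your identification of the finite-dimensionality of $L(\lambda)$ as the crux of the Verma-module route is accurate. Nothing is missing from your sketch beyond the execution of that step, which is indeed where the substance lies.
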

Essentially, every irreducible representation $(V,\pi)$ admits a unique \emph{highest weight} $\lambda = \lambda_V$ (which is dominant) in its weight system, and the multiplicity of $\lambda$ is one.
The remaining weights of $V$ have the form $\lambda-\alpha$, where $\alpha$ is a sum of positive roots.
This can be checked explicitly for the examples treated above.
It follows by Theorem \ref{thm:weyl1} that if two irreducible representations have the same highest weight, they must be isomorphic.
Once one has a dominant weight, constructing the corresponding irreducible representation is in general non-trivial (see \emph{Verma modules} e.g.\ in \cite{knapp}).
In special cases, explicit constructions can be described by hand in a fairly easy way.
An example is given by the irreducible representations of $\mathrm{SU}(2)$ (and $\mathrm{SO}(3)$), to be discussed in the next section.

Theorem \ref{thm:weyl1} provides a concrete way of parametrising isomorphism classes of irreducible representations of $G$.
In particular, fundamental weights correspond to the so-called \emph{fundamental representations}.
For instance, the only fundamental representation of $\mathrm{SU}(2)$ is $V = \mathbb C^2$ (standard representation).
There are two fundamental representations of $\mathrm{SU}(3)$, i.e.\ its standard one $U = \mathbb C^3$, and $\Lambda^2U$ (see \cite{adams, brocker-tomdieck} for the representation rings of these and more examples).
In order to get the weights of the corresponding dual representations, one uses the action of the Weyl group on the Weyl chambers.

In the geometric contexts we will be interested in, it is the concrete realisation of an irreducible representation that will matter the most.
The language of weights will be mainly used to exploit the correspondence between dominant weights and irreducible representations.
Irreducibility of a representation will often be understood via different methods, in particular by enumerating invariant quadratic forms on some representation space and invoking different results going back to Weyl.
We refer to \cite{adams,brocker-tomdieck, fulton-harris, hall, humphreys, knapp} for many more details and examples to illustrate the theory above.

\subsection{Irreducible representations of \texorpdfstring{$\mathrm{SU}(2)$}{SU(2)} and \texorpdfstring{$\mathrm{SO}(3)$}{SO(3)}}
\label{subsec:irreps-su2-so3}

We include here a concrete description of the irreducible representations of $\mathrm{SU}(2)$ and $\mathrm{SO}(3)$, which is classical, and can be found in \cite{brocker-tomdieck}.

Consider the special unitary group $\mathrm{SU}(2)$.
This naturally acts on the vector space $V \coloneqq \mathbb C^2$ by left multiplication (in fact, $V$ is the fundamental representation of $\mathrm{SU}(2)$), but also on $\mathbb C$ via the trivial representation.
Let $V_n$ be the space of homogeneous polynomials of degree $n$ in two variables $x_1$ and $x_2$, with generators
\[x_1\negthinspace{}^n, \quad x_1\negthinspace{}^{n-1}x_2, \quad x_1\negthinspace{}^{n-2}x_2\negthinspace{}^2, \quad \dots \quad x_1x_2\negthinspace{}^{n-1}, \quad x_2\negthinspace{}^n.\]
Let now $P \in \mathbb C[x_1,x_2]$ be any polynomial. By viewing it as a polynomial function on $\mathbb C^2$, we define the following action: an element $g \in \mathrm{SU}(2)$ acts on $P$ via the so-called \emph{right regular action} defined by
\[(gP)(x) \coloneqq P(xg), \qquad x = (x_1,x_2).\]
For example, if $g = \left(\begin{smallmatrix} a & b \\ c & d\end{smallmatrix}\right)$ and $P(x_1,x_2) = x_1^ax_2^{n-a}$, then 
\[P(xg) = P(ax_1+cx_2,bx_1+dx_2) = (ax_1+cx_2)^a(bx_1+dx_2)^{n-a}.\]
So $\mathrm{SU}(2)$ acts via homogeneous linear transformations, and hence the spaces $V_n$ are invariant under this action.

\begin{proposition}
The $\mathrm{SU}(2)$-representations $V_n$ are irreducible.
\end{proposition}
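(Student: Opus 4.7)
The plan is to decompose $V_n$ into weight spaces for the diagonal maximal torus $T \subset \mathrm{SU}(2)$ of Example~\ref{ex:su(2)-max-torus}, observe that these weight spaces are one-dimensional with pairwise distinct weights, and then use raising and lowering elements of the complexified Lie algebra $\mathfrak{sl}(2,\mathbb C)$ to move between them. For $t = \mathrm{diag}(e^{2\pi i\theta}, e^{-2\pi i\theta}) \in T$, the right regular action gives
\[(t \cdot x_1{}^k x_2{}^{n-k})(x_1,x_2) = (e^{2\pi i\theta}x_1)^k\,(e^{-2\pi i\theta}x_2)^{n-k} = e^{2\pi i(2k-n)\theta}\,x_1{}^k x_2{}^{n-k},\]
so $x_1{}^k x_2{}^{n-k}$ is a $T$-weight vector of weight $2k-n$, and $V_n$ is the direct sum of one-dimensional weight spaces with the $n+1$ distinct weights $-n,-n+2,\ldots,n-2,n$.

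Suppose now that $W \subset V_n$ is a non-zero $\mathrm{SU}(2)$-invariant subspace. Since $W$ is in particular $T$-invariant, it is a direct sum of weight spaces of $V_n$, and as these are one-dimensional $W$ must contain at least one monomial $x_1{}^k x_2{}^{n-k}$. To conclude $W = V_n$, I would differentiate the $\mathrm{SU}(2)$-action at the identity to obtain an $\mathfrak{su}(2)$-action on $V_n$, then extend $\mathbb C$-linearly to $\mathfrak{sl}(2,\mathbb C)$. A one-parameter subgroup computation shows that the standard raising and lowering elements $E, F \in \mathfrak{sl}(2,\mathbb C)$ act as the differential operators $x_1 \partial_2$ and $x_2 \partial_1$ respectively, whence
\[E \cdot (x_1{}^k x_2{}^{n-k}) = (n-k)\,x_1{}^{k+1} x_2{}^{n-k-1}, \qquad F \cdot (x_1{}^k x_2{}^{n-k}) = k\,x_1{}^{k-1} x_2{}^{n-k+1}.\]
Connectedness of $\mathrm{SU}(2)$ ensures that $W$ is preserved by this $\mathfrak{sl}(2,\mathbb C)$-action, so iterating $E$ and $F$ on the monomial in $W$ sweeps out every other monomial $x_1{}^j x_2{}^{n-j}$ with a non-zero coefficient, forcing $W = V_n$.

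The only mildly delicate step is the passage from $\mathrm{SU}(2)$-invariance to $\mathfrak{sl}(2,\mathbb C)$-invariance of $W$, which relies on the connectedness of $\mathrm{SU}(2)$ together with the $\mathbb C$-linearity of the representation. Once this is in place, the argument reduces to checking that $E$ and $F$ do not annihilate any monomial before reaching the opposite end of the weight chain, which is transparent from the explicit formulas above.
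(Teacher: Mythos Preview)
Your proof is correct but takes a genuinely different route from the paper's. The paper argues via Schur's Lemma in reverse: it shows that every $\mathrm{SU}(2)$-equivariant endomorphism $A$ of $V_n$ is a scalar multiple of the identity, working entirely at the group level. First, commutation with a generic diagonal torus element $g_a$ forces $A$ to preserve each monomial $P_k$ (so $AP_k = c_kP_k$); then, commutation with the real rotations $r_t \in \mathrm{SO}(2) \subset \mathrm{SU}(2)$ applied to $P_n$ forces all the $c_k$ to coincide. Your approach instead passes to the complexified Lie algebra $\mathfrak{sl}(2,\mathbb C)$ and uses the raising and lowering operators $E = x_1\partial_2$, $F = x_2\partial_1$ to move along the weight chain. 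This is the standard $\mathfrak{sl}_2$-theory argument and is exactly the framework the paper itself sets up in subsection~\ref{subsec:clebsch-gordan-formula}; it is more systematic and generalises cleanly to higher-rank groups. The paper's argument, by contrast, is self-contained at the group level and avoids the Lie-algebra complexification step, at the cost of the somewhat ad hoc rotation computation. One minor remark: the passage from $\mathrm{SU}(2)$-invariance of $W$ to $\mathfrak{su}(2)$-invariance only needs that one-parameter subgroups lie in $\mathrm{SU}(2)$, not connectedness per se; connectedness would be needed for the converse implication.
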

\begin{proof}
It is enough to show that each $\mathrm{SU}(2)$-equivariant endomorphism of $V_n$ is a multiple of the identity.
Indeed, if this is true and $V_n$ split into invariant subspaces $V_n = V_n^1 \oplus V_n^2$, then the projections on the first or the second summand should be the identity, which is a contradiction.
Let $A$ be an equivariant endomorphism of $V_n$, and let $g_a = \mathrm{diag}(a,a^{-1}) \in \mathrm{SU}(2)$ be an element of the standard maximal torus of $\mathrm{SU}(2)$. 
Let $P_k(x_1,x_2) \coloneqq x_1^kx_2^{n-k}$.
Then one computes $g_aP_k = a^{2k-n}P_k$, and therefore \[g_aAP_k = Ag_aP_k = Aa^{2k-n}P_k = a^{2k-n}AP_k.\]
Choose $a$ such that all powers $a^{2k-n}$, $0 \leq k \leq n$, are distinct.
One verifies that in this way the $a^{2k-n}$-eigenspace of $g_a$ in $V_n$ is generated by $P_k$.
Therefore, $AP_k = c_kP_k$ for some $c_k \in \mathbb C$.
Now consider the real rotations
\[r_t = \begin{pmatrix} \cos t & -\sin t \\ \sin t & \cos t \end{pmatrix} \in \mathrm{SO}(2) \subset \mathrm{SU}(2), \qquad t \in \mathbb R,\]
and compute $Ar_tP_n = r_tAP_n$. 
One has
\begin{align*}
Ar_tP_n & = A(z_1\cos t+z_2\sin t)^n \\
& = \sum_{k=0}^n \binom{n}{k}(\cos^kt)(\sin^{n-k}t)AP_k \\
& = \sum_{k=0}^n \binom{n}{k}(\cos^kt)(\sin^{n-k}t)c_kP_k.
\end{align*}
On the other hand,
\begin{align*}
r_tAP_n & = \sum_{k=0}^n \binom{n}{k}(\cos^kt)(\sin^{n-k}t)c_nP_k.
\end{align*}
Comparing coefficients, it follows that $c_k = c_n$, so $A = c_n\mathrm{id}$, which proves the claim.
\end{proof}

The number $n$ corresponds to the highest weight of the irreducible representation $V_n$.
It turns out that any irreducible unitary representation of $\mathrm{SU}(2)$ is isomorphic to some $V_n$.

\begin{corollary}
The spaces $V_{2n}$ are irreducible $\mathrm{SO}(3)$-representations.
\end{corollary}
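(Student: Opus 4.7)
The plan is to deduce the corollary from the preceding proposition by exploiting the double cover $\mathrm{SU}(2) \to \mathrm{SO}(3)$ with kernel $\mathbb Z_2 = \{\pm \mathrm{id}\}$, which was established in Example \ref{ex:su(2)-sp(1)}. The key observation is that the $\mathrm{SU}(2)$-action on $V_n$ factors through $\mathrm{SO}(3)$ exactly when $-\mathrm{id} \in \mathrm{SU}(2)$ acts trivially, and this happens precisely when $n$ is even.

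First I would verify the elementary computation: on the basis polynomial $P_k(x_1,x_2) = x_1^k x_2^{n-k}$, the element $-\mathrm{id} = \mathrm{diag}(-1,-1) \in \mathrm{SU}(2)$ acts, via the right regular action, by $P_k(-x_1,-x_2) = (-1)^n P_k(x_1,x_2)$. Hence $-\mathrm{id}$ acts on $V_n$ as the scalar $(-1)^n \cdot \mathrm{id}_{V_n}$. In particular, for $n$ replaced by $2n$, the element $-\mathrm{id}$ acts trivially on $V_{2n}$, so the representation $\mathrm{SU}(2) \to \mathrm{Aut}(V_{2n})$ descends to a well-defined representation $\mathrm{SO}(3) = \mathrm{SU}(2)/\mathbb Z_2 \to \mathrm{Aut}(V_{2n})$.

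Next I would argue irreducibility. Any $\mathrm{SO}(3)$-invariant subspace $W \subset V_{2n}$ is automatically invariant under $\mathrm{SU}(2)$, since the $\mathrm{SU}(2)$-action is the composition of the covering homomorphism $\mathrm{SU}(2) \to \mathrm{SO}(3)$ with the $\mathrm{SO}(3)$-action. By the preceding proposition, $V_{2n}$ is irreducible as an $\mathrm{SU}(2)$-module, so $W$ must be either $\{0\}$ or $V_{2n}$. This proves irreducibility of $V_{2n}$ as an $\mathrm{SO}(3)$-representation.

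There is no real obstacle here; the only subtle point worth flagging is why the \emph{odd} case is excluded, and this is answered by the sign computation above: for odd $n$ the element $-\mathrm{id}$ acts as $-\mathrm{id}_{V_n}$, hence non-trivially, and the $\mathrm{SU}(2)$-action does not descend to $\mathrm{SO}(3)$. Consequently only the even-degree spaces $V_{2n}$ yield genuine $\mathrm{SO}(3)$-representations, and these exhaust all irreducible complex representations of $\mathrm{SO}(3)$ up to isomorphism.
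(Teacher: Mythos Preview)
Your proof is correct and follows essentially the same route as the paper: use the double cover $\mathrm{SU}(2) \to \mathrm{SO}(3)$ with kernel $\{\pm\mathrm{id}\}$, check that $-\mathrm{id}$ acts trivially on $V_{2n}$ so the representation descends, and deduce irreducibility from the preceding proposition since any $\mathrm{SO}(3)$-invariant subspace is a fortiori $\mathrm{SU}(2)$-invariant. Your explicit sign computation $(-1)^n$ makes transparent the step the paper leaves to ``by the above definitions''.
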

\begin{proof}
As we have seen in Example~\ref{ex:su(2)-sp(1)}, there is a two-to-one universal cover $\pi \colon \mathrm{SU}(2) \to \mathrm{SO}(3)$ with kernel $\{\pm \mathrm{id}\}$.
So if $W$ is an irreducible representation of $\mathrm{SO}(3)$, then $\pi^*W$ is an irreducible representation of $\mathrm{SU}(2)$ on which $-\mathrm{id}$ acts as the identity.
Conversely, if $-\mathrm{id}$ acts as the identity on an $\mathrm{SU}(2)$-representation $V$, then $V$ is also a representation of $\mathrm{SO}(3)$.
By the above definitions, it follows that the spaces $H_n \coloneqq V_{2n}$ are the irreducible representations of $\mathrm{SO}(3)$.
\end{proof}
\begin{remark}
The irreducible $\mathrm{SO}(3)$-representations above can also be realised as invariant spaces of polynomials on the two-sphere $S^2$, the \emph{spherical harmonics}, see Br\"ocker--tom Dieck \cite{brocker-tomdieck} or Faraut \cite{faraut}.
\end{remark}

\subsection{The Clebsch--Gordan formula}
\label{subsec:clebsch-gordan-formula}

Decomposing tensor products of two or more representations into direct sums of irreducible ones is known as \emph{plethysm}.
This is particularly useful in geometric contexts where objects of interest sit in a tensor product of representations.
We look at a few examples of plethysm, which work with weights and some knowledge of the irreducible representations of the Lie group in question.
We then infer a general result on the decomposition of the tensor product of representations for $\mathrm{SU}(2)$ and $\mathrm{SO}(3)$.

First off, the Lie group $\mathrm{SU}(2)$ is simple and simply connected, so its irreducible representations correspond to its Lie algebra representations.
It is more practical to work with the complexified Lie algebra $\mathfrak{su}(2)\otimes \mathbb C$: this is obtained by extending the action of the real scalars on $\mathfrak{su}(2)$ to the complex ones, and by extending the Lie bracket by complex bilinearity.
Next, observe that the complexification $\mathfrak{su}(2) \otimes \mathbb C$ is the complex Lie algebra $\mathfrak{sl}(2,\mathbb C)$ of traceless $2\times 2$ matrices with complex entries.
The Lie algebra $\mathfrak{sl}(2,\mathbb C)$ has a basis given by
\[K = \begin{pmatrix} 1 & 0 \\ 0 & -1 \end{pmatrix}, \qquad U = \begin{pmatrix} 0 & 1 \\ 0 & 0 \end{pmatrix}, \qquad V = \begin{pmatrix} 0 & 0 \\ 1 & 0 \end{pmatrix},\]
and the structure equations are $[K,U]=2U$, $[K,V]=-2V$, $[U,V]=K$.
Recall the structure equations of $\mathfrak{su}(2)$ from Example \ref{ex:su2-roots}, i.e.\ $[H,X]=2Y$, $[H,Y]=-2X$, and $[X,Y]=2H$.
The linear map defined by 
\[K\coloneqq -iH, \qquad U \coloneqq \frac12(X-iY), \qquad V \coloneqq -\frac12(X+iY),\]
is then an isomorphism of Lie algebras $\mathfrak{su}(2) \otimes \mathbb C = \mathfrak{sl}(2,\mathbb C)$.
This is actually a special case of a more general isomorphism $\mathfrak{su}(n) \otimes \mathbb C = \mathfrak{sl}(n,\mathbb C)$.

We also observe that if $\mathfrak{su}(2) \to \mathfrak{gl}(V)$ is any complex representation of $\mathfrak{su}(2)$, then allowing for complex coefficients yields a representation $\mathfrak{sl}(2,\mathbb C) \to \mathfrak{gl}(V)$.
Conversely, restricting the action of $\mathfrak{sl}(2,\mathbb C)$ on $V$ to $\mathfrak{su}(2) \subset \mathfrak{sl}(2,\mathbb C)$ gives a representation of $\mathfrak{su}(2)$ on $V$.

We have seen that the irreducible representations of $\mathrm{SU}(2)$ are classified by the natural numbers, and that the only positive root is $+2e_1$. 
For simplicity, let us identify the weights of $\mathrm{SU}(2)$ with the corresponding natural numbers.
Here is a list of low dimensional irreducible representations with their weight systems.
\begin{center}
\bgroup
\setlength{\tabcolsep}{15pt}
\def\arraystretch{1.25}
\begin{tabular}{|c c c|}
 \hline
 Representation & Dimension & Weights \\ [0.2ex] 
 \hline\hline
 $V_0$ & 1 & $0$ \\ 
 \hline
 $V_1$ & 2 & $-1$, $+1$ \\
 \hline
 $V_2$ & 3 & $-2$,\ $0$,\ $+2$ \\
 \hline
 $V_3$ & 4 & $-3$,\ $-1$,\ $+1$,\ $+3$ \\
 \hline
 $V_4$ & 5 & $-4$,\ $-2$,\ $0$,\ $+2$,\ $+4$\\ 
 \hline
\end{tabular}
\egroup
\end{center}
It is clear by the form of $K$ that the standard representation $V_1=\mathbb C^2$ of $\mathfrak{sl}(2,\mathbb C)$ has weights $\pm 1$.
The structure equations for $\mathfrak{sl}(2,\mathbb C)$ above tell us the roots are $\pm 2$.

Let us compute the decomposition into irreducible $\mathfrak{sl}(2,\mathbb C)$-summands of $V_1 \otimes V_1$, the squared tensor product of the standard representation.
Let $e_1,e_2$ be the standard basis for $V_1$.
A basis for $V_1 \otimes V_1$ is then given by the vectors $\{e_i \otimes e_j\}$, $i,j=1,2$.
Since $Ke_1=e_1$ and $Ke_2=-e_2$, we compute (recall \eqref{eq:action-lie-algebra})
\begin{alignat*}{2}
K(e_1\otimes e_1) & = 2e_1\otimes e_1, \qquad && K(e_1 \otimes e_2) = 0, \\
K(e_2 \otimes e_1) & = 0, \qquad && K(e_2 \otimes e_2) = -2e_2\otimes e_2. 
\end{alignat*}
Note that the weight $0$ has multiplicity two, whereas $\pm 2$ have multiplicity one.
This means that in the decomposition of $V_1 \otimes V_1$ we find one copy of the trivial representation (weight $0$ counted once), and one copy of the adjoint representation (covering the weights $\pm 2$ and the remaining $0$).
We then have a splitting
\[V_1 \otimes V_1=V_2 \oplus V_0,\] 
and this is clearly the decomposition into irreducible summands. 

Next, let us look at $V_1 \otimes V_2$. A basis for this space is $\{e_k \otimes K, e_k \otimes U, e_k \otimes V\}$, with $k=1,2$.
We compute
\begin{alignat*}{2}
K(e_1 \otimes K) & = +e_1 \otimes K, && \qquad K(e_2 \otimes K) = -e_2\otimes K, \\
K(e_1 \otimes U) & = +3e_1 \otimes U, && \qquad K(e_2 \otimes U) = +e_2 \otimes U, \\ 
K(e_1 \otimes V) & = -e_1 \otimes V, && \qquad K(e_2 \otimes V) = -3e_2 \otimes V.
\end{alignat*}
Here $\pm 3$ have multiplicity one, $\pm 1$ have multiplicity two.
Therefore \[V_1 \otimes V_2 = V_3 \oplus V_1.\]
A similar computation for $V_2 \otimes V_2$ shows that $V_2 \otimes V_2 = V_4 \oplus V_2 \oplus V_0$, then one can go on for all tensor products $V_k \otimes V_{\ell}$.

By restricting the action of $\mathfrak{sl}(2,\mathbb C)$ to $\mathfrak{su}(2)$, one obtains representations of $\mathfrak{su}(2)$, and hence of $\mathrm{SU}(2)$.
It seems reasonable to guess that the highest weight of $V_k \otimes V_{\ell}$ is $k+\ell$, and that all other weights are of the form $k+\ell-2m$, where $m$ is a natural number, and $2$ is the only positive root of $\mathrm{SU}(2)$.
This agrees with our discussion on the highest weights of irreducible representations.
Indeed, we have the following result, which can be shown by character theory \cite{brocker-tomdieck}.
\begin{theorem}[Clebsch--Gordan Formula]
\label{thm:clebsch-gordan-formula}
Let $V_k$ be the $(k+1)$-dimensional irreducible representation of $\mathrm{SU}(2)$. 
The tensor product $V_k \otimes V_{\ell}$ decomposes into irreducible summands as
\[V_k \otimes V_{\ell} = \bigoplus_{j=0}^q V_{k+\ell-2j}, \qquad \text{ with } q = \min\{k,\ell\}.\]
If $k$ and $\ell$ are even, the same formula gives the decomposition of $V_k \otimes V_{\ell}$ into irreducible $\mathrm{SO}(3)$-representations.
\end{theorem}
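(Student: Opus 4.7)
The plan is to generalise the weight-counting strategy used in the text for $V_1 \otimes V_1$, $V_1 \otimes V_2$, and $V_2 \otimes V_2$: decompose $V_k \otimes V_\ell$ by repeatedly peeling off the irreducible summand of highest weight. Without loss of generality take $k \leq \ell$, so that $q = k$. Recall from subsection \ref{subsec:irreps-su2-so3} that the weights of $V_n$ with respect to the standard maximal torus of $\mathrm{SU}(2)$ are $-n, -n+2, \ldots, n-2, n$, each with multiplicity one.

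By \eqref{eq:action-lie-algebra}, the weights of $V_k \otimes V_\ell$ are precisely the sums $a + b$ with $a$ a weight of $V_k$ and $b$ a weight of $V_\ell$, counted with product multiplicities. All such sums have the same parity as $k + \ell$ and lie in $[-(k+\ell), k+\ell]$. A short count of pairs $(a,b)$ with prescribed sum $m$ shows that the multiplicity of $m$ in $V_k \otimes V_\ell$ equals $k + 1$ when $|m| \leq \ell - k$, and $(k + \ell - |m|)/2 + 1$ when $\ell - k \leq |m| \leq k + \ell$ (and zero otherwise).

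I would then iterate the following. The highest weight of $V_k \otimes V_\ell$ is $k + \ell$, occurring with multiplicity one, so by Theorem \ref{thm:weyl1} the summand $V_{k+\ell}$ must appear in the decomposition. Subtracting the weight system of $V_{k+\ell}$ from the multiplicities computed above leaves $k + \ell - 2$ as the new highest weight, still with multiplicity one, so $V_{k+\ell - 2}$ splits off next. Continuing yields $V_{k+\ell}, V_{k+\ell - 2}, \ldots, V_{\ell - k}$, and termination is verified by the dimension identity $\sum_{j=0}^{k} (k + \ell - 2j + 1) = (k+1)(\ell + 1) = \dim(V_k \otimes V_\ell)$. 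For the $\mathrm{SO}(3)$ statement, when $k$ and $\ell$ are both even every index $k + \ell - 2j$ on the right-hand side is also even, so $-\mathrm{id} \in \mathrm{SU}(2)$ acts trivially on each summand and the decomposition descends to $\mathrm{SO}(3)$ via the correspondence used in the proof of the preceding corollary.

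The main obstacle is the combinatorial count in the second paragraph: the formula is elementary but one must be careful at the parity boundary $|m| = \ell - k$, and verify that the iterative subtraction of weight systems really does preserve the \emph{multiplicity one} property of the successive highest weights (otherwise Theorem \ref{thm:weyl1} would not directly apply at each stage). As a cross-check, the character-theoretic route alluded to in the theorem statement bypasses this combinatorics: with $\chi_n(t) = (t^{n+1} - t^{-(n+1)})/(t - t^{-1})$ on the diagonal maximal torus, the product $(t - t^{-1}) \chi_k(t) \chi_\ell(t)$ telescopes into $\sum_{j=0}^{k} (t^{k+\ell - 2j + 1} - t^{-(k+\ell - 2j + 1)})$, and linear independence of irreducible characters then pins down the multiplicities, giving the same formula.
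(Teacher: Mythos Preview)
Your argument is correct. Note, however, that the paper does not actually prove this theorem: it states the result and defers to Br\"ocker--tom Dieck for a character-theoretic proof. Your main weight-counting approach is therefore more than the paper itself provides, and it extends naturally the worked examples ($V_1 \otimes V_1$, $V_1 \otimes V_2$, $V_2 \otimes V_2$) that precede the statement. The multiplicity formula you derive is correct, and your worry about the iterative peeling is unfounded: for $0 \leq j \leq k$ one has $\mu(k+\ell-2j) = j+1$ by your own formula, which drops to exactly $1$ after $j$ subtractions, so the highest remaining weight always has multiplicity one and Theorem~\ref{thm:weyl1} applies at each stage. The dimension check then confirms termination cleanly. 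Your closing character sketch is precisely the route the paper points to, so you have covered both the suggested method and an alternative more in the spirit of the surrounding exposition.
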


In general, decomposing tensor products can be done in a systematic way by using the theory of weights.
This approach lends itself to computer algebra, see Lie at the link \url{http://wwwmathlabo.univ-poitiers.fr/~maavl/LiE/}.

\subsection{Real and quaternionic representations}
\label{subsec:real-quaternionic-representations}

The theory we have summarised works well over the complex numbers.
However, in geometric applications one is often interested in real representations, or even quaternionic ones.
We now describe connections among complex, real, and quaternionic types, see \cite{adams, brocker-tomdieck}.

Complexification provides a canonical way to pass from a real representation to a complex one, and works as follows.
Let $V$ be a real vector space acted on by a group $G$ via $(g,v) \mapsto gv$, where $g \in G$ and $v \in V$.
Then $V \otimes \mathbb C$ is a complex vector space, where the action of $\mathbb C$ is given by $ w(v \otimes z) \coloneqq v \otimes wz$.
Also, a $G$-action on $V \otimes \mathbb C$ is defined in a natural way by setting $g(v \otimes z) \coloneqq gv \otimes z$.
Note that there is a conjugate-linear, $G$-equivariant map $j \colon V \otimes \mathbb C \to V \otimes \mathbb C$ defined by $j(v \otimes z) = v \otimes \overline z$, and $j^2 = \mathrm{id}_{V \otimes \mathbb C}$.
The map $j$ is an example of \emph{structure map}, in particular a real one.
\begin{definition}
Let $V$ be a $G$-module. A \emph{structure map} $j \colon V \to V$ is a conjugate-linear, equivariant map such that $j^2=\pm \mathrm{id}_V$.
We say that $j$ is a \emph{real} (resp.\ \emph{quaternionic}) structure if $j^2=\mathrm{id}_V$ (resp.\ $j^2=-\mathrm{id}_V$).
\end{definition}

Conversely, a straightforward way to construct a real representation out of a complex one is to restrict the action of the complex scalars to the reals.
However, in some situations (as above) a complex representation $V$ of $G$ comes with a real structure $j$.
Equivariance implies that the eigenspace of $j$ corresponding to eigenvalue $+1$ is automatically a real representation of $G$.

More precisely, given a complex representation $V$ of $G$, we denote by $[\![V]\!]$ the real vector space obtained by restriction of the action of scalars (e.g.\ $[\![\mathbb C]\!] = \mathbb R^2$) and acted on by $G$ in the natural way.
Clearly, $\dim_{\mathbb R} [\![V]\!] = 2\dim_{\mathbb C} V$, and hence $\dim_{\mathbb C}([\![V]\!] \otimes \mathbb C) = \frac12 \dim_{\mathbb R}([\![V]\!] \otimes \mathbb C) = 2\dim_{\mathbb C} V$.
Indeed, it can be shown that there is a canonical isomorphism
\[[\![V]\!] \otimes \mathbb C = V \oplus \overline{V},\]
where $\overline{V}$ has the same additive structure as that of $V$, but scalars act conjugated, see Adams \cite{adams} for details.
After choosing an invariant Hermitian product $\langle{}\cdot{},{}\cdot{}\rangle$ of $V$, the map $\overline V \to V^*$ defined by $v \mapsto \langle v,{}\cdot{}\rangle$ is an isomorphism (provided that $\langle{}\cdot{},{}\cdot{}\rangle$ is conjugate-linear in the first entry and that the group acting is compact).
If $\overline V = V$ we also say that $V$ is \emph{self-conjugate}.
If $V$ comes equipped with a real structure $j$, we denote by $[V]$ the $+1$-eigenvalue of $j$, and there is an equivalence of representations
\[[V] \otimes \mathbb C = V.\] 

If $V$ is a $G$-module over the complex numbers with a structure map $j$ such that $j^2=-\mathrm{id}_V$, we can view $V$ as a $G$-module over the quaternions in a natural way by viewing $j$ as the usual unit quaternion, and setting $k \coloneqq ij=-ji$.
Conversely, if $V$ is a $G$-module over the quaternions $\mathbb H$, we can view it as a $G$-module over the complex numbers with a structure map $j$ such that $j^2=-\mathrm{id}_V$.
The $\mathbb C$-module structure can be set by letting $i$ act on the left, and the structure map $j$ act on the left.
Or else, it can be set by letting $i$ act on the right and the structure map $j$ act on the right. 

\begin{example}
The group $\mathrm{SU}(2)$ has an irreducible representation $V_n$ for all natural numbers $n$, as shown previously.
It is known that $V_n$ admits a structure map $j$.
If $n$ is odd, $j$ is a real structure.
If $n$ is even, $j$ is a quaternionic structure \cite{itzkowitz-rothman-strassberg}.
\end{example}

\newpage
\section{Invariant theory}
\label{sec:invariant-theory}

In studying representations of Lie groups, one is often led to consider tensor invariants.
For instance, inner products are invariants of orthogonal groups.
Volume forms are invariants of special linear and special orthogonal groups.
Invariants provide a remarkable guide to understand the irreducibility of certain representations, in virtue of results going back to Weyl to be discussed.
For instance, suppose the orthogonal group $\mathrm{O}(n)$ acts on $\mathbb R^n$ in the standard way.
The irreducibility of the space of $p$-forms $\Lambda^p(\mathbb R^n)^*$ under the action of $\mathrm{O}(n)$ can be understood via the theory of the invariants of $\mathrm{O}(n)$.
Suppose instead $n$ is even, say $n=2m$, and assume the unitary group $\mathrm{U}(m)\subset \mathrm{O}(n)$ acts on $\mathbb R^n$.
The space of two-forms $\Lambda^2(\mathbb R^n)^*$ turns out to be reducible, but it always contains an irreducible one-dimensional summand corresponding to a unitary invariant.
This is essentially given by a Hermitian structure on $\mathbb R^n=\mathbb C^m$.

The goal of this section is to study representations of orthogonal and unitary groups from the point of view of invariant theory.
We detect invariants and illustrate their role in decomposing certain representations of interest in geometry.
The compact groups $\mathrm G_2$ and $\mathrm{Spin}(7)$ are also introduced.
We first summarise the linear algebra needed, then proceed with general results.
Most of the material in this section is taken from Besse \cite{besse2}, Bryant \cite{bryant}, and Salamon \cite{salamon}.

\subsection{Linear algebra}
\label{subsec:some-linear-algebra}

Let $(V,g)$ be the standard Euclidean $n$-dimensional real vector space.
The Euclidean structure allows one to identify $V$ and its dual $V^*$ via the linear map \[\flat \colon V \to V^*, \quad v \mapsto v^{\flat} \coloneqq g(v,{}\cdot{}).\]
Since $V$ comes as an $\mathrm{O}(n)$-representation, the isomorphism $\flat$ gives an equivalence of $\mathrm{O}(n)$-representations $V = V^*$, as (recall \eqref{eq:dual-action})
\[Av \mapsto g(Av,{}\cdot{}) = g(v,A^{-1}{}\cdot{}) = Av^{\flat}, \qquad A \in \mathrm{O}(n).\]
This also implies we can identify tensors of different type which have the same homogeneous degree, e.g.\ $(2,0)$, $(1,1)$, and $(0,2)$ tensors, and we will tacitly do so in such contexts.
So, for example, a $2$-tensor may be an endomorphism, a bilinear form, or a bivector.
We set $\sharp \coloneqq \flat^{-1}$.

The scalar product $g$ extends to the tensor algebras of $V$ and $V^*$ in a natural way: if $\{e_i\}$, $i=1,\dots,n$, is an orthonormal basis of $V$, then $\{e_{i_1}\otimes \dots \otimes e_{i_k}\}$ is an orthornomal basis of $V \otimes \dots \otimes V$ ($k$ times), and similarly for covectors in $V^* \otimes \dots \otimes V^*$.

\begin{digression}[The trace operator]
\label{digression:trace}
We recall the definition of the trace operator on $V \otimes V$.
There is a well-known isomorphism \[V^* \otimes V \to \mathrm{End}(V), \qquad \alpha \otimes v \mapsto f_{\alpha,v}, \text{ with } f_{\alpha,v}(e) \coloneqq \alpha(e)v.\]
The trace is defined on endomorphisms in the usual way.
Let $v \otimes w \in V \otimes V$, and view it as an element $v^{\flat} \otimes w \in V^* \otimes V$. 
The corresponding endomorphism is $f_{v^{\flat},w}$ and acts as $f_{v^{\flat},w}(e) = g(v,e)w$.
Let now $e_1,\dots,e_n$ be an orthonormal basis of $V$.
The trace of $f_{v^{\flat},w}$ is then 
\[\mathrm{Tr}(f_{v^{\flat},w}) = \sum_{k=1}^n  g(g(v,e_k) w,e_k) = \sum_{k=1}^n v_kw_k = g(v,w).\]
Identify $f_{v^{\flat},w}$ with $v \otimes w$. 
The trace of $v \otimes w$ is then by definition the inner product $g(v,w)$. 
\end{digression}

We now describe the algebra behind the so-called \emph{algebraic curvature tensors}, which arise in Riemannian geometry (see Section \ref{sec:riemannian-geometry}).
These are covariant tensors of type $(4,0)$ satisfying the same symmetries of curvature tensors.
We will need this algebraic set-up later.

Consider the tensor product $\Lambda^*V \otimes \Lambda^*V$ of the exterior algebra over $V$ with itself.
This has a canonical structure of graded algebra: if $x_i, y_i \in \Lambda^*V$, $i=1,2$, set
\[(x_1\otimes y_1)\cdot (x_2\otimes y_2) \coloneqq (x_1\wedge x_2) \otimes (y_1 \wedge y_2).\]
In general the product is not commutative: if $\alpha \in \Lambda^pV \otimes \Lambda^qV$ and $\beta \in \Lambda^rV \otimes \Lambda^sV$, one has
\[\alpha \cdot \beta = (-1)^{pr+qs}\beta\cdot \alpha,\]
where $pr$ comes from the commutation relation of $p$-forms and $r$-forms, similarly for $qs$.
Define the subalgebra \[CV = \bigoplus_p C^pV \coloneqq \bigoplus_p S^2\Lambda^pV\] of $\Lambda^*V \otimes \Lambda^*V$.
It is easy to check that $CV$ is a graded, commutative algebra. 
The restriction of the dot product to $CV$ is denoted by $\owedge$.

\begin{example}
\label{ex:kulkarni-nomizu-product}
We will be interested in the following special case. 
Let $r,s$ be two indecomposable elements of $S^2V$ (i.e.\ $r,s \in C^1V$), and let us compute $r \owedge s \in C^2V = S^2\Lambda^2V$. 
Set $r = r_1 \odot r_2$ and $s=s_1 \odot s_2$.
We compute
\[r\owedge s = (r_1 \wedge s_1) \odot (r_2\wedge s_2) + (r_1\wedge s_2) \odot (r_2\wedge s_1).\]
Let us view $r\owedge s$ as a covariant tensor. 
We can then apply $r \owedge s$ to a quadruple $(x,y,z,t) \in V^4$.
After rearranging all terms, one finds the formula
\begin{align*}
(r\owedge s)(x,y,z,t) & = r(x,z)s(y,t)+r(y,t)s(x,z) \\
& \qquad -r(x,t)s(y,z)-r(y,z)s(x,t).
\end{align*}
Note that the symmetries of this formula reflect the fact that $r \owedge s \in S^2\Lambda^2V$.
The $\owedge$ product of two elements in $S^2V$ as above is called the \emph{Kulkarni--Nomizu product}. 
A straightforward computation gives the identity
\begin{equation}
\label{eq:formula-owedge}
(r \owedge s)(x,y,z,t)+(r \owedge s)(y,z,x,t)+(r \owedge s)(z,x,y,t)=0.
\end{equation}
\end{example}

On $\Lambda^pV$ we have a scalar product induced by $g$ on $V$ defined on indecomposable elements as
\begin{equation}
\label{eq:inner-product-exterior-algebra}
g_{\Lambda}(x_1\wedge \dots \wedge x_p, y_1 \wedge \dots \wedge y_p) \coloneqq \det([g(x_i,y_j)]_{ij}),
\end{equation}
where $[g(x_i,y_j)]_{ij}$ is the matrix whose $ij$-th entry is $g(x_i,y_j)$.
The definition is well-posed. Swapping two $x_i$'s (or $y_i$'s) is the same as swapping two columns in the matrix with coefficients $g(x_i,y_j)$.
Symmetry is clear. Positive-definiteness and non-degeneracy follow by the fact that $\det([g(x_i,x_j)])$ is the squared volume of the polytope generated by $x_1,\dots,x_p$, which vanishes if and only if any two $x_i$, $x_j$ are linearly dependent, i.e.\ $x_1\wedge \dots \wedge x_p=0$.
Note that on two-forms we have the identity
\begin{equation}
\label{eq:inner-product-exterior-algebra2}
g_{\Lambda}(x \wedge y, z \wedge t) = \frac12 (g \owedge g)(x,y,z,t).
\end{equation}
Using the inner product in \eqref{eq:inner-product-exterior-algebra}, we may identify elements of $S^2\Lambda^pV$ with symmetric endomorphisms of $\Lambda^pV$.
Let $r,s$ be as such. Then their standard scalar product $(r,s) \mapsto \mathrm{Tr}(r \circ s)$ induces an inner product on $S^2\Lambda^pV$.

\subsection{Orthogonal groups}
\label{subsec:orthogonal-groups}
We start studying invariants of the orthogonal group and the special orthogonal group.
We introduce the so-called \emph{elementary invariants}, or \emph{products of traces} (cf.\ Gray \cite{gray-tubes} and Berger--Gauduchon--Mazet~\cite[D\'efinition D.IV.33]{berger-gauduchon-mazet}), which are generators for invariant forms.
\begin{definition}
\label{def:products-traces}
Let $P \colon V^{\otimes k} \to \mathbb R$ be a homogeneous polynomial of degree $h$, i.e.\ $P$ has a symmetric multilinear extension to $(V^{\otimes k})^{\otimes h}$.
We call $P$ an \emph{elementary invariant}, or a \emph{product of traces}, if
\begin{enumerate}
\item $kh$ is even, say $kh=2p$, 
\item there is a permutation $\sigma \in S_{2p}$, and
\item there exists an orthonormal basis $\{e_1,\dots,e_n\}$ of $V$ such that 
\[P(R) = \sum_{s_1,\dots,s_p=1}^n \sigma(\otimes_h R)(e_{s_1},e_{s_1},\dots,e_{s_p},e_{s_p}), \qquad R \in V^{\otimes k}.\]
Here $\otimes_h R$ is the element in $V^{\otimes kh}$ that maps $(x_1,\dots,x_{kh})$ to the product
\[R(x_1,\dots,x_k)R(x_{k+1},\dots,x_{2k})\dots R(x_{k(h-1)+1},\dots,x_{kh}).\]
Then $\sigma(\otimes_h R)(x_1,\dots,x_{kh}) \coloneqq (\otimes_h R)(x_{\sigma(1)},\dots,x_{\sigma(kh)})$.
\end{enumerate}
\end{definition}
\begin{remark}
The expression of $P(R)$ does not depend on the orthonormal basis chosen, and hence $P$ is invariant for the orthogonal group.
\end{remark}
\begin{theorem}
The vector space of real homogeneous polynomials $P \colon V^{\otimes k} \to \mathbb R$ invariant under the action of the orthogonal group is generated by elementary invariants.
\end{theorem}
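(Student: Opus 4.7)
The plan is to reduce the problem to finding $\mathrm O(n)$-invariant tensors in pure tensor powers of $V$, and then invoke the First Fundamental Theorem of invariant theory for $\mathrm O(n)$, due to Weyl \cite{weyl}. First I would polarise: a homogeneous polynomial $P$ of degree $h$ on $V^{\otimes k}$ extends uniquely to a symmetric multilinear form $\widetilde P$ on $(V^{\otimes k})^{\otimes h} = V^{\otimes kh}$, and $P$ is $\mathrm O(n)$-invariant if and only if $\widetilde P$ is. Setting $N \coloneqq kh$, this reduces the theorem to the assertion that every $\mathrm O(n)$-invariant multilinear form on $V^{\otimes N}$ is a linear combination of pairing contractions with the metric. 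Since $g$ identifies $V$ with $V^*$ as $\mathrm O(n)$-modules, such multilinear forms are in bijection with the $\mathrm O(n)$-invariant subspace $(V^{\otimes N})^{\mathrm O(n)}$, and the task becomes one of describing this invariant subspace.

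Next, I would appeal to the First Fundamental Theorem to describe $(V^{\otimes N})^{\mathrm O(n)}$. When $N$ is odd, the space is zero because $-\mathrm{id}_V \in \mathrm O(n)$ acts as $(-1)^N = -1$. When $N = 2p$ is even, the theorem asserts that $(V^{\otimes 2p})^{\mathrm O(n)}$ is spanned by the pairing tensors
\[
\Theta_\sigma \coloneqq \sum_{s_1,\dots,s_p=1}^n \sigma\bigl(e_{s_1} \otimes e_{s_1} \otimes \dots \otimes e_{s_p} \otimes e_{s_p}\bigr), \qquad \sigma \in S_{2p},
\]
where $\sigma$ permutes the $2p$ tensor factors, so that the invariants are indexed by perfect matchings of $\{1,\dots,2p\}$. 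The remaining step is then formal: I would unwind each $\Theta_\sigma$ to identify it with an elementary invariant as in Definition \ref{def:products-traces}, because evaluating the multilinear form associated to $\Theta_\sigma$ on $\otimes_h R$ produces exactly the sum $\sum \sigma(\otimes_h R)(e_{s_1},e_{s_1},\dots,e_{s_p},e_{s_p})$ appearing in item (3) of that definition.

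The main obstacle is the First Fundamental Theorem itself. The route I would take goes through the orthogonal analogue of Schur--Weyl duality, where the centraliser of $\mathrm O(n)$ on $V^{\otimes 2p}$ is realised as a quotient of the Brauer algebra $B_{2p}(n)$ whose diagrammatic basis is indexed precisely by perfect matchings; dualising yields that the $\Theta_\sigma$ above span $(V^{\otimes 2p})^{\mathrm O(n)}$. An alternative I would keep in mind is to reduce to the First Fundamental Theorem for $\mathrm{GL}(n)$, whose invariants come from contractions between $V$ and $V^*$, and then descend to $\mathrm O(n)$ by means of the equivariant isomorphism $V \cong V^*$ furnished by $g$ together with Weyl's unitarian trick to pass between the complex and compact pictures. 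Once this input is accepted, the remainder of the argument is pure bookkeeping.
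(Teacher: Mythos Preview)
Your proposal is correct and follows the standard route: polarise, identify invariant multilinear forms on $V^{\otimes kh}$ with $\mathrm O(n)$-invariants in $V^{\otimes kh}$, and then invoke Weyl's First Fundamental Theorem to see that these are spanned by complete pairwise contractions, which unwind precisely to the elementary invariants of Definition~\ref{def:products-traces}. The paper does not actually give a proof of this statement; it only refers the reader to Berger--Gauduchon--Mazet~\cite{berger-gauduchon-mazet}, whose sketch is essentially the same reduction to Weyl's theorem that you outline. So there is nothing to compare: your argument is the standard one, and is what the cited reference does.

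One small point of wording: the polarised form $\widetilde P$ is symmetric only in its $h$ arguments from $V^{\otimes k}$, not in all $kh$ tensor factors of $V$, so the inclusion into $(V^{\otimes kh})^*$ lands in a smaller subspace than the fully symmetric tensors. This does not affect the argument, since you only need that every invariant linear functional on $V^{\otimes kh}$ is a combination of the $\Theta_\sigma$; the extra symmetry just means fewer $\sigma$ are needed after restriction. It would be worth saying this explicitly to avoid confusion.
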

A sketch of the proof that elementary invariants generate the space of invariant homogeneous polynomials on $V^{\otimes k}$ is given in \cite[pag.\ 83]{berger-gauduchon-mazet}, see also references therein.

The next results are due to Weyl, cf.\ Besse, Expos\'e IX \cite{besse2}.
In particular, the next proposition is an essential ingredient behind our decompositions.
\begin{proposition}
\label{prop:invariant-quadratic-forms}
Let $W$ be a subspace of the tensor algebra over $(V,g)$.
The space of $\mathrm{O}(n)$-invariant quadratic forms on $W$ is one-dimensional if and only if $W$ is an irreducible $\mathrm{O}(n)$-representation.
\end{proposition}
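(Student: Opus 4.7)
My plan is to translate invariant quadratic forms into equivariant self-adjoint endomorphisms, and then run a real version of Schur's Lemma on these.

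First, I would set up the correspondence. The subspace $W \subset V^{\otimes *}$ inherits from $g$ an $\mathrm{O}(n)$-invariant positive-definite inner product, call it $g_W$. Any quadratic form $Q$ on $W$ is determined by its polarisation, a symmetric bilinear form $B$, and via $g_W$ there is a unique $A \in \mathrm{End}(W)$ with $B(x,y) = g_W(Ax,y)$. Symmetry of $B$ is equivalent to $A$ being self-adjoint with respect to $g_W$. A short calculation using the $\mathrm{O}(n)$-invariance of $g_W$ shows that $Q$ is $\mathrm{O}(n)$-invariant if and only if $A$ is $\mathrm{O}(n)$-equivariant. This gives a linear bijection between invariant quadratic forms on $W$ and equivariant self-adjoint endomorphisms of $W$.

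For the ``if'' direction, suppose $W$ is irreducible and let $A$ be an equivariant self-adjoint endomorphism. Because $A$ is self-adjoint for a positive-definite inner product, it has at least one real eigenvalue $\lambda$, and the eigenspace $\ker(A-\lambda\,\mathrm{id}_W)$ is $\mathrm{O}(n)$-invariant by equivariance. Irreducibility forces this eigenspace to be all of $W$, so $A = \lambda\,\mathrm{id}_W$. Hence the space of equivariant self-adjoint endomorphisms, and therefore the space of invariant quadratic forms, is spanned by $g_W$ itself and is one-dimensional. This is the real analogue of Schur's Lemma mentioned in Remark \ref{rmk:schur-lemma}; the only subtlety is the need for a real eigenvalue, which is exactly what self-adjointness supplies.

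For the ``only if'' direction, I would argue by contrapositive. Assume $W$ is reducible, and split $W = W_1 \oplus W_2$ into two non-zero $\mathrm{O}(n)$-invariant subspaces that are orthogonal with respect to $g_W$ (the orthogonal complement of an invariant subspace is invariant precisely because $g_W$ is invariant). Writing $\pi_i$ for the orthogonal projection onto $W_i$, the two quadratic forms $Q_i(x) \coloneqq g_W(\pi_i x, \pi_i x)$ are $\mathrm{O}(n)$-invariant, non-zero, and linearly independent (since one vanishes on $W_2$ and the other on $W_1$), so the space of invariant quadratic forms has dimension at least two. This contradicts the hypothesis.

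The part I expect to require the most care is the equivalence between invariance of $Q$ and equivariance of $A$, since it is the bridge that makes the Schur-type argument applicable; everything else is then straightforward. No deep ingredients are needed beyond compactness of $\mathrm{O}(n)$ (implicit in the positive-definite invariant $g_W$) and the spectral theorem for self-adjoint operators on a real inner product space.
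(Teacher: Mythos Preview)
Your proof is correct and follows essentially the same route as the paper: both directions hinge on the correspondence between invariant quadratic forms and equivariant self-adjoint endomorphisms, with the ``if'' direction using that a self-adjoint operator has a real eigenvalue whose eigenspace is invariant, and the ``only if'' direction using that a non-trivial invariant splitting produces an equivariant projection (equivalently, a second independent invariant quadratic form). Your contrapositive phrasing of the ``only if'' direction is arguably a bit cleaner than the paper's, which states that \emph{every} equivariant $f$ must be a multiple of the identity before specialising to projections, but the substance is identical.
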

\begin{proof}
Assume the space of invariant quadratic forms on $W$ is one-dimensional, and note that the squared norm on $W$ is a generator.
If $f \colon W \to W$ is an $\mathrm{O}(n)$-equivariant map, then the quadratic form $q$ defined by $q(w) = g(f(w),w)$, $w \in W$, is $\mathrm O(n)$-invariant.
But then $q$ is proportional to the above squared norm, so $f$ is forced to be proportional to the identity. 
Now if $W$ split into the direct sum of invariant (non-trivial) subspaces $W_1 \oplus W_2$, the projection $W_1 \oplus W_2 \to W_1$ would be proportional to the identity, contradiction.

Conversely, assume $W$ is an irreducible $\mathrm{O}(n)$-representation.
Let $q$ be an $\mathrm{O}(n)$-invariant quadratic form on $W$, and let $Q$ be the symmetric bilinear form associated to it.
Let now $f \colon W \to W$ be the symmetric endomorphism defined by $Q(v,w) = g(f(v),w)$. 
Since $q$ is $\mathrm{O}(n)$-invariant, $Q$ is bi-invariant, and then $f$ is necessarily $\mathrm{O}(n)$-equivariant.
We know that $f$ is diagonalisable over the reals, and each of its eigenspaces is $\mathrm{O}(n)$-invariant.
By Schur's Lemma, Theorem \ref{thm:schur-lemma}, $f$ admits only one eigenvalue, i.e.\ $f$ is proportional to the identity.
This forces $q$ to be proportional to the restriction of the squared norm on the tensor algebra of $V$ to $W$.
\end{proof}
\begin{remark}
We remark that in Besse, Expos\'e IX \cite{besse2}, only one of the implications in Proposition \ref{prop:invariant-quadratic-forms} is shown.
The statement can be generalised to any compact connected Lie group $G$ by using a $G$-invariant inner product on $V$, which always exists by compactness of $G$, cf.\ also Bredon \cite{bredon}.
\end{remark}
\begin{remark}
\label{rmk:estimate-irreps}
By Proposition \ref{prop:invariant-quadratic-forms}, the number of $\mathrm{O}(n)$-invariant quadratic forms on a subrepresentation $W$ of the tensor algebra of $V$ is an upper bound of the number of the irreducible $\mathrm{O}(n)$-invariant components of $W$.
In the decomposition of $W$, an \emph{isotypic component} may occur, i.e.\ there may be two or more irreducible summands that are equivariantly isomorphic (corresponding to the same highest weight).
If all isotypic components are simple, then the number of quadratic invariants is the same as the number of irreducible summands.
\end{remark}
\begin{proposition}
\label{prop:irreps-orthogonal-group}
Let $(V,g)$ be the standard Euclidean $n$-dimensional space acted on linearly by the orthogonal group $\mathrm{O}(n)$. 
Let $S^2V$ be the space of symmetric $2$-tensors on $V$, and $S_0^2V$ be the traceless elements in $S^2V$. Then $S^2V$ splits into $\mathrm{O}(n)$-invariant irreducible components
\[S^2V = S_0^2V\oplus \mathbb R.\]
Further, for all $p\geq 0$, the space $\Lambda^pV$ is an irreducible $\mathrm{O}(n)$-representation.
\end{proposition}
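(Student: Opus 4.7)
The plan is to use Proposition \ref{prop:invariant-quadratic-forms} throughout: each claim of irreducibility is reduced to showing that a certain space of $\mathrm{O}(n)$-invariant quadratic forms is one-dimensional, and for this we enumerate elementary invariants in the sense of Definition \ref{def:products-traces}.

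First I would dispose of the splitting $S^2V = S_0^2V \oplus \mathbb R$ by writing any $h \in S^2V$ as $h = (h - (\mathrm{Tr}(h)/n)g) + (\mathrm{Tr}(h)/n)g$; both summands are $\mathrm{O}(n)$-invariant subspaces, the second is a trivial one-dimensional representation spanned by $g$, and is therefore obviously irreducible with a one-dimensional space of invariant quadratic forms. For the traceless part, I would count quadratic invariants on $S^2V$ first: a homogeneous polynomial $P\colon S^2V \to \mathbb R$ of degree $2$ is of degree $4$ in $V$, so by the classification of elementary invariants its value on $h = \sum h_{ij}\, e_i \otimes e_j$ is a linear combination of contractions of two copies of $h$. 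Up to the symmetry $h_{ij}=h_{ji}$, the only inequivalent pair contractions are $\sum h_{ij}h_{ij} = |h|^2$ and $\sum h_{ii}h_{jj} = (\mathrm{Tr}\,h)^2$, which span a two-dimensional space. On $S_0^2V$ the second invariant vanishes identically, leaving a one-dimensional space of invariant quadratic forms. Proposition \ref{prop:invariant-quadratic-forms} then forces $S_0^2V$ to be an irreducible $\mathrm{O}(n)$-representation.

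For $\Lambda^p V$ the same strategy applies, but the combinatorics is the heart of the matter and is the step I expect to take most care. A quadratic invariant on $\Lambda^pV$ is a degree-$2p$ homogeneous invariant polynomial on $V$, and by the theorem on elementary invariants it is a linear combination of expressions
\[
P_\sigma(\omega) \;=\; \sum_{i_1,\dots,i_{2p}} \omega_{i_{\sigma(1)}\cdots i_{\sigma(p)}}\,\omega_{i_{\sigma(p+1)}\cdots i_{\sigma(2p)}}
\]
indexed by pair-matchings $\sigma$ of the $2p$ slots (distributed as $p$ pairs whose two members become Kronecker-contracted). I would argue that because $\omega$ is totally antisymmetric, any matching which pairs two indices belonging to the same copy of $\omega$ produces $\sum_i \omega_{\cdots i \cdots i \cdots} = 0$, so only matchings that pair each of the first $p$ slots with one of the last $p$ slots contribute; these are parametrised by permutations $\tau \in S_p$, and antisymmetry gives $P_\tau(\omega) = \mathrm{sign}(\tau)\,|\omega|^2$ (up to a universal combinatorial factor). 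Thus the space of invariant quadratic forms on $\Lambda^pV$ is spanned by $|\omega|^2$ alone, and Proposition \ref{prop:invariant-quadratic-forms} yields irreducibility.

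The main obstacle is the second step: carefully justifying that elementary invariants on $\Lambda^pV$ collapse to a single one. One must verify that the ``vanishing on self-contraction'' and ``collapse to $\pm|\omega|^2$ on cross-contractions'' arguments are not obstructed by the hidden symmetric extension of $P$ over $(V^{\otimes p})^{\otimes 2}$, and that the restriction to antisymmetric $\omega$ is really what is being computed. For $p = 0$ the statement is trivial, and for $p > n$ the space $\Lambda^pV$ is zero, both of which I would dispatch in a short initial remark.
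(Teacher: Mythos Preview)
Your proposal is correct and follows essentially the same approach as the paper: both reduce irreducibility to Proposition~\ref{prop:invariant-quadratic-forms} by enumerating elementary quadratic invariants (Definition~\ref{def:products-traces}), finding exactly $|h|^2$ and $(\mathrm{Tr}\,h)^2$ on $S^2V$ and only $|\omega|^2$ on $\Lambda^pV$. Your treatment of the $\Lambda^pV$ case is in fact more explicit than the paper's, which simply asserts that ``up to sign there is only one non-trivial elementary quadratic invariant''; your self-contraction/cross-contraction dichotomy is exactly the combinatorics behind that assertion, and your caution about the symmetric extension is unnecessary since one is only evaluating on skew tensors.
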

\begin{proof}
Let us consider $S^2V$ with its inner product given by the trace of the product of two elements.
Note that $g$ is itself an element of $S^2V$, so $S^2V$ contains an $\mathrm{O}(n)$-invariant one-dimensional space.
Let $S_0^2V$ be the orthogonal complement of it, so that $S^2V = S_0^2V \oplus \mathbb Rg$ is an orthogonal decomposition.
In other words, $r \in S_0^2V$ if and only if $\mathrm{Tr}(rg)=0$.
By expressing $rg$ in terms of an orthonormal basis we see that $r$ is traceless.
Obviously $S_0^2V$ is an $\mathrm{O}(n)$-representation, as the trace is invariant under conjugation.

Recall Definition \ref{def:products-traces}.
A computation with $h=k=2$ and $R \in S^2V$ now tells us the only elementary quadratic invariants on $S^2V$ are
\[P_1(R) = \sum_{i,j} R(e_i,e_i)R(e_j,e_j), \qquad P_2(R) = \sum_{i,j} R(e_i,e_j)^2,\]
where $\{e_i\}$, $i=1,\dots,n$, is an orthonormal basis of $V$.
Note that $P_1$ vanishes on $S_0^2V$, as $P_1(R) = \mathrm{Tr}(R)^2$.
Proposition \ref{prop:invariant-quadratic-forms} implies that there are at most two irreducible $\mathrm{O}(n)$-invariant summands in the decomposition of $S^2V$, so $S_0^2V$ must be irreducible.

A similar computation with $k=p$ and $h=2$ shows that up to sign there is only one non-trivial elementary quadratic invariant on $\Lambda^pV$, which is 
\[P(R) = \sum_{i_1,\dots,i_p} R(e_{i_1},\dots,e_{i_p})^2,\]
so $\Lambda^pV$ is necessarily irreducible. 
\end{proof}

\begin{corollary}
Let $(V,g)$ be the standard Euclidean $n$-dimensional space acted on linearly by the orthogonal group $\mathrm{O}(n)$. 
The space of endomorphisms $V \otimes V$ decomposes into irreducible $\mathrm{O}(n)$-invariant summands as 
\[V \otimes V = \Lambda^2V \oplus S_0^2V \oplus \mathbb R.\]
\end{corollary}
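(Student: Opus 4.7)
The plan is to reduce this corollary directly to Proposition \ref{prop:irreps-orthogonal-group}, exploiting the fact that $V \otimes V$ splits canonically into symmetric and skew-symmetric parts under the $\mathrm{O}(n)$-action.

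First I would observe that the standard symmetrization/antisymmetrization projectors
\[
T \mapsto \tfrac{1}{2}(T + T^{t}), \qquad T \mapsto \tfrac{1}{2}(T - T^{t})
\]
give an $\mathrm{O}(n)$-equivariant splitting $V \otimes V = S^{2}V \oplus \Lambda^{2}V$. Equivariance is immediate: the identification $V = V^{*}$ via $\flat$ used throughout this section is $\mathrm{O}(n)$-equivariant, so transposition of a $(2,0)$-tensor commutes with the $\mathrm{O}(n)$-action, and the two projectors are therefore intertwining. The sum is direct because every $2$-tensor is uniquely the sum of a symmetric and a skew-symmetric one.

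Next I would invoke Proposition \ref{prop:irreps-orthogonal-group} to refine the symmetric summand as $S^{2}V = S_{0}^{2}V \oplus \mathbb{R}$, where the trivial line $\mathbb{R}$ corresponds to the span of $g$ (or of the identity endomorphism under the $\flat$-identification), and $S_{0}^{2}V$ denotes the orthogonal complement of traceless symmetric $2$-tensors. The same proposition asserts the irreducibility of $\Lambda^{2}V$ (as the case $p=2$ of $\Lambda^{p}V$), of $S_{0}^{2}V$, and obviously of the trivial one-dimensional representation $\mathbb{R}$. Assembling these pieces produces
\[
V \otimes V = \Lambda^{2}V \oplus S_{0}^{2}V \oplus \mathbb{R},
\]
which is the desired decomposition into irreducibles.

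There is no real obstacle here; the entire argument is just stacking the canonical $S^{2}/\Lambda^{2}$ splitting on top of the previous proposition, together with the remark that all three summands are pairwise inequivalent $\mathrm{O}(n)$-representations (they have distinct dimensions $\binom{n}{2}$, $\tfrac{n(n+1)}{2} - 1$, and $1$ for $n \geq 3$, so no isotypic component can mix them, in the sense of Remark \ref{rmk:estimate-irreps}). The only point worth mentioning explicitly is why the ambient $\mathrm{O}(n)$-invariant inner product on $V \otimes V$ restricts correctly: this is because the trace pairing on endomorphisms agrees with the natural tensor inner product on $V \otimes V$ (cf.\ Digression \ref{digression:trace}), so the three summands are mutually orthogonal and the splitting is as stated.
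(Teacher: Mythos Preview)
Your proof is correct and is exactly the argument the paper intends: the corollary is stated without proof precisely because it follows by combining the $\mathrm{GL}(n,\mathbb R)$-splitting $V\otimes V=\Lambda^2V\oplus S^2V$ (noted in the Remark immediately following the corollary) with Proposition~\ref{prop:irreps-orthogonal-group}. Your additional comments on equivariance of the projectors and pairwise inequivalence of the summands are fine but not strictly needed for the bare statement.
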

\begin{remark}
The above splitting refines the well-known $\mathrm{GL}(n,\mathbb R)$-decomposition into symmetric and skew-symmetric endomorphisms
\[V \otimes V = \Lambda^2V \oplus S^2V.\]
Here the two summands are irreducible for the action of $\mathrm{GL}(n,\mathbb R)$, as can be seen by using Young diagrams and Schur functors, see \cite[Lecture 6]{fulton-harris}.
Incidentally, Schur functors yield decompositions into irreducible $\mathrm{O}(n)$-representations of $V^{\otimes k}$.
\end{remark}
We now discuss more on the special orthogonal group. 
By definition, the special orthogonal group $\mathrm{SO}(n)$ acts on the standard Euclidean space $\mathbb R^n$ preserving any volume form.
If $W$ is an irreducible representation of $\mathrm{O}(n)$, then $\mathrm{SO}(n)$ may act on $W$ irreducibly or not.
In the latter case, $W$ necessarily splits into the direct sum of two (non-trivial) irreducible $\mathrm{SO}(n)$-representations, as $\mathrm{SO}(n)$ has index $2$ in $\mathrm{O}(n)$.
\begin{lemma}
\label{lemma:splitting-so}
If $W$ is an irreducible $\mathrm{O}(n)$-representation, and $W$ splits into irreducible components $W=W_1 \oplus W_2$ under $\mathrm{SO}(n)$, then $W_1$ and $W_2$ have the same dimension.
\end{lemma}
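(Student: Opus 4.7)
The plan is to exploit the fact that $\mathrm{SO}(n)$ is a normal subgroup of $\mathrm{O}(n)$ of index $2$. Pick any element $A \in \mathrm{O}(n) \setminus \mathrm{SO}(n)$, for instance a reflection across a hyperplane, so $\det A = -1$. Since $\mathrm{SO}(n)$ is normal, conjugation by $A$ is an automorphism of $\mathrm{SO}(n)$, and this will force $A$ to permute the $\mathrm{SO}(n)$-irreducible summands of $W$. This is the structural observation that drives everything.

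Concretely, I would first show that $A \cdot W_1$ is again an $\mathrm{SO}(n)$-invariant subspace of $W$. Indeed, for any $B \in \mathrm{SO}(n)$ and $w \in W_1$, one has $B(Aw) = A(A^{-1}BA)w$, and $A^{-1}BA \in \mathrm{SO}(n)$ by normality, so $A^{-1}BAw \in W_1$ and hence $B(Aw) \in AW_1$. Moreover $AW_1$ is irreducible under $\mathrm{SO}(n)$, because $A$ is a linear isomorphism intertwining the $\mathrm{SO}(n)$-action on $W_1$ with the twisted action on $AW_1$. Since the decomposition $W = W_1 \oplus W_2$ into irreducibles is essentially unique up to isotypic rearrangement, $AW_1$ must lie in one of the summands.

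Next, I would rule out the case $AW_1 = W_1$. If this equality held, then $W_1$ would be invariant under both $\mathrm{SO}(n)$ and $A$; but $\mathrm{SO}(n)$ together with $A$ generate $\mathrm{O}(n)$ (since $\mathrm{SO}(n)$ has index $2$ and $A \notin \mathrm{SO}(n)$), so $W_1$ would be an $\mathrm{O}(n)$-invariant proper non-trivial subspace of $W$, contradicting the irreducibility of $W$ as an $\mathrm{O}(n)$-representation. Therefore $AW_1 \subset W_2$, and by the symmetric argument $AW_2 \subset W_1$. Applying $A$ again and using $A^2 \in \mathrm{SO}(n)$, which preserves each $W_i$, yields $AW_1 = W_2$ and $AW_2 = W_1$.

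Finally, since $A$ is a linear isomorphism of $W$, the equality $AW_1 = W_2$ immediately gives $\dim W_1 = \dim W_2$. The only subtlety to watch for is the step that $\mathrm{SO}(n)$ together with any orientation-reversing element generates all of $\mathrm{O}(n)$, but this is just the index-two observation, and I do not expect any genuine obstacle in the argument.
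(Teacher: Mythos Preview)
Your proof is correct and uses the same core idea as the paper: pick an orientation-reversing element $A$, observe that $AW_1$ is an irreducible $\mathrm{SO}(n)$-subspace of $W$, and rule out $AW_1 = W_1$ via the $\mathrm{O}(n)$-irreducibility of $W$. One minor point to tighten: the step ``$AW_1$ must lie in one of the summands'' tacitly assumes $W_1 \not\cong W_2$ as $\mathrm{SO}(n)$-modules (otherwise $AW_1$ could sit diagonally in $W_1 \oplus W_2$), but if $W_1 \cong W_2$ then $\dim W_1 = \dim W_2$ is immediate anyway; the paper sidesteps this case-split with a direct dimension argument, assuming $\dim W_1 > \dim W_2$ to force $\tau_0(W_1) \cap W_1 \neq 0$ and hence $\tau_0(W_1) = W_1$, arriving at the same contradiction.
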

\begin{proof}
Elements in $\mathrm{O}(n)$ with negative determinant can be written as a product of a matrix in $\mathrm{SO}(n)$ and an element $\tau_0 \in \mathrm{O}(n)$ with $\det \tau_0=-1$.
Take $\tau_0 \coloneqq \mathrm{diag}(-1,+1,\dots,+1)$ with respect to the standard orthonormal basis, so $\tau_0^2=\mathrm{id}$.

We claim that $\tau_0(W_i)$, $i=1,2$, is $\mathrm{SO}(n)$-invariant.
Take $x \in \tau_0(W_i)$, $i=1,2$, and $\alpha \in \mathrm{SO}(n)$. 
Then $x = \tau_0(y)$ for some $y \in W_i$, and $\tau_0(\alpha(x)) = \tau_0(\alpha(\tau_0(y)))$. 
But $\det(\tau_0 \circ \alpha \circ \tau_0) = 1$, so $\tau_0 \circ \alpha \circ \tau_0 \in \mathrm{SO}(n)$, and hence $\tau_0(\alpha(\tau_0(y))) \in W_i$, which forces $\alpha(x) \in \tau_0(W_i)$ by applying $\tau_0$ to both sides.
Clearly, $\tau_0(W_i)$ is irreducible for $\mathrm{SO}(n)$.
Now, if we assume $\dim W_1 > \dim W_2$, then $\dim \tau_0(W_1)=\dim W_1>\tfrac12 \dim W$, so $\tau_0(W_1)$ has non-zero intersection with $W_1$.
Also, $\tau_0(W_1) \cap W_1$ and $\tau_0(W_1) \cap W_2$ are invariant subspaces, so irreducibility of $W_1$ and $W_2$ forces $\tau_0(W_1) = W_1$, contradiction.
An analogous argument applies if $\dim W_1 < \dim W_2$, whence $\dim W_1=\dim W_2$.
\end{proof}

\begin{proposition}
\label{prop:decomposition-son}
Let $(V,g)$ be the standard Euclidean $n$-dimensional space acted on linearly by the special orthogonal group $\mathrm{SO}(n)$. 
Let $S^2V$ be the space of symmetric $2$-tensors on $V$, and $S_0^2V$ be the traceless elements in $S^2V$. Then $S^2V$ splits into $\mathrm{SO}(n)$-irreducible components
\[S^2V = S_0^2V\oplus \mathbb R.\]
Further, the space $\Lambda^pV$ is reducible as an $\mathrm{SO}(n)$-representation only if $n=2p$.
\end{proposition}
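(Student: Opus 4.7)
The proof extends the approach of Proposition \ref{prop:irreps-orthogonal-group} to the subgroup $\mathrm{SO}(n) \subset \mathrm O(n)$, using the generalisation of Proposition \ref{prop:invariant-quadratic-forms} to any compact connected Lie group indicated in the remark after it. Beyond $g$, the only new piece of invariant tensor machinery available for $\mathrm{SO}(n)$ is the volume form $\epsilon \in \Lambda^n V^*$; the entire task therefore reduces to checking that $\epsilon$ does not contribute genuinely new quadratic invariants in the relevant spaces.

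For the symmetric part, both $S_0^2 V$ and $\mathbb R g$ are $\mathrm O(n)$-invariant, hence $\mathrm{SO}(n)$-invariant, and $\mathbb R g$ is trivially irreducible. To prove $S_0^2 V$ is $\mathrm{SO}(n)$-irreducible I count the $\mathrm{SO}(n)$-invariant quadratic forms on $S^2 V$. A quadratic expression in $R \in S^2 V$ has four free indices, so an insertion of $k$ copies of $\epsilon$ demands $kn \leq 4$ with $4 - kn$ even; this happens only when $n \in \{2,4\}$. For $n = 4$, the candidate $\epsilon^{ijkl} R_{ij} R_{kl}$ vanishes because $R$ is symmetric in $(ij)$ while $\epsilon$ is skew; for $n = 2$, the identity $\epsilon^{ij}\epsilon^{kl} = \delta^{ik}\delta^{jl} - \delta^{il}\delta^{jk}$ reduces every $\epsilon$-contraction to a combination of $\mathrm{Tr}(R)^2$ and $\mathrm{Tr}(R^2)$. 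Hence the $\mathrm{SO}(n)$-invariant quadratic forms on $S^2 V$ are exactly the $\mathrm O(n)$-invariant ones, restricting to a one-dimensional space on $S_0^2 V$, and Proposition \ref{prop:invariant-quadratic-forms} yields irreducibility.

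For $\Lambda^p V$, I split into $n > 2p$ and $n < 2p$, the case $n = 2p$ being excluded by hypothesis. When $n > 2p$, a quadratic in $R \in \Lambda^p V$ has $2p < n$ free indices, so $\epsilon$ cannot be inserted at all and the $\mathrm{SO}(n)$-invariant quadratic forms coincide with the $\mathrm O(n)$-invariant ones; Proposition \ref{prop:irreps-orthogonal-group} gives a one-dimensional space, whence $\Lambda^pV$ is $\mathrm{SO}(n)$-irreducible. When $n < 2p$, the Hodge star $\star \colon \Lambda^p V \to \Lambda^{n-p} V$ is an $\mathrm{SO}(n)$-equivariant isomorphism; since $n - p < n/2$ one has $n > 2(n-p)$, reducing to the previous case applied to $\Lambda^{n-p} V$, and the irreducibility transfers back through $\star$.

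The main obstacle is the low-dimensional analysis in the symmetric part: one must verify by hand that no genuinely new $\mathrm{SO}(n)$-invariant quadratic form appears on $S^2 V$ for $n = 2$ or $n = 4$, since a priori $\epsilon$ could contribute. Once this elementary but non-vacuous step is in place, the rest is index counting combined with a single application of Hodge duality.
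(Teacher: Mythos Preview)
Your argument is essentially correct and takes a genuinely different route from the paper. The paper never extends the invariant-polynomial machinery to $\mathrm{SO}(n)$; instead it first proves Lemma~\ref{lemma:splitting-so}, which says that if an irreducible $\mathrm O(n)$-module splits under $\mathrm{SO}(n)$ then the two pieces have equal dimension. It then picks the reflection $\tau_0=\mathrm{diag}(-1,1,\dots,1)$, counts its $+1$- and $-1$-eigenvectors on the obvious bases of $S_0^2V$ and $\Lambda^pV$, and reads off the combinatorial obstructions $\binom{n-1}{2}=0$ (forcing $n\le 2$, with $n=2$ checked by hand) and $\binom{n-1}{p-1}=\binom{n-1}{p}$ (forcing $n=2p$). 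This is entirely elementary and stays within the paper's toolkit. Your approach, by contrast, invokes the First Fundamental Theorem for $\mathrm{SO}(n)$---that invariant tensors are built from $g$ and a single $\epsilon$---which is classical (Weyl) but is not stated anywhere in the paper; you should cite it explicitly rather than leave it implicit. The payoff is a uniform argument that directly continues the method of Proposition~\ref{prop:irreps-orthogonal-group}, and your use of the Hodge star for $n<2p$ anticipates exactly what the paper does in the paragraph following the proposition.

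One small gap: in the symmetric case you enumerate the pairs $(n,k)$ with $kn\le 4$ and $4-kn$ even, but for $n=2$ you only dispose of $k=2$ via the identity $\epsilon^{ij}\epsilon^{kl}=\delta^{ik}\delta^{jl}-\delta^{il}\delta^{jk}$. The case $k=1$ (one $\epsilon$ and one $g$) is still on the table: the candidates are $\epsilon^{ij}R_{ij}\,\mathrm{Tr}(R)$ and $\epsilon^{ij}R_{ik}R^{k}{}_{j}=\epsilon^{ij}(R^2)_{ij}$, both of which vanish because $R$ and $R^2$ are symmetric. With that one line added your argument is complete.
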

\begin{proof}
Let $\{e_1,\dots,e_n\}$ be an orthonormal oriented basis of $V$.
Then a basis of $S_0^2V$ is given by the vectors (recall Digression \ref{digression:trace})
\[e_i \odot e_j, \quad i<j, \qquad \text{and} \qquad e_1 \odot e_1 - e_j \odot e_j, \quad 1<j.\]
Recall Lemma \ref{lemma:splitting-so}, and choose $\tau_0$ as in its proof. 
Note that $\tau_0$ acts trivially on the $n-1$ vectors $e_1\odot e_1-e_j\odot e_j$ and on the $\binom{n-1}{2}$ vectors $e_i \odot e_j$ with $1<i<j$.
On the other hand, $\tau_0$ switches the sign of the $n-1$ vectors $e_1 \odot e_j$.
Then a necessary condition for $S_0^2V$ to split is \[n-1=n-1+\binom{n-1}{2},\] which implies $n=1$ or $n=2$.
The only relevant case is $n=2$.
Then $\mathrm{SO}(n) = \mathrm{SO}(2)$ is a circle and acts on $S_0^2V=\mathbb R^2$ by rotations.
Thus $S_0^2V$ is irreducible.

We now consider $\Lambda^pV$. Assume $\Lambda^pV$ splits into the direct sum of two invariant subspaces of the same dimension.
A basis for $\Lambda^pV$ is given by the $\binom{n}{p}$ vectors
\[e_{i_1} \wedge \dots \wedge e_{i_p}, \qquad i_1 <\dots < i_p.\]
Note that $\tau_0$ switches the sign of the vectors $e_1 \wedge e_{i_2} \wedge \dots e_{i_p}$, and leaves all other vectors invariant.
But then a necessary condition for $\Lambda^pV$ to split is
\[\binom{n-1}{p-1}=\binom{n-1}{p}.\]
A computation shows this holds for $n=2p$, so $\Lambda^pV$ is reducible only if $n=2p$.
\end{proof}
\begin{remark}
An alternative way to prove the irreducibility of $\Lambda^pV$ for $n \neq 2p$ follows by the theory of weights, see e.g.\ \cite[Chapter 4]{knapp}.
\end{remark}

Let $\mathrm{SO}(n)$ act on $(V,g)$ in the standard way by left multiplication.
Take the volume form $\mathrm{vol}=e_1\wedge \dots \wedge e_n$ on $(V,g)$.
The action preserves $\mathrm{vol}$ and the Euclidean structure.
Define the \emph{Hodge star operator}
$\star \colon \Lambda^pV \to \Lambda^{n-p}V$ as follows: if $\alpha$ and $\beta$ are $p$-forms, then 
\[\star \alpha \wedge \beta \coloneqq g_{\Lambda}(\alpha,\beta)\mathrm{vol},\]
where $g_{\Lambda}$ is as in \eqref{eq:inner-product-exterior-algebra}. 

The Hodge star operator is clearly an isometry of the exterior algebra, as $\star^2 = (-1)^{p(n-p)}\mathrm{id}$.
If $A \in \mathrm{SO}(n)$, then $g_{\Lambda}(A{}\cdot{},A{}\cdot{})=g_{\Lambda}$, and the condition $\det A=1$ implies $A\mathrm{vol} = (\det A)\mathrm{vol} = \mathrm{vol}$. 
It follows that
\begin{align*}
\star A\alpha \wedge A\beta & = g_{\Lambda}(A\alpha,A\beta)\mathrm{vol} = g_{\Lambda}(\alpha,\beta)\mathrm{vol}, \\
A\star \alpha \wedge A\beta & = A(\star \alpha \wedge \beta) = g_{\Lambda}(\alpha,\beta) A \mathrm{vol} = g_{\Lambda}(\alpha,\beta)\mathrm{vol},
\end{align*}
so $\star A\alpha \wedge A\beta = A\star \alpha \wedge A\beta$. Since this holds for all $\beta$, the action of $A$ commutes with the action of $\star$, namely $\star$ is $\mathrm{SO}(n)$-equivariant.
This implies $\Lambda^pV$ and $\Lambda^{n-p}V$ behave in the same way as $\mathrm{SO}(n)$-representations, and we can restrict attention to those $p\geq0$ such that $p\leq n-p$.
We now know that if $p<n-p$ the space $\Lambda^pV$ is irreducible for $\mathrm{SO}(n)$, so let us take $p=n-p$.
Note that $\star^2 = (-1)^{p^2}\mathrm{id}$. 
So in the special case $n=2p$ and $p$ is even, $\star$ maps $p$-forms to $p$-forms, and has eigenvalues $\pm 1$.
This implies $\Lambda^pV$ splits into the direct sum of $\mathrm{SO}(n)$-invariant eigenspaces
\[\Lambda^pV = \Lambda^p_+V \oplus \Lambda^p_-V,\]
where $\Lambda^p_{\pm}V$ correspond to the eigenvalues $\pm 1$.
\begin{definition}
We say $\Lambda^p_+V$ is the space of \emph{self-dual} $p$-forms, whereas $\Lambda^p_-V$ is the space of \emph{anti self-dual} $p$-forms.
\end{definition}
\begin{proposition}
\label{rmk:lambda-so}
The $\mathrm{SO}(n)$-module $\Lambda^2\mathbb R^n$ with its standard action and the adjoint representation $\mathfrak{so}(n)$ are equivalent.
\end{proposition}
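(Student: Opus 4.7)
The plan is to exhibit an explicit linear isomorphism $\Phi \colon \Lambda^2\mathbb R^n \to \mathfrak{so}(n)$ and check that it intertwines the two $\mathrm{SO}(n)$-actions. The natural candidate, using the Euclidean structure on $V = \mathbb R^n$, is
\[\Phi(v \wedge w)(x) \coloneqq g(v,x)\,w - g(w,x)\,v,\]
extended by bilinearity. First I would verify that $\Phi$ is well-defined by checking that it is alternating in $(v,w)$, hence factors through $\Lambda^2 V$, and that for each $v\wedge w$ the endomorphism $\Phi(v\wedge w)$ is skew-symmetric with respect to $g$, namely $g(\Phi(v\wedge w)x,y) = -g(x,\Phi(v\wedge w)y)$; this is a direct calculation that uses only the definition and the symmetry of $g$, so $\Phi$ indeed lands in $\mathfrak{so}(n)$.

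Next I would establish that $\Phi$ is a linear isomorphism. Since $\dim \Lambda^2 V = \binom{n}{2} = \dim \mathfrak{so}(n)$, it suffices to verify either injectivity or surjectivity. Working with an orthonormal basis $\{e_1,\dots,e_n\}$, one computes $\Phi(e_i \wedge e_j)$ explicitly and recognises it (up to sign) as the standard skew-symmetric matrix $E_{ij} - E_{ji}$, where $E_{kl}$ denotes the elementary matrix with a $1$ in position $(k,l)$. Since $\{E_{ij} - E_{ji} : i < j\}$ is a basis of $\mathfrak{so}(n)$, the map $\Phi$ sends a basis of $\Lambda^2 V$ to a basis of $\mathfrak{so}(n)$ and is therefore bijective.

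The final step is $\mathrm{SO}(n)$-equivariance. On $\Lambda^2 V$ the group acts by $A \cdot (v \wedge w) = Av \wedge Aw$, whereas on $\mathfrak{so}(n)$ the adjoint action is $\mathrm{Ad}_A(B) = ABA^{-1} = ABA^{T}$ for $A \in \mathrm{SO}(n)$. The identity to verify is
\[\Phi(Av \wedge Aw) = A\,\Phi(v\wedge w)\,A^{-1}.\]
Applying both sides to a vector $x \in V$ and using $g(Av,x) = g(v,A^{-1}x)$, which holds precisely because $A$ preserves $g$, the left-hand side expands to $g(v,A^{-1}x)Aw - g(w,A^{-1}x)Av$, and the right-hand side to $A\bigl(g(v,A^{-1}x)w - g(w,A^{-1}x)v\bigr)$, which coincide by linearity. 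This step is the conceptual core, but it is essentially a one-line verification; I do not expect any real obstacle. The main point worth flagging is the consistent bookkeeping between the identifications $V \cong V^*$ and $\mathfrak{so}(n) \subset V \otimes V$ implicit in Section \ref{subsec:some-linear-algebra}, which ensures that $\Phi$ is genuinely a morphism of $\mathrm{SO}(n)$-representations rather than merely a vector space isomorphism.
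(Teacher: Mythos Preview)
Your proof is correct and follows essentially the same approach as the paper: both construct the natural metric-induced isomorphism between $\Lambda^2\mathbb R^n$ and $\mathfrak{so}(n)$ and verify equivariance directly. The only cosmetic difference is direction---the paper defines the map $\mathfrak{so}(n)\to\Lambda^2\mathbb R^n$ via $A\mapsto g(A\cdot,\cdot)$, whereas you write down its inverse $\Phi$; your more explicit treatment of well-definedness and the basis computation is fine but not strictly needed.
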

\begin{proof}
Let $g$ be the standard scalar product on $\mathbb R^n$, and let $A \in \mathfrak{so}(n)$ be a skew-symmetric endomorphism.
The identity \[g(AX,Y) = -g(X,AY) = -g(AY,X)\] shows that $g(A{}\cdot{},{}\cdot{})$ is a skew-symmetric bilinear form.
This gives a linear isomorphism $\mathfrak{so}(n) \to \Lambda^2 (\mathbb R^n)^* = \Lambda^2 \mathbb R^n$.
Now let $B \in \mathrm{SO}(n)$ be a special orthogonal matrix. 
We know that $B$ acts on $A \in \mathfrak{so}(n)$ via $A \mapsto BAB^{-1}$. 
Note that
\[g(BAB^{-1}{}\cdot{},{}\cdot{}) = g(AB^{-1}{}\cdot{},B^{-1}{}\cdot{}) = B(g(A{}\cdot{},{}\cdot{})).\]
This chain of identities implies that $\Lambda^2 \mathbb R^n = \mathfrak{so}(n)$ is an equivalence of $\mathrm{SO}(n)$-representations.
The equivalence also identifies the scalar product on $\Lambda^2 \mathbb R^n$ with a multiple of the Killing form on $\mathfrak{so}(n)$.
\end{proof}
\begin{example}
\label{ex:self-dual-forms}
Let $V = \mathbb R^4$ with its standard Euclidean structure.
Let $e_1,\dots,e_4$ be the standard basis of $V$, and let $\mathrm{vol} = e_1 \wedge \dots \wedge e_4$ be the standard volume form. 
A basis for $\Lambda^2_+V$ is given by 
\begin{gather*}
f_1 = e_1\wedge e_2+e_3\wedge e_4, \qquad f_2 = e_1\wedge e_3-e_2\wedge e_4, \\
f_3 = e_1\wedge e_4+e_2\wedge e_3,
\end{gather*}
and a basis of $\Lambda^2_-V$ is easily deduced from this.
By Proposition \ref{rmk:lambda-so}, the images of the bivectors $f_1,f_2,f_3$ in $\mathfrak{so}(4)$ are given by the matrices
\[H = \left(\begin{matrix} 0 & -1 & 0 & 0 \\ 1 & 0 & 0 & 0 \\ 0 & 0 & 0 & -1 \\ 0 & 0 & 1 & 0 \end{matrix}\right), \quad X = \left(\begin{matrix} 0 & 0 & -1 & 0 \\ 0 & 0 & 0 & 1 \\ 1 & 0 & 0 & 0 \\ 0 & -1 & 0 & 0 \end{matrix}\right), \quad Y = \left(\begin{matrix} 0 & 0 & 0 & -1 \\ 0 & 0 & -1 & 0 \\ 0 & 1 & 0 & 0 \\ 1 & 0 & 0 & 0 \end{matrix}\right).\]
One computes $[H,X]=2Y$, $[H,Y]=-2X$, and $[X,Y]=2H$, so we have an equivalence of $\mathrm{SO}(4)$-representations $\Lambda^2_+V = \mathfrak{su}(2) \subset \mathfrak{so}(4)$, cf.\ Example \ref{ex:su2-roots}.
Analogously, $\Lambda^2_-V = \mathfrak{su}(2) \subset \mathfrak{so}(4)$ (the roles of $X$ and $Y$ above are switched).
We then have equivalent decompositions of $\mathrm{SO}(4)$-modules
\begin{align*}
\Lambda^2V & = \Lambda^2_+V \oplus \Lambda^2_-V, \\
\mathfrak{so}(4) & = \mathfrak{su}(2) \oplus \mathfrak{su}(2),
\end{align*}
where each copy of $\mathfrak{su}(2)$ is also an ideal in $\mathfrak{so}(4)$.
\end{example}
\begin{proposition}
The $\mathrm{O}(4)$-modules $\Lambda_+^2V \otimes \Lambda_-^2V$ and $S_0^2V$ are isomorphic. In particular, $\Lambda_+^2V \otimes \Lambda_-^2V$ is $\mathrm{O}(4)$-irreducible, and hence $\Lambda_+^2V$ and $\Lambda_-^2V$ cannot be equivalent as $\mathrm{O}(4)$-modules.
\end{proposition}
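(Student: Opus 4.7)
\begin{sketchproof}
The plan is to build a natural equivariant nonzero linear map
\[\mu \colon \Lambda_+^2 V \otimes \Lambda_-^2 V \to S_0^2 V\]
and then apply Schur's Lemma (Remark \ref{rmk:schur-lemma}) together with the irreducibility of $S_0^2 V$ (Proposition \ref{prop:irreps-orthogonal-group}) to conclude that $\mu$ is an isomorphism. A dimension check confirms feasibility, as both sides have dimension $9$.

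First, I would combine Proposition \ref{rmk:lambda-so} with Example \ref{ex:self-dual-forms} to regard $\Lambda_\pm^2 V$ as the two commuting simple ideals $\mathfrak{su}(2)_\pm$ inside $\mathfrak{so}(4)$. For $\alpha \in \Lambda_+^2 V$ and $\beta \in \Lambda_-^2 V$, viewed as skew-symmetric endomorphisms of $V$ via the metric, the key identities $\alpha\beta = \beta\alpha$ (commuting ideals) and $\mathrm{Tr}(\alpha\beta) = 0$ (Killing-orthogonal ideals) imply that the composition $\alpha \circ \beta$ is symmetric and traceless, since $(\alpha\beta)^T = \beta^T\alpha^T = (-\beta)(-\alpha) = \alpha\beta$. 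I would then set
\[\mu(\alpha \otimes \beta) \coloneqq \alpha \circ \beta \in S_0^2 V.\]

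Next I would verify $\mathrm{O}(4)$-equivariance. For $A \in \mathrm{SO}(4)$ this reduces to the identity $(A\alpha A^{-1})(A\beta A^{-1}) = A(\alpha\beta)A^{-1}$, reflecting that the adjoint action on $\mathfrak{so}(4)$ is conjugation. Non-vanishing of $\mu$ is immediate from the explicit bases of $\Lambda_\pm^2 V$ recorded in Example \ref{ex:self-dual-forms}: a single matrix product of the self-dual matrix $H$ with an anti-self-dual analogue already produces a nonzero symmetric traceless endomorphism. At this point Schur's Lemma forces $\mu$ to be surjective, hence an isomorphism by equality of dimensions. The irreducibility of $\Lambda_+^2 V \otimes \Lambda_-^2 V$ then transports back across $\mu$.

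The final claim follows by a standard contradiction: if $\Lambda_+^2 V$ and $\Lambda_-^2 V$ were equivalent as representations through some equivariant $\varphi$, then $\mathrm{id} \otimes \varphi^{-1}$ would identify $\Lambda_+^2 V \otimes \Lambda_-^2 V$ with $\Lambda_+^2 V \otimes \Lambda_+^2 V$, which carries the canonical equivariant splitting into symmetric and skew-symmetric parts, contradicting irreducibility. The step I expect to need most care is the $\mathrm{O}(4)$-equivariance of $\mu$: orientation-reversing elements of $\mathrm{O}(4)$ swap $\Lambda_+^2 V$ and $\Lambda_-^2 V$, so one must interpret the $\mathrm{O}(4)$-action on the tensor product as including a factor swap, and verify that the image $\alpha \circ \beta$, being a symmetric endomorphism of $V$, transforms compatibly under this swap.
\end{sketchproof}
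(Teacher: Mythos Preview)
Your approach is correct and, in fact, constructs the very same map as the paper, only in different notation. The paper sets
\[
\omega_+\otimes\omega_-\ \longmapsto\ \sum_{i=1}^4 (e_i\lrcorner\,\omega_+)\odot(e_i\lrcorner\,\omega_-),
\]
and a one-line computation (using $e_i\lrcorner\,\omega_\pm=(\alpha_\pm e_i)^\flat$ and orthonormality) shows that this bilinear form equals $-2\,g(\alpha\beta\,\cdot\,,\,\cdot\,)$, i.e.\ a constant multiple of your $\mu(\alpha\otimes\beta)=\alpha\circ\beta$. Your Lie-algebraic packaging is arguably cleaner: the symmetry and tracelessness of $\alpha\beta$ drop out immediately from the fact that $\mathfrak{su}(2)_+$ and $\mathfrak{su}(2)_-$ are commuting, Killing-orthogonal ideals, whereas the paper simply asserts that its explicit formula is an $\mathrm{O}(4)$-isomorphism and leaves the basis check to the reader. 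Your appeal to Schur's Lemma (nonzero equivariant map into the irreducible target $S_0^2V$, hence surjective, hence an isomorphism by the dimension count $3\times 3=9$) is a slight upgrade over the paper's implicit ``compute on the basis'' verification.

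Your contradiction for the non-equivalence is also essentially the paper's, phrased differently: the paper says an equivariant isomorphism $\Lambda_+^2V\to\Lambda_-^2V$ would produce an invariant line in the tensor product (the graph of the isomorphism), while you produce a nontrivial equivariant splitting of $\Lambda_+^2V\otimes\Lambda_+^2V$; both contradict irreducibility. Your caution about the orientation-reversing elements is well placed and is exactly why the paper's map uses the symmetric product $\odot$: since $\alpha\beta=\beta\alpha$, your $\mu$ is symmetric under the swap $\alpha\leftrightarrow\beta$, so the induced $\mathrm{O}(4)$-action (which for $\det A=-1$ exchanges the $\Lambda_+^2$ and $\Lambda_-^2$ factors) is respected.
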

\begin{proof}
One way to prove this is to use an explicit basis, cf.\ \cite[Expos\'e XVI]{besse2}.
Let $e_1,\dots,e_n$ be the standard basis of $V$.
Write $e_{ij}$ for $e_i \wedge e_j$.
A basis for $\Lambda_{\pm}^2V$ is given by $\alpha_{\pm} \coloneqq e_{12}\pm e_{34}$, $\beta_{\pm} \coloneqq e_{13}\mp e_{24}$, $\gamma_{\pm} \coloneqq e_{14}\pm e_{23}$.
If $\omega_{\pm}$ are vectors in $\Lambda_{\pm}^2$, the linear map $\Lambda_+^2V \otimes \Lambda_-^2V \to S_0^2V$ defined by
\[\omega_+\otimes \omega_- \mapsto \sum_{i=1}^4 (e_i \lrcorner\ \omega_+) \odot (e_i \lrcorner\ \omega_-)\]
is an isomorphism of $\mathrm{O}(4)$-modules. 

If $\Lambda_+^2V$ and $\Lambda_-^2V$ were equivalent modules, then there would be an equivariant isomorphism $\Lambda_+^2V \to \Lambda_-^2V$ defining an invariant submodule in $\Lambda_+^2V \otimes \Lambda_-^2V$, contradiction.
\end{proof}
By Proposition \ref{prop:decomposition-son}, it follows that $\Lambda_+^2V \otimes \Lambda_-^2V$ is irreducible for $\mathrm{SO}(4)$ as well.
We then have the following special case.
\begin{proposition}
The space of endomorphisms $V \otimes V$ splits into irreducible $\mathrm{SO}(4)$-invariant summands as
\[V \otimes V = \Lambda_+^2V \oplus \Lambda_-^2V \oplus S_0^2V \oplus \mathbb R,\]
where $\Lambda_+^2V$ and $\Lambda_-^2V$ are the spaces of self-dual and anti self-dual two-forms on $V$ respectively.
\end{proposition}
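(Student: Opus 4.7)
The plan is to build the decomposition in two stages and then verify that the four listed summands are each $\mathrm{SO}(4)$-irreducible. First I would invoke the $\mathrm{GL}(n,\mathbb R)$-splitting $V \otimes V = \Lambda^2 V \oplus S^2 V$ mentioned in the remark after the orthogonal decomposition of endomorphisms, then refine each summand using results already proved in the excerpt.

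For the symmetric part, Proposition \ref{prop:decomposition-son} yields $S^2 V = S_0^2 V \oplus \mathbb R$, and for $n=4$ the traceless part $S_0^2 V$ is $\mathrm{SO}(n)$-irreducible (the only exception in that proposition was $n=2$). For the skew-symmetric part, the Hodge star satisfies $\star^2 = (-1)^{p^2}\mathrm{id}$ with $n=2p$ and $p=2$, so $\star^2 = +\mathrm{id}$ on $\Lambda^2 V$; its $\pm 1$-eigenspaces give the $\mathrm{SO}(4)$-invariant splitting $\Lambda^2 V = \Lambda_+^2 V \oplus \Lambda_-^2 V$ as in the definition of self-dual / anti self-dual forms.

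It remains to argue that each three-dimensional piece $\Lambda_\pm^2 V$ is $\mathrm{SO}(4)$-irreducible, which I expect to be the main (though short) obstacle, since nothing prior to the statement explicitly asserts this. Here I would use Example \ref{ex:self-dual-forms}, which provides the equivalence of $\mathrm{SO}(4)$-modules $\Lambda_\pm^2 V \cong \mathfrak{su}(2)$, where the target is an ideal of $\mathfrak{so}(4) = \mathfrak{su}(2) \oplus \mathfrak{su}(2)$ acted on by $\mathrm{SO}(4)$ via the adjoint representation. Since $\mathfrak{su}(2)$ is simple, a non-trivial $\mathrm{SO}(4)$-invariant subspace of either ideal would be a non-trivial ideal of $\mathfrak{su}(2)$, which does not exist; hence $\Lambda_\pm^2 V$ are irreducible. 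Alternatively, one can note that on the three-dimensional module $\mathfrak{su}(2)$ the (suitably rescaled) Killing form is the unique $\mathrm{SO}(4)$-invariant quadratic form, so Proposition \ref{prop:invariant-quadratic-forms} (extended to the compact group $\mathrm{SO}(4)$ as in the remark that follows it) forces irreducibility.

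Assembling the four pieces gives the claimed orthogonal decomposition
\[V \otimes V = \Lambda_+^2 V \oplus \Lambda_-^2 V \oplus S_0^2 V \oplus \mathbb R,\]
and since the four summands have pairwise distinct dimensions $3,3,9,1$ —or more substantively, since $\Lambda_+^2 V$ and $\Lambda_-^2 V$ were shown in the preceding proposition to be inequivalent as $\mathrm{O}(4)$-modules, and $S_0^2 V$ carries a different highest weight— no isotypic component collapses and the decomposition is genuine.
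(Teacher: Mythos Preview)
Your argument is correct and follows the same structure as the paper, which in fact states this proposition without proof, as an immediate consequence of the preceding material. The only difference is in how you justify irreducibility of $\Lambda_\pm^2 V$: the paper implicitly relies on the remark just before Lemma~\ref{lemma:splitting-so} that an $\mathrm{O}(n)$-irreducible module splits into at most two $\mathrm{SO}(n)$-irreducibles (since $\mathrm{SO}(n)$ has index~$2$), so once $\Lambda^2V=\Lambda_+^2V\oplus\Lambda_-^2V$ is exhibited these pieces are automatically irreducible; you instead go through Example~\ref{ex:self-dual-forms} and the simplicity of $\mathfrak{su}(2)$. Both routes work. Your closing paragraph is superfluous for the stated claim, and note that inequivalence as $\mathrm{O}(4)$-modules does not by itself give inequivalence as $\mathrm{SO}(4)$-modules (though here it happens to hold).
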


\begin{remark}
Example \ref{ex:self-dual-forms} tells us $\mathfrak{spin}(4) = \mathfrak{so}(4) = \mathfrak{su}(2) \oplus \mathfrak{su}(2)$, which fits with the isomorphism $\mathrm{Spin}(4) = \mathrm{SU}(2) \times \mathrm{SU}(2)$ in Example \ref{ex:spin(4)}.
One sometimes writes $\mathrm{SO}(4) = \mathrm{SU}(2)_+\mathrm{SU}(2)_-$, meaning that the two copies of $\mathrm{SU}(2)$ correspond to self-dual and anti self-dual two-forms.
\end{remark}
\begin{digression}
\label{digression:so(4)}
Note that the equivalence of $\mathrm{SO}(4)$-modules $\Lambda_+^2\mathbb R^4=\mathfrak{su}(2)$ is \emph{not} an equivalence of $\mathrm{SU}(2)$-modules.
In fact, $\mathrm{SU}(2)$ acts trivially on $\Lambda_+^2\mathbb R^4$, whereas $\mathfrak{su}(2)$ is the (irreducible) adjoint representation.

The group $\mathrm{SU}(2)$ acts via left-multiplication on $\mathbb C^2$ preserving the standard Hermitian structure.
It follows that $\mathrm{SU}(2)$ acts as a subgroup of $\mathrm{SO}(4)$ on $\mathbb R^4$ preserving its standard inner product $g$ and a complex structure $J \colon \mathbb R^4 \to \mathbb R^4$.
The latter is a linear isometry such that $J^2=-\mathrm{id}$, so it plays the role of the imaginary unit.
We construct an adapted orthonormal basis $e_1$, $Je_1$, $e_2$, $Je_2$.
Then $e_1 \wedge Je_1 \wedge e_2 \wedge Je_2$ is a natural volume form on $\mathbb R^4$.
Now, essentially by definition, $\mathrm{SU}(2)$ acts on $\mathbb R^4$ and preserves the two-forms
\begin{align*}
\sigma & \coloneqq g(J{}\cdot{},{}\cdot{}) = e_1 \wedge Je_1+e_2\wedge Je_2, \\
\psi_{\mathbb C} & \coloneqq (e_1+iJe_1) \wedge (e_2+iJe_2).
\end{align*}
The latter is a complex volume form of type $(2,0)$ with respect to $J$ (we will discuss forms of this type in subsection \ref{subsec:unitary-groups}). Set
\begin{align*}
\psi_+ & \coloneqq \mathrm{Re}(\psi_{\mathbb C}) = e_1\wedge e_2-Je_1\wedge Je_2, \\
\psi_- & \coloneqq \mathrm{Im}(\psi_{\mathbb C}) = e_1 \wedge Je_2+Je_1 \wedge e_2.
\end{align*}
Then $\sigma$ and $\psi_{\pm}$ clearly correspond to $f_1,f_2,f_3$, and hence are a basis for $\Lambda^2_+\mathbb R^4$.
However, $\mathrm{SU}(2)$ acts trivially on each of them, so $\Lambda_+^2\mathbb R^4$ is a direct sum of three trivial $\mathrm{SU}(2)$-modules \[\Lambda_+^2\mathbb R^4=\mathbb R \sigma \oplus \mathbb R \psi_+ \oplus \mathbb R\psi_-,\] whereas the adjoint representation of $\mathrm{SU}(2)$ on $\mathfrak{su}(2)$ is irreducible (in Weyl's correspondence, it is associated to the dominant weight $+2$).
\end{digression}

\subsection{Unitary groups}
\label{subsec:unitary-groups}

Let $\mathbb C^n$ be equipped with the standard Hermitian scalar product $h$ (conjugate-linear in the first entry).
The group of transformations of $\mathbb C^n$ preserving the Hermitian structure is by definition the unitary group $\mathrm{U}(n)$.

The map $(z_1,\dots,z_n) \in \mathbb C^n \mapsto (\mathrm{Re}(z_1),\dots,\mathrm{Re}(z_n),\mathrm{Im}(z_1),\dots,\mathrm{Im}(z_n))$ identifies $\mathbb C^n$ with $\mathbb R^{2n}$.
The action of the imaginary unit $i$ on $\mathbb R^{2n}$ is then encoded into an endomorphism $J$ of $\mathbb R^{2n}$, which in terms of the standard basis is as in \eqref{eq:symplectic-matrix}.
\begin{definition}
The linear map $J \colon \mathbb R^{2n} \to \mathbb R^{2n}$ is a \emph{complex structure} on $\mathbb R^{2n}$, i.e.\ $J^2=-\mathrm{id}$.
\end{definition}
Let $g$ be the real part of the Hermitian inner product, which corresponds to a scalar product $g$ on $\mathbb R^{2n}$.
Note that $h(iz_1,iz_2)=h(z_1,z_2)$ and $h(iz_1,z_2)=-h(z_1,iz_2)$.
This interplay between the Hermitian structure and the action of the imaginary unit implies that $J$ preserves $g$, so $g(J{}\cdot{},J{}\cdot{}) = g$.
Note that $\sigma \coloneqq g(J{}\cdot{},{}\cdot{})$ is then a two-form, as 
\[\sigma(X,Y) = g(JX,Y) = g(J^2X,JY) = -g(JY,X) = -\sigma(Y,X),\]
and corresponds to the imaginary part of the Hermitian structure, so one writes $h=g+i\sigma$.
Since $J$ is non-degenerate, $\sigma$ is non-degenerate, meaning that $\sigma^n = \sigma \wedge \dots \wedge \sigma$ is a volume form.
Then $\mathbb R^{2n}$ comes with a natural orientation as well.
We lastly recall from Remark \ref{rmk:alternative-def-spn} that there is an isomorphism
\begin{equation*}
\mathrm{U}(n) = \mathrm{SO}(2n) \cap \mathrm{GL}(n,\mathbb C).
\end{equation*}
where $\mathrm{GL}(n,\mathbb C) \subset \mathrm{GL}(2n,\mathbb R)$ is given by matrices commuting with $J$. 
So again, when working on $\mathbb R^{2n}=[\![\mathbb C^n]\!]$, we can identify tensors of the same homogeneous degree.

We now look for the irreducible $\mathrm{U}(n)$-decompositions of the spaces of skew-symmetric forms $\Lambda^k\mathbb R^{2n}$.
In order to do this, we go through the complexification of $\Lambda^k\mathbb R^{2n}$. 
We then give the definition of complex forms of type $(p,q)$, and finally specialise our considerations to the reals.

Let $(V,g,J)$ be the Euclidean space $\mathbb R^{2n}$ as above, with complex structure $J$, and let $\mathrm{U}(n) \subset \mathrm{SO}(2n)$ act on it preserving $g$ and $J$.
Let $\alpha \in \Lambda^1 \coloneqq \Lambda^1 V$ be a one-form. If we view the latter as an element in the complexified space $\Lambda^1\otimes \mathbb C$, we can write
\[\alpha = \frac12 (\alpha-iJ\alpha)+\frac12 (\alpha+iJ\alpha),\]
where $(J\alpha)(x) \coloneqq \alpha(J^{-1}x) = -\alpha(Jx)$. Note that $J^2\alpha = -J\alpha(J{}\cdot{}) = -\alpha$.
Therefore, after extending the action of $J$ to $\Lambda^1 \otimes \mathbb C$ by complex linearity, we have
\begin{align*}
J(\alpha-iJ\alpha) & = J\alpha-iJ^2\alpha = J\alpha+i\alpha = +i(\alpha-iJ\alpha), \\
J(\alpha+iJ\alpha) & = J\alpha+iJ^2\alpha = J\alpha-i\alpha = -i(\alpha+iJ\alpha).
\end{align*}
So $J$ acts on $\Lambda^1 \otimes \mathbb C$ via multiplication by $\pm i$ on forms $\alpha \mp iJ\alpha$.
Define the spaces of complex $(1,0)$- and $(0,1)$-forms
\[\Lambda^{1,0} \coloneqq \{\alpha+iJ\alpha: \alpha \in \Lambda^1\}, \qquad \Lambda^{0,1} \coloneqq \{\alpha-iJ\alpha: \alpha \in \Lambda^1\},\]
so as to have the $J$-decomposition into $\pm i$-eigenspaces \[\Lambda^1 \otimes \mathbb C = \Lambda^{1,0} \oplus \Lambda^{0,1}.\]
Note that if $\beta \in \Lambda^{1,0}$, then there is a real form $\alpha$ such that $\beta = \alpha+iJ\alpha$, and hence $\overline \beta = \alpha-iJ\alpha \in \Lambda^{0,1}$.
So the correspondence $\beta \in \Lambda^{1,0} \to \overline \beta \in \Lambda^{0,1}$ is a linear isomorphism, provided that complex scalars act conjugated on $\Lambda^{0,1}$.
Therefore, we can write $\overline \Lambda{}^{0,1} = (\Lambda^{0,1})^* = \Lambda^{1,0}$, and the above identity can be written as
\begin{equation}
\label{eq:decomposition-lambda1}
\Lambda^1 \otimes \mathbb C = \Lambda^{1,0} \oplus \overline \Lambda{}^{1,0}.
\end{equation}
We lastly note that all spaces considered are unitary representations (as elements in $\mathrm{U}(n)$ commute with $J$), and thus \eqref{eq:decomposition-lambda1} is a $\mathrm{U}(n)$-decomposition.
It follows that \[\Lambda^1 = [\![\Lambda^{1,0}]\!],\] cf.\ subsection \ref{subsec:real-quaternionic-representations}.
Clearly, $\Lambda^1$ is $\mathrm{U}(n)$-irreducible.

We now look at forms of higher order. Define the spaces of $(p,0)$- and $(0,q)$-forms as
\[
\Lambda^{p,0} \coloneqq \Lambda^p(\Lambda^{1,0}), \qquad \Lambda^{0,q} \coloneqq \Lambda^q(\Lambda^{0,1}).
\]
Then $\Lambda^{p,q}$ is the space $\Lambda^{p,0} \otimes \Lambda^{0,q}$.
The action of $J$ can be carried over to each of these spaces by first extending the action of $J$ to the tensor algebra over $V$ and then imposing that it preserves the tensor product. 
We then have relative decompositions into eigenspaces. 
For instance, let $\alpha, \beta$ be complex one-forms.
\begin{enumerate}
\item If $\alpha,\beta \in \Lambda^{1,0}$ or $\alpha,\beta \in \Lambda^{0,1}$, then $J(\alpha \wedge \beta) = J\alpha \wedge J\beta = -\alpha \wedge \beta$;
\item If $\alpha \in \Lambda^{1,0}$ and $\beta \in \Lambda^{0,1}$, then $J(\alpha \wedge \beta) = J\alpha \wedge J\beta = +\alpha \wedge \beta$.
\end{enumerate}
So $J$ acts via multiplication by $-1$ on $\Lambda^{2,0} \oplus \Lambda^{0,2}$, and by $+1$ on $\Lambda^{1,1}$.
We then have the $J$-decomposition into eigenspaces \[\Lambda^2 \otimes \mathbb C = (\Lambda^{2,0} \oplus \Lambda^{0,2}) \oplus \Lambda^{1,1}.\]
Note that $\overline \Lambda{}^{1,1} = \Lambda^{1,1}$, so $\Lambda^{1,1}$ is self-conjugate, and $\overline \Lambda{}^{2,0}=\Lambda^{0,2}$.
Therefore, conjugation on $\Lambda^{1,1}$ is an equivariant conjugate-linear map that squares to the identity.
Following again subsection \ref{subsec:real-quaternionic-representations}, its $+1$-eigenspace $[\Lambda^{1,1}]$ is a real $\mathrm{U}(n)$-representation, and
\[[\Lambda^{1,1}] \otimes \mathbb C = \Lambda^{1,1}.\]
Similarly, by the above observations we have
\[[\![\Lambda^{2,0}]\!] \otimes \mathbb C = \Lambda^{2,0}\oplus \Lambda^{0,2},\]
so we can write
\[\Lambda^2 = [\![\Lambda^{2,0}]\!] \oplus [\Lambda^{1,1}].\]
\begin{example}
\label{ex:decomposition-lambda2}
In $\mathbb R^4$ (so $n=2$), consider the $J$-adapted basis given by $e_1$, $Je_1$, $e_2$, $Je_2$.
A basis for $\Lambda^{1,0}$ is given by $e_1+iJe_1$, $e_2+iJe_2$, and a basis of $\Lambda^{0,1}$ is obtained by conjugating the latter two vectors.
Then a basis for $\Lambda^{1,1}$ is 
\begin{alignat*}{2}
(e_1+iJe_1) & \wedge (e_1-iJe_1), && \qquad (e_1+iJe_1) \wedge (e_2-iJe_2), \\
(e_2+iJe_2) & \wedge (e_1-iJe_1), && \qquad (e_2+iJe_2) \wedge (e_2-iJe_2).
\end{alignat*}
Taking a complex linear combination $\alpha$ of these four vectors and imposing $\alpha = \overline{\alpha}$ forces $\alpha$ to be a real combination of
\[e_1\wedge Je_1, \qquad e_2\wedge Je_2, \qquad e_1\wedge e_2+Je_1\wedge Je_2, \qquad Je_1\wedge e_2-e_1\wedge Je_2.\]
So $[\Lambda^{1,1}]$ is the real four-dimensional vector space generated by these four vectors. Note that $J$ acts as $+\mathrm{id}$ on each of them.
Also, mapping
\begin{align*}
e_1\wedge Je_1 \mapsto e_1\wedge Je_1+e_2\wedge Je_2, \\
e_2\wedge Je_2 \mapsto e_1\wedge Je_1-e_2\wedge Je_2,
\end{align*}
gives a different basis of $[\Lambda^{1,1}]$ where now $\sigma$ corresponds to the first basis vector.
One writes \[[\Lambda^{1,1}] = [\Lambda_0^{1,1}] \oplus \mathbb R \sigma,\] where $[\Lambda_0^{1,1}]$ is the orthogonal complement of $\mathbb R \sigma$ inside $[\Lambda^{1,1}]$ (with respect to the scalar product in \eqref{eq:inner-product-exterior-algebra}). 
A basis for $\Lambda^{2,0}\oplus \Lambda^{0,2}$ is given by $\beta$ and $\overline \beta$, where \[\beta=e_1\wedge e_2-Je_1\wedge Je_2+i(e_1\wedge Je_2+Je_1\wedge e_2).\]
Then $[\![\Lambda^{2,0}]\!]$ is two-dimensional and generated by $e_1\wedge e_2-Je_1\wedge Je_2$, $e_1\wedge Je_2+Je_1\wedge e_2$.
It is clear that $J$ acts as $-\mathrm{id}$ on each of them.
\end{example}
Generalising Example \ref{ex:decomposition-lambda2} to any dimension, we have a $J$-decomposition
\[\Lambda^2 = [\![\Lambda^{2,0}]\!] \oplus [\Lambda^{1,1}_0] \oplus \mathbb R,\]
where $[\Lambda^{1,1}_0]$ is the orthogonal complement of $\mathbb R \sigma$ into $[\Lambda^{1,1}]$ with respect to the inner product \eqref{eq:inner-product-exterior-algebra}.
Another way to understand this decomposition is by means of Lie algebras.
By Proposition \ref{rmk:lambda-so}, we have $\Lambda^2 V = \mathfrak{so}(2n) \supset \mathfrak{u}(n) = \mathfrak{su}(n) \oplus \mathbb R$.
We can write $\mathfrak{so}(2n)=\mathfrak{u}(n)^{\perp} \oplus \mathfrak{u}(n) = \mathfrak{u}(n)^{\perp} \oplus \mathfrak{su}(n) \oplus \mathbb R$, where $\mathfrak{u}(n)^{\perp}$ is the orthogonal complement of $\mathfrak{u}(n)$ inside $\mathfrak{so}(2n)$ with respect to the Killing form. 
So we have
\[\mathfrak{so}(2n) = \mathfrak{u}(n)^{\perp} \oplus \mathfrak{su}(n) \oplus \mathbb R.\]
But matrices in the Lie algebra $\mathfrak{u}(n) \subset \mathfrak{so}(2n)$ act on $\mathbb R^{2n}$ and commute with $J$, so $\mathfrak{u}(n)$ corresponds to $[\Lambda^{1,1}]$ and hence $\mathfrak{u}(n)^{\perp}$ corresponds to $[\![\Lambda^{2,0}]\!]$. 
Furthermore, $\sigma \in \Lambda^2$ corresponds to $J \in \mathfrak{so}(2n)$, so the two orthogonal decompositions 
\begin{align*}
\Lambda^2\mathbb R^{2n} & = [\![\Lambda^{2,0}]\!] \oplus [\Lambda^{1,1}_0] \oplus \mathbb R, \\
\mathfrak{so}(2n) & = \mathfrak{u}(n)^{\perp} \oplus \mathfrak{su}(n) \oplus \mathbb R, 
\end{align*}
are equivalent. 

In general, if $p \neq q$ we have \[[\![\Lambda^{p,q}]\!] \otimes \mathbb C = \Lambda^{p,q} \oplus \Lambda^{q,p}, \qquad [\Lambda^{p,p}] \otimes \mathbb C = \Lambda^{p,p}.\]
As regards the spaces $\Lambda^p \mathbb R^{2n}$, complexifying and decomposing with respect to $J$ yields
\[\Lambda^p\mathbb R^{2n} \otimes \mathbb C = \Lambda^{p,0} \oplus \Lambda^{p-1,1} \oplus \dots \oplus \Lambda^{1,p-1} \oplus \Lambda^{0,p}.\]
For low $p$, the same ideas above lead to the decompositions
\begin{align*}
\Lambda^3 & = [\![\Lambda^{3,0}]\!] \oplus [\![\Lambda^{2,1}]\!], \\
\Lambda^4 & = [\![\Lambda^{4,0}]\!] \oplus [\![\Lambda^{3,1}]\!] \oplus [\Lambda^{2,2}], \\
\Lambda^5 & = [\![\Lambda^{5,0}]\!] \oplus [\![\Lambda^{4,1}]\!] \oplus [\![\Lambda^{3,2}]\!], 
\end{align*}
and so on.
We further note that wedging with $\sigma$ gives a map \[\wedge \sigma \colon [\![\Lambda^{p,q}]\!] \to [\![\Lambda^{p+1,q+1}]\!]\] which is injective for $n>p+q$ and $\mathrm{U}(n)$-equivariant (cf.\ e.g.\ Huybrechts \cite[Chapter 1]{huybrechts} or Weil \cite{weil}),
so each copy of $[\![\Lambda^{p,q}]\!]$ or $[\Lambda^{p,p}]$ with either $p\geq 1$ or $q\geq 1$ contains the image of $[\![\Lambda^{p-1,q-1}]\!]$ under this map.
Let us write \[[\![\Lambda^{p,q}]\!] = [\![\Lambda_0^{p,q}]\!] \oplus [\![\Lambda^{p-1,q-1}]\!], \qquad [\Lambda^{p,p}]=[\Lambda_0^{p,p}] \oplus [\Lambda^{p-1,p-1}].\] 
One then gets the following decompositions by applying the above process multiple times.
\begin{proposition}[Salamon \cite{salamon}]
\label{thm:u(n)-decompositions}
Let $\Lambda^k$ be the space of $k$-forms on the Hermitian vector space $V=(\mathbb R^{2n},g,J)$.
Let the group $\mathrm{U}(n)$ act on $V$ preserving the Hermitian structure.
The following are $\mathrm{U}(n)$-decompositions into irreducible summands:
\begin{align*}
\Lambda^1 & = [\![\Lambda^{1,0}]\!], \\
\Lambda^2 & = [\![\Lambda^{2,0}]\!] \oplus [\Lambda_0^{1,1}] \oplus \mathbb R, \\
\Lambda^3 & = [\![\Lambda^{3,0}]\!] \oplus [\![\Lambda_0^{2,1}]\!] \oplus \Lambda^1, \\
\Lambda^4 & = [\![\Lambda^{4,0}]\!] \oplus [\![\Lambda_0^{3,1}]\!] \oplus [\Lambda_0^{2,2}] \oplus \Lambda^2, \\
\Lambda^5 & = [\![\Lambda^{5,0}]\!] \oplus [\![\Lambda_0^{4,1}]\!] \oplus [\![\Lambda_0^{3,2}]\!] \oplus \Lambda^3.
\end{align*}
The subspaces above are non-zero only when $n \geq 5$. 
\end{proposition}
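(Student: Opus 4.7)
The plan is to carry out, uniformly for $k=1,\dots,5$, the same bigrading-then-Lefschetz scheme already illustrated in the text for $\Lambda^1$ and $\Lambda^2$, and then to verify irreducibility of each resulting piece. First I would pass to the complexification and extend $J$ by $\mathbb{C}$-linearity. Since $J$ acts by $\pm i$ on the two summands of $\Lambda^1\otimes\mathbb{C}$, on a wedge of $p$ factors of type $(1,0)$ and $q$ of type $(0,1)$ it acts by $i^{p-q}$, producing the bigrading $\Lambda^k\otimes\mathbb{C}=\bigoplus_{p+q=k}\Lambda^{p,q}$; this is $\mathrm{U}(n)$-invariant because elements of $\mathrm{U}(n)$ commute with $J$. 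Complex conjugation swaps $\Lambda^{p,q}$ with $\Lambda^{q,p}$, so the decomposition descends to a real decomposition of $\Lambda^k$ into summands $[\![\Lambda^{p,q}]\!]$ for $p>q$ and $[\Lambda^{p,p}]$.

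The second step is to peel off, inside each such summand, the image of the $\mathrm{U}(n)$-equivariant Lefschetz-type operator $\wedge\sigma$, writing
\[
[\![\Lambda^{p,q}]\!]=[\![\Lambda^{p,q}_0]\!]\oplus\sigma\wedge[\![\Lambda^{p-1,q-1}]\!],
\]
where the second summand is isomorphic, via $\wedge\sigma$, to the whole lower-bidegree space (and similarly for $[\Lambda^{p,p}]$). Here one invokes the cited injectivity of $\wedge\sigma\colon[\![\Lambda^{p-1,q-1}]\!]\to[\![\Lambda^{p,q}]\!]$ valid for $n>p+q$. Iterating until $p=0$ or $q=0$, and substituting back the already-established formulas $\Lambda^1=[\![\Lambda^{1,0}]\!]$, $\Lambda^2=[\![\Lambda^{2,0}]\!]\oplus[\Lambda^{1,1}_0]\oplus\mathbb{R}$, and $\Lambda^3$ into the candidate formulas for $\Lambda^3$, $\Lambda^4$, $\Lambda^5$ respectively, reproduces exactly the list in the statement, up to identifying each repeated summand with its isomorphism class.

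The last, and most delicate, step is to prove that each primitive piece $[\![\Lambda^{p,q}_0]\!]$ and $[\Lambda^{p,p}_0]$ is $\mathrm{U}(n)$-irreducible: the bigrading and Lefschetz peeling alone only exhibit invariant summands and do not rule out further splitting. Two routes are available. The direct one is to apply the compact group analogue of Proposition \ref{prop:invariant-quadratic-forms}: one enumerates all elementary invariants on $\Lambda^k$ built from iterated contractions against $g$ and $\sigma$, shows that the resulting space of $\mathrm{U}(n)$-invariant quadratic forms has dimension equal to the number of summands appearing on the right-hand side, and then concludes via Remark \ref{rmk:estimate-irreps}. The more conceptual alternative is to exhibit a highest-weight vector in each primitive piece with respect to the standard diagonal maximal torus of Example \ref{ex:diag-max-torus-unitary-group}, check that the resulting highest weights are pairwise distinct, and invoke Theorem \ref{thm:weyl1}.

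The main obstacle is precisely this irreducibility: one must be sure that no further $\mathrm{U}(n)$-invariant subspace has been missed, and that the bookkeeping is consistent with the range $n\geq 5$ under which every summand listed in the $\Lambda^5$-line is genuinely non-zero. In the invariant-forms approach this amounts to showing that the natural family of quadratic invariants, obtained by contracting a form twice with itself and possibly inserting $\sigma$ or $g$, is both complete and linearly independent, with extra care in bidegrees $(p,p)$ where additional $\sigma$-contractions are available. In the highest-weight approach it amounts to writing down a $(p,q)$-type wedge of the standard basis, verifying it is primitive (killed by the adjoint of $\wedge\sigma$), and comparing its weight with those of candidate vectors in the other summands.
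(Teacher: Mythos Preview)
Your proposal is correct and follows essentially the same route as the paper: first the $(p,q)$-bigrading of $\Lambda^k\otimes\mathbb C$ via the action of $J$, then the Lefschetz peeling $[\![\Lambda^{p,q}]\!]=[\![\Lambda^{p,q}_0]\!]\oplus\sigma\wedge[\![\Lambda^{p-1,q-1}]\!]$ using the injectivity of $\wedge\sigma$ for $n>p+q$, iterated to produce the displayed lists. The paper does not actually prove irreducibility of the primitive pieces but simply refers to Weil, Salamon, and \v{Z}elobenko; your two suggested approaches (counting invariant quadratic forms as in Proposition~\ref{prop:invariant-quadratic-forms}, or exhibiting highest-weight vectors) are both viable and go somewhat further than the text itself, which is appropriate given that $\mathrm U(n)$ is not semisimple and so requires the extra care you mention.
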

Irreducible decompositions for higher degree forms are obtained in an analogous manner.
The irreducibility of these decompositions is claimed in Weil \cite{weil} and Salamon \cite[Chapter 3]{salamon}, but essentially follows by the representation theory of the unitary group.
We recall that $\mathrm{U}(n)$ is not semisimple, so it does not immediately fit into our discussion in Section \ref{sec:representation-theory}.
We refer to \v{Z}elobenko \cite{zelobenko} for more details on this case.

\begin{remark}
\label{rmk:u(2)-dec}
Note that the $\mathrm{U}(2)$-decomposition 
\[\Lambda^2\mathbb R^4=[\![\Lambda^{2,0}]\!] \oplus [\Lambda_0^{1,1}] \oplus \mathbb R\]
in Example \ref{ex:decomposition-lambda2} can be refined for $\mathrm{SU}(2)$. 
In fact, $\mathrm{SU}(2)$ is exactly the group acting on $\Lambda^2 \mathbb R^4$ preserving the two generators of $[\![\Lambda^{2,0}]\!]$, and hence the corresponding $\mathrm{SU}(2)$-decomposition is
\[\Lambda^2\mathbb R^4=\mathbb R \oplus \mathbb R \oplus \mathbb R \oplus [\Lambda_0^{1,1}].\]
This complements the content of Digression \ref{digression:so(4)}.
\end{remark}

\begin{digression}
\label{digression:su3}
In dimension $6$, $\psi_{\pm}$ as in Digression \ref{digression:so(4)} are three-forms, so we have different decompositions of $\Lambda^2$ and $\Lambda^3$.
The group $\mathrm{SU}(3)$ acts on $\mathbb C^3$ in the standard way. 
As usual, we identify $\mathbb C^3$ with $\mathbb R^6$ equipped with the standard Hermitian structure, or equivalently the standard scalar product $g$ and complex structure $J$.
Take the $J$-adapted orthonormal basis $e_1,Je_1,e_2,Je_2,e_3,Je_3$. 
Then $\mathrm{SU}(3)$ preserves the forms
\begin{align*}
\sigma & \coloneqq g(J{}\cdot{},{}\cdot{}) = e_1\wedge Je_1+e_2\wedge Je_2+e_3\wedge Je_3, \\
\psi_+ & \coloneqq e_1 \wedge e_2 \wedge e_3-Je_1\wedge Je_2\wedge e_3-e_1\wedge Je_2\wedge Je_3-Je_1\wedge e_2\wedge Je_3, \\
\psi_- & \coloneqq e_1\wedge e_2\wedge Je_3-Je_1\wedge Je_2\wedge Je_3+e_1\wedge Je_2\wedge e_3+Je_1\wedge e_2\wedge e_3. 
\end{align*}
The last two forms are respectively real and imaginary part of the complex volume form $\psi_{\mathbb C} = \psi_++i\psi_-$, which has type $(3,0)$ with respect to $J$.  
This implies in particular that $[\![\Lambda^{3,0}]\!]$ splits as $[\![\Lambda^{3,0}]\!] = \mathbb R \psi_+ \oplus \mathbb R \psi_-$.
Combining this and the content of Remark \ref{rmk:u(2)-dec}, one deduces $[\![\Lambda^{n,0}]\!] = \mathbb R \oplus \mathbb R$ is an irreducible decomposition for $\mathrm{SU}(n)$.
The remaining decompositions in Proposition \ref{thm:u(n)-decompositions} are unchanged.
\end{digression}

\subsection{On \texorpdfstring{$\mathrm G_2$}{G2} and \texorpdfstring{$\mathrm{Spin}(7)$}{Spin7}}
\label{subsec:on-g2-spin7}

We now introduce the groups $\mathrm G_2$ and $\mathrm{Spin}(7)$, which model geometries in dimension $7$ and $8$ respectively.
The material presented in this section is extrapolated from Bryant \cite{bryant}.
For a comprehensive overview of $\mathrm G_2$ and $\mathrm{Spin}(7)$ see e.g.\ the notes of Karigiannis \cite{karigiannis}.

Let $V=\mathbb R^7$. Take any basis $e_1,\dots,e_7$, and let $e^1,\dots,e^7$ be the dual basis.
For simplicity, let us write $e^{ijk} = e^i \wedge e^j \wedge e^k \in \Lambda^3 V^*$ (a similar convention will be used for forms of different degrees).
Define the three-form
\begin{equation}
\label{eq:g2-3-form}
\varphi \coloneqq e^{123}+e^{145}+e^{167}+e^{246}-e^{257}-e^{347}-e^{356}.
\end{equation}
This form encodes one possible multiplication table for the imaginary octonions in the following sense.
The vector space of octonions $\mathbb O$ is isomorphic to $\mathbb R^8$.
Write $e_0,e_1,\dots,e_7$ for a basis of $\mathbb O$, and let $\mathbb O = \mathbb Re_0 \oplus V$ be the splitting into real and imaginary octonions.
By definition of $\varphi$, $e_1e_2=e_3$, $e_2e_3=e_1$, and so on. 
Skew-symmetry of $\varphi$ tells us each $e_k^2$ is real (in fact, $e_0^2=1$ and $e_k^2=-1$ for $k=1,\dots,7$), and that $e_ie_j=-e_je_i$ for $i\neq j$ in $\{1,\dots,7\}$.
Associativity fails as, for instance, $e_1(e_2e_5)=-e_1e_7=+e_6$, but $(e_1e_2)e_5=e_3e_5=-e_6$.
The same algebraic structure can be encoded in a cross product, which we will see below.

Let us define 
\begin{equation*}
\mathrm G_2 \coloneqq \{A \in \mathrm{Aut}(V): A\varphi = \varphi\}.
\end{equation*}
It is a result of Schouten \cite{schouten} that this group is of \lq\lq type $\mathrm G_2$\rq\rq, in the sense that its Lie algebra is isomorphic to the Lie algebra $\mathfrak g_2$ (cf.\ the classification of simple Lie algebras \cite{fulton-harris}, or the original paper by Killing \cite{killing0} on the classification of simple transformation groups).
\begin{theorem}[Bryant \cite{bryant}]
\label{thm:g2}
The group $\mathrm G_2$ is compact, connected, simply connected, simple, centreless, and has dimension $14$.
Also, $V$ is an irreducible representation of $\mathrm G_2$, and $\mathrm G_2$ acts transitively on the spaces of lines in $V$ and two-planes in $V$.
Finally, the group $\mathrm G_2$ is isomorphic to the group of algebra automorphisms of the octonions.
\end{theorem}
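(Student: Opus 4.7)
The plan is to first realise $\mathrm G_2$ inside $\mathrm{SO}(7)$, then exploit its transitive action on the unit sphere of $V$ to extract the global information (connectedness, simple connectedness, irreducibility), and finally identify it with $\mathrm{Aut}(\mathbb O)$ through the cross product.

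\textbf{Metric and compactness.} The form $\varphi$ canonically induces a positive-definite inner product $g_\varphi$ and a volume form $\mathrm{vol}_\varphi$ on $V$ through an identity of the shape
\[
g_\varphi(u,v)\,\mathrm{vol}_\varphi \;=\; \tfrac{1}{6}\,(u \lrcorner \varphi) \wedge (v \lrcorner \varphi) \wedge \varphi,
\]
whose positive-definiteness one verifies on the basis $e_1,\dots,e_7$. Since the right-hand side is $\mathrm{GL}(V)$-equivariant in $\varphi$, any $A \in \mathrm G_2$ preserves $g_\varphi$ and $\mathrm{vol}_\varphi$, so $\mathrm G_2 \hookrightarrow \mathrm{SO}(7)$. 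This already gives compactness, and permits the introduction of the $\mathrm G_2$-equivariant cross product $\times\colon V \times V \to V$ defined by $g_\varphi(u\times v,w) = \varphi(u,v,w)$.

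\textbf{Dimension 14.} I would compute the differential of the orbit map $\mathrm{GL}(V) \to \Lambda^3 V^*$, $A \mapsto A\cdot \varphi$, at the identity: this is the linear map $\mathfrak{gl}(V) \to \Lambda^3 V^*$, $X \mapsto -X\cdot \varphi$, whose kernel is the Lie algebra of $\mathrm G_2$. A direct calculation in the basis $e_1,\dots,e_7$ shows that this map is surjective onto the $35$-dimensional target, so $\dim \mathrm G_2 = 49 - 35 = 14$ and the $\mathrm{GL}(V)$-orbit of $\varphi$ is open.

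\textbf{Transitivity on $S^6$ and the stabiliser.} Next I would show that $\mathrm G_2$ acts transitively on the unit sphere $S^6 \subset V$ by constructing, for any unit vector $u$, an adapted orthonormal frame of the form $\{u,\,v,\,u\times v,\,w,\,u\times w,\,v\times w,\,u\times(v\times w)\}$ and matching it against the standard frame. The stabiliser $H$ of $e_1$ preserves $V' \coloneqq e_1^\perp$, the inner product on $V'$, and the endomorphism $J = e_1 \times (\cdot)$, which satisfies $J^2 = -\mathrm{id}_{V'}$ by the identity $u \times (u \times v) = -g_\varphi(u,u)\,v + g_\varphi(u,v)\,u$. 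Hence $H \subset \mathrm U(3)$; examining the action of $H$ on the $(3,0)$-part of $e_1 \lrcorner \varphi$ shows that $H$ preserves a complex volume form, so $H \subset \mathrm{SU}(3)$. A dimension count ($14 - 6 = 8 = \dim \mathrm{SU}(3)$) forces $H = \mathrm{SU}(3)$, giving the diffeomorphism $S^6 = \mathrm G_2/\mathrm{SU}(3)$.

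\textbf{Connectedness, simple connectedness, irreducibility, transitivity on planes.} The long exact homotopy sequence of the fibration $\mathrm{SU}(3) \to \mathrm G_2 \to S^6$, together with $\pi_0(\mathrm{SU}(3)) = \pi_1(\mathrm{SU}(3)) = 0$ and $\pi_1(S^6) = 0$, yields $\pi_0(\mathrm G_2) = \pi_1(\mathrm G_2) = 0$. Transitivity on lines is immediate from transitivity on $S^6$, and irreducibility of $V$ follows since any proper invariant subspace is stable under the Lie group but cannot contain every line. For two-planes, I move any two-plane to one through $e_1$ by Step 3, and then use the fact that $\mathrm{SU}(3)$ acts transitively on $S^5 \subset V'$, so on lines in $V'$.

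\textbf{Centre, simplicity, and $\mathrm{Aut}(\mathbb O)$.} For the centre: $V$ is an irreducible real $\mathrm G_2$-representation of odd dimension $7$, so the commuting algebra is $\mathbb R$, every central element of $\mathrm G_2$ acts as $\pm \mathrm{id}$, and $-\mathrm{id} \notin \mathrm{SO}(7)$ (determinant $-1$), leaving only the identity. For simplicity, I would decompose $\mathfrak{so}(7) = \mathfrak g_2 \oplus V$ as $\mathrm G_2$-modules using the embedding $\mathrm G_2 \hookrightarrow \mathrm{SO}(7)$ and Proposition~\ref{rmk:lambda-so}, and then argue that $\mathfrak g_2$ is irreducible as a $\mathrm G_2$-representation (highest weight the short dominant root), which together with centrelessness and semisimplicity of a centreless compact connected Lie algebra gives simplicity. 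Finally, recovering $\mathbb O = \mathbb R \oplus V$ from $(g_\varphi,\times)$ via the Cayley--Dickson formula
\[
(a+u)(b+v) \;=\; \bigl(ab - g_\varphi(u,v)\bigr) + \bigl(av + bu + u \times v\bigr),
\]
shows that every $A \in \mathrm G_2$ extends uniquely to an algebra automorphism $\hat A(a+u) = a + Au$, and that every $\mathbb O$-automorphism fixes $1$, preserves $V = 1^\perp$ with $g_\varphi$ and $\times$, and therefore preserves $\varphi$.

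The main obstacle is the step asserting simplicity of $\mathrm G_2$: unlike the other parts, which reduce to explicit linear algebra with $\varphi$ or to a homotopy sequence, establishing irreducibility of the $14$-dimensional adjoint representation requires either a hands-on determination of the root system of $\mathfrak g_2$ or an appeal to the classification of simple complex Lie algebras of rank $2$. The transitivity on $S^6$ and the precise identification of the stabiliser with $\mathrm{SU}(3)$ are also delicate, but they follow systematically once the cross product is in hand.
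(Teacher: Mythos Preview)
Your approach is correct and closely parallels the paper's, but with a different choice of fibration and a weaker simplicity argument. The paper works with the stabiliser of an orthonormal \emph{pair} $(e_1,e_2)$, which it identifies with $\mathrm{SU}(2)$, obtaining the fibration $\mathrm{SU}(2) \to \mathrm G_2 \to \mathcal V_2(\mathbb R^7)$ over the Stiefel manifold of $2$-frames; you instead use the stabiliser of a single vector $e_1$, identified with $\mathrm{SU}(3)$, and the fibration $\mathrm{SU}(3) \to \mathrm G_2 \to S^6$. Both yield connectedness and simple connectedness via the homotopy sequence, and both give transitivity on $2$-planes (yours in two steps, the paper's directly). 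The dimension count also differs: you claim surjectivity of $\mathfrak{gl}(7) \to \Lambda^3 V^*$ by brute force, whereas the paper squeezes $\dim \mathrm G_2$ between $14$ (from the orbit embedding in $\Lambda^3 V^*$) and $14$ (from the Stiefel fibration), avoiding that computation.

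The genuine improvement the paper offers is for simplicity, precisely where you flag an obstacle. Rather than appealing to irreducibility of the adjoint representation or to the rank-$2$ classification, the paper pushes the homotopy sequence one step further to compute $\pi_3(\mathrm G_2) = \mathbb Z$, and then invokes the fact that for a compact, simply connected, semisimple Lie group the rank of $\pi_3$ equals the number of simple factors. Your fibration $\mathrm{SU}(3) \to \mathrm G_2 \to S^6$ gives this just as readily: $\pi_4(S^6)=\pi_3(S^6)=0$ and $\pi_3(\mathrm{SU}(3))=\mathbb Z$ force $\pi_3(\mathrm G_2)=\mathbb Z$. This removes your main obstacle entirely and is worth incorporating. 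One small caution: your frame-matching proof of transitivity on $S^6$ needs the cross-product identities (e.g.\ $u\times(u\times v) = -|u|^2 v + g(u,v)u$) \emph{before} transitivity is available, so you must verify them by direct computation in the standard basis rather than by the paper's trick of reducing to $e_1,e_2$ via transitivity.
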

\begin{proof}
Let us consider the map $b \colon V \times V \to \Lambda^7V^*$ defined by 
\[6b(x,y)\coloneqq  (x\lrcorner\ \varphi) \wedge (y\lrcorner\ \varphi) \wedge \varphi.\]
We claim that the map $b$ is $\mathrm G_2$-equivariant. If $A \in \mathrm G_2$, we have
\begin{align*}
6b(Ax,Ay) & = (Ax\lrcorner\ \varphi) \wedge (Ay\lrcorner\ \varphi) \wedge \varphi \\
& = \varphi(x,A^{-1}{}\cdot{},A^{-1}{}\cdot{}) \wedge \varphi(y,A^{-1}{}\cdot{},A^{-1}{}\cdot{}) \wedge A\varphi \\
& = A(x\lrcorner\ \varphi) \wedge A(y\lrcorner\ \varphi) \wedge A\varphi \\
& = 6Ab(x,y).
\end{align*}
Set $x=\sum_{i=1}^7 x^ie_i$ and $y=\sum_{i=1}^7 y^ie_i$. Then one computes 
\[b(x,y) = \left(\sum_{i=1}^7 x^iy^i\right) e^{12\dots 7}.\]
Define $g(x,y) \coloneqq \sum_{i=1}^7 x^iy^i$. For $A \in \mathrm G_2$ we compute
\[g(Ax,Ay)e^{12\dots 7} = b(Ax,Ay)=Ab(x,y)=g(x,y)(\det A)^{-1}e^{12\dots 7}.\]
We deduce $g(Ax,Ay) = (\det A)^{-1}g(x,y)$.

On the other hand, we have the formula (recall \eqref{eq:inner-product-exterior-algebra})
\[\det ([g(x_i,x_j)]_{ij}) = (e^{12\dots 7}(x_1\wedge \dots \wedge x_7))^2, \qquad x_i \in V.\]
If we set $x_i=A^{-1}e_i$, then by the above equivariance law for $g$ we compute
\begin{align*}
(\det A)^7 & = \det ((\det A)[g(e_i,e_j)]_{ij}) \\
& = \det ([g(A^{-1}e_i,A^{-1}e_j)]_{ij}) \\
& = (e^{12\dots 7}(A^{-1}e_1\wedge \dots \wedge A^{-1}e_7))^2 = (\det A)^{-2}.
\end{align*}
This forces $(\det A)^9=1$, so $\det A=1$. It follows that any element $A \in \mathrm G_2$ preserves the scalar product $g$ and $\det A=1$, so $\mathrm G_2 \subset \mathrm{SO}(7)$.
Since $\mathrm G_2 \subset \mathrm{SO}(7)$ is closed, it is also compact.
It can be shown that $\mathrm G_2$ contains a subgroup generated by $\mathrm{SO}(4)$ and $\mathrm{SU}(3) \subset \mathrm{SO}(7)$, from which it follows that $\mathrm G_2$ acts transitively on $S^6 \subset V$ (see e.g.\ \cite{bryant-salamon, montgomery-samelson}).
This implies $\mathrm G_2$ acts irreducibly on $V$.

Combining $\varphi$ and $g$ we get a map $\times \colon V \times V \to V$ such that 
\[g(x\times y,z) \coloneqq \varphi(x,y,z).\]
Bilinearity and skew-symmetry of $\varphi(x,y,z)$ in $x$ and $y$ imply that $\times$ is skew-symmetric and bilinear.
Invariance of $\varphi$ implies equivariance of $\times$, i.e.\ $Ax \times Ay = A(x \times y)$ for $A \in \mathrm G_2$.
The map $\times$ is then a cross product on $V$.

Note that $e_1 \times e_2 = e_3$. Let $H \subset \mathrm G_2$ be the stabiliser of $e_1$ and $e_2$.
Then $He_3=H(e_1\times e_2) = He_1 \times He_2=e_1\times e_2=e_3$, so $H$ fixes $e_3$ as well.
If we write $\varphi$ as
\[\varphi = e^{123}+e^1(e^{45}+e^{67})+e^2(e^{46}-e^{57})-e^3(e^{47}+e^{56}),\]
we see that $H$ must act on $\mathbb R^4 = \mathrm{Span}\{e^4,\dots,e^7\}$ preserving the two-forms $e^{45}+e^{67}$, $e^{46}-e^{57}$, $e^{47}+e^{56}$.
By Digression \ref{digression:so(4)}, it follows that $H = \mathrm{SU}(2)$.

Let $\mathcal V_2(\mathbb R^7)$ be the Stiefel manifold of orthonormal $2$-frames in $\mathbb R^7$, or equivalently the space of $2\times 7$ matrices $A$ with $A^TA=\mathrm{id}$.
This is a compact submanifold of $\mathbb R^{14}$ and has dimension $11$.
We have a map $\nu \colon \mathrm G_2 \to \mathcal{V}(\mathbb R^7)$ given by $A \mapsto (Ae_1,Ae_2)$ whose kernel is $H=\mathrm{SU}(2)$.
The fibres of $\nu$ are left cosets of $H$, so $\dim \mathrm G_2 \leq 11+3=14$, with equality if and only if $\nu(\mathrm G_2)$ is open in $\mathcal V_2(\mathbb R^7)$ (i.e.\ $\nu$ is surjective, by compactness of $\mathrm G_2$).
On the other hand, $\dim \Lambda^3 V^*=35$ and $\mathrm{Aut}(V)/\mathrm G_2$ naturally embeds in $\Lambda^3 V^*$ as $\mathrm{GL}(V)\varphi$.
Then $\dim(\mathrm{GL}(V)/\mathrm G_2) \leq 35$, so $49-\dim \mathrm G_2 \leq 35$, so $\dim \mathrm G_2 \geq 14$.
It follows that $\dim \mathrm G_2=14$ and $\nu$ is surjective, i.e.\ $\mathrm G_2$ acts transitively on $2$-frames in $\mathbb R^7$ with generic stabiliser isomorphic to $\mathrm{SU}(2)$.
We then have the homogeneous description $\mathcal V_2(\mathbb R^7) = \mathrm G_2/\mathrm{SU}(2)$ (cf.\ Proposition \ref{prop:homogeneous-spaces}).

If is known that $\mathcal V_2(\mathbb R^7)$ is an $S^5$-bundle over $S^6$, so by the long exact homotopy sequence for the fibration
\[1 \longrightarrow S^5 \longrightarrow \mathcal V_2(\mathbb R^7) \longrightarrow S^6 \longrightarrow 1,\]
one deduces $\pi_i(\mathcal V_2(\mathbb R^7))$ is trivial for $i=0,\dots,4$.
Now, we also have the fibration
\[1 \longrightarrow \mathrm{SU}(2) \longrightarrow \mathrm G_2 \longrightarrow \mathcal V_2(\mathbb R^7) \longrightarrow 1,\]
and we then deduce that $\pi_i(\mathrm G_2)$ is trivial for $i=0,1,2$, whereas $\pi_3(\mathrm G_2) = \mathbb Z$.
Therefore, $\mathrm G_2$ is connected, simply connected, and compact.
By Cartan's criterion \cite[Chapter V, Remark 7.13]{brocker-tomdieck}, $\mathrm G_2$ is semisimple, and its centre $Z(\mathrm G_2)$ is finite.
This implies in particular that $\mathfrak g_2$ is semisimple, i.e.\ it is the direct sum of simple Lie algebras.
The number of its simple summands corresponds to the rank of $\pi_3(\mathrm G_2)=\mathbb Z$, so $\mathfrak g_2$ is simple, and hence $\mathrm G_2$ is simple.

We show that $\mathrm G_2$ has trivial centre.
The characteristic polynomial of any non-trivial element $\lambda \in Z(\mathrm G_2)$ is of degree $7$, so it admits a real root.
Also, if $k = |Z(\mathrm G_2)|$, then $\lambda^k=\mathrm{id}$, so $\pm 1$ are the only possible real eigenvalues of $\lambda$.
Since $\lambda$ is central in $\mathrm G_2$, the map $\lambda \colon V \to V$ is $\mathrm G_2$-equivariant.
But $\mathrm G_2$ acts irreducibly on $V$, so Schur's Lemma, Theorem \ref{thm:schur-lemma}, and Remark \ref{rmk:schur-lemma} imply that $\lambda$ acts as $\pm \mathrm{id}$, whence $Z(\mathrm G_2) \subset \{\pm \mathrm{id}\}$.
But $-\mathrm{id} \not \in \mathrm{SO}(7)$, so $Z(\mathrm G_2)$ is trivial.

We now show two formulas for any $x,y \in V$:
\begin{enumerate}
\item $\lVert x \times y \rVert^2 = \lVert x\rVert^2 \lVert y\rVert^2-g(x,y)^2$, 
\item $x \times (x \times y) = -\lVert x\rVert^2 y + g(x,y)x$.
\end{enumerate}
Since $\mathrm G_2$ acts transitively on orthonormal pairs in $V$, we can restrict to taking $x=x^1e_1$ and $y=y^1e_1+y^2e_2$.
In this case, the two identities are trivial to verify.
On $O \coloneqq \mathbb R e_0\oplus V$, define a $\mathrm G_2$-equivariant multiplication and inner product by
\begin{enumerate}
\item $(x^0e_0+x)(y^0e_0+y) = (x^0y^0-g(x,y))e_0+(x^0y+y^0x+x \times y),$
\item $(x^0e_0+x,y^0e_0+y) = x^0y^0+g(x,y)$.
\end{enumerate}
By the above formulas, $(zw,zw)=(z,z)(w,w)$ and $({}\cdot{},{}\cdot{})$ is positive-definite. 
Then $O$ is isomorphic to the octonions $\mathbb O$ and $\mathrm G_2$ is its automorphism group, so we are done.
\end{proof}
\begin{remark}
The latter characterisation of $\mathrm G_2$ as the automorphism group of the octonions was already known to Cartan \cite{cartan2}.
\end{remark}
\begin{corollary}
The $\mathrm{GL}(V)$-orbit of $\varphi$ in $\Lambda^3V^*$ is open and diffeomorphic to $\mathrm{GL}(V)/\mathrm G_2$.
\end{corollary}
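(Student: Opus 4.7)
The plan is to study the orbit map $F\colon \mathrm{GL}(V)\to \Lambda^3V^*$, $A\mapsto A\varphi$, and use the dimension count already carried out in Theorem \ref{thm:g2}. By definition of $\mathrm G_2$ as the $\mathrm{GL}(V)$-stabiliser of $\varphi$, the fibre of $F$ over $\varphi$ is $\mathrm G_2$, and more generally the fibres of $F$ are the left cosets of $\mathrm G_2$. In particular, $F$ factors as $\mathrm{GL}(V)\to \mathrm{GL}(V)/\mathrm G_2 \stackrel{\bar F}{\longrightarrow} \Lambda^3V^*$, with $\bar F$ injective and image equal to $\mathrm{GL}(V)\varphi$.

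Next I would compute dimensions. By Theorem \ref{thm:g2} one has $\dim \mathrm G_2 = 14$, so
\[
\dim \bigl(\mathrm{GL}(V)/\mathrm G_2\bigr) \;=\; 49 - 14 \;=\; 35 \;=\; \binom{7}{3} \;=\; \dim \Lambda^3V^*.
\]
So source and target of $\bar F$ have the same dimension.

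The key step is to verify that the differential $dF_e\colon \mathfrak{gl}(V)\to \Lambda^3V^*$ at the identity has kernel exactly $\mathfrak g_2$. By the standard formula for orbit maps, $dF_e(X)$ is the infinitesimal $\mathrm{GL}(V)$-action of $X$ on $\varphi$, i.e.\ $X$ viewed as a derivation of the tensor algebra over $V^*$ applied to $\varphi$. Its kernel is precisely the Lie algebra of the stabiliser $\mathrm G_2$, namely $\mathfrak g_2$. Hence $\mathrm{rank}(dF_e) = 49 - 14 = 35 = \dim \Lambda^3V^*$, so $F$ is a submersion at the identity. By $\mathrm{GL}(V)$-equivariance (left translations on $\mathrm{GL}(V)$ intertwine with the $\mathrm{GL}(V)$-action on $\Lambda^3V^*$), $F$ has constant rank $35$ and is a submersion everywhere. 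A submersion is an open map, so its image $\mathrm{GL}(V)\varphi$ is open in $\Lambda^3V^*$.

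Once openness is established, the orbit becomes a smooth $35$-dimensional manifold on which $\mathrm{GL}(V)$ acts transitively with stabiliser $\mathrm G_2$, and Proposition \ref{prop:homogeneous-spaces} yields the diffeomorphism $\mathrm{GL}(V)\varphi = \mathrm{GL}(V)/\mathrm G_2$. The only substantive obstacle is the kernel computation for $dF_e$, and this is essentially automatic once one identifies the Lie algebra of a closed subgroup with the kernel of the infinitesimal action at a fixed point; the rest is a dimension count already packaged inside Theorem \ref{thm:g2}.
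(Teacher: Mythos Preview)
Your argument is correct and is exactly the standard orbit-map / dimension-count argument implicit in the paper: the proof of Theorem~\ref{thm:g2} already establishes $\dim\mathrm G_2=14$ and that $\mathrm{GL}(V)/\mathrm G_2$ embeds in $\Lambda^3V^*$ as the orbit, so the corollary follows immediately from $49-14=35=\dim\Lambda^3V^*$. The paper gives no separate proof, and your submersion argument simply spells out this implicit step.
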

We write $\Lambda_+^3V^*$ for such an orbit, and call it the set of \emph{positive}, or \emph{definite}, or \emph{stable} forms.
The word \lq\lq stable\rq\rq\ is used more generally for forms lying in an open orbit.
Stable forms in dimension $6$, $7$, and $8$ were investigated by Hitchin \cite{hitchin}.

Since $\mathrm{GL}(V)$ has two connected components and $\mathrm G_2$ is connected, $\Lambda_+^3V^*$ has two connected components. 
Each three-form in the orbit of $\varphi$ defines uniquely a Hodge star operator $\star_{\varphi}$ which is $\mathrm G_2$-equivariant, so we also have a four-form $\star_{\varphi} \varphi$.
In this context, we simply write $\star$ for $\star_{\varphi}$.

\begin{figure}
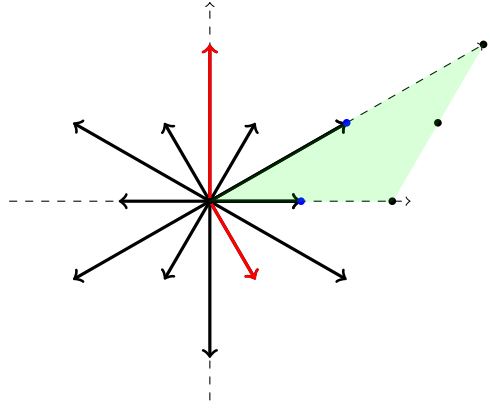

\centering
  \tikzpicture
  [scale=1.2]
  \coordinate (centre) at (0,0);
  \coordinate (little-hex11) at (0.5,0.8660254038);
  \coordinate (little-hex12) at (-0.5,-0.8660254038);
  \coordinate (little-hex21) at (-0.5,0.8660254038);
  \coordinate (little-hex22) at (0.5,-0.8660254038);
  \coordinate (little-hex31) at (-1,0);
  \coordinate (little-hex32) at (1,0);
  \coordinate (big-hex11) at (1.5,0.8660254038);
  \coordinate (big-hex12) at (-1.5,-0.8660254038);
  \coordinate (big-hex21) at (1.5,-0.8660254038);
  \coordinate (big-hex22) at (-1.5,0.8660254038);
  \coordinate (big-hex31) at (0,-1.7320508076);
  \coordinate (big-hex32) at (0,1.7320508076);
  \coordinate (vert-axis-north) at (0,2.2);
  \coordinate (vert-axis-south) at (0,-2.2);
  \coordinate (hor-axis-left) at (-2.2,0);
  \coordinate (hor-axis-right) at (2.2,0);
  \coordinate (end-line) at (3,1.7320508076);
  \coordinate (weight20) at (2,0);
  \coordinate (weight02) at (3,1.7320508076);
  \coordinate (weight11) at (2.5,0.8660254038);
  \draw[<->][very thick][black] (little-hex11) -- (little-hex12);
  \draw[<->][very thick][black] (little-hex21) -- (little-hex22);
  \draw[<->][very thick][black] (little-hex31) -- (little-hex32);
  \draw[<->][very thick][black] (big-hex11) -- (big-hex12);
  \draw[<->][very thick][black] (big-hex21) -- (big-hex22);
  \draw[<->][very thick][black] (big-hex31) -- (big-hex32);
  \draw[->][dashed][black] (vert-axis-south) -- (vert-axis-north);
  \draw[->][dashed][black] (hor-axis-left) -- (hor-axis-right);
  \draw[->][very thick][red] (centre) -- (big-hex32);
  \draw[->][very thick][red] (centre) -- (little-hex22);
  \draw[->][very thick][black] (centre) -- (big-hex11);
  \draw[->][very thick][black] (centre) -- (little-hex32);
  \draw[->][dashed][black] (big-hex11) -- (end-line);
  \fill[blue] (little-hex32) circle [radius=1.2pt];
  \fill[blue] (big-hex11) circle [radius=1.2pt];
  \fill (weight20) circle [radius=1.2pt];
  \fill (weight02) circle [radius=1.2pt];
  \fill (weight11) circle [radius=1.2pt];
  \fill (centre) circle [radius=1pt];
   \draw [fill, opacity=.15, green] (centre) -- (weight20) -- (end-line) -- cycle;
  \endtikzpicture
  \caption{Roots and weights of $\mathrm G_2$. The red roots correspond to a choice of positive simple roots.
  The black dots correspond to some dominant weights, the blue ones are the fundamental weights.
  The shaded green region is the fundamental dual Weyl chamber for the positive simple roots chosen.}
  \label{fig:roots-g2}
\end{figure}

We include here the decomposition of the spaces $\Lambda^k \coloneqq \Lambda^kV^*$ with respect to the action of $\mathrm G_2$.
We write $\Lambda_h^k$ for an irreducible $\mathrm G_2$-invariant submodule of $\Lambda^k$ of dimension $h$.
Since $\Lambda^k$ behaves like $\Lambda^{7-k}$, we give the decompositions only for $k < 7-k$.
\begin{proposition}
\label{prop:g2-decomposition-forms}
We have the following decompositions into irreducible $\mathrm G_2$-invariant modules:
\begin{align*}
\Lambda^2 & = \Lambda_7^2 \oplus \Lambda_{14}^2, \\
\Lambda^3 & = \Lambda_1^3 \oplus \Lambda_7^3 \oplus \Lambda_{27}^3.
\end{align*}
We have isomorphisms $\Lambda_7^2 = V^* = \Lambda_7^3$, $\Lambda_{14}^2 = \mathfrak g_2^{\flat}$, $\Lambda_1^3 = \mathbb R \varphi$, and $\Lambda_{27}^3 = S_0^2V^*$.
\end{proposition}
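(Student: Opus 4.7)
The strategy is to build $\mathrm G_2$-equivariant maps from familiar modules into $\Lambda^2 V^*$ and $\Lambda^3 V^*$, recovering the listed summands, then close the decompositions by a dimension count. Irreducibility of the new summands will be established via Proposition \ref{prop:invariant-quadratic-forms}, applicable to $\mathrm G_2$ by the remark following it, together with Remark \ref{rmk:estimate-irreps}.

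For $\Lambda^2$, I would use the $\mathrm{SO}(V)$-equivariant (hence $\mathrm G_2$-equivariant) isomorphism $\Lambda^2 V^* = \mathfrak{so}(V)$ from Proposition \ref{rmk:lambda-so}. The Lie subalgebra $\mathfrak g_2 \subset \mathfrak{so}(V)$ is a $14$-dimensional $\mathrm G_2$-invariant submodule, irreducible as the adjoint representation of the simple Lie group $\mathrm G_2$ (Theorem \ref{thm:g2}). The $\mathrm G_2$-equivariant linear map $\iota_2 \colon V \to \Lambda^2 V^*$, $v \mapsto v \lrcorner\ \varphi$, is non-zero (evaluate on a basis vector) with irreducible domain (Theorem \ref{thm:g2}), hence injective; its $7$-dimensional image is $\mathrm G_2$-isomorphic to $V = V^*$. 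These two irreducible submodules are non-isomorphic by dimension, so their intersection in $\Lambda^2$ is zero by Schur's Lemma; the identity $21 = 7 + 14$ then yields $\Lambda^2 V^* = \Lambda_7^2 \oplus \Lambda_{14}^2$ with $\Lambda_7^2 = \iota_2(V) \cong V^*$ and $\Lambda_{14}^2 = \mathfrak g_2^\flat$. In particular, $\mathfrak g_2^\perp \cong V$ inside $\mathfrak{so}(V)$.

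For $\Lambda^3$, introduce the $\mathrm G_2$-equivariant map
\[
\sigma_\varphi \colon \mathrm{End}(V) \to \Lambda^3 V^*, \qquad
\sigma_\varphi(A)(X, Y, Z) = \varphi(AX, Y, Z) + \varphi(X, AY, Z) + \varphi(X, Y, AZ).
\]
Its kernel is the infinitesimal stabiliser of $\varphi$ in $\mathfrak{gl}(V)$. Since the $\mathrm{GL}(V)$-orbit of $\varphi$ is open in $\Lambda^3 V^*$ (as used in the proof of Theorem \ref{thm:g2}), this stabiliser has dimension $49 - 35 = 14$, hence coincides with $\mathfrak g_2 \subset \mathfrak{so}(V) \subset \mathfrak{gl}(V)$. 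Thus $\sigma_\varphi$ descends to a $\mathrm G_2$-equivariant isomorphism $\mathrm{End}(V)/\mathfrak g_2 \cong \Lambda^3 V^*$. Decomposing $\mathrm{End}(V) = S^2 V^* \oplus \mathfrak{so}(V)$ via $g$, and then $S^2 V^* = \mathbb R\, g \oplus S_0^2 V^*$ (Proposition \ref{prop:irreps-orthogonal-group}) and $\mathfrak{so}(V) = \mathfrak g_2 \oplus \mathfrak g_2^\perp$ with $\mathfrak g_2^\perp \cong V^*$ from the preceding paragraph, one obtains
\[
\Lambda^3 V^* \cong \mathbb R \oplus S_0^2 V^* \oplus V^*.
\]
The trivial line is $\mathbb R\varphi$ since $\sigma_\varphi(\mathrm{id}_V) = 3\varphi$; the $V^*$-summand coincides with the image of the $\mathrm G_2$-equivariant injection $\iota_3 \colon V \to \Lambda^3 V^*$, $v \mapsto v \lrcorner\ \star\varphi$, as $V$ has multiplicity one; and the $27$-dimensional summand is identified as $\mathrm G_2$-module with $S_0^2 V^*$.

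The main obstacle is irreducibility of the $27$-dimensional summand. I would confirm it by exhibiting three linearly independent $\mathrm G_2$-invariant quadratic forms on $\Lambda^3$, namely $\lVert\alpha\rVert^2$, $\langle\alpha,\varphi\rangle^2$, and $\sum_i\langle\alpha, e_i\lrcorner\ \star\varphi\rangle^2$, with $\{e_i\}$ orthonormal. Linear independence is checked by evaluating these on $\varphi$, on $e_j\lrcorner\ \star\varphi$, and on an element of $\sigma_\varphi(S_0^2 V^*)$. Proposition \ref{prop:invariant-quadratic-forms} and Remark \ref{rmk:estimate-irreps} then force the three-summand decomposition of $\Lambda^3$ to be the finest possible, so each summand is $\mathrm G_2$-irreducible and they are pairwise non-isomorphic. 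An analogous argument with two invariants on $\Lambda^2$ confirms irreducibility of the two summands there.
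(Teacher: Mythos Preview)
Your construction of the decompositions is essentially the paper's: both use the differential of the $\mathrm{GL}(V)$-action on $\varphi$ to produce a surjection $\mathrm{End}(V) \to \Lambda^3 V^*$ with kernel $\mathfrak g_2$ (the paper writes it as $e \otimes w \mapsto w \wedge (e \lrcorner\ \varphi)$, which is your $\sigma_\varphi$ on rank-one endomorphisms), and hence obtain $\Lambda^3 \cong \mathbb R \oplus V^* \oplus S_0^2 V^*$. The $\Lambda^2$ argument is likewise the same, and there you already have irreducibility of both pieces directly, so the quadratic-form count you mention at the end is superfluous.

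The gap is in the irreducibility of $S_0^2 V^*$. Exhibiting three linearly independent $\mathrm G_2$-invariant quadratic forms on $\Lambda^3$ shows that the space of such forms has dimension \emph{at least} three; but Remark~\ref{rmk:estimate-irreps} says this dimension is an \emph{upper} bound on the number of irreducible summands. To conclude there are at most three summands you would need the reverse inequality---that every invariant quadratic form lies in the span of your three---and you have not argued this. Nor is it straightforward: it amounts to computing $\dim(S^2\Lambda^3)^{\mathrm G_2}$, which already presupposes detailed knowledge of the irreducible constituents. The paper closes the argument by a different route: $S_0^2 V^*$ must contain the irreducible $\mathrm G_2$-module of highest weight $(2,0)$, and Freudenthal's dimension formula gives that module dimension $27 = \dim S_0^2 V^*$, forcing equality.
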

\begin{proof}
The rank of $\mathrm G_2$ is $2$, so its weights can be viewed as elements in $\mathbb R^2$, Figure \ref{fig:roots-g2}.
We know that $\mathrm G_2$ acts irreducibly on $V = \mathbb R^7$ (it corresponds to dominant weight $(1,0)$, where coordinates are with respect to the fundamental weights), so it acts irreducibly on $\Lambda^1 \simeq \Lambda^6$.
We observe that the adjoint representation $\mathfrak g_2$ of $\mathrm G_2$ is irreducible (it corresponds to dominant weight $(0,1)$).
Recall that $\Lambda^2=\mathfrak{so}(7)$, cf.\ Proposition \ref{rmk:lambda-so}.
Since $\mathfrak g_2 \subset \mathfrak{so}(7)$, and contracting $\varphi$ with any vector in $V$ gives a $\mathrm G_2$-invariant two-form, we have a decomposition
\[\Lambda^2 = \mathfrak g_2^{\flat} \oplus V^*.\]
Note that the linear map $\alpha \mapsto \star (\varphi \wedge \alpha)$ maps $\Lambda^2$ into itself and is $\mathrm G_2$-equivariant.
A computation shows that its eigenvalues are $+2$ and $-1$, and we have identifications
\begin{align*}
\Lambda_7^2 & = \{\alpha \in \Lambda^2: \star (\varphi \wedge \alpha) = +2\alpha\}=\{\varphi(X,{}\cdot{},{}\cdot{}): X \in V\}, \\
\Lambda_{14}^2 & = \{\alpha \in \Lambda^2: \star(\varphi \wedge \alpha)=-\alpha\}=\mathfrak g_2^{\flat}.
\end{align*}
The corresponding decomposition of $\Lambda^5$ then follows.

Next, consider $\Lambda^3$.
We have an invariant trivial one-dimensional submodule generated by $\varphi$.
The map $\mathrm{Aut}(V) \to \Lambda^3$ given by $A \mapsto A\varphi$ is open.
It follows that the linear map $V \otimes V^* \to \Lambda^3$ defined by $e \otimes w \mapsto w \wedge (e \lrcorner\ \varphi)$ is surjective with kernel $\mathfrak g_2 \subset V \otimes V^*$.
Now, $V \otimes V^*$ is isomorphic to $V^* \otimes V^*$, and hence $\Lambda^3 \oplus \mathfrak g_2 = \Lambda_7^2 \oplus \mathfrak g_2 \oplus S^2V^*$.
It follows that $\Lambda^3 = \Lambda_7^2 \oplus S^2V^*$.
Now $S^2V^*= S_0^2V^* \oplus \mathbb R$ as $\mathrm G_2 \subset \mathrm{SO}(7)$, cf.\ Proposition \ref{prop:decomposition-son}.
The $\mathrm G_2$-module $S_0^2V^*$ has dimension $27$, and must contain the irreducible $\mathrm G_2$-module with dominant weight $(2,0)$.
Freudenthal's formula (see e.g.\ \cite{humphreys}) gives that the latter also has dimension $27$. 
Thus $S_0^2V^*$ is irreducible for $\mathrm G_2$ as well, and we have the decomposition into irreducible $\mathrm G_2$-invariant summands
\begin{equation*}
\Lambda^3 = \mathbb R \oplus V^* \oplus S_0^2V^* \eqqcolon \Lambda_1^3 \oplus \Lambda_7^3 \oplus \Lambda_{27}^3.
\end{equation*}
This decomposition is also explained in more detail in \cite{bryant1}.
We have the following characterisations:
\begin{align*}
\Lambda_7^3 & = \{\star (\varphi\wedge \alpha): \alpha \in V^*\}, \\
\Lambda_{27}^3 & = \{\alpha \in \Lambda^3 V^*: \alpha \wedge \varphi=0, \alpha \wedge \star \varphi=0\}.
\end{align*}
The decomposition of $\Lambda^4$ follows. We refer to \cite{bryant1} for further details.
\end{proof}
We now get to the group $\mathrm{Spin}(7)$. 
As any other spin group, $\mathrm{Spin}(7)$ can be realized as a subgroup of a Clifford algebra \cite{lawson-michelsohn}, but we choose a more ad hoc approach here coming from \cite{bryant}.
Let $V=\mathbb R^7$ with basis $e_1,\dots,e_7$ as above. 
Set $V_+\coloneqq \mathbb R e_0 \oplus V$, and $e^i$, $i=0,\dots,7$, be the dual basis of $V_+$.
Note the identity
\begin{equation}
\label{eq:identity-four-forms}
\Lambda^4V_+ = \bigoplus_{k=0}^4 \Lambda^k \mathbb R \otimes \Lambda^{4-k}V = \Lambda^3V \oplus \Lambda^4V.
\end{equation}
Accordingly, define a four-form $\Phi \in \Lambda^4(V_+)^*$ by 
\[\Phi=e^0 \wedge \varphi+\star \varphi.\]
Since $\varphi \wedge \star \varphi = 7e^{12\dots 7}$ and $\star \varphi \wedge \star \varphi=0$, one computes $\Phi^2 = 14e^{012\dots 7}$.
Define the forms
\begin{align}
\label{eq:alpha}
\alpha & = e^{01}+e^{23}+e^{45}+e^{67}, \\
\label{eq:beta}
\beta & = (e^0+ie^1)\wedge (e^2+ie^3) \wedge (e^4+ie^5)\wedge (e^6+ie^7),
\end{align}
and note that
\begin{equation}
\label{eq:Phi}
\Phi = \frac12 \alpha \wedge \alpha+\mathrm{Re}(\beta).
\end{equation}
Define the group $\mathrm{Spin}(7)$ as
\begin{equation*}
\mathrm{Spin}(7) \coloneqq \{A \in \mathrm{Aut}(V_+): A\Phi=\Phi\}.
\end{equation*}
Extend the scalar product on $V$ to a scalar product $\langle{}\cdot{},{}\cdot{}\rangle$ on $V_+$ so that $e_0,\dots,e_7$ form an orthonormal basis.
\begin{theorem}
\label{thm:spin7}
The group $\mathrm{Spin}(7)$ is compact, connected, simply connected, and has dimension $21$.
Also, $\mathrm{Spin}(7)$ acts irreducibly on $V_+$, and transitively on the space of $k$-planes in $V_+$ for $k\neq 4$.
Lastly, $\mathrm{Spin}(7)$ preserves the scalar product on $V_+$, its centre is $\mathbb Z_2$, and $\mathrm{Spin}(7)/\mathbb Z_2=\mathrm{SO}(7)$.
\end{theorem}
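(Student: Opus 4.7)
The plan is to mirror the strategy used for $\mathrm G_2$ in Theorem \ref{thm:g2}, exploiting the natural inclusion $\mathrm G_2 \hookrightarrow \mathrm{Spin}(7)$ realising $\mathrm G_2$ as the stabiliser of $e_0 \in V_+$ via $g \mapsto \mathrm{id}_{\mathbb R e_0} \oplus g$.

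First, I would establish $\mathrm{Spin}(7) \subset \mathrm{SO}(V_+)$ by the same equivariant-bilinear-map-into-top-forms trick used for $\mathrm G_2$. Define
\[c\, b(x,y) \coloneqq (x\lrcorner\ \Phi) \wedge (y \lrcorner\ \Phi) \wedge \Phi,\]
for a suitable nonzero constant $c$; using $\varphi \wedge \star\varphi = 7\, e^{12\dots 7}$, the splitting \eqref{eq:Phi}, and a direct calculation on the basis $e_0,\dots,e_7$, one checks $b(x,y) = \langle x,y\rangle\, e^{012\dots 7}$. Equivariance of $b$ combined with the determinant argument from Theorem \ref{thm:g2} then forces $\det A = 1$ and $\langle Ax,Ay\rangle = \langle x,y\rangle$ for all $A \in \mathrm{Spin}(7)$. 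Closedness in $\mathrm{SO}(8)$ yields compactness.

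Next, I would pin down the stabiliser of $e_0$ inside $\mathrm{Spin}(7)$. Since $A e_0 = e_0$ forces $A$ to preserve the orthogonal complement $V$, and since $e_0 \lrcorner\ \Phi = \varphi$, such an $A$ restricts to an element of $\mathrm G_2$; conversely, any $g \in \mathrm G_2 \subset \mathrm{SO}(V)$ preserves both $\varphi$ and the induced volume on $V$, hence preserves $\star\varphi$ and so $\Phi = e^0 \wedge \varphi + \star\varphi$. A dimension estimate in the spirit of Theorem \ref{thm:g2}, combining the embedding $\mathrm{Aut}(V_+)/\mathrm{Spin}(7) \hookrightarrow \Lambda^4 V_+^*$ with the orbit map $\mathrm{Spin}(7) \to V_+$ through $e_0$, squeezes $\dim \mathrm{Spin}(7)$ between $21$ and $\dim \mathrm G_2 + \dim V_+ = 21$. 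Equality then forces $\dim \mathrm{Spin}(7) = 21$, transitivity on the unit sphere $S^7 \subset V_+$, and the fibration $\mathrm G_2 \hookrightarrow \mathrm{Spin}(7) \to S^7$. The long exact homotopy sequence of this fibration, together with Theorem \ref{thm:g2} and $\pi_i(S^7) = 0$ for $i \leq 6$, yields connectedness, simple connectedness, and $\pi_3(\mathrm{Spin}(7)) = \mathbb Z$; Cartan's criterion then implies $\mathrm{Spin}(7)$ is semisimple, and the rank of $\pi_3$ being one forces simplicity. Irreducibility of $V_+$ follows from transitivity on $S^7$. Transitivity on $k$-planes for $k \in \{2,3\}$ and (by Hodge duality on $V_+$) for $k \in \{5,6\}$ would be obtained by reducing to the isotropy action: after moving a first vector to $e_0$, the further action of $\mathrm G_2$ on $V$ is transitive on lines and $2$-planes by Theorem \ref{thm:g2}, so the corresponding Grassmannian orbits are open; connectedness of the Grassmannians then forces a single orbit. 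The $k=4$ case is genuinely exceptional, as $\mathrm{Spin}(7)$ has two orbits (the Cayley and anti-Cayley $4$-planes).

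For the final three claims, Schur's Lemma together with Remark \ref{rmk:schur-lemma} applied to the irreducible module $V_+$ force any central element of $\mathrm{Spin}(7) \subset \mathrm{SO}(8)$ to act as $\pm \mathrm{id}$; both signs fix $\Phi$ (which has even degree), so $Z(\mathrm{Spin}(7)) = \mathbb Z_2$. To construct the $2{:}1$ covering $\mathrm{Spin}(7) \to \mathrm{SO}(7)$, I would use the $\mathrm{Spin}(7)$-invariant orthogonal splitting
\[\mathfrak{so}(8) = \Lambda^2 V_+ = \mathfrak{spin}(7) \oplus \mathfrak{spin}(7)^{\perp},\]
noting $\dim \mathfrak{spin}(7)^{\perp} = 28 - 21 = 7$. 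The induced representation $\rho \colon \mathrm{Spin}(7) \to \mathrm{SO}(\mathfrak{spin}(7)^{\perp}) = \mathrm{SO}(7)$ has kernel contained in $Z(\mathrm{Spin}(7)) = \mathbb Z_2$ by simplicity; since $\pm \mathrm{id}$ acts trivially on $\Lambda^2 V_+$, the kernel equals $\mathbb Z_2$, and a dimension count $\dim \mathrm{Spin}(7)/\mathbb Z_2 = 21 = \dim \mathrm{SO}(7)$ with connectedness forces surjectivity. The main obstacle I anticipate is verifying cleanly that $\mathfrak{spin}(7)^{\perp}$ is a \emph{nontrivial} irreducible $7$-dimensional $\mathrm{Spin}(7)$-representation (so that $\rho$ is nonzero on the Lie algebra), which amounts to identifying this summand with the standard vector representation of $\mathrm{SO}(7)$ lifted through the cover.
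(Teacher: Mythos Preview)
Your plan has two genuine gaps, both arising because the $\mathrm G_2$ argument does not transplant to $\mathrm{Spin}(7)$ as directly as you assume.

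First, the bilinear form $(x\lrcorner\,\Phi)\wedge(y\lrcorner\,\Phi)\wedge\Phi$ has degree $3+3+4=10$ in the $8$-dimensional space $V_+$ and is therefore identically zero; no choice of $c$ rescues it. The four-form $\Phi$ does determine the metric, but not by this formula.

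Second, and more seriously, the dimension squeeze collapses: $\dim\mathrm{GL}(V_+)=64<70=\dim\Lambda^4V_+^*$, so the embedding $\mathrm{GL}(V_+)/\mathrm{Spin}(7)\hookrightarrow\Lambda^4V_+^*$ yields only the vacuous bound $\dim\mathrm{Spin}(7)\geq -6$. The orbit of $\Phi$ is \emph{not} open (it has dimension $43$; cf.\ Remark~\ref{rmk:characterisation-g2-orbits}), in sharp contrast with the $\mathrm G_2$ three-form in dimension $7$. Without a lower bound you cannot deduce transitivity on $S^7$, and the rest of your argument (fibration, homotopy, $\dim=21$) never gets off the ground. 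Incidentally, even your upper bound as written is off: $\dim\mathrm G_2+\dim V_+=14+8=22$, not $21$; you need $\dim S^7$ there, which presupposes the orthogonality you have not established.

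The paper circumvents both issues by introducing an ingredient you have missed: the subgroup $H\subset\mathrm{Spin}(7)$ stabilising the auxiliary forms $\alpha,\beta$ of \eqref{eq:alpha}--\eqref{eq:beta}, identified as $\mathrm{SU}(4)=\mathrm{Spin}(6)$. Since $\mathrm{SU}(4)$ already acts transitively on $S^7$, transitivity of $\mathrm{Spin}(7)$ is immediate; metric preservation is then extracted from the orbit structure together with the identification of the stabiliser of the line $\mathbb Re_0$ as $\mathrm G_2$ (an argument that itself requires the $\mathrm G_2$-decomposition of $\Lambda^4V^*$, because $\mathrm{Spin}(7)\subset\mathrm{SO}(8)$ is not yet available and so fixing $e_0$ need not imply preserving $V$). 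Your later steps on $\pi_3$, the centre via Schur, and the cover via the $7$-dimensional module $\mathfrak{spin}(7)^\perp\subset\Lambda^2V_+$ are plausible alternatives to the paper's route through the classification of $21$-dimensional simple Lie algebras, but they all presuppose transitivity and orthogonality, which your proposal does not establish.
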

\begin{remark}
The latter statement tells us $\mathrm{Spin}(7)$ defined above really is the universal double cover of $\mathrm{SO}(7)$.
\end{remark}
\begin{proof}
We have seen that $\Phi^2$ is a volume form on $V_+$.
Since $\mathrm{Spin}(7)$ preserves $\Phi$, it also preserves $\Phi^2$, so $\mathrm{Spin}(7) \subset \mathrm{SL}(V_+)$.
Let $H$ be the subgroup of $\mathrm{Aut}(V_+)$ preserving $\alpha$ and $\beta$ as in \eqref{eq:alpha}--\eqref{eq:beta}.
By identity \eqref{eq:Phi}, $H$ is a subgroup of $\mathrm{Spin}(7)$.
There is a unique complex structure $J \in \mathrm{End}(V_+)$ such that the forms $e^k+ie^{k+1}$, $k=0,2,4,6$, form a basis for $\Lambda^{1,0}\subset \Lambda^1V_+ \otimes \mathbb C$, and $J$ satisfies the formula $\alpha(x,y)=\langle Jx,y\rangle$.
So $H$ preserves the Hermitian inner product determined by $\alpha$ and $\langle{}\cdot{},{}\cdot{}\rangle$, and the complex volume form $\beta$.
It follows that $H=\mathrm{SU}(4) = \mathrm{Spin}(6)$, so $H$ acts transitively on the space of oriented lines in $V_+$.

Let $G \subset \mathrm{Spin}(7)$ be the subgroup preserving the line $\mathbb R e_0 \subset V_+$.
We claim $G=\mathrm G_2$.
We see that $G$ preserves $\Phi$, $G$ preserves $V$ (the orthogonal complement of $\mathbb R e_0$), and any $g \in G$ satisfies $g^*e^0=\lambda e^0+\gamma$, where $\lambda>0$ and $\gamma(e_0)=0$ (i.e.\ $\gamma \in (\mathbb R e_0)^{\perp}$).
By the splitting \eqref{eq:identity-four-forms} we have at once 
\begin{align*}
g^*\varphi & =\lambda^{-1}\varphi, \\
g^*(\star \varphi) & = \star \varphi-\gamma \wedge g^*\varphi = \star \varphi-\lambda^{-1}\gamma \wedge \varphi.
\end{align*}
The induced map $\tilde g \colon V \to V$ satisfies the same identities.
The first of these two conditions forces $\tilde g = \lambda^{-1/3}a$, for $a \in \mathrm G_2$.
Applying this to the second identity gives $\lambda^{-4/3}\star \varphi = \star \varphi-\lambda^{-1}\gamma \wedge \varphi$.
By our previous decomposition of $\Lambda^4V^*$ under $\mathrm G_2$, this forces $\lambda^{-4/3}=1$ and $\gamma=0$. 
Since $\lambda>0$, we necessarily have $\lambda=1$. Therefore $g^*e^0=e^0$.
It follows that $G=\mathrm G_2$. 
The $\mathrm{Spin}(7)$-orbit of $e_0$ intersects the ray $\mathbb R_{>0}e_0$ only in $e_0$.
Since $H \subset \mathrm{Spin}(7)$ acts transitively on the rays of $V_+$ and $He_0=S^7 \subset V_+$, then $\mathrm{Spin}(7)e_0$ contains $S^7$ and intersects each ray in one point.
It follows that $\mathrm{Spin}(7)$ preserves the scalar product and that $\mathrm{Spin}(7)e_0=S^7$.

There is a fibration $\nu \colon \mathrm{Spin}(7) \to S^7$ given by $\nu(g)=ge_0$ whose fibres are $\mathrm G_2$-cosets, so we have a short exact sequence
\[1 \longrightarrow \mathrm G_2 \longrightarrow \mathrm{Spin}(7) \longrightarrow S^7 \longrightarrow 1.\]
The homotopy sequence of this shows that $\pi_k(\mathrm{Spin}(7))$ is trivial for $i=1,2,3$ and $\pi_3(\mathrm{Spin}(7))=\mathbb Z$.
Thus the Lie algebra $\mathfrak{spin}(7) \subset \mathfrak{so}(V_+) = \mathfrak{so}(8)$ is simple with a negative definite Killing form by Cartan's criterion.
Since $\dim (\mathrm{Spin}(7))=21$ by the short sequence above, we have $\mathfrak{spin}(7)=\mathfrak{so}(7)$ by the classification of Lie algebras, and the fact that $\mathrm{Sp}(3)$ has no irreducible representation of dimension $8$.
Then $\mathrm{Spin}(7)$ is the simply connected cover of $\mathrm{SO}(7)$.
Now, $\mathrm{SO}(7)$ is known to be simple and its fundamental group is isomorphic to $\mathbb Z_2$. 
On the other hand, $\{\pm \mathrm{id}_{V_+}\} \subset \mathrm{Spin}(7)$, so $\{\pm \mathrm{id}_{V_+}\}$ must be the centre of $\mathrm{Spin}(7)$.
Since $\mathrm{Spin}(7)$ acts transitively on $S^7$, it acts irreducibly on $V_+$.
Moreover, the fact that $\mathrm{Spin}(7)$ acts transitively on $k$-planes for $k\neq 4$ now follows immediately from the fact that $\mathrm G_2$ acts transitively on $k$-planes in $V$ for $k \neq 3,4$.
\end{proof}
\begin{remark}
\label{rmk:characterisation-g2-orbits}
We now know that $\mathrm{Spin}(7)/\mathrm G_2=S^7$ by Proposition \ref{prop:homogeneous-spaces}, whence $\mathrm{SO}(7)/\mathrm G_2=\mathbb RP^7$.
The map $\mathrm{GL}(V_+) \to \Lambda^4V_+^*$ given by $A \mapsto A^*\Phi$ has kernel $\mathrm{Spin}(7)$, and hence the $\mathrm{GL}(V_+)$-orbit $\mathrm{GL}(V_+)/\mathrm{Spin}(7)$ is embedded in $\Lambda^4V_+^*$ and has dimension $43$. However, $\dim(\Lambda^4 V_+^*)=70$, so the orbit of $\Phi$ is not open.
\end{remark}
Again, we write $\Lambda_h^k$ for an irreducible $\mathrm{Spin}(7)$-invariant submodule of $\Lambda^k \coloneqq \Lambda^kV_+^*$ of dimension $h$.
Since $\Lambda^k$ behaves like $\Lambda^{8-k}$, we give the decompositions only for $k \leq 8-k$.
\begin{proposition}
We have the following decompositions into irreducible $\mathrm{Spin}(7)$-invariant modules:
\begin{align*}
\Lambda^2 & = \Lambda_7^2 \oplus \Lambda_{21}^2, \\
\Lambda^3 & = \Lambda_8^3 \oplus \Lambda_{48}^3, \\
\Lambda^4 & = \Lambda_1^4 \oplus \Lambda_7^4 \oplus \Lambda_{27}^4 \oplus \Lambda_{35}^4.
\end{align*}
Also, $\Lambda_7^2 = \mathbb R^7 = \Lambda_7^4$ (the standard representation of $\mathrm{SO}(7)$), $\Lambda_{21}^2 = \mathfrak{spin}(7)^{\flat}$, $\Lambda_8^3=V_+$, $\Lambda_1^4 = \mathbb R \Phi$, $\Lambda_{27}^4 = S_0^2\Lambda_7^2$, and $\Lambda_{35}^4 = S_0^2V_+^*$.
\end{proposition}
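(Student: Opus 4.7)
The strategy is to decompose $\Lambda^k V_+^*$ for $k = 2, 4, 3$ in this order, mirroring Proposition \ref{prop:g2-decomposition-forms} one dimension higher, with the defining four-form $\Phi$ playing the role of $\varphi$. The main tools are Proposition \ref{rmk:lambda-so}, Proposition \ref{prop:invariant-quadratic-forms} (extended to arbitrary compact connected groups as noted after its proof), the explicit form \eqref{eq:Phi} of $\Phi$, and the structural information on $\mathrm{Spin}(7)$ coming from Theorem \ref{thm:spin7}.

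For $\Lambda^2 V_+^*$, Proposition \ref{rmk:lambda-so} gives the $\mathrm{Spin}(7)$-equivariant identification $\Lambda^2 V_+^* = \mathfrak{so}(8)$. The Killing form of $\mathfrak{so}(8)$ is non-degenerate on the semisimple subalgebra $\mathfrak{spin}(7) \subset \mathfrak{so}(8)$, producing a $\mathrm{Spin}(7)$-invariant orthogonal splitting $\mathfrak{so}(8) = \mathfrak{spin}(7) \oplus \mathfrak{m}$ with $\dim \mathfrak{m} = 28 - 21 = 7$. The first summand is irreducible as the adjoint representation of the simple group $\mathrm{Spin}(7)$; the second I would identify with the tangent space at the identity coset of $\mathrm{Spin}(7)/\mathrm G_2 = S^7$ (Theorem \ref{thm:spin7}), hence with the standard seven-dimensional representation of $\mathrm{SO}(7) = \mathrm{Spin}(7)/\mathbb Z_2$, whose irreducibility is Proposition \ref{prop:irreps-orthogonal-group}.

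For $\Lambda^4 V_+^*$, a direct calculation using $\Phi = e^0 \wedge \varphi + \star_7 \varphi$ and $\star_7 \star_7 = \mathrm{id}$ on $\Lambda^3 V^*$ shows that $\star \Phi = \Phi$, so $\Phi$ is self-dual. Since $\star^2 = \mathrm{id}$ on middle forms in dimension eight and $\star$ is $\mathrm{Spin}(7)$-equivariant, the $(\pm 1)$-eigenspaces give a decomposition $\Lambda^4 = \Lambda_+^4 \oplus \Lambda_-^4$ of dimensions $35 + 35$. The self-dual summand splits further as $\Lambda_+^4 = \mathbb R \Phi \oplus \mathcal V \oplus \mathcal W$, where $\mathcal V = \Lambda_7^4$ arises from an equivariant embedding of $V^* = \Lambda_7^2$ into $\Lambda_+^4$ built by wedging with $\Phi$, and $\mathcal W = \Lambda_{27}^4$ is its orthogonal complement, realising $S_0^2 \Lambda_7^2$; the dimensions match $1 + 7 + 27 = 35$. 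The anti self-dual summand $\Lambda_-^4 = \Lambda_{35}^4$ is identified with $S_0^2 V_+^*$ through an explicit equivariant map. Irreducibility of the four resulting pieces follows by enumerating elementary invariants in the spirit of Definition \ref{def:products-traces}, now supplemented by the invariants built from $\Phi$, and invoking Proposition \ref{prop:invariant-quadratic-forms}.

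For $\Lambda^3 V_+^*$, the contraction $v \mapsto v \lrcorner\ \Phi$ provides a $\mathrm{Spin}(7)$-equivariant injection $V_+ \hookrightarrow \Lambda^3$, isolating $\Lambda_8^3 = V_+$. The orthogonal complement $\Lambda_{48}^3$ has dimension $48$; to establish its irreducibility I would show that the space of $\mathrm{Spin}(7)$-invariant quadratic forms on $\Lambda^3$ is two-dimensional, generated by the Euclidean norm and by one further invariant built from $\Phi$ (e.g.\ via the equivariant endomorphism $\alpha \mapsto \star(\Phi \wedge \star(\Phi \wedge \alpha))$), and then appeal to Proposition \ref{prop:invariant-quadratic-forms} to bound the number of irreducible summands by two. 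The main obstacle is the separation of the three pieces inside the self-dual four-forms $\Lambda_+^4$ and the matching of each summand with the correct small irreducible representation of $\mathrm{Spin}(7)$ (of type $B_3$); the bookkeeping of the various $\Phi$-induced equivariant maps is what makes the computation intricate, but once each summand is isolated, comparing dimensions with the known small irreducible $\mathrm{Spin}(7)$-modules of dimensions $1, 7, 8, 21, 27, 35, 48$ closes the argument.
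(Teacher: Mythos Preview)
Your architecture is sensible, and the self-dual/anti-self-dual splitting of $\Lambda^4$ is a good first move (the paper records $\Lambda_{35}^4=\{\psi:\star\psi=-\psi\}$ only at the end). However, there are two genuine gaps.

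First, a minor confusion: your identification of $\mathfrak m=\mathfrak{so}(8)/\mathfrak{spin}(7)$ with the tangent space of $\mathrm{Spin}(7)/\mathrm G_2=S^7$ mixes up two different isotropy representations. The latter is $\mathfrak{spin}(7)/\mathfrak g_2$ and carries only a $\mathrm G_2$-action, not a $\mathrm{Spin}(7)$-action. The paper argues instead that $-\mathrm{id}_{V_+}$ acts trivially on $\Lambda^2$, so $\mathfrak m$ is an $\mathrm{SO}(7)$-module; transitivity of $\mathrm{Spin}(7)$ on $2$-planes rules out any invariant line, and the smallest non-trivial $\mathrm{SO}(7)$-module has dimension $7$, forcing $\mathfrak m\cong\mathbb R^7$. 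Also, your embedding of $\Lambda_7^2$ into $\Lambda_+^4$ ``by wedging with $\Phi$'' lands in $\Lambda^6$, not $\Lambda^4$; the paper builds $\Lambda_7^4$ via the infinitesimal action $(\sharp a)\cdot\Phi$ for $a\in\Lambda_7^2$.

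Second, and more seriously, your plan to establish irreducibility of $\Lambda_{48}^3$, $\Lambda_{27}^4$, $\Lambda_{35}^4$ by counting invariant quadratic forms requires knowing that you have found \emph{all} $\mathrm{Spin}(7)$-invariant quadratic forms on each space. For $\mathrm O(n)$ this is the theorem on elementary invariants; for $\mathrm{Spin}(7)$ you would need an analogous first fundamental theorem (every invariant is a polynomial in contractions with the metric and with $\Phi$), which you have not supplied and which is itself non-trivial. The paper bypasses this entirely by using highest-weight theory: for $\Lambda^3$ the multiplicity formula exhibits an irreducible $48$-dimensional submodule directly; for $\Lambda^4$ the paper differentiates the orbit map $g\mapsto (g^{-1})^*\Phi$ to obtain an injection $(V_+\otimes V_+^*)/\mathfrak{spin}(7)\hookrightarrow\Lambda^4$, identifies the domain as $\Lambda_7^2\oplus\mathbb R\oplus S_0^2V_+^*$, proves $S_0^2V_+^*$ irreducible via the decomposition of $V_+^*\otimes V_+^*$ into highest-weight modules, and then realises $\Lambda_{27}^4$ as the image of $S_0^2\Lambda_7^2$ under $\alpha\odot\beta\mapsto\alpha\wedge\beta$. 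Your approach could be completed, but only after supplying the missing invariant-theoretic input; as written, the irreducibility claims are assertions rather than arguments.
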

\begin{proof}
We know that $V_+ = \mathbb R^8$ is irreducible, so $\Lambda^1$ is irreducible.

We have $\mathfrak{spin}(7)^{\flat} \subset \Lambda^2$, and this is an irreducible summand as it corresponds to the adjoint representation of $\mathrm{Spin}(7)$.
Since $\dim \Lambda^2=28$ and $\dim \mathfrak{spin}(7)=21$, the orthogonal complement of $\mathfrak{spin}(7)^{\flat}$ inside $\Lambda^2$ has dimension $7$.
Since $\{\pm \mathrm{id}_{V_+}\}$ acts trivially on $\Lambda^2$, this is a representation of $\mathrm{SO}(7)$.
Since $\mathrm{Spin}(7)$ acts transitively on the space of two-planes in $V_+$, it follows that $\mathrm{Spin}(7)$ cannot fix any element of $\Lambda^2$.
Since the lowest dimension of a non-trivial representation of $\mathrm{SO}(7)$ is $7$, we see that $\Lambda_7^2$ is irreducible, whence the decomposition into irreducible $\mathrm{Spin}(7)$-components
\[\Lambda^2 = \mathbb R^7 \oplus \mathfrak{spin}(7)^{\flat}=\Lambda_7^2 \oplus \Lambda_{21}^2.\]
A computation leads to the following characterisations
\begin{align*}
\Lambda_7^2 & = \{\alpha \in \Lambda^2: \star (\Phi \wedge \alpha)=3\alpha\}, \\
\Lambda_{21}^2 & = \{\alpha \in \Lambda^2: \star (\Phi \wedge \alpha)=-\alpha\}.
\end{align*}

Now we consider $\Lambda^3$. 
We have a $\mathrm{Spin}(7)$-equivariant map $\Lambda^1 \to \Lambda^3$ given by $\alpha \mapsto \star (\Phi \wedge \alpha)$.
Since this map is non-zero, it must be injective, so we denote the image by $\Lambda_8^3$.
On the other hand, the multiplicity formula (see Humphreys \cite{humphreys}) shows that $\Lambda^3V_+^*$ must contain the irreducible $\mathrm{Spin}(7)$-representation of dimension $48$.
Since $8+48=56=\dim \Lambda^3$, we have an irreducible $\mathrm{Spin}(7)$-decomposition 
\[\Lambda^3 = \Lambda_8^3 \oplus \Lambda_{48}^3.\]
We also have the characterisations
\begin{align*}
\Lambda_8^3 & = \{\star (\Phi\wedge \alpha): \alpha \in V_+^*\}, \\
\Lambda_{48}^3 & = \{\beta \in \Lambda^3: \Phi \wedge \beta=0\}.
\end{align*}
Each of these is an effective $\mathrm{Spin}(7)$-representation since $-\mathrm{id}_{V_+}$ acts as minus the identity on $\Lambda^3$.

Finally, we turn to $\Lambda^4$.
If we differentiate the map $\mathrm{GL}(V_+) \to \Lambda^4$ given by $g \mapsto (g^{-1})^*\Phi$ at the identity, we get a map $\mathrm{End}(V_+) \to \Lambda^4$ given by $e \otimes w \mapsto -w \wedge (e \lrcorner\ \Phi)$. 
This map has kernel $\mathfrak{spin}(7) \subset V_+\otimes V_+^*$ since $\mathrm{Spin}(7)$ is the stabiliser of $\Phi$.
We have $\mathrm{End}(V_+) = V_+^* \otimes V_+^* = \Lambda^2 \oplus S^2V_+^*$.
But \[\Lambda^2=\Lambda_7^2 \oplus \Lambda_{21}^2,\] and the latter summand is $\mathfrak{spin}(7)^{\flat}$, so we see that \[(V_+\oplus V_+^*)/\mathfrak{spin}(7)=\Lambda_7^2 \oplus S^2V_+^*.\]
Since $\mathrm{Spin}(7) \subset \mathrm{SO}(8)$, we have a decomposition $S^2V_+^*=\mathbb R \oplus S_0^2V_+^*$.

We show that $S_0^2V_+^*$ is irreducible.
Indeed, $V_+^*$ has highest weight $(\tfrac12, \tfrac12, \tfrac12)$ (with respect to the basis of fundamental weights for $\mathrm{SO}(7)$), so the multiplicity formula shows that $V_+^* \otimes V_+^*$ is the direct sum of the modules with highest weights $(0,0,0)$ (dimension $1$), $(1,0,0)$ (dimension $7$, i.e.\ $\Lambda_7^2$), $(0,1,0)$ (dimension $21$, i.e.\ $\Lambda_{21}^2$), and $(0,0,1)$ (dimension $35$, i.e.\ $\Lambda^3(\Lambda_7^2)$).
Thus $S_0^2V_+^*$, which is of dimension $35$, must be the remaining summand in $V_+^* \oplus V_+^*$.
It follows that, under the above injection, $(V_+\otimes V_+^*)/\mathfrak{spin}(7) \to \Lambda^4$, these modules must go to irreducible modules $\Lambda_1^4$, $\Lambda_7^4$, and $\Lambda_{35}^4$.
Let $\Lambda_{27}^4$ denote the orthogonal complement of the sum of $\Lambda_1^4 \oplus \Lambda_7^4\oplus \Lambda_{35}^4$ in $\Lambda^4$.
We show that $\Lambda_{27}^4$ is $\mathrm{Spin}(7)$-irreducible.
Indeed, consider the map $S^2\Lambda_7^2 \to \Lambda^4$ induced by the wedge product $\alpha \odot \beta \to \alpha \wedge \beta$.
Since $\mathrm{Spin}(7)$ acts as the full orthogonal group on $\Lambda_7^2$, we see that $S^2\Lambda_7^2=\mathbb R \oplus S_0^2\Lambda_7^2$ is an irreducible sum.
In particular, the image of $S_0^2\Lambda_7^2$ must be an irreducible summand of $\Lambda^4$ of dimension $27$.
But this can only be $\Lambda_{27}^4$. 
Thus $\Lambda^4$ has four irreducible summands under the action of $\mathrm{Spin}(7)$.
We have the following characterisations.
Recall that the map $\sharp\colon \Lambda^2 \to \mathfrak{so}(8)\subset V_+ \otimes V_+^*$ is the inverse of $\flat$, and for $a \in V_+\otimes V_+^*$ and $\psi \in \Lambda^4$, let $a\cdot \psi \in \Lambda^4$ denote the result of the pairing $(V_+\otimes V_+^*) \times \Lambda^4 \to \Lambda^4$ given on monomials by $(x\otimes \alpha, \psi) = -\alpha \wedge (x\lrcorner\ \psi)$. Then
\begin{align*}
\Lambda_1^4 & = \mathbb R \Phi, \\
\Lambda_7^4 & = \{(\sharp a)\cdot \Phi: \alpha \in \Lambda_7^2\}, \\
\Lambda_{27}^4 & = \{\psi \in \Lambda^4: \Phi\wedge \psi=0, \star \psi=\psi, \gamma \wedge \psi=0 \text{ for all } \gamma \in \Lambda_7^4\}, \\
\Lambda_{35}^4 & = \{\psi\in \Lambda^4:\star \psi = -\psi\}.
\end{align*}
The claim is proved.
\end{proof}
We are essentially done with the algebraic part.
We finish by remarking that the inclusions $\mathrm{SU}(3) \subset \mathrm G_2 \subset \mathrm{Spin}(7)$ establish connections among $\mathrm{SU}(3)$-geometry in dimension $6$, $\mathrm G_2$-geometry in dimension $7$, and $\mathrm{Spin}(7)$-geometry in dimension $8$.
In particular, this explains the importance of $\mathrm G_2$ in the world of particles and supersymmetry, in particular $M$-theory \cite{agricola2, duff, strominger}.
The latter aims to describe our Universe as a space of eleven dimensions, seven of which have to be rolled up into a manifold of microscopic radius and integrate the four-dimensional space-time introduced by Einstein in his theory of general relativity.
It turns out that $\mathrm G_2$ models the structure of the seven-dimensional space in question, which is then called a \emph{$\mathrm G_2$-manifold}, cf.\ Karigiannis \cite{karigiannis2}.

A $\mathrm G_2$-manifold is a manifold equipped with a $\mathrm G_2$-structure in the first place.
In the next section, we introduce the language to understand, among other things, what a $\mathrm G_2$-structure really is.

\newpage
\section{Fibre bundles}
\label{sec:fibre-bundles-and-connections}

Objects of interest in geometry are often sections of some vector bundle.
The standard fibre of these vector bundles may come as a representation of a Lie group, and hence the theory in the previous sections may help get information on the irreducible invariant components of a section.

Here below we give a quick summary of principal and vector bundles.
The material in this section is explained in greater detail in Joyce \cite{joyce}, Kobayashi--Nomizu \cite[Vol.\ 1]{kobayashi-nomizu}, or Steenrod \cite{steenrod}.
We look in particular at vector bundles and their correspondence with principal bundles, whose fibre is a Lie group.
We can assume throughout our Lie groups to be linear and reductive.
A number of geometric structures related to Lie groups is given. These are examples of \emph{$G$-structures}.
We introduce \emph{connections} on these bundles, which provide a mean to transport data from a tangent space to another via the so-called \emph{parallel transport}.
By using connections, one can define a central algebraic invariant, the \emph{holonomy group}.
This will be particularly relevant in the next sections, when we start looking at \emph{Riemannian geometry}. 
In this context, the holonomy group encodes information on the \emph{curvature} of the space, and can be used to distinguish different Riemannian geometries.

\subsection{Principal and associated bundles}
\label{subsec:principal-and-associated-bundles}
Let $G$ be a Lie group acting freely on the right on a smooth manifold $P$.
Assume the orbit space $M \coloneqq P/G$ is a smooth manifold. 
Let $\pi \colon P \to M$ be the standard projection, and note that $\pi$ is constant on the $G$-orbits.
Also, assume that $P$ is \emph{locally trivial}, i.e.\ for all $x \in M$ there is a neighbourhood $U \subset M$ containing $x$ such that $\pi^{-1}(U)$ is diffeomorphic to $U \times G$.
Specifically, we require that there is a diffeomorphism $\psi \colon \pi^{-1}(U) \to U \times G$ of the form $\psi(u) = (\pi(u),\varphi(u))$ for all $u \in \pi^{-1}(U)$, with $\varphi(ug) = \varphi(u)g$ for all $g \in G$.
\begin{definition}
The quadruple $(P,M,\pi,G)$ is called a \emph{principal bundle with structure group} $G$.
We call $P$ the \emph{total space} and $M$ the \emph{base space} of the bundle.
\end{definition}
\begin{remark}
We also write $\pi \colon P \to M$ with structure group $G$ for $(P,M,\pi,G)$. 
We use a similar notation for other types of bundles.
\end{remark}
\begin{definition}
A \emph{homomorphism} of a principal bundle $(P',M',\pi',G')$ to another principal bundle $(P,M,\pi,G)$ consists of a map $f' \colon P' \to P$, and a Lie group homomorphism $f'' \colon G' \to G$ such that $f'(u'a) = f'(u')f''(a)$, for all $u' \in P'$ and $a \in G'$.
Since the projection map $\pi'$ is constant on the fibres, there is an induced map $f \colon M' \to M$.
Such a homomorphism is called an \emph{embedding} if $f'$ is an embedding, and if $f''$ is injective.
In the latter case, we say that $(P',M',\pi',G')$ is called a \emph{subbundle} of $(P,M,\pi,G)$.
If $(P',M,\pi',G')$ is a subbundle of $(P,M,\pi,G)$, and the map $f$ induced by $f'$ is the identity map, then the subbundle is called a \emph{reduction of the group $G$ to $G'$}.
\end{definition}
A \emph{local section} $\varphi$ of $(P,M,\pi,G)$ is a map defined in a neighbourhood in $M$ with values in $P$, such that $\pi \circ \varphi = \mathrm{id}$.
A section is \emph{global} when it is defined on all of $M$.
\begin{example}
Let $G$ be any Lie group, and $H$ be a closed subgroup of $G$.
Since $G$ acts on itself freely by right multiplication, then $H$ acts on $G$ freely on the right, and hence we have a principal bundle $G \to G/H$ with structure group $H$.
Local triviality follows by the existence of a local section, cf.\ Kobayashi--Nomizu \cite{kobayashi-nomizu}.
\end{example}
\begin{example}
\label{ex:frame-bundle}
Let $M$ be a smooth $n$-manifold. 
At any point $p \in M$ one can consider the set of all ordered bases of the tangent space $T_pM$.
Each such basis is a set of $n$ linearly independent tangent vectors, and can be interpreted as an isomorphism $u_p \colon \mathbb R^n \to T_pM$.
Precomposition with a general linear transformation $g$ in $\mathrm{GL}(n,\mathbb R)$ yields an action of $\mathrm{GL}(n,\mathbb R)$ on any basis: $g \cdot u_p \coloneqq u_p \circ g$.
This is a right action and is free.
If we let $p \in M$ vary, then the set of all possible bases of all tangent spaces becomes a principal bundle $LM$ over $M$ with structure group $\mathrm{GL}(n,\mathbb R)$, the \emph{bundle of linear frames} over $M$.
\end{example}

We now recall the definition of vector bundle.
Let $W$ be a vector space, and $M$ be a smooth manifold. 
Suppose there are a smooth manifold $E$ and a surjective map $\eta \colon E \to M$ whose fibres are vector spaces.
Also, suppose $E$ is \emph{locally trivial}, i.e.\ every point in $M$ has a neighbourhood $U$ such that there is a diffeomorphism $\psi \colon U \times W \to \eta^{-1}(U)$, with $\eta(\psi(x,w)) = x$ for all $x \in U$, $w \in W$, and $\psi_{|x} \colon W \to \eta^{-1}(\{x\})$ is a linear isomorphism.
\begin{definition}
The quadruple $(E,M,\eta,W)$ is called a \emph{vector bundle}.
The manifold $E$ is its \emph{total space}, $M$ its \emph{base space}, $W$ the \emph{standard fibre}.
The dimension of $W$ is the \emph{rank} of the bundle.
\end{definition}
A \emph{local section} $\varphi$ of $(E,M,\eta,W)$ is a map defined in a neighbourhood in $M$ with values in $E$, such that $\eta \circ \varphi = \mathrm{id}$.
A section is \emph{global} when it is defined on all of $M$.

\begin{remark}
In general we will not be too pedantic with the notations, and write simply $E$ for a vector bundle when base manifold and fibre are clear.
\end{remark}

There is a standard procedure to get a vector bundle from a principal one.
Assume we have a principal bundle $P \to M$ with structure group $G$.
Let $V$ be a representation of $G$.
Then we have a left action of $G$ on $P \times V$ defined by $g\cdot (p,v) = (pg^{-1},gv)$.
Let $E \coloneqq P \times_G V$ be the relative orbit space, and let $\pi_E([p,v]) \coloneqq \pi(p)$.
The latter projection is well-defined as $\pi$ is constant on the $G$-orbits.
It turns out that $\pi_E \colon E \to M$ is a vector bundle over $M$ with standard fibre $V$.
\begin{definition}
We say that the quadruple $(E,M,\pi_E,V)$ is the \emph{vector bundle associated to $P$ via the $G$-representation $V$}.
\end{definition}
Conversely, let $E \to M$ be a real vector bundle over $M$ of rank $k$. 
For any point $p \in M$, consider the set of all ordered bases of the fibre $E_p$ over $p$.
By the same idea as in Example \ref{ex:frame-bundle}, one constructs a principal bundle $P \to M$ with structure group $\mathrm{GL}(k,\mathbb R)$.
Such a principal bundle is also called the \emph{principal frame bundle of $E$}.
For more details on these constructions, we refer to Kobayashi--Nomizu \cite[Chapter 1, Section 5]{kobayashi-nomizu} or Joyce \cite[Section 2]{joyce}.
\begin{remark}
\label{rmk:associated-bundles-general-fibre}
The construction of the associated bundle works just as well in the case where the fibre is any smooth manifold acted on by $G$.
\end{remark}
\begin{example}
\label{ex:tensor-bundles}
Let $M$ be a smooth $n$-manifold, and let $LM \to M$ be the canonical frame bundle over $M$ with structure group $\mathrm{GL}(n,\mathbb R)$.
The general linear group $\mathrm{GL}(n,\mathbb R)$ acts on $\mathbb R^n$ by left multiplication, so we can construct the associated vector bundle $LM \times_{\mathrm{GL}(n,\mathbb R)} \mathbb R^n$.
This turns out to be isomorphic to the tangent bundle $TM$ of $M$.
More generally, if we replace $\mathbb R^n$ by the tensor algebra over $\mathbb R^n$, and let $\mathrm{GL}(n,\mathbb R)$ act on it componentwise, the same construction returns the tensor bundle over $M$.
Special examples we will use are the bundles of symmetric tensors and skew-symmetric tensors.
\end{example}
\begin{example}
\label{ex:adjoint-bundle}
Let $(P,M,\pi,G)$ be a principal bundle.
Let $G$ act on its Lie algebra $\mathfrak g$ via the adjoint representation.
The vector bundle associated to $P$ via the adjoint representation $\mathfrak g$ is the \emph{adjoint bundle} $\mathrm{ad}(P) \coloneqq P \times_G \mathfrak g \to M$ over $M$.
\end{example}

\subsection{Connections and holonomy}
\label{sec:principal-connections-holonomy}

Let $(P,M,\pi,G)$ be a principal bundle.
Let $u \in P$, $\pi(u) = p$, and consider the differential $d\pi_u \colon T_uP \to T_pM$.
The kernel of this map is the set of vectors in $T_uP$ which are tangent to the fibre through $u$ (the $G$-orbit of $u$), and hence we have an isomorphism $\ker d\pi_u = \mathfrak g$.
\begin{definition}
\label{def:connections}
A \emph{connection} $\Gamma$ on $P$ is a choice of a complementary vector space $H_u$ at each $u \in P$ such that 
\begin{enumerate}
\item $T_uP = \mathfrak g \oplus H_u$ for all $u \in P$, 
\item the distribution $u \mapsto H_u$ is smooth, 
\item $H_{ua} = (R_a)_*H_u$ for all $a \in G$. 
\end{enumerate}
We say that $V_u \coloneqq \ker d\pi_u$ is the \emph{vertical} space at $u$, and $H_u$ is the \emph{horizontal} space at $u$.
The distributions $u \mapsto V_u$ and $u \mapsto H_u$ are called \emph{vertical} and \emph{horizontal} respectively.
\end{definition}
\begin{remark}
Smooth manifolds are paracompact (i.e.\ every open cover has an open refinement which is locally finite), so connections always exist on smooth manifolds, cf.\ \cite[Chapter II, Section II]{kobayashi-nomizu}.
\end{remark}
\begin{digression}[Fundamental vector fields]
\label{rmk:fundamental-vector-fields}
Let $P$ be any right $G$-manifold, not necessarily a principal bundle.
Let $A \in \mathfrak g$ be a non-zero vector in the Lie algebra of $G$.
Let now $p \in P$. Then $\alpha(t) \coloneqq p\cdot \exp(tA)$ is a curve in $P$ such that $\alpha(0)=p$. 
Set $A_p \coloneqq \alpha'(0)$.
This defines a Lie algebra homomorphism $\mathfrak g \to \mathfrak{X}(P)$ mapping any generator to a vector field on $P$.
If $G$ acts effectively, the map is injective. 
If $G$ acts freely, then each non-zero vector in $\mathfrak g$ is mapped to a vector field with no zeros (see Kobayashi--Nomizu \cite[Chapter 1, Proposition 4.1]{kobayashi-nomizu}).
Vector fields in $P$ obtained by the above map are called \emph{fundamental vector fields} of the $G$-action.
\end{digression}
Let $\Gamma$ be a connection on $(P,M,\pi,G)$, and let $V$ and $H$ be the corresponding vertical and horizontal distributions respectively.
We define a $\mathfrak g$-valued one-form \[\omega \colon TP \to \mathfrak g\] acting in the following way.
Let $u \in P$. Then $T_uP = V_u \oplus H_u$ with respect to $\Gamma$, and $\omega_u$ is the projection $V_u \oplus H_u \to V_u = \mathfrak g$, cf.\ Digression \ref{rmk:fundamental-vector-fields}.
Clearly, $\omega$ vanishes on horizontal vector fields.

Since $G$ acts both on $TP$ and on $\mathfrak g$, it is natural to check the behaviour of $\omega$ under the action of $G$.
Let $X_u \in T_uP = V_u \oplus H_u$, and write $X_u = X_u^v+X_u^h$ for the splitting into vertical and horizontal part.
Then $\omega_u(X_u) = \omega_u(X_u^v) \eqqcolon X$. 
Now, for any $a \in G$, the right action $R_a \colon P \to P$ is the map $u \mapsto ua$. 
Then the differential $(R_a)_{*|u}$ acts on $X_u^v$ as
\begin{equation*}
(R_a)_{*|u}(X_u^v) = \frac{d}{dt}\left(u\exp(tX)a\right)\Bigr|_{\substack{t=0}} = \frac{d}{dt}\left(uaa^{-1}\exp(tX)a\right)\Bigr|_{\substack{t=0}},
\end{equation*}
and hence
\begin{equation}
\label{eq:connection-form-equivariance}
(R_a)_{*|u}(X_u^v) = \frac{d}{dt}\left(ua\exp(ta^{-1}Xa)\right)\Bigr|_{\substack{t=0}},
\end{equation}
so $\omega_{ua}((R_a)_{*|u}(X_u^v)) = a^{-1}Xa = a^{-1}\omega_u(X_u^v)a = a^{-1}\omega_u(X_u)a$.
Note that the equivariance property for the exponential map, cf.\ e.g.\ Fulton--Harris \cite[pag.\ 116]{fulton-harris}.
More succintly, we can write $(R_a)^*\omega = \mathrm{Ad}(a^{-1})\omega$, cf.\ Digression \ref{rmk:roots}
\begin{definition}
The one-form $\omega \colon TP \to \mathfrak g$ is called a \emph{connection one-form} of the connection $\Gamma$ on $(P,M,\pi,G)$. 
\end{definition}

Conversely, given a one-form $\omega \colon TP \to \mathfrak g$ defined as above and satisfying the equivariance property $(R_a)^*\omega = \mathrm{Ad}(a^{-1})\omega$ for all $a \in G$, one defines $H_u \coloneqq \ker \omega_u$ for all $u \in P$ to get a connection $\Gamma$ on $(P,M,\pi,G)$.

Let now $(P,M,\pi,G)$ be a principal bundle with structure group $G$, and choose a connection $\Gamma$ on $P$.
For any point $u \in P$ with $\pi(u)=p$, the differential $d\pi_u \colon T_uP = V_u \oplus H_u \to T_pM$ maps $H_u$ isomorphically onto $T_pM$.
Then any $X \in T_pM$ can be lifted to a vector $X^*$ in $H_u \subset T_uP$ in a natural way.
Since $(R_a)_{*|u}H_u = H_{ua}$, $X^*$ defines a unique vector field on the fibre over $p$ which is invariant under right $G$-multiplication. 
Let $p \in M$ vary to get a smooth vector field $X^*$ on $P$.
Then $X^*$ is called the \emph{horizontal lift} of $X$.

If $X,Y$ are vector fields on $M$, let $X^*,Y^*$ be their horizontal lifts. 
In general $[X^*,Y^*]$ splits into vertical and horizontal part, and by definition $[X,Y]^* = [X^*,Y^*]^h$.
Then there is a skew-symmetric bilinear $\mathfrak g$-valued form $\Omega \colon TM \times TM \to \mathfrak g$ whose action on $(X,Y) \in TM \times TM$ is
\[\Omega(X,Y) \coloneqq \omega([X^*,Y^*]-[X,Y]^*) = \omega([X^*,Y^*]^v).\]
This can be viewed as a section of the adjoint bundle associated to $P$, cf.\ Example \ref{ex:adjoint-bundle} and the equivariance law given by \eqref{eq:connection-form-equivariance}.
\begin{definition}
The two-form $\Omega \colon TM \times TM \to \mathfrak g$ is the \emph{curvature two-form} of the connection $\Gamma$ on $(P,M,\pi,G)$.
\end{definition}
An alternative approach to the derivation of $\Omega$ via exterior covariant differentials is found in Kobayashi--Nomizu \cite[Chapter 2, Section 5]{kobayashi-nomizu}.
In short, $\Omega$ can also be derived as the \emph{exterior covariant differential} $D\omega$ of the connection one-form $\omega$, where $D\omega$ is the standard differential $d\omega$ composed with the projection onto the horizontal distribution in both of its entries. 
In this way, $\Omega$ is initially a two-form on $P$, but descends to a two-form on $M$ with values in $\mathfrak g$, matching the previous definition.
By viewing $\Omega$ as a two-form on $P$, one has the following result.
\begin{proposition}
\label{prop:str-eq-first-bianchi}
If $X,Y$ are vector fields on $P$, then we have the \emph{Cartan's structure equation}
\begin{equation*}
d\omega(X,Y) = -\tfrac12 [\omega(X),\omega(Y)]+\Omega(X,Y),
\end{equation*}
and a \emph{Bianchi identity} $D\Omega=0$.
\end{proposition}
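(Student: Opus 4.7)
The plan for the structure equation is to exploit the $C^\infty(P)$-bilinearity of both sides: the left-hand side $d\omega(X,Y)$ is a $2$-form, while $[\omega(X),\omega(Y)]$ is manifestly tensorial and $\Omega(X,Y)$ descends from a $\mathfrak g$-valued $2$-form on $M$. It therefore suffices to verify the identity pointwise, and since at every $u\in P$ one has $T_uP=V_u\oplus H_u$ with $V_u$ spanned by the values of fundamental vector fields and $H_u$ spanned by values of horizontal lifts (cf.\ Digression \ref{rmk:fundamental-vector-fields}), I will check the equality on the three bilinear cases built from pairs of $X=A^{*}$, $Y=B^{*}$ (fundamental, with $A,B\in\mathfrak g$) and $X=Z_1^{*}$, $Y=Z_2^{*}$ (horizontal lifts of $Z_1,Z_2\in\mathfrak X(M)$).

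In the purely horizontal case, $\omega(X)=\omega(Y)=0$, so the bracket term on the right vanishes, while $\Omega(X,Y)=d\omega(X^{h},Y^{h})=d\omega(X,Y)$ by the very definition of the curvature two-form given just before Proposition \ref{prop:str-eq-first-bianchi}. In the purely vertical case $X=A^{*}$, $Y=B^{*}$, I will use that $\omega(X)=A$ and $\omega(Y)=B$ are constant functions on $P$, so the intrinsic formula for $d\omega$ collapses (up to the convention-dependent factor $\tfrac12$) to $-\omega([A^{*},B^{*}])$; then the fact that $A\mapsto A^{*}$ is a Lie algebra homomorphism gives $[A^{*},B^{*}]=[A,B]^{*}$, whence $\omega([A^{*},B^{*}])=[A,B]=[\omega(X),\omega(Y)]$, while $\Omega$ vanishes on vertical vectors by construction. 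In the mixed case $X=A^{*}$, $Y=Z^{*}$, the crucial input is that the flow of $A^{*}$ is precisely the right translation $R_{\exp tA}$, and since $Z^{*}$ is $G$-invariant by construction of the horizontal lift one gets $[A^{*},Z^{*}]=\mathcal L_{A^{*}}Z^{*}=0$; combined with constancy of $\omega(A^{*})$ and vanishing of $\omega(Z^{*})$, all three terms of the intrinsic formula for $d\omega(X,Y)$ vanish, and $\Omega(X,Y)$ also vanishes since $X$ is vertical, so both sides are zero.

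For the Bianchi identity $D\Omega=0$, my plan is to differentiate the structure equation and then restrict to horizontal vectors. Applying $d$ to $d\omega=-\tfrac12[\omega,\omega]+\Omega$ yields $0=-\tfrac12\,d[\omega,\omega]+d\Omega$, and the graded Leibniz rule for $\mathfrak g$-valued forms gives $d[\omega,\omega]=2[d\omega,\omega]$, so $d\Omega=[d\omega,\omega]$. Evaluating on three horizontal vectors $X^{h},Y^{h},Z^{h}$, the right-hand side vanishes identically because $\omega$ vanishes on each factor, and by the definition $D\Omega(X,Y,Z)=d\Omega(X^{h},Y^{h},Z^{h})$ we conclude $D\Omega=0$.

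The main bookkeeping obstacle will be to keep the graded Leibniz rule and the antisymmetrisation used in the intrinsic formula for $d\omega$ consistent with the sign and normalisation conventions for $\wedge$ recorded in the notations section of the paper, so that the factor $\tfrac12$ in the structure equation comes out as stated; once those conventions are fixed, the three cases and the Bianchi step are each a few lines of computation.
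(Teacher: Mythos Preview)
The paper states this proposition without proof (it is followed immediately by Remark~\ref{rmk:pseudotensorial-forms}), so there is no in-paper argument to compare against. Your proof is the standard one, essentially that of Kobayashi--Nomizu \cite[Chapter II, Theorems~5.2 and~5.4]{kobayashi-nomizu}, and it is correct. The reduction by tensoriality to the three cases (vertical--vertical, horizontal--horizontal, mixed) is sound; each case is handled correctly, including the key observation that $[A^{*},Z^{*}]=0$, which follows from the $G$-invariance of horizontal lifts exactly as you explain. The Bianchi step---differentiate the structure equation and evaluate on horizontal triples so that every appearance of $\omega$ vanishes---is likewise correct. Your closing caveat about normalisations is well placed: the factor $\tfrac12$ in the statement matches the Kobayashi--Nomizu convention for the intrinsic formula of $d\omega$, which is the reference the paper is following for this material.
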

\begin{remark}
\label{rmk:pseudotensorial-forms}
We point out that exterior covariant differentials can only be applied to differential forms satisfying particular equivariant laws.
We refer to Kobayashi--Nomizu \cite{kobayashi-nomizu} for details.
\end{remark}

Lastly, we see how a connection on a principal bundle defines the so-called \emph{parallel transport}.
Take a principal bundle $(P,M,\pi,G)$, and choose a connection $\Gamma$ on it.
Assume $M$ is connected.
Let $p,q \in M$ be two distinct points, and let $\gamma$ be a smooth curve in $M$ joining $p$ and $q$.
The time-derivative $\dot{\gamma}$ is a tangent vector field to the support of $\gamma$, and thus can be lifted to a vector field on the restriction of $P$ to the support of $\gamma$.
Choose a point $u \in P$ in the fibre over $p$, and let $X_u$ be the value of the lifted vector field at $u$.
Then we can integrate to get a curve in $P$ projecting to $\gamma$ at each point, and whose final point in $\pi^{-1}(q)$ is uniquely determined by the integration process.
This new curve is called a \emph{horizontal lift} of $\gamma$.

Let $u$ be a lift of $\gamma$ joining its two endpoints $u_0$ and $u_1$ such that $\pi(u_0)=p$ and $\pi(u_1)=q$.
If we let $u_0$ vary in its fibre, we get a map $\tau \colon \pi^{-1}(p) \to \pi^{-1}(q)$ mapping the initial point of $u_0$ to the unique final point.
It is essentially obvious by Definition \ref{def:connections} that $\tau$ commutes with the action of $G$ on the fibres, which implies that $\tau$ is an isomorphism.
\begin{definition}
The map $\tau \colon \pi^{-1}(p) \to \pi^{-1}(q)$ is called \emph{parallel transport} along $u$. 
\end{definition}
One can compose horizontal paths by concatenation, and hence the set of all parallel transport maps comes with a group structure.

A special case occurs when $\gamma$ is a piecewise smooth loop based at $p$.
Any horizontal lift of $\gamma$ has initial and final point $u_0$ and $u_1$ in the fibre over $p$, but the two points need not coincide.
However, since $G$ acts on each fibre transitively, there is an element $a \in G$ such that $u_0a = u_1$.
Since the $G$-action on $P$ is free, such $a$ is unique.

So, given $u$ in the fibre over $p$, every loop $\gamma$ based at $p$ determines a unique element $a \in G$ via its horizontal lift starting from $u$.
Consider the set $\mathrm{Hol}_{u}(\Gamma)$ of elements $a \in G$ determined in this way.
Concatenating loops corresponds to taking products in $G$, so $\mathrm{Hol}_{u}(\Gamma)$ comes with a group structure.
It turns out that $\mathrm{Hol}_u(\Gamma)$ is a Lie group (see \cite{kobayashi-nomizu} for details).
\begin{definition}
\label{def:principal-hol}
The Lie group $\mathrm{Hol}_u(\Gamma)$ is called the \emph{holonomy group of $\Gamma$ with reference point $u$}.
Its connected component of the identity is called the \emph{restricted holonomy group}, and is denoted by $\mathrm{Hol}^0_u(\Gamma)$.
\end{definition}

\subsection{On vector bundles}
\label{subsec:associated-connections}

Take a principal bundle $(P,M,\pi,G)$, and let $\rho \colon G \to \mathrm{GL}(n,\mathbb R)$ be a representation of $G$.
Let $\Gamma$ be a connection on $(P,M,\pi,G)$ and let $\tau$ be parallel transport with respect to $\Gamma$.
Consider the associated vector bundle $(E,M,\pi_E,\mathbb R^n)$ with $E = P \times_{\mathrm{GL}(n,\mathbb R)} \mathbb R^n$.
We see how $\Gamma$ yields a connection on $E$.

For each $w \in E$ we define the \emph{horizontal subspace} $H_w$ and the \emph{vertical subspace} $V_w$.
The latter is by definition the tangent space to the fibre of $E$ at $w$.
Now, we have a natural projection $P \times \mathbb R^n \to E = (P \times \mathbb R^n)/G$.
Choose $(u,\xi) \in P \times \mathbb R^n$ in the preimage of $w$.
Fix $\xi \in \mathbb R^n$ and consider the resulting function $P \to E$ mapping $v \in P$ to $v\xi \in E$, where $v\xi$ is just the image of $(v,\xi) \in P \times \mathbb R^n$ in $E$ under the above projection map.
The horizontal subspace $H_w$ is by definition the image of the horizontal subspace in $T_uP$.
This construction does not depend on the choice of $(u,\xi)$, and we have $T_wE = V_w \oplus H_w$.

In analogy with the previous case, a curve in $E$ is horizontal if its tangent vector at each point is horizontal. 
A horizontal lift of a curve in $M$ is a horizontal curve in $E$ projecting to the one on $M$.
We then have a parallel transport map, which we denote by $\tau_E$.
Let $\gamma$ be a curve in $M$.
We denote by $\tau_t^{t+h}$ the isomorphism $\pi_E^{-1}(\gamma(t+h)) \to \pi_E^{-1}(\gamma(t))$ defined by $\tau$.
Let $\varphi$ be a section of $E \to M$ and restrict it to the support of $\gamma$.
\begin{definition}
The \emph{covariant derivative} $\nabla_{\dot{\gamma}}\varphi$ of $\varphi$ in the direction of $\dot{\gamma}$ is 
\begin{equation}
\label{eq:covariant-derivative}
\nabla_{\dot{\gamma}(t)}\varphi \coloneqq \lim_{h \to 0} \frac{\tau_t^{t+h}(\varphi(\gamma(t+h)))-\varphi(\gamma(t))}{h} \in \pi_E^{-1}(\gamma(t)).
\end{equation}
\end{definition}
One can extend this definition by replacing $\dot{\gamma}$ with a generic vector field $X$ on $M$, and then $\nabla_X\varphi$ at a point $p$ is $\nabla_{X_p}\varphi$.

A covariant derivative $\nabla_X\varphi$ satisfies the usual properties of a derivation: it is additive both in $X$ and $\varphi$, $\nabla_{\lambda X}\varphi = \lambda \nabla_X\varphi$ when $\lambda$ is a scalar function, and satisfies the Leibniz rule when $\varphi$ is multiplied by a function, i.e.\ $\nabla_X(\lambda\varphi) = X(\lambda)\varphi+\lambda\nabla_X\varphi$.
One can also define covariant derivatives on vector bundles axiomatically based on the above properties of $\nabla$, cf.\ Kobayashi--Nomizu \cite[Volume 1]{kobayashi-nomizu}.
\begin{definition}
A local section $\varphi$ of $E \to M$ on an open subset $U$ of $M$ is called \emph{parallel} if the image of $T_pM$ under $\varphi$ is horizontal for all $p \in U$.
\end{definition}
Note that the curve $\varphi(\gamma(t))$ is horizontal if and only if $\nabla_{\dot{\gamma}}\varphi=0$, so we also say that $\varphi$ is \emph{parallel} with respect to $\dot{\gamma}$ when $\nabla_{\dot{\gamma}}\varphi=0$.

We now discuss the special case where $(P,M,\pi,G)$ is the canonical frame bundle $LM \to M$ with structure group $\mathrm{GL}(n,\mathbb R)$.
Assume we have a connection $\Gamma$ on $LM$.
Let $\omega$ be its connection one-form and $\Omega$ the curvature two-form.
Let $p \in M$ and take $u \in \pi^{-1}(p)$. 
The differential of the projection map defines an epimorphism $d\pi_u \colon T_u(LM) \to T_pM$.
We have seen that $u$ may be regarded as a linear isomorphism $u \colon \mathbb R^n \to T_pM$.
\begin{definition}
The \emph{solder form}, or \emph{canonical one-form}, $\theta \colon T(LM) \to \mathbb R^n$, is the one-form on $LM$ with values in $\mathbb R^n$ such that
\[\theta_u(X) = u^{-1}(d\pi_u(X)), \qquad X \in T_u(LM).\]
\end{definition}
Note that $\theta$ annihilates vertical vector fields, complementing the behaviour of the connection one-form $\omega$. 
The pair $(\omega,\theta)$ maps $T(LM)$ to $\mathfrak{gl}(n,\mathbb R) \oplus \mathbb R^n$, capturing in a sense the entire geometry of the frame bundle.
More precisely, let $E_{ij}$, $i,j=1,\dots,n$, to be the standard basis of endomorphisms in $\mathfrak{gl}(n,\mathbb R)$, and $e_1,\dots,e_n$ be the standard basis of $\mathbb R^n$.
Let $E_{ij}^*$ be the fundamental vector field on $T(LM)$ induced by $E_{ij}$, and let $B(e_i)$, $i=1,\dots,n$ the horizontal vector fields in $T(LM)$ induced by $e_1,\dots,e_n$.
Then the $n(n+1)$ vector fields  $(E_{ij}^*,B(e_k))$ define an \emph{absolute parallelism} in $LM$, i.e.\ they yield a basis of $T_u(LM)$ for all $u \in LM$.

Just like $\omega$, the solder form $\theta$ behaves in a particular way with respect to the action of $\mathrm{GL}(n,\mathbb R)$ on $LM$.
Let $X_u$ be a vector at $T_u(LM)$, $\pi(u)=p$, and recall that $\pi$ is constant on the fibres.
So if $a \in \mathrm{GL}(n,\mathbb R)$, we have
\begin{align*}
\theta_{ua}((R_a)_{*|u}X_u) & = (ua)^{-1}(d\pi_u((R_a)_{*|u}X_u)) \\
& = a^{-1}u^{-1}(d\pi_u(X_u)) \\
& =a^{-1}\theta_u(X_u).
\end{align*}
More succintly, $(R_a)^*\theta = a^{-1}\theta$.
This property allows one to apply the exterior covariant differential $D$ to $\theta$ (cf.\ Remark \ref{rmk:pseudotensorial-forms}).
\begin{definition}
The two-form $\Theta \coloneqq D\theta \colon T(LM) \times T(LM) \to \mathbb R^n$ is called the \emph{torsion two-form}.
\end{definition}
\begin{proposition}
\label{prop:str-equations-second-bianchi}
If $X,Y$ are vector fields on $LM$, we have the structure equation 
\[d\theta(X,Y) = \omega(Y)\theta(X)-\omega(X)\theta(Y)+\Theta(X,Y),\]
and a second \emph{Bianchi identity} $D\Theta = \Omega \wedge \theta$.
\end{proposition}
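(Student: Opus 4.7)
The plan is to prove both identities by exploiting the splitting $TP = V \oplus H$ induced by the connection, reducing to cases where one works with fundamental vertical vector fields $A^*$ (for $A \in \mathfrak{gl}(n,\mathbb R)$) and the standard horizontal vector fields $B(\xi)$ (for $\xi \in \mathbb R^n$), which were already singled out in the excerpt as an absolute parallelism on $LM$.

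For the structure equation, I will check the identity $d\theta(X,Y) = \omega(Y)\theta(X)-\omega(X)\theta(Y)+\Theta(X,Y)$ on all three possible combinations of horizontal/vertical vector fields, using the invariant formula $d\theta(X,Y) = X\theta(Y)-Y\theta(X)-\theta([X,Y])$. The case of two horizontal fields is immediate from the very definition $\Theta = D\theta$, since $\omega$ and the vertical projection vanish on horizontal vectors. The case of two vertical fields reduces, by linearity, to $X=A^*$ and $Y=B^*$: here $\theta(A^*)=\theta(B^*)=0$, while the bracket $[A^*,B^*]=[A,B]^*$ is vertical so $\theta$ annihilates it, and $\Theta$ vanishes because $D$ projects onto horizontal components; simultaneously the right-hand side vanishes since $\theta$ kills the vertical arguments. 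The mixed case is the substantive one: for $X = A^*$ and $Y = B(\xi)$, I will invoke the standard identity $[A^*, B(\xi)] = B(A\xi)$ (which follows from unraveling the definition of the horizontal lift and the equivariance of $\theta$ under $R_a$). Then $A^*(\theta(B(\xi))) = A^*(\xi) = 0$ because $\xi$ is constant, $B(\xi)(\theta(A^*))=0$, and $\theta([A^*,B(\xi)]) = \theta(B(A\xi)) = A\xi$, so $d\theta(A^*,B(\xi)) = -A\xi$. The right-hand side gives $\omega(B(\xi))\theta(A^*)-\omega(A^*)\theta(B(\xi))+\Theta(A^*,B(\xi)) = 0 - A\cdot \xi + 0$, matching.

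For the Bianchi identity, I will first rewrite the structure equation just proved in the compact form $d\theta = \Theta - \omega \wedge \theta$, where $\omega\wedge\theta$ is understood via the natural action of $\mathfrak{gl}(n,\mathbb R)$ on $\mathbb R^n$, namely $(\omega\wedge\theta)(X,Y) = \omega(X)\theta(Y)-\omega(Y)\theta(X)$. Applying the ordinary exterior derivative and using $d^2=0$ gives
\[
0 = d\Theta - d\omega \wedge \theta + \omega \wedge d\theta.
\]
Recall that by definition $D\Theta(X,Y,Z) = d\Theta(X^h,Y^h,Z^h)$. When I evaluate the identity above on a triple of horizontal vector fields, the term $\omega \wedge d\theta$ vanishes because $\omega$ is fed only horizontal arguments. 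Cartan's first structure equation, Proposition \ref{prop:str-eq-first-bianchi}, ensures that on horizontal arguments $d\omega$ coincides with $\Omega$, so $(d\omega\wedge\theta)(X^h,Y^h,Z^h) = (\Omega\wedge\theta)(X^h,Y^h,Z^h)$. Combining these yields $D\Theta = \Omega \wedge \theta$.

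The main obstacle is the mixed case in the structure equation, specifically the computation $[A^*, B(\xi)] = B(A\xi)$; every other step reduces to bookkeeping with the absolute parallelism $(E_{ij}^*, B(e_k))$ and the vanishing of $\omega$ and $\theta$ on the complementary distribution. Once that bracket identity is in hand, both assertions follow from the standard Cartan formula for $d$ on one-forms and the observation that $D$ is defined by precomposition with horizontal projection, which in the presence of Cartan's first structure equation makes the Bianchi identity essentially a consequence of $d^2 = 0$.
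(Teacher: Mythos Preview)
The paper states this proposition without proof, deferring implicitly to Kobayashi--Nomizu \cite{kobayashi-nomizu}, which it cites as the main reference for the surrounding material. Your argument is correct and is precisely the standard case-by-case verification found there: the bracket identity $[A^*,B(\xi)]=B(A\xi)$ is the substantive point in the structure equation, and deriving $D\Theta=\Omega\wedge\theta$ by differentiating $d\theta=\Theta-\omega\wedge\theta$ and evaluating on horizontal arguments is exactly the textbook route.
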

Take $X,Y$ be two vectors in $T_pM$, and let $X^*,Y^*$ be their horizontal lifts to $T_u(LM)$, where $\pi(u)=p$.
Then $\Theta(X^*,Y^*)$ is a vector in $\mathbb R^n$, and we can map it to $T_pM$ via $u$. 
This operation defines a two-form $T$ on $M$ with values in $TM$ via
\[T(X,Y) \coloneqq u(\Theta(X^*,Y^*)),\]
which is independent of the chosen $u$.
Similarly, for an extra $Z \in T_pM$, set
\[R_{X,Y}Z \coloneqq u(\Omega(X^*,Y^*))(u^{-1}Z).\]
Then $R_{X,Y}Z$ is a vector at $T_pM$.
We call these two tensors the \emph{torsion tensor} and the \emph{curvature tensor} in $TM$.
We have the following result, cf.\ \cite[Chapter III, Theorem 5.1]{kobayashi-nomizu}.
\begin{theorem}
\label{thm:torsion-curvature}
In terms of the covariant derivative $\nabla$ induced by the connection $\Gamma$ on the frame bundle over $M$, the torsion tensor $T$ and the curvature tensor $R$ can be expressed as
\begin{align*}
T(X,Y) & = \nabla_XY-\nabla_YX-[X,Y], \\
R_{X,Y}Z & = \nabla_X\nabla_YZ-\nabla_Y\nabla_XZ-\nabla_{[X,Y]}Z,
\end{align*}
where $X,Y,Z$ are vector fields on $M$.
\end{theorem}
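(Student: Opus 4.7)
The plan is to perform both calculations upstairs on the frame bundle $LM$, using the Cartan structure equations from Propositions \ref{prop:str-eq-first-bianchi} and \ref{prop:str-equations-second-bianchi}, and then to descend back to $M$ via a single key identification of covariant differentiation on $M$ with horizontal differentiation of $\mathbb R^n$-valued functions on $LM$. The crucial lemma is: for any vector fields $X,Y$ on $M$ with horizontal lifts $X^*,Y^*$ and any $u \in LM$,
\[
u\bigl(X^*_u(\theta(Y^*))\bigr) \;=\; (\nabla_X Y)_{\pi(u)},
\]
or equivalently $X^*(\theta(Z^*)) = \theta((\nabla_X Z)^*)$ as $\mathbb R^n$-valued functions on $LM$. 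This is essentially a reformulation of \eqref{eq:covariant-derivative}: letting $\gamma$ be the integral curve of $X$ through $p=\pi(u)$ and $\tilde\gamma$ its horizontal lift starting at $u$, parallel transport on $TM = LM \times_{\mathrm{GL}(n,\mathbb R)} \mathbb R^n$ takes the explicit form $\tau_t^0(Y_{\gamma(t)}) = u\bigl((\theta(Y^*) \circ \tilde\gamma)(t)\bigr)$, and differentiating at $t=0$ yields the identity.

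For the torsion, I would apply Proposition \ref{prop:str-equations-second-bianchi} to the horizontal pair $X^*, Y^*$: since $\omega$ annihilates horizontal vectors, $\Theta(X^*, Y^*) = d\theta(X^*, Y^*) = X^*(\theta(Y^*)) - Y^*(\theta(X^*)) - \theta([X^*, Y^*])$. Applying $u$ and invoking the key lemma on the first two terms produces $(\nabla_X Y - \nabla_Y X)_p$; for the third term, horizontal lifts being $\pi$-related to their projections forces $d\pi_u([X^*, Y^*]) = [X,Y]_p$, so $u(\theta([X^*, Y^*])) = [X,Y]_p$. This assembles to the claimed expression for $T(X,Y)$.

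For the curvature, iterating the lemma gives $[X^*, Y^*](\theta(Z^*)) = \theta\bigl((\nabla_X \nabla_Y Z - \nabla_Y \nabla_X Z)^*\bigr)$, whose image under $u$ at a point over $p$ is $(\nabla_X \nabla_Y Z - \nabla_Y \nabla_X Z)_p$. On the other hand, decompose $[X^*, Y^*] = [X,Y]^* + V$ with $V$ vertical. The first summand contributes $(\nabla_{[X,Y]} Z)_p$ via the lemma. For the vertical part, any vertical vector $V_u$ arises as the infinitesimal generator of right multiplication by $A = \omega(V_u) \in \mathfrak{gl}(n,\mathbb R)$, and a short calculation using $\theta(Z^*)(u\exp(tA)) = \exp(-tA)u^{-1}Z_p$ gives $V_u(\theta(Z^*)) = -\omega(V_u)\cdot u^{-1}Z_p$; combining this with $\omega([X^*, Y^*]) = -\Omega(X^*, Y^*)$ (which follows from Proposition \ref{prop:str-eq-first-bianchi} since the quadratic term vanishes on horizontal inputs) yields $u\bigl(V(\theta(Z^*))_u\bigr) = u\bigl(\Omega(X^*, Y^*)(u^{-1}Z_p)\bigr) = R_{X,Y}Z$ by the definition of $R$. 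Equating the two expressions for $u([X^*, Y^*](\theta(Z^*)))$ at $u$ delivers the curvature formula.

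The principal technical burden is setting up the key lemma and its vertical counterpart for fundamental vector fields; once these are in hand, both identities follow by a mechanical combination of the Cartan structure equations with the tensorial (in particular $C^{\infty}(M)$-bilinear) nature of $T$ and $R$ in their arguments, which should be verified along the way to ensure the resulting expressions depend only on $X,Y,Z$ at the point $p$.
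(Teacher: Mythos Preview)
The paper does not actually prove this theorem; it is stated with a bare reference to Kobayashi--Nomizu \cite[Chapter III, Theorem 5.1]{kobayashi-nomizu}. Your outline is precisely the standard argument found there: lift everything to $LM$, establish the identification $X^*(\theta(Z^*)) = \theta((\nabla_X Z)^*)$ from the definition of the covariant derivative via parallel transport, and then read off both formulas from the two Cartan structure equations applied to horizontal lifts. The key lemma and the vertical computation with the fundamental vector field are set up correctly.

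One caution on signs: the paper's \emph{direct} definition of the curvature form, $\Omega(X,Y) \coloneqq \omega([X^*,Y^*]^v)$, already yields $\omega([X^*,Y^*]) = +\Omega(X,Y)$, whereas combining Proposition~\ref{prop:str-eq-first-bianchi} with the usual formula $d\omega(A,B) = A\omega(B) - B\omega(A) - \omega([A,B])$ on horizontal inputs gives the opposite sign. This is an internal convention clash in the paper rather than a flaw in your reasoning; just fix one convention at the outset and carry it through so that the final curvature formula comes out with the intended sign.
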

We conclude this part by giving the definition of holonomy group of a connection on $TM$.
Let a connection on $TM$ be given, and let $\nabla$ be its covariant derivative.
Let $p \in M$ be any point, $\gamma$ a piecewise smooth loop based at $p$, and $\tau_{\gamma}$ the parallel transport map along $\gamma$.
In analogy with Definition \ref{def:principal-hol}, we have the following.
\begin{definition}
\label{def:linear-holonomy}
 The Lie group
\[\mathrm{Hol}_p(\nabla) \coloneqq \{\tau_{\gamma} \in \mathrm{GL}(T_pM): \gamma \text{ loop based at } p\}\]
is called the \emph{holonomy group of $\nabla$ with reference point $p$}. 
The \emph{restricted holonomy group} $\mathrm{Hol}_p^0(\nabla)$ based at $p$ is its connected component of the identity.
\end{definition}
\begin{remark}
We sometimes say that $\nabla$ is a \emph{linear} connection on $M$ or on $TM$.
Definition \ref{def:linear-holonomy} can be easily extended to any vector bundle on $M$.
More details are found in Joyce \cite{joyce}.
\end{remark}

\subsection{On \texorpdfstring{$G$}{G}-structures}
\label{subsec:g-structures}

Let $(LM,M,\pi,\mathrm{GL}(n,\mathbb R))$ be the canonical frame bundle over $M$.
Let $G$ be a closed Lie subgroup of $\mathrm{GL}(n,\mathbb R)$.
\begin{definition}
A \emph{$G$-structure} on $M$ is a smooth principal subbundle $P$ of $LM$ with structure group $G$.
If $P$ and $P'$ are $G$-structures over $M$ and $M'$, and $f \colon M \to M'$ is a diffeomorphism, then $f$ is an \emph{isomorphism} of the two $G$-structures if the induced map at the level of frame bundles maps $P$ into $P'$.
If $M=M'$ and $P=P'$ then $f$ is an \emph{automorphism} of the $G$-structure $P$.
\end{definition}

One can think of a $G$-structure as a section of a bundle whose fibre is a quotient $\mathrm{GL}(n,\mathbb R)/G$.
Since $\mathrm{GL}(n,\mathbb R)$ acts freely on $LM$, $G$ acts freely on $LM$.
We then have a bundle $LM/G \to M$ with fibre $\mathrm{GL}(n,\mathbb R)/G$ and such that the coset $uG$ is mapped to $\pi(u)$.
This is well-defined as $\pi$ is $G$-invariant.
We then have another bundle \[\pi_G \colon LM \to LM/G,\] which is just the projection onto the $G$-orbit.
By \cite[Proposition 5.5]{kobayashi-nomizu}, $LM/G$ is isomorphic to the bundle \[LM \times_{\mathrm{GL}(n,\mathbb R)} (\mathrm{GL}(n,\mathbb R)/G)\] associated to $LM$ via the natural left action of $\mathrm{GL}(n,\mathbb R)$ on $\mathrm{GL}(n,\mathbb R)/G$ (cf.\ Remark \ref{rmk:associated-bundles-general-fibre}).
By \cite[Proposition 5.6]{kobayashi-nomizu}, this bundle admits a global section if and only the structure group of $LM$ is reducible to $G$, i.e.\ there is a $G$-structure on $M$, and the correspondence between global sections $M \to LM/G$ and $G$-structures is one-to-one.
This motivates the following.
\begin{definition}
A \emph{$G$-structure} on $M$ is a global section of the bundle $LM/G \to M$.
\end{definition}
\begin{remark}
\label{rmk:extension-g-structures}
We note that if $H\subset G$ are two closed subgroups of $\mathrm{GL}(n,\mathbb R)$, then an $H$-structure $M \to LM/H$ induces uniquely a $G$-structure $M \to LM/G$.
\end{remark}

In many applications, $G$ comes as the stabiliser in $\mathrm{GL}(n,\mathbb R)$ of some tensor, e.g.\ an inner product, a volume form, a complex structure, and so on.
In these cases, the existence of a $G$-structure corresponds to the existence of a particular geometric structure related to $G$ on $M$.
\begin{example}
\label{ex:o(n)-structure}
A \emph{Riemannian manifold} $(M,g)$ is a smooth manifold equipped with a smooth section $g$ of the symmetric tensor bundle $S^2T^*M \to M$ defining a positive-definite inner product on each tangent space.
On a Riemannian $n$-dimensional manifold $(M,g)$, we can single out orthonormal bases of any tangent space.
Let $p \in M$ be fixed, and choose an orthonormal (ordered) basis $(u_1,\dots,u_n)$ of $T_pM$. 
This can be viewed as an isomorphism $\mathbb R^n \to T_pM$.
So any orthonormal basis corresponds to an orthogonal matrix in $\mathrm{O}(n) \subset \mathrm{GL}(n,\mathbb R)$ under this map.
The construction of the frame bundle over $M$ can then be adapted to the group $\mathrm{O}(n)$ to get the \emph{principal bundle of orthonormal frames} $\mathrm{O}(M) \subset LM$.

Conversely, if we have a principal subbundle of $LM$ over $M$ with structure group $\mathrm{O}(n)$, then the fibre over any point $p \in M$ is the set of all possible orthonormal bases of $T_pM$.
Such a basis is an isomorphism $u_p \colon \mathbb R^n \to T_pM$, and we can then set an inner product on $T_pM$ by 
\[g_p(X,Y) = (u_p^{-1}X,u_p^{-1}Y), \qquad X,Y \in T_pM,\]
where $({}\cdot{},{}\cdot{})$ is the standard scalar product on $\mathbb R^n$.
Since $({}\cdot{},{}\cdot{})$ is invariant by $\mathrm{O}(n)$, $g_p$ does not depend on the specific basis $u_p$ chosen: any other basis is of the form $u_p \circ h$, $h \in \mathrm{O}(n)$, and hence 
\[((u_p\circ h)^{-1}X,(u_p\circ h)^{-1}Y)=(h^{-1}u_p^{-1}X,h^{-1}u_p^{-1}Y)=(u_p^{-1}X,u_p^{-1}Y).\]
Letting $p$ vary smoothly we get a smooth Riemannian metric on $M$.
This gives a bijection between Riemannian metrics on $M$ and $\mathrm{O}(n)$-structures over $M$.
A reduction to the group $\mathrm{SO}(n) \subset \mathrm{O}(n)$ corresponds to an orientation on $M$ via a similar process.
\end{example}
\begin{example}
\label{ex:gl(n,c)-structure}
On a $2n$-dimensional real manifold $M$, suppose we are given an \emph{almost complex structure} $J$, i.e.\ an endomorphism of the tangent bundle such that $J^2=-\mathrm{id}$ pointwise.
Let $p \in M$ be any point. 
Let $e_1$ be any non-zero vector in $T_pM$.
Then $Je_1$ and $e_1$ are linearly independent.
If $n=1$ we are done, otherwise take $e_2 \in T_pM$ such that $e_1$, $Je_1$, and $e_2$ are linearly independent. 
Then $e_1,e_2,Je_1,Je_2$ are linearly independent. 
If $n=2$ we are done, otherwise we can go on this way and construct a $J$-adapted basis $e_1,\dots, e_n, Je_1,\dots, Je_n$.
By using a similar reasoning as in the previous example, this corresponds to the existence of a complex structure $J'$ on $\mathbb R^{2n} = T_pM$, given by the standard model \eqref{eq:symplectic-matrix}.
The group acting on $\mathbb R^{2n}$ preserving $J'$ is isomorphic to $\mathrm{GL}(n,\mathbb C)$, so $J$ on $M$ yields a $\mathrm{GL}(n,\mathbb C)$-structure.
Conversely, having a $\mathrm{GL}(n,\mathbb C)$-structure on $M$ implies we can construct a complex structure on each $T_pM = \mathbb C^n = \mathbb R^{2n}$ that varies smoothly on $M$, i.e.\ an almost complex structure on $M$.
\end{example}
\begin{example}
On a $2n$-dimensional real manifold $M$, an \emph{almost symplectic structure} is a non-degenerate two-form $\sigma$ pointwise linearly equivalent to the standard model
\[\sigma = e^1\wedge e^2+\dots+e^{2n-1} \wedge e^{2n}.\]
The presence of $\sigma$ on $M$ is equivalent to having an $\mathrm{Sp}(2n,\mathbb R)$-structure on $M$.
\end{example}
\begin{example}
Recall that $\mathrm{U}(n) = \mathrm{SO}(2n) \cap \mathrm{GL}(n,\mathbb C)$, see \eqref{eq:unitary-group}.
A $\mathrm{U}(n)$-structure on a $2n$-manifold $M$ is an \emph{almost Hermitian structure}, i.e.\ a pair $(g,J)$ of a Riemannian metric and an almost complex structure compatible with $g$ (i.e.\ $J$ is an isometry) pointwise linearly equivalent to standard ones on $\mathbb R^{2n}$.
One can build up the two-form $\sigma \coloneqq g(J{}\cdot{},{}\cdot{})$, which is an almost symplectic structure on $M$.
\end{example}
\begin{example}
An $\mathrm{SU}(n)$-structure on a $2n$-manifold $M$ is an almost Hermitian structure together with a complex volume form of type $(n,0)$ pointwise linearly equivalent to the standard model
\[(e^1+iJe^1) \wedge \dots \wedge (e^n+iJe^n).\]
A low dimensional example was given in Digression \ref{digression:so(4)}. 
We have seen that an $\mathrm{SU}(2)$-structure on a four-dimensional manifold is given by a triple of two-forms $\sigma, \psi_+,\psi_-$, where $\psi_{\mathbb C} = \psi_++i\psi_-$ is a complex form of type $(2,0)$.
One can check the algebraic relations $\sigma \wedge \sigma = \psi_+\wedge \psi_+=\psi_-\wedge \psi_-$, and $\sigma \wedge \psi_+=\sigma \wedge \psi_-=\psi_+\wedge \psi_-=0$.
Digression \ref{digression:su3} gives a similar picture for $\mathrm{SU}(3)$-structures in dimension $6$.
\end{example}
\begin{example}
On a $4n$-dimensional manifold $M$, an \emph{almost hypercomplex} structure is a $\mathrm{GL}(n,\mathbb H)$-structure.
Equivalently, it is a trivial subbundle $Q \subset \mathrm{End}(TM)$ of rank $3$, generated by three almost complex structures $I,J,K$ such that $IJ=-JI=K$. 
In other words, $I,J,K$ behave like the unit quaternions $i,j,k$.
Note that any combination 
\[a_1I+a_2J+a_3K, \qquad a_1^2+a_2^2+a_3^2=1,\]
gives an almost complex structure, so we actually have a two-sphere of almost complex structures.
\end{example}
\begin{example}
\label{ex:spn-str}
Recall that $\mathrm{Sp}(n) = \mathrm{GL}(n,\mathbb H) \cap \mathrm{U}(2n)$, see \eqref{def:alternative-spn}. 
An $\mathrm{Sp}(n)$-structure on a $4n$-dimensional manifold $M$ is an \emph{almost hyperHermitian} structure, i.e.\ a quadruple $(g,I,J,K)$, where each pair $(g,I)$, $(g,J)$, $(g,K)$ is an almost Hermitian structure, and $IJ=-JI=K$.
\end{example}
\begin{example}
Consider the group $\mathrm{Sp}(n) \times \mathrm{Sp}(1)$. 
Take the quotient \[\mathrm{Sp}(n)\mathrm{Sp}(1) \coloneqq (\mathrm{Sp}(n) \times \mathrm{Sp}(1))/\mathbb Z_2,\] where $\mathbb Z_2$ is the subgroup generated by $(-\mathrm{id},-1)$.
This group acts on $\mathbb R^{4n} = \mathbb H^n$ via $[A,q]x \coloneqq Ax\overline q$.
Equip $\mathbb R^{4n}$ with the scalar product $\langle x,y\rangle \coloneqq \mathrm{Re}(\overline x y)$.
This action of $\mathrm{Sp}(n)\mathrm{Sp}(1)$ is faithful and isometric, so one has an embedding \[\mathrm{Sp}(n)\mathrm{Sp}(1) \hookrightarrow \mathrm{SO}(4n).\]
An $\mathrm{Sp}(n)\mathrm{Sp}(1)$-structure on a $4n$-dimensional manifold $M$ is equivalent to having three almost complex structure $I, J, K$ as in Example \ref{ex:spn-str} defined only locally.
More precisely, there is a subbundle $Q \subset \mathrm{End}(TM)$ of rank $3$, which is non-trivial. 
This structure is called \emph{almost quaternionic Hermitian}.
\end{example}
\begin{example}
A \emph{$\mathrm G_2$-structure} on a seven-dimensional manifold $M$ is a three-form $\varphi$ pointwise linearly equivalent to the standard model \eqref{eq:g2-3-form}.
Since $\mathrm G_2 \subset \mathrm{SO}(7)$, a $\mathrm G_2$-structure induces a unique Riemannian metric and an orientation on $M$.
Similarly, a \emph{$\mathrm{Spin}(7)$-structure} on an eight-dimensional manifold $M$ is a four-form $\psi$ pointwise linearly equivalent to the standard one \eqref{eq:Phi}.
Since $\mathrm{Spin}(7) \subset \mathrm{SO}(8)$, a $\mathrm{Spin}(7)$-structure induces a unique Riemannian metric and an orientation on $M$.
\end{example}

Let $P$ be a $G$-structure over a manifold $M$. 
A connection on $LM$ reduces to a connection on $P$ if and only if for each $u \in P$ the horizontal subspace of $T_u(LM)$ lies in $T_uP$.
Conversely, given a connection on $P$, there is a unique connection on $LM$ that reduces to the connection on $P$.

Recall that a connection on $LM$ is equivalent to a connection on $TM$.
We call such a connection on $TM$ \emph{compatible} with the $G$-structure $P$ if the corresponding connection on $LM$ reduces to $P$.
Conversely, every connection on $P$ induces a unique connection on $TM$.

We will be interested in torsion-free linear connections.
Given a $G$-structure $P$ on $M$, we may ask how many torsion-free connections $\nabla$ on $TM$ are compatible with $P$.
If $\nabla$ and $\nabla'$ are two connections on $P$, the difference tensor $\alpha = \nabla'-\nabla$ is a section of $\mathrm{ad}(P) \otimes T^*M$.
Combining Example \ref{ex:tensor-bundles} and \ref{ex:adjoint-bundle} applied to this case, we see that $\mathrm{ad}(P)$ is a vector subbundle of $TM \otimes T^*M$, so $\alpha$ is a type $(2,1)$-tensor.
We compute
\begin{align*}
\nabla'_XY-\nabla'_YX-[X,Y] & = \nabla_XY-\nabla_YX-[X,Y] \\
& \qquad +\alpha(X,Y)-\alpha(Y,X).
\end{align*}
If $\nabla$ is a fixed connection on $P$, the formula shows that $\nabla'$ has vanishing torsion $T'$ if and only if the torsion $T$ of $\nabla$ satisfies
\[T(X,Y)=\alpha(Y,X)-\alpha(X,Y).\]
Further, if such $\nabla'$ exists, then the set of all torsion-free connections on $P$ is in one-to-one correspondence with the vector space of tensors $\alpha$ as above such that $\alpha(X,Y)=\alpha(Y,X)$.

We now get to the concept of \emph{intrinsic torsion} for $P$. 
This essentially represents the obstruction to finding torsion-free connections on a $G$-structure.
The following approach is taken from Joyce \cite{joyce}.
\begin{definition}
\label{def:g-structure-map}
Let $G$ be a Lie subgroup of $\mathrm{GL}(n,\mathbb R)$, and let $V=\mathbb R^n$.
Then $G$ acts effectively on $V$, and $\mathfrak g \subset V \otimes V^*$.
Define the map
\[\sigma \colon \mathfrak g \otimes V^* \to V \otimes \Lambda^2V^*, \qquad \sigma(\alpha_{bc}^a)=\alpha_{bc}^a-\alpha_{cb}^a,\]
in index notation. Define vector spaces $W_1,\dots, W_4$ by
\[W_1\coloneqq V \otimes \Lambda^2V^*, \quad W_2 \coloneqq \mathrm{Im}\ \sigma, \quad W_3 \coloneqq \mathrm{coker}\ \sigma, \quad W_4 \coloneqq \ker \sigma.\]
and let $\rho_j \colon G \to \mathrm{GL}(W_j)$ be the natural representation of $G$ on $W_j$.
\end{definition}
Let $P$ be a $G$-structure on $M$. Recall that $TM = P \times_G V$, and let $\nabla$ be any connection on $TM$.
The torsion of $\nabla$ is a section of the vector bundle associated to $P$ via $\rho_1$.
If $\nabla'$ is another connection, then the difference of the torsion tensors $T'-T$ takes values in the vector bundle associated to $P$ via $\rho_2$, which is a subbundle of the bundle obtained via $\rho_1$.
Since the associated bundle obtained via $\rho_3$ is the quotient bundle $\rho_2/\rho_1$, then the projection of $T'-T$ to $\rho_3$ is zero, and hence $T=T'$.
It follows that the projection of $T$ via $\rho_3$ does not depend on the connection $\nabla$.
\begin{definition}
The \emph{intrinsic torsion} $T^i(P)$ of the $G$-structure $P$ is the projection via $\rho_3$ of the torsion $T$ of any connection $\nabla$ compatible with $P$.
The $G$-structure $P$ is called \emph{torsion-free} if its intrinsic torsion vanishes.
\end{definition}
\begin{remark}
A torsion-free connection $\nabla$ on $P$ exists if and only if $P$ is torsion-free.
In this sense, the intrinsic torsion represents the obstruction to finding a torsion-free connection on $P$.
\end{remark}
\begin{remark}
Any two torsion-free connections differ by a section of the bundle associated to $P$ via $\rho_4$.
So if the intrinsic torsion of $P$ vanishes, then the torsion-free connections on $P$ are in one-to-one correspondence with sections of the latter bundle.
If $\ker \sigma = 0$, there is a unique torsion-free connection.
\end{remark}
\begin{proposition}
\label{prop:torsion-free-holonomy}
Let $M$ be an $n$-manifold, and $G$ be a Lie subgroup of $\mathrm{GL}(n,\mathbb R)$.
Then $M$ admits a torsion-free $G$-structure $P$ if and only if there exists a torsion-free connection $\nabla$ on $TM$ with $\mathrm{Hol}(\nabla) \subset G$.
\end{proposition}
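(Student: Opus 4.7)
The plan is to prove the two implications separately by unpacking the correspondence between connections on $P$, connections on $LM$ that reduce to $P$, and linear connections $\nabla$ on $TM$ whose parallel transport preserves $P$, as established in subsections~\ref{subsec:associated-connections}--\ref{subsec:g-structures}.

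For the forward direction, suppose $P$ is torsion-free. By definition of the intrinsic torsion, its vanishing is equivalent to the existence of a torsion-free connection on the principal $G$-bundle $P$. Such a connection on $P$ lifts uniquely to a connection on $LM$ whose horizontal distribution is tangent to $P$, and hence corresponds to a linear connection $\nabla$ on $TM$ compatible with $P$. Compatibility means that parallel transport sends frames in $P_p$ to frames in $P_q$ along any path from $p$ to $q$. In particular, for a loop $\gamma$ based at $p$, if $u \in P_p$ is any frame, then $\tau_\gamma$ read through the isomorphism $u \colon \mathbb R^n \to T_pM$ becomes an element of $G$. Hence $\mathrm{Hol}_p(\nabla) \subset G$.

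For the converse, suppose $\nabla$ is a torsion-free linear connection on $TM$ with $\mathrm{Hol}(\nabla) \subset G$, and assume $M$ is connected (otherwise argue componentwise). Fix a basepoint $p_0 \in M$ and a frame $u_0 \in L_{p_0}M$. Define
\[
Q \coloneqq \{\tau_\gamma(u_0) \in L_qM : q \in M,\ \gamma \text{ piecewise smooth path from } p_0 \text{ to } q\}.
\]
Then $Q$ is a principal subbundle of $LM$ with structure group $\mathrm{Hol}_{u_0}(\nabla)$, and the horizontal distribution of $\nabla$ is tangent to $Q$ by construction. Since $\mathrm{Hol}_{u_0}(\nabla) \subset G$, enlarging by the right $G$-action yields
\[
P \coloneqq Q \cdot G = \{u \cdot g : u \in Q,\ g \in G\} \subset LM,
\]
which is a principal $G$-subbundle of $LM$, i.e.\ a $G$-structure on $M$ (cf.\ Remark~\ref{rmk:extension-g-structures}). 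By construction $\nabla$ is compatible with $P$, and being torsion-free, it witnesses the vanishing of the intrinsic torsion of $P$.

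The main obstacle is the holonomy reduction step: one must verify that the subset $Q \subset LM$ defined by parallel transport from a single frame is actually a smooth principal subbundle of $LM$ with fibre $\mathrm{Hol}_{u_0}(\nabla)$, and that this subbundle is preserved by the horizontal distribution of $\nabla$. This requires checking that $\mathrm{Hol}_{u_0}(\nabla)$ is indeed a Lie subgroup of $\mathrm{GL}(n,\mathbb R)$ acting properly on $Q$, and it is the substantive content of the reduction theorem attributed to Borel--Lichnerowicz (cf.\ \cite{kobayashi-nomizu}). Once $Q$ is in hand, enlarging to $P = Q \cdot G$ is straightforward since $G$ is a closed subgroup of $\mathrm{GL}(n,\mathbb R)$ containing $\mathrm{Hol}_{u_0}(\nabla)$, and the two directions of the equivalence become tautological modulo the dictionary between compatible connections, intrinsic torsion, and horizontal distributions on $P$.
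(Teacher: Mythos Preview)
The paper states this proposition without proof, so there is no argument to compare against directly. Your proof is correct and follows the standard route: the forward implication uses that a torsion-free $G$-structure admits a compatible torsion-free connection (as the paper notes in the remark preceding the proposition), and the converse invokes the holonomy reduction theorem to produce the holonomy subbundle $Q$ and then extends to $G$.

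One small point worth making explicit in the converse: the hypothesis $\mathrm{Hol}(\nabla) \subset G$ is only meaningful up to conjugation in $\mathrm{GL}(n,\mathbb R)$, since the identification of $\mathrm{Hol}_p(\nabla) \subset \mathrm{GL}(T_pM)$ with a subgroup of $\mathrm{GL}(n,\mathbb R)$ depends on the choice of frame. So the initial frame $u_0$ must be chosen so that the holonomy group read through $u_0$ actually lands in $G$; with an arbitrary $u_0$ you would only get a conjugate of $G$. This is implicit in your construction but should be stated. Also, the paper's standing assumption in subsection~\ref{subsec:g-structures} is that $G$ is a \emph{closed} Lie subgroup, which you use when forming $P = Q \cdot G$; you might flag that you are relying on this.
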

\begin{remark}
This shows that torsion-free $G$-structures on $M$ are closely related to torsion-free connections $\nabla$ on $TM$ with $\mathrm{Hol}(\nabla) = G$.
However, torsion-free $G$-structures are normally easier to handle than torsion-free connections with prescribed holonomy.
In fact, the equation $T^i(P)=0$ is a differential equation, whereas $\mathrm{Hol}(\nabla)=G$ involves both differentiation and integration.
\end{remark}
\begin{example}
\label{ex:uniqueness-levi-civita}
Let $G=\mathrm{O}(n) \subset \mathrm{GL}(n,\mathbb R)$.
The map $\sigma$ in Definition \ref{def:g-structure-map} is then $\sigma \colon \mathfrak{so}(n) \otimes V^* \to V \otimes \Lambda^2V^*$, and in index notation a tensor $\alpha_{bc}^a$ in the domain of $\sigma$ is skew-symmetric in $a$ and $b$.
Assume $\alpha \in \ker \sigma$, then
\[\alpha_{bc}^a=\alpha_{cb}^a=-\alpha_{ab}^c=-\alpha_{ba}^c=\alpha_{ca}^b=\alpha_{ac}^b=-\alpha_{bc}^a.\]
So $\ker \sigma$ is trivial, and since $\mathfrak{so}(n) = \Lambda^2V$, $\sigma$ must be surjective.
Therefore, $W_4=0$, $W_1=W_2$, and $W_3=0$. 
We deduce that every $\mathrm{O}(n)$-structure is torsion-free, and hence on any $\mathrm{O}(n)$-structure there is a unique compatible torsion-free connection, the \emph{Levi-Civita connection}.
\end{example}
\begin{example}
\label{ex:integrable-kahler-str}
Let $G=\mathrm{U}(n)\subset \mathrm{SO}(2n) \cap \mathrm{GL}(n,\mathbb C)$.
By Remark \ref{rmk:extension-g-structures}, a $\mathrm{U}(n)$-structure extends uniquely to an $\mathrm{SO}(2n)$-structure.
Since $\mathfrak{u}(n) \subset \mathfrak{so}(2n)$, the map $\sigma$ is again injective, but the Levi-Civita connection corresponding to the $\mathrm{SO}(2n)$-structure is in general not compatible with the $\mathrm{U}(n)$-structure.
Compatibility occurs when the holonomy of the Levi-Civita connection reduces to $\mathrm{U}(n)$ by Proposition \ref{prop:torsion-free-holonomy}, in which case the $\mathrm{U}(n)$-structure is called \emph{K\"ahler}.
The only case in which compatibility occurs automatically is when $n=1$ (i.e.\ when $\sigma$ is surjective), according to the isomorphism $\mathrm{U}(1)=\mathrm{SO}(2)$ (cf.\ Remark \ref{eq:monomorphism-c}).
So every $\mathrm{U}(1)$-structure is torsion-free.
\end{example}

Finally, we present a second fundamental concept related to $G$-structures, which is \emph{integrability}.
See Kobayashi \cite{kobayashi} for the material here below.
\begin{definition}
A $G$-structure $P$ on an $n$-manifold $M$ is called \emph{integrable} (or \emph{locally flat}) if every point of $M$ has a local neighbourhood $U$ with local coordinates $x_1,\dots,x_n$ such that the local section $(\partial_{x_1}, \dots, \partial_{x_n})$ of $LM$ over $U$ is a local section of $P$ over $U$.
We say that $(x_1,\dots,x_n)$ is \emph{admissible} with respect to the $G$-structure $P$.
\end{definition}
If $(x_1,\dots,x_n)$ and $(y_1,\dots,y_n)$ are two admissible local coordinate systems in two open sets $U$ and $V$ respectively, then the Jacobian $(\partial y_i/\partial x_j)_{i,j}$ is in $G$ at each point of $U \cap V$.
\begin{proposition}
Let $\mathrm K$ be a tensor on $\mathbb R^n$, and $G \subset \mathrm{GL}(n,\mathbb R)$ the stabiliser of $\mathrm K$.
Let $P$ be a $G$-structure over $M$ and $K$ the tensor field on $M$ defined by $\mathrm K$ and $P$ in the natural way.
Then $P$ is integrable if and only if each point of $M$ has a local neighbourhood with local coordinate system $x_1,\dots,x_n$ with respect to which the components of $K$ are constant functions on $U$.
\end{proposition}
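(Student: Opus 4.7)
The plan is to reformulate the notion of integrability in terms of the tensor $K$ rather than the frames in $P$, and then the two directions become almost tautological once the conventions are pinned down. The key preliminary observation I would establish is the following characterisation: a frame $u \colon \mathbb R^n \to T_pM$ lies in $P_p$ if and only if the components of $K_p$ in $u$ equal $\mathrm K$. One inclusion is the definition of $K$ from $\mathrm K$ and $P$; for the other, if $u_0 \in P_p$ and $u = u_0 \cdot a$ with $a \in \mathrm{GL}(n,\mathbb R)$, then $K^u = a^* \mathrm K$, so demanding $K^u = \mathrm K$ forces $a \in \mathrm{Stab}(\mathrm K) = G$, whence $u \in P_p$.

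For the forward direction, assume $P$ is integrable and let $(x_1,\dots,x_n)$ be admissible coordinates on $U$, so that $(\partial_{x_1}|_p,\dots,\partial_{x_n}|_p)$ lies in $P_p$ for every $p \in U$. By the reformulation above, the components of $K$ in this coordinate frame are equal to $\mathrm K$ at every point of $U$, hence constant.

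For the converse, suppose $(x_1,\dots,x_n)$ are coordinates on $U$ in which the components of $K$ form a constant tensor $\mathrm K'$ on $\mathbb R^n$. At each $p \in U$, the coordinate frame $u_p \coloneqq (\partial_{x_1}|_p,\dots,\partial_{x_n}|_p)$ differs from an element of $P_p$ by some $a_p \in \mathrm{GL}(n,\mathbb R)$, and by the reformulation we have $a_p^* \mathrm K = \mathrm K'$. Since $\mathrm K'$ is the same tensor for every $p$, the matrix $a_p$ is determined up to right multiplication by an element of $G$, and in particular we may fix a single $a \in \mathrm{GL}(n,\mathbb R)$ with $a^* \mathrm K = \mathrm K'$. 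Define new coordinates by the constant linear change $y \coloneqq a x$; then $\partial_{y_i}|_p$ is the frame $u_p \cdot a^{-1}$, so the components of $K$ in $(\partial_{y_i})$ are $(a^{-1})^* \mathrm K' = (a^{-1})^* a^* \mathrm K = \mathrm K$ at every point of $U$. By the reformulation, $(\partial_{y_1}|_p,\dots,\partial_{y_n}|_p) \in P_p$ for all $p$, so $(y_i)$ is admissible and $P$ is integrable near an arbitrary point.

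The only real subtlety, and the one I expect to be the main conceptual point of the proof, is making sure that a \emph{single} constant matrix $a$ works uniformly on $U$: this is exactly where the hypothesis that the components of $K$ are constant (not merely pointwise equivalent to $\mathrm K$ under some point-dependent element of $\mathrm{GL}(n,\mathbb R)$) is used. Everything else is a matter of bookkeeping with the right action of $\mathrm{GL}(n,\mathbb R)$ on frames and the corresponding pullback action on tensors.
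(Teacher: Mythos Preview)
Your proposal is correct and follows essentially the same route as the paper: both directions hinge on the observation that a frame lies in $P$ precisely when the components of $K$ in that frame equal $\mathrm K$, and the converse is handled by a single constant linear change of coordinates. The only difference is organisational: you state the characterisation up front as a lemma, while the paper invokes it implicitly at the end of the argument by noting that the new coordinate frame ``coincides with $u$ modulo $G$.''
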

\begin{proof}
We have already seen how $\mathrm K$ induces a tensor field $K$ on $M$ 
Let $p \in M$ and $u_p$ be an element in the fibre of $P$ over $p$. 
Then $u_p$ is a linear isomorphism $\mathbb R^n \to T_pM$, and thus extends to a linear isomorphism of the tensor algebras of the two spaces.
Then $K_p$ is defined as the image of $\mathrm K$ under this isomorphism.
Invariance of $\mathrm K$ under $G$ implies that $K_p$ does not depend on the choice of $u_p$.

Assume $P$ to be integrable, and let $x_1,\dots,x_n$ be an admissible local coordinate system. 
The components of $K$ with respect to $x_1,\dots,x_n$ equal the components of $\mathrm K$ with respect to the standard basis of $\mathbb R^n$, so they are constant functions.

Conversely, let $x_1,\dots,x_n$ be a local coordinate system, and assume $K$ has constant components with respect to it.
Note that in general $x_1,\dots,x_n$ need not be admissible.
Consider $(\partial_{x_1},\dots, \partial_{x_n})$ at the origin of this coordinate system.
By a linear change of coordinates, we get another coordinate system $y_1,\dots,y_n$ such that $(\partial_{y_1},\dots,\partial_{y_n})$ at the origin sits in $P$.
It follows that $K$ has constant components with respect to $y_1,\dots,y_n$.
These correspond to the components of $\mathrm K$ with respect to the standard basis of $\mathbb R^n$, as $(\partial_{y_1}, \dots, \partial_{y_n})$ at the origin sits in $P$.
Let $u$ be an element in the fibre of $P$ over $x \in U$.
The components of $K$ with respect to $u$ coincide with the components of $\mathrm K$ with respect to the standard basis of $\mathbb R^n$, and hence with the components of $K$ with respect to $(\partial_{y_1},\dots,\partial_{y_n})$. 
Therefore, the frame $(\partial_{y_1},\dots,\partial_{y_n})$ at $x$ coincides with $u$ modulo $G$, and hence lies in $P$.
\end{proof}
\begin{example}
We have seen in Example \ref{ex:o(n)-structure} that there is a one-to-one correspondence between Riemannian metrics and $\mathrm{O}(n)$-structures.
An $\mathrm{O}(n)$-structure is integrable if and only if the corresponding Riemannian metric is flat, i.e.\ the Levi-Civita connection $\nabla$ has vanishing curvature, cf.\ Theorem \ref{thm:torsion-curvature}. 
The idea of the proof is the following.
If the $\mathrm{O}(n)$-structure is integrable, then there are local coordinate with respect to which the corresponding metric has constant coefficients, and hence its curvature vanishes.
As for the converse, if the curvature of $g$ vanishes, then there exists a parallel frame on $M$ in a neighbourhood of each point, so the metric tensor $g$ is locally constant. 
\end{example}
\begin{example}
Consider an almost symplectic $2n$-dimensional manifold $(M,\sigma)$.
Suppose $d\sigma=0$, i.e.\ $\sigma$ is a \emph{symplectic} form.
In this case, $(M,\sigma)$ is called a \emph{symplectic manifold}.
Then by Darboux Theorem (cf.\ \cite[Theorem 8.1]{dasilva}) every point in $M$ has a coordinate neighbourhood on which $\sigma$ has constant coefficients.
So $d\sigma=0$ is the integrability condition for the $\mathrm{Sp}(2n,\mathbb R)$-structure induced by $\sigma$.
\end{example}
\begin{example}
\label{ex:integrability-complex-structure}
Let $M$ be a smooth $2n$-manifold equipped with an almost complex structure $J$.
This is equivalent to having a $\mathrm{GL}(n,\mathbb C)$-structure over $M$.
The latter is integrable if and only if it comes from a complex structure on $M$.
By the Newlander--Nirenberg Theorem \cite{newlander-nirenberg}, integrability is equivalent to the vanishing of the \emph{Nijenhuis tensor} of $J$, which (up to multiples) is defined by 
\begin{equation*}
N_J(X,Y) \coloneqq [X,Y]-[JX,JY]+J[JX,Y]+J[X,JY].
\end{equation*} 
Useful references for complex and K\"ahler geometry are Huybrechts \cite{huybrechts}, or the notes by Moroianu \cite{moroianu}.
\end{example}
\begin{remark}
One way to see how the Nijenhuis tensor arises is the following.
If $(M,J)$ is a $2n$-dimensional almost complex manifold, the decomposition $\Lambda^1 \mathbb R^{2n} \otimes \mathbb C = \Lambda^{1,0} \oplus \Lambda^{0,1}$ implies that the space of complex one-forms $\Omega_{\mathbb C}^1(M)$ splits as a direct sum
\[\Omega_{\mathbb C}^1 = \Omega^{1,0} \oplus \Omega^{0,1}.\]
Consider the differential of a $(1,0)$ form $\alpha$, i.e.\ $J\alpha=i\alpha$. Since $d\alpha$ is a two-form, and we have a $J$-decomposition
\[\Omega_{\mathbb C}^2(M) = \Omega^{2,0} \oplus \Omega^{1,1} \oplus \Omega^{0,2},\]
then $d \colon \Omega^{1,0} \to \Omega_{\mathbb C}^2$ splits as 
\[d=d^{1,0}+d^{0,1}+d^{-1,2},\]
where $d^{i,j} \colon \Omega^{1,0} \to \Omega^{1+i,j}$.
We are interested in understanding how $d^{-1,2}$ acts on $\alpha$. Since $d^{-1,2}\alpha \in \Omega^{0,2}$, its values are determined by evaluating $d\alpha$ on pairs of $(0,1)$-vector fields $X+iJX, Y+iJY$.
Recall that $\alpha(X+iJX)=\alpha(Y+iJY)=0$ as $\alpha \in \Omega^{1,0}$. We then compute
\begin{align*}
d\alpha(X+iJX,Y+iJY) & = (X+iJX)(\alpha(Y+iJY))-(Y+iJY)(\alpha(X+iJX)) \\
& \qquad -\alpha([X+iJX,Y+iJY]) \\
& = -\alpha([X,Y]-[JX,JY]+i[X,JY]+i[JX,Y]).
\end{align*}
By writing the argument of $\alpha$ as 
\[N_J(X,Y)+i([JX,Y]+iJ[JX,Y])+i([X,JY]+iJ[X,JY]),\]
and noting that $\alpha$ vanishes on $[JX,Y]+iJ[JX,Y]$ and $[X,JY]+iJ[X,JY]$, we see that
\[d^{-1,2}\alpha(X+iJX,Y+iJY) = -\alpha(N_J(X,Y)).\]
So $d^{-1,2}\alpha=0$ if and only if the Nijenhuis tensor of $J$ vanishes.
A similar process can be repeated for $d^{2,-1}\alpha$ when $\alpha \in \Omega^{0,1}$. 
It follows that the Nijenhuis tensor of $J$ vanishes if and only if the exterior differential can be written as \[d=d^{1,0}+d^{0,1} \eqqcolon \partial+\overline{\partial},\] the latter being a standard notation in complex geometry.
\end{remark}
\begin{proposition}
If a $G$-structure $P$ over $M$ is integrable, then its intrinsic torsion vanishes.
\end{proposition}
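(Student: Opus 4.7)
The plan is to produce, in a neighbourhood of each point of $M$, a torsion-free linear connection on $TM$ that is compatible with the $G$-structure $P$. Since the intrinsic torsion $T^i(P)$ at a point is defined as the image, under the canonical projection $\rho_1 \to \rho_3$ of Definition~\ref{def:g-structure-map}, of the torsion of any compatible connection defined near that point, the existence of such a connection will force the pointwise vanishing of $T^i(P)$ and hence its global vanishing.

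The construction is local. Fix an arbitrary $p \in M$ and use integrability to choose an admissible chart $(U; x_1, \dots, x_n)$ centred at $p$, so that the coordinate frame $(\partial_{x_1}, \dots, \partial_{x_n})$ is a local section of $P$ over $U$. On $U$ I will take $\nabla$ to be the flat coordinate connection defined by $\nabla_{\partial_{x_i}} \partial_{x_j} \coloneqq 0$ and extended by the Leibniz rule. Vanishing of torsion will then be immediate from Theorem~\ref{thm:torsion-curvature}, because the coordinate vector fields commute, so $T(\partial_{x_i}, \partial_{x_j}) = 0$ identically.

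The one slightly subtler point to check is that $\nabla$ actually reduces to $P|_U$, i.e.\ that its horizontal distribution lies in $TP \subset T(LM)$, equivalently that parallel transport maps fibres of $P$ to fibres of $P$. For this I will use that, locally, every element of $P_q$ has the form $(\partial_{x_1}|_q, \dots, \partial_{x_n}|_q)\cdot g$ for some $g \in G$, since $P$ is a principal $G$-subbundle of $LM$ and the coordinate frame is a local section. Each $\partial_{x_j}$ is globally parallel on $U$ for $\nabla$, so parallel transport along any curve $\gamma \subset U$ sends a frame $(\partial_{x_i})\cdot g$ at the initial point to $(\partial_{x_i})\cdot g$ at the endpoint with the same $g \in G$; hence parallel transport preserves $P|_U$, which is the compatibility condition.

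Combining the two items, every point of $M$ admits an open neighbourhood on which a compatible connection with identically zero torsion exists; applying $\rho_3$ to this zero torsion gives $T^i(P)_p = 0$ for every $p$, so the intrinsic torsion vanishes on $M$. I do not expect a real obstacle: the only point worth handling carefully is the verification that parallel transport preserves $P|_U$, and that reduces to the observation above that the coordinate frame is both a section of $P$ and $\nabla$-parallel.
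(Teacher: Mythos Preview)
Your proof is correct and follows essentially the same approach as the paper: both use the flat coordinate connection in an admissible chart, observe that the coordinate frame is parallel (hence the connection is compatible with $P$), and that its torsion vanishes. The only difference is that the paper then glues these local connections into a single global one via a partition of unity, whereas you argue pointwise using the fact that the intrinsic torsion is a pointwise invariant independent of the compatible connection chosen; your version is slightly more economical, since no gluing is actually required.
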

\begin{proof}
Let $U$ be a coordinate chart with admissible coordinate system $x_1,\dots,x_n$.
Let $\omega_U$ be the connection form restricted to $P_{|U}$ defining a flat connection on $U$ such that the vector fields $\partial_{x_1},\dots,\partial_{x_n}$ are parallel.
Cover $M$ by a locally finite family of such open sets $U$.
Let $\pi \colon P \to M$ be the standard projection.
By taking a partition of unity $\{f_U\}$ subordinate to $\{U\}$, one defines a connection one-form $\omega$ by
\[\omega = \sum_{U} \pi^* f_U\omega_U.\]
Since the above frame is parallel, the torsion of $\omega$ vanishes.
\end{proof}
\begin{example}
Obviously a Riemannian structure admits a (unique) compatible torsion-free connection.
Complex and symplectic manifolds admit compatible torsion-free connections.
\end{example}
The above result shows that in general if a $G$-structure is integrable then it is also torsion-free.
The torsion-free condition for a $G$-structure can be interpreted as a first-order integrability condition.
So torsion-free $G$-structures are sometimes called \emph{$1$-flat}, whereas integrable $G$-structures are called \emph{flat}.
We refer the reader to \cite{bryant, guillemin} for an account on \emph{$k$-flat} $G$-structures, integrability of $G$-structures, and the relation to \emph{Spencer homology}.

\newpage
\section{Riemannian geometry}
\label{sec:riemannian-geometry}

On smooth manifolds, one can always impose a \emph{Riemannian metric}, namely a smooth symmetric $2$-tensor that is positive-definite pointwise.
These spaces appear very naturally in geometry and physics, particularly in general relativity or in string theory.
One feature of Riemannian manifolds is that they come with a natural torsion-free connection, the \emph{Levi-Civita} connection, and one can study the curvature tensor of it under the action of the orthogonal group.
Distinguished metrics coming out from the decomposition of the curvature into irreducible orthogonal invariant components are metrics with \emph{constant sectional curvature} and \emph{Einstein} metrics.

In this section we understand the Riemannian curvature tensor and its behaviour with respect to orthogonal groups.
We also illustrate other applications of the representation theory of the orthogonal group, e.g.\ we illustrate how torsion tensors are classified into orthogonal invariant types.
In particular, we look at \emph{connections with torsion}.
Most of the material is taken from Besse \cite{besse, besse2} and Berger--Gauduchon--Mazet \cite{berger-gauduchon-mazet}.

\subsection{Riemannian manifolds}
\label{subsec:riemannian-manifolds}
We start with the following definition.
\begin{definition}
A \emph{Riemannian manifold} is a pair $(M,g)$, where $M$ is a smooth $n$-dimensional manifold, and $g$ a smooth section of the symmetric tensor bundle $S^2T^*M \to M$ defining a positive-definite inner product on $TM$ pointwise.
We also say that $g$ is a \emph{metric tensor} (or simply a \emph{metric}) on $M$.
\end{definition}
The presence of a metric on $M$ allows one to single out local orthonormal frames, so that the structure group of the general frame bundle $LM$ over $M$ can be reduced to the orthogonal group $\mathrm{O}(n) \subset \mathrm{GL}(n,\mathbb R)$.
Each tangent space $T_pM$ is then acted on by the orthogonal group $\mathrm{O}(n)$, which preserves the scalar product $g_p$ on $T_pM$. 
\begin{remark}
The identification of $TM$ and $T^*M$ via the metric tensor will normally be implicitly used, cf.\ subsection \ref{subsec:some-linear-algebra}.
\end{remark}

We have already seen that for a given $\mathrm{O}(n)$-structure there is a unique compatible torsion-free connection, the Levi-Civita connection, cf.\ Example \ref{ex:uniqueness-levi-civita}.
The following is a reformulation of this result.

\begin{theorem}[Fundamental Theorem of Riemannian Geometry]
On any Riemannian manifold $(M,g)$, there is a unique torsion-free linear connection $\nabla$ parallelising $g$, i.e.\ $\nabla g = 0$.
\end{theorem}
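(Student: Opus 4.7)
The plan is to complement the abstract argument already given in Example~\ref{ex:uniqueness-levi-civita}, where the vanishing of $W_3$ and $W_4$ for $G = \mathrm{O}(n)$ was shown to imply existence and uniqueness of the Levi-Civita connection, with a direct computation producing the explicit Koszul formula for $\nabla$. This is the form used in most applications, and it will yield both uniqueness and existence in one stroke.

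For uniqueness, I assume such a $\nabla$ exists. Metric compatibility $\nabla g = 0$ reads
\[X g(Y,Z) = g(\nabla_X Y, Z) + g(Y, \nabla_X Z),\]
with analogous identities obtained by cyclically permuting $X, Y, Z$. Adding the identities in which one differentiates along $X$ and $Y$ and subtracting the one in which one differentiates along $Z$, then using the torsion-free condition $\nabla_V W = \nabla_W V + [V,W]$ to rewrite every term of the form $\nabla_V W$ with $V \neq X$, all covariant derivatives collapse except one, giving the Koszul formula
\[2 g(\nabla_X Y, Z) = X g(Y,Z) + Y g(Z,X) - Z g(X,Y) + g([X,Y], Z) - g([Y,Z], X) + g([Z,X], Y).\]
Non-degeneracy of $g$ then pins $\nabla_X Y$ down uniquely in terms of $g$ and the Lie bracket.

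For existence, I would take the right-hand side of the Koszul formula as the definition of a linear functional in $Z \in \mathfrak{X}(M)$. First check that the expression is $C^\infty(M)$-linear in $Z$, so that non-degeneracy of $g$ defines a unique vector field $\nabla_X Y$. Then verify in turn: $C^\infty(M)$-linearity in $X$; additivity in $Y$ together with the Leibniz rule $\nabla_X(fY) = X(f) Y + f \nabla_X Y$; that swapping $X$ and $Y$ and subtracting recovers $\nabla_X Y - \nabla_Y X = [X,Y]$, i.e.\ torsion-freeness; and that swapping $Y$ and $Z$ and adding recovers $X g(Y,Z) = g(\nabla_X Y, Z) + g(Y, \nabla_X Z)$, i.e.\ $\nabla g = 0$.

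The main obstacle is not conceptual but bookkeeping: verifying the $C^\infty(M)$-linearities and the Leibniz rule requires tracking carefully how each of the six terms on the right-hand side transforms when a scalar function $f$ is inserted, and observing the cancellation between the derivation terms coming from $X(f)$, $Y(f)$, $Z(f)$ and the new Lie-bracket terms coming from identities such as $[X, fY] = f[X,Y] + X(f) Y$. Once these tensorial properties are in place, the torsion-free and compatibility conditions follow from the symmetrisations of the Koszul expression indicated above.
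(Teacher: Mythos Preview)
Your proposal is correct and follows essentially the same route as the paper: both derive the Koszul formula by combining the three cyclic instances of metric compatibility with the torsion-free condition, and both invoke non-degeneracy of $g$ to conclude uniqueness. The only difference is that the paper stops at uniqueness (existence having been handled abstractly in Example~\ref{ex:uniqueness-levi-civita}), whereas you additionally sketch the direct verification that the Koszul expression defines a connection with the required properties; this extra step is standard and your outline of the tensoriality checks is accurate.
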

\begin{remark}
The condition $\nabla g = 0$ is also known as $\nabla$ being a \emph{metric connection}. 
\end{remark}
\begin{proof}
Let $X,Y,Z \in \mathfrak{X}(M)$ be vector fields on $M$. Metric and torsion-free condition amount to saying
\begin{align*}
X(g(Y,Z)) & = g(\nabla_XY,Z)+g(Y,\nabla_XZ), \\
[X,Y] & = \nabla_XY-\nabla_YZ.
\end{align*}
By the above requirements, the sum of the three identities
\begin{align*}
X(g(Y,Z)) & = g(\nabla_XY,Z)+g(Y,\nabla_XZ), \\
-Y(g(Z,X)) & = -g(\nabla_YZ,X)-g(Z,\nabla_YX), \\
Z(g(X,Y)) & = g(\nabla_ZX,Y)+g(X,\nabla_ZY),
\end{align*}
gives \emph{Koszul formula}
\begin{align*}
2g(\nabla_ZX,Y) & = X(g(Y,Z))-Y(g(Z,X))+Z(g(X,Y)) \\
& \qquad +g(Z,[Y,X])+g(X,[Y,Z])+g(Y,[Z,X]),
\end{align*}
which expresses the general action of $\nabla$.
We see that $\nabla$ is uniquely determined, so the claim is proved.
\end{proof}

We are particularly interested in the curvature tensor of $\nabla$, cf.\ Theorem \ref{thm:torsion-curvature}, i.e.\ the type $(3,1)$-tensor field $R$ on $M$ defined by 
\begin{equation*}
R_{X,Y}Z = \nabla_X\nabla_YZ-\nabla_Y\nabla_XZ-\nabla_{[X,Y]}Z.
\end{equation*}
It will be convenient to view $R$ as a type $(4,0)$-tensor by the identity
\[R(X,Y,Z,W) \coloneqq g(R_{X,Y}Z,W), \qquad X,Y,Z,W \in \mathfrak{X}(M),\]
in fact we will do so from now on.
It is easily checked that $R$ satisfies the following symmetries:
\begin{enumerate}
\item $R(X,Y,Z,W) = -R(Y,X,Z,W) = -R(X,Y,W,Z) = R(Z,W,X,Y)$.
\item $R(X,Y,Z,W)+R(Y,Z,X,W)+R(Z,X,Y,W)=0$.
\end{enumerate}
The first symmetries can be reinterpreted by saying that $R$ is a section of the tensor bundle $S^2 \Lambda^2 T^*M$.
The second identity is known as the \emph{first Bianchi identity} and is derived from $D\Theta = \Omega \wedge \theta$ in Proposition \ref{prop:str-equations-second-bianchi}
\begin{remark}
\label{rmk:second-bianchi-identity}
A \emph{second Bianchi identity} $\mathfrak{S}_{X,Y,Z} (\nabla_XR)_{Y,Z}W=0$ also holds, and is a consequence of $\nabla$ being torsion-free. 
This is derived by $D\Omega=0$ in Proposition \ref{prop:str-eq-first-bianchi}.
The second Bianchi identity is not interesting for us at the moment, but the restrictions it imposes on the symmetries of the curvature tensor are relevant in the classification of Riemannian holonomy groups, cf.\ subsection \ref{subsec:berger-theorem} below.
\end{remark}
By Proposition \ref{rmk:lambda-so} and the above symmetries, we may write \[R(X,Y,Z,W) = R(X \wedge Y,Z \wedge W),\] and since we have symmetry in the pairs $X \wedge Y$, $Z \wedge W$, $R$ is determined by its values $R(X \wedge Y,X\wedge Y)$.
For $X$ and $Y$ linearly independent at a point $p \in M$, define (cf.\ \eqref{eq:inner-product-exterior-algebra})
\begin{equation}
\label{eq:sectional-curvature}
\sigma_p(X,Y) \coloneqq \frac{R_p(X \wedge Y,X\wedge Y)}{\lVert X \wedge Y\rVert_p^2} = \frac{R_p(X \wedge Y,X\wedge Y)}{g_p(X,X)g_p(Y,Y)-g_p(X,Y)^2}.
\end{equation}
The definition of $\sigma_p(X,Y)$ is independent of the basis of the plane generated by $X$ and $Y$.
Hence at any point $p \in M$, $\sigma_p$ is defined on the Grassmannian of two-planes in $T_pM$.
It is clear by the above observations that $\sigma$ and $R$ contain the same amount of information.
\begin{definition}
The map $\sigma$ in \eqref{eq:sectional-curvature} is the \emph{sectional curvature} of $(M,g)$.
\end{definition}
It is interesting to note that $\sigma$ takes a constant value $k$ on all planes for all points in $M$ if and only if
\begin{align*}
R(X,Y,Z,W) & = k(g(X,Z)g(Y,W)-g(Y,Z)g(X,W)) \nonumber \\
& = \frac{k}{2}(g\owedge g)(X,Y,Z,W), 
\end{align*}
where the latter identity is derived by the expression of the Kulkarni--Nomizu product, cf.\ Example \ref{ex:kulkarni-nomizu-product}.
Indeed, the tensor \[R'(X,Y,Z,W)\coloneqq g(X,Z)g(Y,W)-g(Y,Z)g(X,W)\] has the same symmetries of $R$, and is thus determined by its values on $Z=X$ and $W=Y$.
If we assume $R(X,Y,X,Y) = kR'(X,Y,X,Y)$, then $R=kR'$.
The converse is trivial.
This motivates the following.
\begin{definition}
Let $(M,g)$ be a Riemannian manifold, and $R$ the curvature tensor of $g$.
If there is a constant $k \in \mathbb R$ such that \[R = \frac{k}{2} g\owedge g,\] we say that $(M,g)$ has \emph{constant sectional curvature}.
\end{definition}
\begin{definition}
If $(M,g)$ is a Riemannian manifold with constant sectional curvature $k$, we say that $(M,g)$ is \emph{elliptic}, \emph{hyperbolic}, or \emph{flat} if $k>0$, $k<0$, or $k=0$ respectively.
Such spaces are called \emph{space forms}.
\end{definition}
\begin{theorem}[Killing--Hopf \cite{hopf, killing}] 
The universal cover of a complete space form with curvature $k$ is isometric (up to homotheties) to a sphere if $k>0$, a hyperbolic space if $k<0$, or the standard Euclidean space if $k=0$.
\end{theorem}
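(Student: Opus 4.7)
The plan is to reduce to the simply connected case and then identify the space with a standard model via a comparison of Jacobi fields. First, I would pass to the universal cover $\tilde M$ of $(M,g)$ equipped with the pulled-back metric $\tilde g$: this is complete and has the same constant sectional curvature $k$, so it is no loss of generality to assume $(M,g)$ itself is simply connected and complete. A homothety then normalises $k$ to one of $\{-1,0,+1\}$, with corresponding model spaces $H^n$, $\mathbb R^n$, $S^n$ --- each complete, simply connected, and of the same constant curvature.

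Next, I would fix $p \in M$, $p_0$ in the model $M_k$, and a linear isometry $\phi\colon T_pM \to T_{p_0}M_k$, and study the candidate map $f \coloneqq \exp_{p_0} \circ \phi \circ \exp_p^{-1}$, defined a priori only on a normal neighbourhood of $p$. The key calculation is a Jacobi field comparison along radial geodesics: under parallel transport together with $\phi$, the Jacobi equation $J'' + kJ = 0$ has identical solutions in $M$ and in $M_k$ because $R$ is determined pointwise by $k$ alone (via $R = \tfrac{k}{2} g \owedge g$). This forces $df$ to preserve the inner product at every point where $f$ is defined, so $f$ is a local isometry on its domain.

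For $k \le 0$, the Cartan--Hadamard theorem implies that $\exp_p$ and $\exp_{p_0}$ are diffeomorphisms from the respective tangent spaces, so $f$ is automatically a global isometry and we are finished. The hard part is the spherical case $k = +1$, where the exponential map is no longer injective and $f$ is only defined on a ball of radius $\pi$. My approach would be analytic continuation along paths: at each point of $M$ the Jacobi-field comparison produces a local isometry in a normal neighbourhood that must agree with $f$ on overlaps, by the rigidity of local isometries once frame data has been prescribed. The monodromy theorem combined with simple connectedness of $M$ then forces this extension to be single-valued, producing a globally defined local isometry $F \colon M \to S^n$. Completeness of $M$ implies $F$ is a Riemannian covering map, and simple connectedness of $S^n$ upgrades $F$ to a diffeomorphism, hence an isometry. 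The main obstacle is thus setting up the continuation argument carefully enough that the monodromy step really applies; the rest is a packaging of Cartan's determination of the metric by its curvature.
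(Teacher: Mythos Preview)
The paper does not prove this theorem; it is stated with citations to Hopf and Killing and then the text moves on immediately to the Ricci tensor. So there is nothing to compare against directly.

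Your outline is the classical Cartan argument and is correct in all essentials. The Jacobi-field comparison (equivalently, the explicit form $R = \tfrac{k}{2}\, g \owedge g$, which pins down the curvature operator along every geodesic) shows that the exponential-based map is a local isometry on its domain; Cartan--Hadamard then dispatches $k \le 0$ globally. For $k > 0$ your continuation-plus-monodromy step is exactly the content of the Cartan--Ambrose--Hicks theorem, and your identification of this as the delicate point is accurate: once the globally defined local isometry $F\colon M \to S^n$ exists, completeness makes it a covering and simple connectedness of $S^n$ (for $n \ge 2$) finishes. The only cosmetic remark is that the final sentence could invoke Cartan--Ambrose--Hicks by name rather than rederive it, but as a self-contained sketch your plan is sound.
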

If $R$ is a Riemannian curvature tensor, then one defines the \emph{Ricci curvature} as the trace
\[\mathrm{Ric}(X,Y) \coloneqq \mathrm{Tr}(R(X,{}\cdot{})Y{}\cdot{}) = \sum_{k=1}^n R(X,E_k,Y,E_k),\]
where $\{E_k\}$ is a local orthonormal frame. 
Note that $\mathrm{Ric}$ is symmetric.
By using the metric, one can also define the Ricci endomorphism via $g(\mathrm{Ric}(X),Y) \coloneqq \mathrm{Ric}(X,Y)$.
Then the trace of the Ricci endomorphism is the \emph{scalar curvature} 
\[s \coloneqq \sum_{k=1}^n g(\mathrm{Ric}(E_k),E_k) = \sum_{k=1}^n \mathrm{Ric}(E_k,E_k).\]
\begin{definition}
Let $(M,g)$ be a Riemannian manifold. If $\mathrm{Ric}=\lambda g$ for a constant $\lambda$, then $(M,g)$ is called \emph{Einstein}.
If in particular $\lambda=0$, we say that $(M,g)$ is \emph{Ricci-flat}.
\end{definition}

\begin{digression}
\label{digression:curvature-low-dimension}
In general, $\mathrm{Ric}$ and $s$ contain only partial information on $R$ (or equivalently $\sigma$).
However, there are two low dimensional cases where we can say more on the relationship among $R$, $\mathrm{Ric}$, and $s$.
In these two cases, we will see that the first Bianchi identity follows from the other symmetries of $R$, cf.\ Remark \ref{rmk:bianchi-identity-trivial-low-dimension} below.
Hereafter, for any choice of a local frame $E_1,\dots,E_n$ we denote by $R_{ijk\ell}$ the function $R(E_i,E_j,E_k,E_{\ell})$.

In dimension $2$ the curvature tensor $R$, the Ricci tensor $\mathrm{Ric}$, and the scalar curvature $s$ are equivalent.
This is easily seen as follows. 
At any point, take an orthonormal basis $E_1,E_2$. Then $R$ is specified by only one coefficient $R_{1212}$.
A computation gives $\mathrm{Ric}(E_i,E_i) = R_{1212}$ for $i=1,2$, $\mathrm{Ric}(E_1,E_2)=0$, and $s=2R_{1212}$.
Then $\mathrm{Ric}=\tfrac12 sg$.
It is clear that the knowledge of $R$ is equivalent to that of $\mathrm{Ric}$ or $s$.
We then see that in dimension $2$ the metric $g$ is Einstein if and only if its scalar curvature (or sectional curvature) is constant.
Also, in this dimension the scalar curvature is twice the \emph{Gaussian curvature}, i.e.\ the sectional curvature of any two-plane.

A similar computation shows that in dimension $3$, the curvature tensor $R$ and the Ricci tensor $\mathrm{Ric}$ are equivalent.
At any point, take again an orthonormal basis $E_1,E_2,E_3$.
Since $R \in S^2\Lambda^2\mathbb R^3 = S^2 \mathbb R^3 = \mathbb R^6$, the curvature $R$ is given by six coefficients $R_{1212}, R_{1313}, R_{2323}, R_{1213}, R_{1223}, R_{1323}$.
The Ricci tensor is specified by six coefficients as well.
A straighforward computation gives 
\begin{alignat*}{2}
\mathrm{Ric}(E_1,E_1) & = R_{1212}+R_{1313}, && \qquad \mathrm{Ric}(E_1,E_2) = R_{1323}, \\
\mathrm{Ric}(E_2,E_2) & = R_{1212}+R_{2323}, && \qquad \mathrm{Ric}(E_1,E_3) = R_{1232}, \\ 
\mathrm{Ric}(E_3,E_3) & = R_{1313}+R_{2323}, && \qquad \mathrm{Ric}(E_2,E_3) = R_{1213}.
\end{alignat*}
The system is invertible, so the knowledge of $\mathrm{Ric}$ is again equivalent to that of $R$.
Note that if $R$ has constant sectional curvature, then \[\mathrm{Ric}(E_1,E_1) = \mathrm{Ric}(E_2,E_2) = \mathrm{Ric}(E_3,E_3),\] and they all equal a constant, so the metric is Einstein. 
Conversely, if the metric is Einstein, one can invert the above system to get that the only non-zero coefficients of $R$ are $R_{1212}, R_{1313}, R_{2323}$, and they are all constant.
This implies that the sectional curvature is constant.
In higher dimension, $\mathrm{Ric}$ contains less information than $R$.
\end{digression}

\subsection{Orthogonal decompositions}
\label{sec:orthogonal-decompositions}

Take a Riemannian $n$-dimensional manifold $(M,g)$, and let $\nabla$ be any linear connection on $TM$. 
The formula
\begin{align*}
(\nabla_X g)(Y,Z) = X(g(Y,Z))-g(\nabla_XY,Z)-g(Y,\nabla_XZ)
\end{align*}
shows that $\nabla_Xg$ is a symmetric $2$-tensor, and hence $\nabla g$ takes values in $T^*M \otimes S^2T^*M$.
Let $V$ be the model of each tangent space of $M$, and recall $V$ comes as a representation of the orthogonal group $\mathrm{O}(n)$.
Pointwise $\nabla g$ takes values in $V \otimes S^2V$, where we have identified $V$ and its dual, as usual.

Let now $\omega$ be any tensor in $V \otimes S^2V$, and let $\omega_{ijk}$ be its components with respect to an orthonormal basis.
Define $Y_2^1$ to be the subspace of $V \otimes S^2V$ of tensors $\omega$ satisfying the three conditions
\[\omega_{ijk}+\omega_{jki}+\omega_{kij}= \sum_{\ell=1}^n \omega_{\ell \ell i} = \sum_{\ell=1}^n \omega_{i\ell \ell}=0.\]
\begin{proposition}{\cite[Expos\'e XVI]{besse2}}
The $\mathrm{O}(n)$-module $V \otimes S^2V$ decomposes into the direct sum of irreducible $\mathrm{O}(n)$-invariant summands
\[V \otimes S^2V = S_0^3V \oplus (V \oplus V) \oplus Y_2^1.\]
\end{proposition}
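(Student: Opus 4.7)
The plan is to first split $V\otimes S^2V$ as a $\mathrm{GL}(n,\mathbb R)$-module by Young symmetrisation into a totally symmetric part and a hook part, then use the two natural traces to detach copies of $V$ from each, and finally confirm irreducibility of the remaining summands by enumerating the elementary quadratic invariants in the sense of Definition~\ref{def:products-traces}.

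For $\omega\in V\otimes S^2V$ (so $\omega_{ijk}=\omega_{ikj}$), set $\omega^s_{ijk}=\tfrac13(\omega_{ijk}+\omega_{jki}+\omega_{kij})$ and $\omega^h=\omega-\omega^s$. This gives an $\mathrm{O}(n)$-invariant orthogonal decomposition $V\otimes S^2V=S^3V\oplus H$, where $H$ is the hook submodule cut out by $\omega_{ijk}+\omega_{jki}+\omega_{kij}=0$. The two natural contractions
\[T_1(\omega)_i=\sum_j\omega_{ijj},\qquad T_2(\omega)_k=\sum_i\omega_{iik}\]
coincide on $S^3V$ by total symmetry, while on $H$ contracting the cyclic identity with $\delta^{jk}$ forces $T_1=-2T_2$. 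Hence each of $S^3V$ and $H$ contains exactly one embedded copy of $V$, realised by $v\mapsto v_ig_{jk}+v_jg_{ik}+v_kg_{ij}$ and $v\mapsto v_ig_{jk}-\tfrac12(v_jg_{ik}+v_kg_{ij})$ respectively. Writing $S_0^3V$ and $Y_2^1$ for their orthogonal complements, the dimension identity
\[\Bigl(\tfrac{n(n+1)(n+2)}{6}-n\Bigr)+2n+\tfrac{n(n-2)(n+2)}{3}=\tfrac{n^2(n+1)}{2}\]
shows the four pieces exhaust $V\otimes S^2V$.

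Irreducibility of $S_0^3V$ and $Y_2^1$ follows from Proposition~\ref{prop:invariant-quadratic-forms} and Remark~\ref{rmk:estimate-irreps}. Enumerating elementary invariants with $k=3$, $h=2$, after collapsing the $S_2$-symmetry exchanging the two copies of $\omega$ together with the pairwise symmetry in the last two indices, produces exactly five linearly independent quadratic polynomials: $\lVert\omega\rVert^2$, $\sum\omega_{ijk}\omega_{jki}$, $\lVert T_1\omega\rVert^2$, $\lVert T_2\omega\rVert^2$, and $\langle T_1\omega,T_2\omega\rangle$. If $V\otimes S^2V=\bigoplus W_i^{\oplus m_i}$ denotes the isotypic decomposition with $W_i$ pairwise non-isomorphic irreducibles, then the dimension of $\mathrm{O}(n)$-invariant quadratic forms equals $\sum_i m_i(m_i+1)/2$. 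The two detached trace copies force $m_V\geq 2$, contributing at least $3$; since $S_0^3V$ and $Y_2^1$ are each non-zero they contain at least one irreducible summand each, contributing at least $1$. The only way to match the bound of $5$ is to have $m_V=2$ with both $S_0^3V$ and $Y_2^1$ irreducible and pairwise non-isomorphic to each other and to $V$, which is the claim.

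The principal obstacle is the enumeration step: of the fifteen perfect matchings of the six indices, many collapse to the same polynomial under the copy-swap and $j\leftrightarrow k$ symmetries, and verifying that exactly five invariants survive and that they remain linearly independent (and, if desired, that they restrict non-trivially to each candidate summand) is where the delicate bookkeeping on which the argument rests.
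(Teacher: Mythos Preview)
Your proof is correct and follows essentially the same strategy as the paper's sketch: decompose $V\otimes S^2V$ by hand (you make this explicit via the Young splitting $S^3V\oplus H$ and trace extraction, whereas the paper simply says ``by hand''), then count the elementary quadratic invariants and use Proposition~\ref{prop:invariant-quadratic-forms}/Remark~\ref{rmk:estimate-irreps} to force irreducibility of the remaining pieces. Your five invariants coincide with the paper's $Q_1,\dots,Q_5$ after the identifications $Q_1=\lVert\omega\rVert^2$, $Q_2=\sum\omega_{ijk}\omega_{jki}$, $Q_3=\langle T_1\omega,T_2\omega\rangle$, $Q_4=\lVert T_2\omega\rVert^2$, $Q_5=\lVert T_1\omega\rVert^2$, and your isotypic bookkeeping via $\sum m_i(m_i+1)/2$ is exactly what the paper means by ``three of these quadratic forms correspond to the isotypic component $V\oplus V$''.
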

\begin{sketchproof}
The tensor product can be decomposed by hand.
There are five $\mathrm{O}(n)$-invariant quadratic forms on $V \otimes S^2V$, so according to Remark \ref{rmk:estimate-irreps} there at most five irreducible summands in $V \otimes S^2V$. Let $\omega \in V \otimes S^2V$, and write $\omega_{ijk}$ for its components with respect to an orthonormal basis. Then the five quadratic forms are computed from Definition \ref{def:products-traces} and turn out to be
\begin{alignat*}{2}
Q_1(\omega) & = \sum_{i,j,k} \omega_{ijk}^2, && \qquad Q_2(\omega) = \sum_{i,j,k} \omega_{ijk}\omega_{jik}, \\
Q_3(\omega) & = \sum_{i,j,k} \omega_{iij}\omega_{jkk}, && \qquad Q_4(\omega) = \sum_{i,j,k} \omega_{iij}\omega_{kjk}, \\
Q_5(\omega) & = \sum_{i,j,k} \omega_{ijj}\omega_{ikk}.
\end{alignat*}
Three of these quadratic forms correspond to the isotypic component $V\oplus V$. \qed
\end{sketchproof}
The result describes all possible values taken by $\nabla g$ for any linear connection on $M$.
Working with a metric connection is equivalent to be working in the trivial submodule in $\{0\} \subset V \otimes S^2V$.
We will always be interested in metric connections.

When $n$ is even, it may be that $(M,g)$ comes equipped with a compatible almost complex structure $J$, in which case we recall that $(g,J)$ is an almost Hermitian structure on $M$.
Take again $\nabla$ to be any connection on $TM$. 
Consider the two-form $\sigma \coloneqq g(J{}\cdot{},{}\cdot{})$.
The general formula
\[(\nabla_X\sigma)(Y,Z) = X(\sigma(Y,Z))-\sigma(\nabla_XY,Z)-\sigma(Y,\nabla_XZ)\]
shows that $\nabla_X\sigma$ is skew-symmetric, and hence $\nabla\sigma$ takes values in $T^*M \otimes \Lambda^2T^*M$.
If $V$ is the model of each tangent space, pointwise $\nabla \sigma$ takes values in $V \otimes \Lambda^2V$.
We are then led to consider the decomposition of $V \otimes \Lambda^2V$ under the action of the orthogonal group.

There is another case where we need to consider the same decomposition, namely when we work with torsion tensors on Riemannian manifolds, cf.\ Agricola \cite[Proposition 2.1]{agricola} and references therein.
We have seen in Theorem \ref{thm:torsion-curvature} that a torsion tensor $T$ gets two vectors $X$ and $Y$ as input and returns
\[T(X,Y)=\nabla_XY-\nabla_YX-[X,Y].\]
Using the metric $g$ on $M$, we can view the same tensor as a totally covariant tensor by
\[T(X,Y,Z) \coloneqq g(X,T(Y,Z)).\]
The skew-symmetry of $T$ tells us this new tensor takes values in $T^*M \otimes \Lambda^2T^*M$.
The following result follows from Besse \cite[Expos\'e XVI]{besse2}.
\begin{proposition}
\label{eq:torsion-tensors}
For $n \geq 3$, the $\mathrm{O}(n)$-module $V \otimes \Lambda^2V$ decomposes into the direct sum of irreducible $\mathrm{O}(n)$-invariant summands
\begin{equation*}
V \otimes \Lambda^2V = \Lambda^3V \oplus V \oplus Y_2^1.
\end{equation*}
\end{proposition}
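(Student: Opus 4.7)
The plan is to mirror the strategy used in Proposition \ref{prop:irreps-orthogonal-group} and the sketch of the previous proposition: exhibit natural $\mathrm{O}(n)$-equivariant surjections onto $\Lambda^3V$ and $V$, define $Y_2^1$ as the intersection of their kernels, and then cap the number of irreducible summands from above by counting elementary quadratic invariants (Definition \ref{def:products-traces}, Proposition \ref{prop:invariant-quadratic-forms}, Remark \ref{rmk:estimate-irreps}).

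First I would produce the two natural projections. Using components $\omega_{ijk}$ of $\omega \in V \otimes \Lambda^2V$ with respect to an orthonormal basis (so that $\omega_{ijk}=-\omega_{ikj}$), set
\[
a(\omega)_{ijk} \coloneqq \omega_{ijk}+\omega_{jki}+\omega_{kij}, \qquad c(\omega)_i \coloneqq \sum_j \omega_{jij}.
\]
A direct check shows $a$ is valued in totally skew-symmetric tensors, hence realises a surjection $V \otimes \Lambda^2V \to \Lambda^3V$ for $n\geq 3$; the trace $c$ is the unique non-vanishing contraction, since $\sum_j \omega_{ijj}=0$ by skew-symmetry in the last two slots. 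Both maps are $\mathrm{O}(n)$-equivariant and surjective (preimages are exhibited via the obvious inclusion $\Lambda^3V \hookrightarrow V \otimes \Lambda^2V$ and the assignment $v \mapsto \tfrac{1}{n-1}\sum_j v_j\,e_j \wedge e_\ast$ appropriately symmetrised). I then define $Y_2^1$ as the orthogonal complement of $\Lambda^3V \oplus V$ in $V \otimes \Lambda^2V$, which coincides with $\ker a \cap \ker c$ and matches the conditions listed in the definition of $Y_2^1$ (the third condition $\sum_\ell \omega_{i\ell\ell}=0$ is automatic from skew-symmetry).

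Next I would count elementary quadratic invariants with $k=3$, $h=2$. The $15$ pairings of the six indices fall into two classes. Pairings in which each factor $\omega$ has one within-$\omega$ contraction must use the $(1,2)$- or $(1,3)$-trace on each factor (the $(2,3)$-trace vanishes), and all such pairings reduce up to sign to $\sum_i c(\omega)_i^2$. Pairings in which every slot of one factor is contracted with a slot of the other correspond to $\sigma \in S_3$; accounting for the skew-symmetry $\omega_{ijk}=-\omega_{ikj}$ one gets the relations
\[
\omega_{ijk}\omega_{ikj}=-\|\omega\|^2, \qquad \omega_{ijk}\omega_{jik}=-\omega_{ijk}\omega_{jki}, \qquad \omega_{ijk}\omega_{kij}=\omega_{ijk}\omega_{jki},
\]
leaving exactly two independent invariants $\|\omega\|^2$ and $\sum \omega_{ijk}\omega_{jki}$. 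Thus there are at most three $\mathrm{O}(n)$-invariant quadratic forms on $V \otimes \Lambda^2V$.

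By Proposition \ref{prop:irreps-orthogonal-group}, $\Lambda^3V$ and $V$ are irreducible. A dimension count gives
\[
\dim Y_2^1 = \tfrac12 n^2(n-1)-\tbinom{n}{3}-n = \tfrac13 n(n-2)(n+2),
\]
which is strictly positive for $n\geq 3$; so $Y_2^1$ is a non-trivial third summand, forcing it to be irreducible and ruling out any isotypic merging. The restriction $n\geq 3$ is exactly what guarantees both $\Lambda^3V\neq 0$ and $Y_2^1\neq 0$. The main technical obstacle I expect is the bookkeeping of the fifteen pairings: one must patiently verify which vanish by the skew-symmetry in the last two slots and which coincide up to sign after relabelling dummy indices, so that the count of independent invariants comes out to exactly three rather than an overestimate that would leave the irreducibility of $Y_2^1$ open.
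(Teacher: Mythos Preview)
Your proposal is correct and follows essentially the same approach as the paper: enumerate the elementary quadratic invariants on $V\otimes\Lambda^2V$ (finding exactly three), exhibit the three obvious summands $\Lambda^3V$, $V$, and their complement $Y_2^1$, and conclude irreducibility from the matching count. The paper's sketch lists the three invariants as $Q_1=\sum\omega_{ijk}^2$, $Q_2=\sum\omega_{ijk}\omega_{jik}$, $Q_3=\sum(\sum_i\omega_{iik})^2$, which agree (up to sign) with your $\|\omega\|^2$, $\sum\omega_{ijk}\omega_{jki}$, and $\sum_i c(\omega)_i^2$; your write-up simply supplies more of the bookkeeping (the explicit projections, the dimension of $Y_2^1$, and the case analysis of the fifteen pairings) that the paper leaves implicit.
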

\begin{sketchproof}
The proof proceeds as above, but we now only have three independent $\mathrm{O}(n)$-invariant quadratic forms.
If $\omega_{ijk}$ are the components of $\omega \in V \otimes \Lambda^2V$, then we have the forms
\begin{alignat*}{2}
Q_1(\omega) & = \sum_{i,j,k} \omega_{ijk}^2, && \qquad Q_2(\omega) = \sum_{i,j,k} \omega_{ijk}\omega_{jik}, \\
Q_3(\omega) & = \sum_{i} \omega_{iik}^2.
\end{alignat*}
So in this case there are three irreducible $\mathrm{O}(n)$-submodules. \qed
\end{sketchproof}
\begin{remark}
Note that the identity $\sum_{\ell} \omega_{i\ell\ell}=0$ characterising tensors in $Y_2^1$ is trivially verified in this case.
More generally, it can be proved that $V \otimes \Lambda^kV$ splits into irreducible orthogonal invariant summands \[V \otimes \Lambda^kV=\Lambda^{k+1}V \oplus \Lambda^{k-1}V \oplus Y_2^1,\] where $Y_2^1$ has a more general characterisation, see \cite[Expos\'e XVI]{besse2} for details.
\end{remark}
By using the latter result, we can distinguish eight classes of connections defined by the properties of their torsion tensor.
For instance, a connection $\nabla$ has \emph{vectorial torsion} if its torsion tensor takes values in $TM$, corresponding to the irreducible submodule $V$ in Proposition \ref{eq:torsion-tensors}.
Every metric connection on a surface is of vectorial type: this is easily seen by $V \otimes \Lambda^2V = V \otimes \mathbb R = V$ when $\dim V = 2$.
Connections with \emph{skew-symmetric torsion} are those whose torsion tensor lies in $\Lambda^3TM$, corresponding to the irreducible submodule $\Lambda^3V$ in Proposition \ref{eq:torsion-tensors}.
In physics, this type of connections appear in superstring theory, $M$-theory \cite{curio-kors-lust, duff, lust-theisen, strominger} and weak holonomy theories \cite{agricola-friedrich}.
We refer to the lecture notes of Agricola \cite[Section 2]{agricola} for further details, examples, and more references.

\subsection{Algebraic curvature tensors}
\label{susec:decomposition-algebraic-curvature-tensors}

In the following, we denote by $(V,\langle{}\cdot{},{}\cdot{}\rangle)$ the vector space $\mathbb R^n$ with its standard scalar product.
We take the whole digression on the Riemannian curvature tensor to a more abstract level by considering the subspace $\mathcal T$ of tensors in $S^2 \Lambda^2 V^*$ satisfying the first Bianchi identity. 
We call $\mathcal T$ the space of \emph{algebraic curvature tensors} on $V$.
In other words, we only look at the formal properties of Riemannian curvature tensors, regardless of any geometric context they might come from.
Since we look at algebraic properties, the second Bianchi identity does not play any role here, as it involves differential operations.
The material in this section can be found in \cite{berger-gauduchon-mazet, besse}.

The vector space $V$ with its Euclidean structure models each tangent space of a Riemannian manifold, so it comes as a representation of the orthogonal group $\mathrm{O}(V)$.
We can then identify the representations $V$ and $V^*$.
The action of $\mathrm{O}(V)$ on $S^2\Lambda^2V \subset V^{\otimes 4}$ is obtained by extending the action to all four factors.

Let $\mathcal S$ be the subspace of symmetric tensors $S^2V$ inside $V^{\otimes 2}$.
We define two maps corresponding to Ricci and scalar curvature respectively.
Set
\begin{alignat*}{2}
\rho & \colon \mathcal T \to \mathcal S, && \qquad \rho(R)(X,Y) = \mathrm{Tr}(R(X,{}\cdot{})Y{}\cdot{}), \\
\tau & \colon \mathcal T \to \mathbb R, && \qquad \tau(R) = \mathrm{Tr}(\rho(R)).
\end{alignat*}
Conversely, the Kulkarni--Nomizu product provides a way to construct an element of $\mathcal T \subset S^2\Lambda^2V$ from two elements of $S^2V$.

Let us now introduce the \emph{Bianchi map} $b \colon V^{\otimes 4} \to V^{\otimes 4}$ defined by 
\begin{equation*}
b(R)(X,Y,Z,W) \coloneqq \frac13 (R(X,Y,Z,W)+R(Y,Z,X,W)+R(Z,X,Y,W)).
\end{equation*}
It is clear that $b$ is $\mathrm{GL}(V)$-equivariant (i.e.\ $b(gR)=gb(R)$ for $g \in \mathrm{GL}(V)$), then one verifies $b^2=b$, and that $b$ maps $S^2\Lambda^2V$ into itself.
Also, one easily checks that $b(R)(X,Y,Z,W)$ is skew-symmetric in $X$ and $Z$, which implies $\mathrm{Im}b \subset \Lambda^4V$.
On the other hand, if $\alpha \in \Lambda^4V$, then $b(\alpha)=\alpha$, so $\mathrm{Im}b = \Lambda^4V$.
So we have a $\mathrm{GL}(V)$-decomposition
\[S^2\Lambda^2V \otimes \Lambda^kV = \ker b \oplus \mathrm{Im}b = \mathcal T \oplus \Lambda^4V.\]
\begin{remark}
\label{rmk:bianchi-identity-trivial-low-dimension}
If $\dim V = 2$ or $3$, then $\Lambda^4V=0$, and hence $b=0$. 
So the first Bianchi identity follows from the other symmetry conditions in these dimensions.
\end{remark}
We know that $\Lambda^4V$ is irreducible as an $\mathrm{O}(V)$-module, cf.\ Proposition \ref{prop:irreps-orthogonal-group}.
We then wish to understand how $\mathcal T$ decomposes under the action of $\mathrm{O}(V)$.
By identity \eqref{eq:formula-owedge}, we see that $g \owedge$ is a linear map $S^2V \to \mathcal T$.
Now if $\dim V > 2$, by evaluating $g \owedge k$ on an orthonormal basis $\{E_i\}$ and imposing $g \owedge k=0$, we find $k=0$ (recall that since $g \owedge k$ lies in $S^2\Lambda^2V$, it is enough to compute $(g\owedge k)(E_i,E_j,E_i,E_j)$). 
So $g \owedge \colon S^2V \to \mathcal T$ is injective.
Also, if $A \in \mathrm{O}(V)$, we have \[g \owedge Ak=Ag \owedge Ak=A(g \owedge k),\] so $g \owedge \colon S^2V \to \mathcal T$ is also equivariant, and $\mathcal T$ contains irreducible modules $(\mathbb Rg \oplus S_0^2V) \owedge g$.
We now show that $\mathcal T$ has exactly three orthogonal invariant submodules by enumerating quadratic invariants on $\mathcal T$.
There are at least three quadratic forms defined on $\mathcal T$, namely the squared norms (in terms of an orthonormal basis)
\begin{align*}
|R|^2 & = \sum_{i,j,k,\ell} R_{ijk\ell}^2, \quad |\rho(R)|^2 = \sum_{j,k}\biggl(\sum_i R_{ijik}\biggr)^2, \quad |\tau(R)|^2 = \biggl(\sum_{i,j} R_{ijij}\biggr)^2.
\end{align*}
We call these three quantities \emph{fundamental forms}.

\begin{theorem}
The vector space of $\mathrm{O}(V)$-invariant quadratic forms on $\mathcal T$ is generated by the three fundamental forms 
$|R|^2$, $|\rho(R)|^2$, $|\tau(R)|^2$. It is of dimension $1$, $2$, or $3$ according to $\dim V=2$, $3$, or more than $3$.
\end{theorem}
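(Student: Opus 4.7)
The plan is to apply the theorem on elementary invariants, stated earlier for polynomials on $V^{\otimes k}$, to $R \otimes R \in V^{\otimes 8}$, and then to use the symmetries of $R \in \mathcal T \subset S^2\Lambda^2V$ together with the first Bianchi identity to collapse the resulting list of generators into a linear combination of $|R|^2$, $|\rho(R)|^2$, and $|\tau(R)|^2$. Since $\mathcal T$ is an $\mathrm{O}(V)$-invariant direct summand of $V^{\otimes 4}$, every $\mathrm{O}(V)$-invariant quadratic form on $\mathcal T$ is the restriction of a linear combination of elementary invariants on $V^{\otimes 4}$, and by Definition \ref{def:products-traces} with $k=4$, $h=2$, $p=4$ these are parametrised by partitions of the eight index slots of $R \otimes R$ into four contracted pairs.

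I would next classify these partitions by the number of pairs lying entirely inside a single copy of $R$: type $(2,0,2)$ with two intra-copy pairs on each side, type $(1,2,1)$ with one intra-copy pair on each side and two cross-pairs, and type $(0,4,0)$ with four cross-pairs. In type $(2,0,2)$ each copy of $R$ is fully contracted with itself, and by antisymmetry in each index-pair combined with the pair-exchange symmetry, every such contraction equals $0$ or $\pm \tau(R)$; hence the invariant is a scalar multiple of $|\tau(R)|^2$. In type $(1,2,1)$ the single intra-copy contraction on each side produces $0$ or $\pm \rho(R)$, and the two cross-contractions then pair these Ricci-like tensors into a multiple of $|\rho(R)|^2$ (when free indices are contracted directly) or $|\tau(R)|^2$ (when the two Ricci tensors are traced separately).

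In type $(0,4,0)$ the pair-exchange symmetry of $R$ reduces every partition to one of two cases: either the two indices of a pair of $R_1$ meet both indices of a single pair of $R_2$, which yields a multiple of $|R|^2$; or they split across the two pairs of $R_2$, which yields the candidate new invariant $J \coloneqq \sum R_{abcd} R^{acbd}$. The crucial step is the first Bianchi reduction: contracting $R_{abcd} + R_{acdb} + R_{adbc} = 0$ against $R^{abcd}$ and then rewriting both $R^{abcd}R_{acdb}$ and $R^{abcd}R_{adbc}$ as $-J$ (the first via antisymmetry in the last index-pair, the second after relabelling dummy indices and applying the same antisymmetry) yields $|R|^2 - 2J = 0$, so $J = \tfrac12 |R|^2$. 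Hence every elementary invariant of type $(0,4,0)$ is a scalar multiple of $|R|^2$, and the three fundamental forms span.

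For the dimension count I would argue separately in each case. When $\dim V = 2$ the space $\mathcal T$ is one-dimensional and the three forms are non-zero multiples of $R_{1212}^2$, so the space of invariants has dimension one. When $\dim V = 3$ the Ricci map $\rho \colon \mathcal T \to \mathcal S$ is an isomorphism by Digression \ref{digression:curvature-low-dimension}, and $R$ is expressible as $\rho \owedge g - \tfrac{\tau}{4} g \owedge g$; substituting this formula into $|R|^2$ produces an explicit linear relation expressing $|R|^2$ in terms of $|\rho(R)|^2$ and $|\tau(R)|^2$, so the invariants span a two-dimensional space. When $\dim V \geq 4$ the dimension count $\dim \mathcal T > \dim \mathcal S$ shows that $\ker \rho$ is non-zero, so non-zero Ricci-flat algebraic curvature tensors exist; linear independence of the three forms then follows by evaluation on $R_1 = g \owedge g$, $R_2 = h \owedge g$ with $h \in S_0^2V$ non-zero, and a non-zero $R_3 \in \ker \rho$, whose matrix of invariant values is, after a column permutation, lower-triangular with non-zero diagonal. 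The main obstacle is the Bianchi reduction in the third paragraph: one must verify that every partition of type $(0,4,0)$, not just the representative $J$, collapses to a scalar multiple of $|R|^2$, which requires some combinatorial care in the index bookkeeping.
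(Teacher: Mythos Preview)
Your approach is correct and is exactly the strategy the paper sketches---enumerate products of traces for $k=4$, $h=2$ and reduce via the curvature symmetries and the first Bianchi identity---except that you actually carry out the details (including the key reduction $\sum R_{abcd}R_{acbd}=\tfrac12|R|^2$ and the dimension-by-dimension independence check) that the paper defers to \cite{berger-gauduchon-mazet}. One harmless slip: in type $(1,2,1)$ the two cross-pairs necessarily connect the free Ricci indices of $R_1$ to those of $R_2$, so only multiples of $|\rho(R)|^2$ (not $|\tau(R)|^2$) can appear there, but this overstatement does not affect your spanning conclusion.
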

\begin{sketchproof}
A detailed discussion is found in \cite{berger-gauduchon-mazet}, but the proof essentially amounts to writing down all possible products of traces for $k=4$ and $h=2$, cf.\ Definition \ref{def:products-traces}.
The second statement follows by Digression \ref{digression:curvature-low-dimension}. \qed
\end{sketchproof}
As a consequence, we have the following result.
\begin{theorem}
If $\dim V \geq 4$, the $\mathrm{O}(V)$-module $\mathcal T$ decomposes into irreducible $\mathrm{O}(V)$-modules
\begin{equation}
\label{eq:decomposition-algebraic-curvature-tensors}
\mathcal T = \mathbb R \oplus \mathcal Z \oplus \mathcal W,
\end{equation}
where $\mathbb R = \mathbb R g \owedge g$, $\mathcal Z = g \owedge S_0^2V$, and $\mathcal W$ is the orthogonal complement of $\mathbb R \oplus \mathcal Z$ in $\mathcal T$.
\end{theorem}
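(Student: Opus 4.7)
The plan is to combine the three-dimensional upper bound on $\mathrm{O}(V)$-invariant quadratic forms on $\mathcal T$ (from the theorem immediately preceding the statement) with Remark \ref{rmk:estimate-irreps}, which states that the dimension of the space of invariant quadratic forms bounds from above the number of irreducible $\mathrm{O}(V)$-invariant components. This gives at most three irreducible summands in $\mathcal T$ for $\dim V \geq 4$. I will then exhibit two of them explicitly and force the third by taking the orthogonal complement.

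First, I would observe that the Kulkarni--Nomizu product $k \mapsto g \owedge k$ defines a linear map $S^2V \to \mathcal T$ which is manifestly $\mathrm{O}(V)$-equivariant (because $g$ is $\mathrm{O}(V)$-invariant) and, as established in the paragraph just before the statement, injective for $\dim V > 2$. Combining this with the decomposition $S^2V = \mathbb R g \oplus S_0^2V$ from Proposition \ref{prop:irreps-orthogonal-group}, I obtain two invariant irreducible submodules of $\mathcal T$: the one-dimensional trivial summand $\mathbb R \coloneqq \mathbb R g \owedge g$ and the submodule $\mathcal Z \coloneqq g \owedge S_0^2V$, equivariantly isomorphic to the irreducible module $S_0^2V$. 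These two submodules are inequivalent $\mathrm{O}(V)$-representations, so by Schur's Lemma they are automatically orthogonal inside $\mathcal T$.

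Next, I define $\mathcal W$ as the orthogonal complement of $\mathbb R \oplus \mathcal Z$ in $\mathcal T$ with respect to the inner product coming from the tensor algebra of $V$, so that $\mathcal T = \mathbb R \oplus \mathcal Z \oplus \mathcal W$ as $\mathrm{O}(V)$-modules. A dimension count confirms $\mathcal W \neq 0$ for $n \geq 4$: one has $\dim \mathcal T = \tfrac{n^2(n^2-1)}{12}$ and $\dim(\mathbb R \oplus \mathcal Z) = \tfrac{n(n+1)}{2}$, whose difference factors as $\tfrac{n(n+1)(n-3)(n+2)}{12}$, strictly positive for $n \geq 4$. Irreducibility of $\mathcal W$ then follows from the three-dimensional bound: any further splitting of $\mathcal W$ would produce at least four invariant irreducible summands inside $\mathcal T$, contradicting Remark \ref{rmk:estimate-irreps}.

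The main obstacle is the subtle counting issue hidden behind Remark \ref{rmk:estimate-irreps}, namely that a potential isotypic component inside $\mathcal W$ of multiplicity $m \geq 2$ contributes $\binom{m+1}{2}$ linearly independent invariant quadratic forms rather than $m$. I would address this by noting that $\mathbb R$ and $\mathcal Z$ already account for two of the three fundamental forms (the squared scalar curvature $|\tau(R)|^2$ and a combination involving $|\rho(R)|^2$), leaving at most a one-dimensional slot of invariant quadratic forms on $\mathcal W$; this rules out any multiplicity $\geq 2$ inside $\mathcal W$ and, together with the three-summand bound, forces $\mathcal W$ to be a single irreducible $\mathrm{O}(V)$-module.
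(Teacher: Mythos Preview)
Your proof is correct and follows essentially the same strategy as the paper's: bound the number of irreducible $\mathrm{O}(V)$-summands of $\mathcal T$ by three via the count of invariant quadratic forms, exhibit $\mathbb R g\owedge g$ and $g\owedge S_0^2V$ explicitly, and conclude that the orthogonal complement $\mathcal W$ is irreducible. The only cosmetic difference is that the paper re-derives the three-form bound by counting four quadratic invariants on the ambient space $S^2\Lambda^2V$ and subtracting one for the irreducible summand $\Lambda^4V$, whereas you invoke the preceding theorem on $\mathcal T$ directly; your additional dimension count and treatment of the isotypic subtlety are refinements the paper leaves implicit.
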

\begin{proof}
There are four orthogonal quadratic invariants on $S^2\Lambda^2V=\mathcal T \oplus \Lambda^4V$, namely
\begin{alignat*}{2}
P_1(T) & \coloneqq \sum_{ijk\ell} (T_{ijk\ell})^2, && \qquad P_2(T) = \sum_{ijk\ell} T_{ijk\ell}T_{ikj\ell}, \\
P_3(T) & \coloneqq \sum_{ijk\ell} T_{iji\ell}T_{kjk\ell}, && \qquad P_4(T) = \sum_{ijk\ell}T_{ijij}T_{k\ell k\ell}.
\end{alignat*}
This follows by Definition \ref{def:products-traces} with $k=4$ and $h=2$, then considering the symmetries of tensors in $S^2\Lambda^2V$.
Since $\Lambda^4V$ is irreducible, $\mathcal T$ must be the direct sum of three invariant modules by Proposition \ref{prop:invariant-quadratic-forms}.
By the discussion above, $\mathcal T$ contains two irreducible invariant submodules isomorphic to $\mathbb R$ and $S_0^2V$, so the statement follows.
\end{proof}
\begin{remark}
By the formulas in the previous section, it follows that the $\mathbb R$-part of $R \in \mathcal T$ is given by 
\[\frac{s}{2n(n-1)} g \owedge g,\]
where $n=\dim V$. Also, $\rho(R)$ is a symmetric $2$-tensor by definition, so its traceless part is $\rho(R)-\frac{s}{n}g$. 
Since for $h \in S^2V$ we have the formula $\rho(g \owedge h) = (n-2)h+\mathrm{Tr}(h)g$, we then find that the $\mathcal Z$-part of $R$ is 
\[\frac{1}{n-2}\left(\rho(R)-\frac{s}{n}g\right) \owedge g.\]
Therefore, we can write the general formula
\begin{equation}
\label{eq:curvature-general-formula}
R = \frac{s}{2n(n-1)} g \owedge g+\frac{1}{n-2}\left(\rho(R)-\frac{s}{n}g\right) \owedge g+W.
\end{equation}
The tensor $W$ is the $\mathcal W$-component of $R$ and is called the \emph{Weyl tensor}.
This is invariant under conformal transformations $g \mapsto ug$, $u>0$ a positive function, cf.\ \cite{besse, besse2}.
Its expression can be derived by formula \eqref{eq:curvature-general-formula}.
\end{remark}

\begin{remark}
Consider a Riemannian $n$-dimensional manifold $(M,g)$, with $n \geq 3$.
Apply the orthogonal decomposition to the curvature tensor $R$ of $(M,g)$.
Suppose that the $\mathcal Z$-component of $R$ vanishes at each point of $M$.
This is equivalent to the metric $g$ being Einstein.
In fact, if $\mathrm{Ric}_p = \lambda_pg_p$ at all points $p \in M$ for constants $\lambda_p$, then $\mathrm{Ric} = \lambda g$ for a constant $\lambda$ in virtue of \cite[Theorem 1.97]{besse}. 

Suppose now that both the $\mathcal Z$- and the $\mathcal W$-components of $R$ vanish at each point of $M$.
Then in particular $(M,g)$ is Einstein and has constant scalar curvature. 
Then $(M,g)$ has constant sectional curvature.
\end{remark}

\subsection{Curvature in low dimension}
\label{subsec:curvature-in-low-dimension}

A curvature tensor in dimension greater than $3$ splits into the sum of three invariant components, and dimensions $2$ and $3$ are more special due to the relationship among the curvature tensor and its traces.
We then reinterpret our previous discussion on $R$ in dimension $2$ and $3$ using the orthogonal decomposition of $R$.
We also look at the decomposition of $R$ under the special orthogonal group in dimension $4$, which is interesting on oriented Riemannian manifolds.

Let $(M,g)$ be a Riemannian $n$-dimensional manifold, $R$ the curvature tensor of $g$, and let $p \in M$ be any point.
If $n=2$, then $S^2\Lambda^2T_p^*M$ is one-dimensional, and formula \eqref{eq:curvature-general-formula} implies \[R=\frac{s}{4}g \owedge g.\] 
It follows immediately that $\mathrm{Ric}=\frac{s}{2}g$.
If $n=3$, then $S^2\Lambda^2T_p^*M$ is six-dimensional, and formula \eqref{eq:curvature-general-formula} reads 
\[R=\frac{s}{12}g\owedge g+\left(\mathrm{Ric}-\frac{s}{3}g\right)\owedge g.\]
Since knowing $\mathrm{Ric}$ implies knowing $s$, we see that $\mathrm{Ric}$ determines $R$.
Applying again \cite[Theorem 1.97]{besse}, we have the following result.
\begin{proposition}
In dimension $2$ and $3$, a Riemannian manifold is Einstein if and only if it has constant sectional curvature.
\end{proposition}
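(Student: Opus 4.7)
The plan is to lean entirely on the explicit formulas for the curvature tensor in low dimension that have just been derived from the orthogonal decomposition \eqref{eq:decomposition-algebraic-curvature-tensors}, together with the identity $\rho(g \owedge h) = (n-2)h + \mathrm{Tr}(h)g$ used to extract the Ricci trace.

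First I would handle the case $n = 2$. Here the paragraph preceding the proposition records the pointwise identity $R = \tfrac{s}{4}\,g \owedge g$, from which taking the Ricci trace yields $\mathrm{Ric} = \tfrac{s}{2}\,g$ automatically at every point. Therefore $g$ is Einstein (in the sense $\mathrm{Ric} = \lambda g$ with $\lambda$ a \emph{constant}) if and only if $s$ is constant, which by the formula above is equivalent to $R = \tfrac{k}{2}\,g \owedge g$ with $k = s/2$ constant, i.e.\ constant sectional curvature.

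Next I would treat $n = 3$. The relevant formula is
\[
R = \tfrac{s}{12}\,g \owedge g + \Bigl(\mathrm{Ric} - \tfrac{s}{3}\,g\Bigr) \owedge g.
\]
If $g$ is Einstein, then $\mathrm{Ric} = \lambda g$ for a constant $\lambda$, forcing $s = 3\lambda$ to be constant and the $\mathcal{Z}$-component $(\mathrm{Ric} - \tfrac{s}{3}g)\owedge g$ to vanish; hence $R = \tfrac{\lambda}{4}\,g \owedge g$, which is constant sectional curvature $k = \lambda/2$. Conversely, if $R = \tfrac{k}{2}\,g \owedge g$ for a constant $k$, applying $\rho$ and the formula $\rho(g \owedge g) = 2(n-1)g$ gives $\mathrm{Ric} = (n-1)k\,g = 2k\,g$, so $g$ is Einstein. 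In this dimension I can alternatively invoke Besse's Theorem 1.97 (cited right after \eqref{eq:curvature-general-formula}) to upgrade a pointwise proportionality $\mathrm{Ric}_p = \lambda_p g_p$ to a global constant, but here that is not even needed since constancy of $\lambda$ is built into the definition of Einstein.

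There is no real obstacle: both implications are direct algebraic consequences of the low-dimensional collapse of the decomposition \eqref{eq:decomposition-algebraic-curvature-tensors} (the Weyl component $\mathcal{W}$ is absent for $n \leq 3$, and in $n = 2$ also $\mathcal{Z}$ is absent). The only point requiring minimal care is to separate the pointwise algebraic identities from the global constancy built into the definition of \emph{Einstein}, which is why the argument has to be organised around the two equivalences \emph{Einstein} $\Longleftrightarrow$ $s$ constant $\Longleftrightarrow$ constant sectional curvature rather than a single direct computation.
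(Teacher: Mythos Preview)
Your proof is correct and follows the same route as the paper: both rely on the low-dimensional collapse of the decomposition \eqref{eq:decomposition-algebraic-curvature-tensors} via the explicit formulas $R=\tfrac{s}{4}\,g\owedge g$ (for $n=2$) and $R=\tfrac{s}{12}\,g\owedge g+(\mathrm{Ric}-\tfrac{s}{3}g)\owedge g$ (for $n=3$). The paper condenses the entire argument into a one-line appeal to \cite[Theorem 1.97]{besse}, whereas you spell out both implications directly and correctly observe that this Schur-type result is not actually needed here, since constancy of $\lambda$ is already part of the definition of Einstein.
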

Let us consider the decomposition of $\mathcal T$ with respect to the special orthogonal group.
In dimension greater than $4$, the decomposition \eqref{eq:decomposition-algebraic-curvature-tensors} is irreducible for $\mathrm{SO}(n)$, cf.\ Kirillov \cite{kirillov}.
When $\dim V = 4$, then $\Lambda^2V$ decomposes as $\Lambda^2V=\Lambda_+^2V\oplus \Lambda_-^2V$ (Example \ref{ex:self-dual-forms}), and hence we have an $\mathrm{SO}(4)$-decomposition
\[V \otimes V = \Lambda^2V \oplus S^2V = \Lambda_+^2V\oplus \Lambda_-^2V \oplus S_0^2V \oplus \mathbb R.\]

We mentioned in Example \ref{ex:self-dual-forms} that in dimension $4$ there is an equivalence of $\mathrm{O}(4)$-modules 
\[\Lambda_+^2V \otimes \Lambda_-^2V=S_0^2V,\]
so $\Lambda_+^2V \otimes \Lambda_-^2V$ is $\mathrm{O}(4)$-irreducible by Proposition \ref{prop:irreps-orthogonal-group}.
In Equation \eqref{eq:decomposition-algebraic-curvature-tensors}, we have seen that $\mathcal Z$ corresponds to $S_0^2V$, so $\mathcal Z$ corresponds to $\Lambda_+^2V \otimes \Lambda_-^2V$.
Now, we have the identity
\[\Lambda^2 \otimes \Lambda^2 = (\Lambda_+^2 \otimes \Lambda_+^2)\oplus (\Lambda_+^2\otimes \Lambda_-^2) \oplus (\Lambda_-^2 \otimes \Lambda_-^2),\]
whence
\begin{align}
\label{eq:first-decomposition-four-dimensions}
S^2\Lambda^2 & = S^2\Lambda_+^2 \oplus (\Lambda_+^2\otimes \Lambda_-^2)\oplus S^2\Lambda_-^2 \nonumber \\
& = S_0^2\Lambda_+^2 \oplus (\Lambda_+^2\otimes \Lambda_-^2)\oplus S_0^2\Lambda_-^2 \oplus \mathbb R \oplus \mathbb R,
\end{align}
where the $\mathbb R$ summands are generated by the identity elements $\mathrm{id}_{\pm}\colon \Lambda_{\pm}^2 \to \Lambda_{\pm}^2$, whose sum is $\frac12 g \owedge g$, cf.\ \eqref{eq:inner-product-exterior-algebra2}.
Note also that if $\alpha \in \Lambda^2 = \Lambda_+^2\oplus \Lambda_-^2$, then $\star \alpha = \star \alpha_++\star \alpha_- = \alpha_+-\alpha_-$, so the Hodge star operator corresponds to the difference of the identity maps $\mathrm{id}_{\pm}$.
Now, $\star$ maps $\mathbb R$ to $\Lambda^4V$, and hence the $\mathbb R \oplus \mathbb R$ part in \eqref{eq:first-decomposition-four-dimensions} corresponds to $\Lambda^4V \oplus \mathbb R g \owedge g$ in the general decomposition of $S^2\Lambda^2V$ in \eqref{eq:decomposition-algebraic-curvature-tensors}.
Since $\mathcal Z = \Lambda_+^2V \otimes \Lambda_-^2V$, the space of Weyl tensors must be $S_0^2\Lambda_+^2\oplus S_0^2\Lambda_-^2$.
Both summands here are irreducible $\mathrm{SO}(4)$-representations by the following result.

\begin{theorem}[Singer--Thorpe \cite{singer-thorpe}]
The $\mathrm{SO}(4)$-representation $S^2\Lambda^2V$ decomposes into the direct sum of irreducible $\mathrm{SO}(4)$-summands as
\[S^2\Lambda^2V = \mathbb R \oplus \mathbb R \oplus (\Lambda_+^2V \otimes \Lambda_-^2V) \oplus S_0^2\Lambda_+^2V \oplus S_0^2\Lambda_-^2V.\]
\end{theorem}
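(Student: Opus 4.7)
The strategy is to take the $\mathrm{SO}(4)$-splitting $\Lambda^2V = \Lambda^2_+V \oplus \Lambda^2_-V$ from Example~\ref{ex:self-dual-forms} and symmetrise it. From the identity
\[
S^2(A \oplus B) = S^2A \oplus (A \otimes B) \oplus S^2B,
\]
applied to $A = \Lambda^2_+V$, $B = \Lambda^2_-V$, I would obtain a first $\mathrm{SO}(4)$-invariant decomposition
\[
S^2\Lambda^2V = S^2\Lambda^2_+V \oplus (\Lambda^2_+V \otimes \Lambda^2_-V) \oplus S^2\Lambda^2_-V.
\]
Using the scalar product \eqref{eq:inner-product-exterior-algebra} on each $\Lambda^2_{\pm}V$, I would then peel off the identity endomorphisms $\mathrm{id}_{\pm} \in S^2\Lambda^2_{\pm}V$ to write $S^2\Lambda^2_{\pm}V = S^2_0\Lambda^2_{\pm}V \oplus \mathbb R\cdot \mathrm{id}_{\pm}$, yielding the five claimed summands.

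The two one-dimensional pieces are patently trivial (hence irreducible) $\mathrm{SO}(4)$-modules, and the middle factor $\Lambda^2_+V \otimes \Lambda^2_-V$ is irreducible since it was already identified with $S^2_0V$ as an $\mathrm{O}(4)$-module earlier in the text. The only step requiring real content is therefore irreducibility of $S^2_0\Lambda^2_{\pm}V$, and this is where I would focus the argument.

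For that step I would invoke the identifications $\Lambda^2_{\pm}V = \mathfrak{su}(2)_{\pm}$ of Example~\ref{ex:self-dual-forms}, together with the observation from Digression~\ref{digression:so(4)} that $\mathrm{SO}(4)$ can be written as $\mathrm{SU}(2)_+\mathrm{SU}(2)_-$, with each $\mathrm{SU}(2)_{\pm}$ factor acting on $\Lambda^2_{\pm}V$ by the adjoint representation (the unique three-dimensional irreducible $\mathrm{SU}(2)$-representation $V_2$) and trivially on the opposite summand. The Clebsch--Gordan formula, Theorem~\ref{thm:clebsch-gordan-formula}, gives $V_2 \otimes V_2 = V_4 \oplus V_2 \oplus V_0$, and separating symmetric from skew-symmetric part yields $S^2V_2 = V_4 \oplus V_0$, so $S^2_0V_2 = V_4$ is irreducible of dimension $5$. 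Hence $S^2_0\Lambda^2_{\pm}V$ is irreducible already under $\mathrm{SU}(2)_{\pm}$, and therefore a fortiori under $\mathrm{SO}(4)$.

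The main obstacle is really bookkeeping rather than substance: one must check that the three summands $S_0^2\Lambda^2_+V$, $\Lambda^2_+V \otimes \Lambda^2_-V$, $S_0^2\Lambda^2_-V$ are pairwise inequivalent as $\mathrm{SO}(4)$-modules, so that the decomposition is multiplicity-free and unique. This follows by looking at the dimensions $(5,9,5)$ and the way the two $\mathrm{SU}(2)$ factors act: on $S^2_0\Lambda^2_+V$ only $\mathrm{SU}(2)_+$ acts non-trivially, on $S^2_0\Lambda^2_-V$ only $\mathrm{SU}(2)_-$, while on $\Lambda^2_+V \otimes \Lambda^2_-V$ both factors act non-trivially, so no two of them can be equivariantly isomorphic. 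As a consistency check, the sum of dimensions is $1+1+5+9+5=21=\binom{6+1}{2}=\dim S^2\Lambda^2\mathbb R^4$, and the decomposition refines the $\mathrm{O}(4)$-splitting $\mathcal T \oplus \Lambda^4V = (\mathbb R \oplus \mathcal Z \oplus \mathcal W)\oplus \Lambda^4V$, with $\mathcal W = S^2_0\Lambda^2_+V \oplus S^2_0\Lambda^2_-V$ and the two trivial $\mathbb R$'s corresponding to $\mathbb R g \owedge g$ and to $\Lambda^4V$ via the Hodge star, as observed in \eqref{eq:first-decomposition-four-dimensions}.
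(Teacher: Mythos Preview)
Your decomposition step is exactly the one the paper carries out in the paragraph preceding the theorem, culminating in \eqref{eq:first-decomposition-four-dimensions}. The paper, however, does not prove the irreducibility of $S_0^2\Lambda_{\pm}^2V$ under $\mathrm{SO}(4)$; it simply states the theorem with a citation to Singer--Thorpe and uses it as a black box. Your Clebsch--Gordan computation (identifying $S_0^2V_2$ with the irreducible $V_4$) supplies precisely this missing step, so your proposal is correct and in fact more complete than what the text provides.

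One small caution on the references: Digression~\ref{digression:so(4)} is not where you will find the claim that $\mathrm{SU}(2)_\pm$ acts on $\Lambda^2_\pm V$ by the adjoint representation. That digression discusses a particular $\mathrm{SU}(2)\subset\mathrm{SO}(4)$ and observes that it acts \emph{trivially} on $\Lambda^2_+ V$---this is the $\mathrm{SU}(2)_-$ factor, so there is no contradiction with your claim, but a reader may be momentarily confused. The notation $\mathrm{SO}(4)=\mathrm{SU}(2)_+\mathrm{SU}(2)_-$ appears in the Remark just before the digression, and the adjoint action of each $\mathrm{SU}(2)_\pm$ on $\Lambda^2_\pm V$ (and trivial action on the opposite summand) follows cleanly from the $\mathrm{SO}(4)$-equivariant identification $\Lambda^2_\pm V\cong\mathfrak{su}(2)_\pm$ of Example~\ref{ex:self-dual-forms}, since each $\mathfrak{su}(2)_\pm$ is an ideal in $\mathfrak{so}(4)$. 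That is the justification your argument actually needs.
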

It follows that the space of Weyl tensors $\mathcal W$ splits into the direct sum of two irreducible submodules $\mathcal W_{\pm} \coloneqq S_0^2\Lambda_{\pm}^2V$, which is the only difference to the general case.
We have the following characterisations of the irreducible components above:
\begin{align*}
\mathcal ZV & = \{R \in S^2\Lambda^2V: \star R = -\star R\}, \\
\mathcal W_+V & = \{R \in S_0^2\Lambda^2V: \star R = R\star = R\}, \\ 
\mathcal W_-V& = \{R \in S_0^2\Lambda^2V: \star R = R\star = -R\}.
\end{align*}
\begin{corollary}
For a four-dimensional Riemannian manifold $(M,g)$, the following are equivalent:
\begin{itemize}
\item $(M,g)$ is Einstein, 
\item for any local orientation, $R \star = \star R$, 
\item for any $p \in M$ and any two-plane in $T_pM$, its sectional curvature is the same as the sectional curvature of its orthogonal two-plane in $T_pM$.
\end{itemize}
\end{corollary}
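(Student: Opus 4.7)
The natural approach is to read all three conditions off the block form of the curvature operator. Viewing $R$ as a self-adjoint endomorphism $R\colon \Lambda^2T_pM \to \Lambda^2T_pM$ and using the $\mathrm{SO}(4)$-splitting $\Lambda^2 = \Lambda_+^2 \oplus \Lambda_-^2$, write
\[
R = \begin{pmatrix} A & B \\ B^T & D \end{pmatrix},
\qquad A \in S^2\Lambda_+^2,\ D \in S^2\Lambda_-^2,\ B \colon \Lambda_-^2 \to \Lambda_+^2.
\]
By the Singer--Thorpe decomposition recalled above, $A$ carries the $\mathbb{R}$- and $\mathcal W_+$-parts of $R$, $D$ carries the $\mathbb{R}$- and $\mathcal W_-$-parts, and the off-diagonal block $B$ is exactly the $\mathcal Z$-component. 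Since $\mathcal Z = g\owedge S_0^2V$, vanishing of $B$ is equivalent to the traceless Ricci vanishing, i.e.\ to $(M,g)$ being Einstein. Thus the entire statement reduces to showing that each of conditions (ii) and (iii) is equivalent to $B=0$.

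\textbf{Step 1: (i) $\Leftrightarrow$ (ii).} Since $\star$ acts as $+\mathrm{id}$ on $\Lambda_+^2$ and as $-\mathrm{id}$ on $\Lambda_-^2$, a direct block computation gives
\[
R\star - \star R \;=\; \begin{pmatrix} 0 & -2B \\ 2B^T & 0 \end{pmatrix},
\]
so $R\star = \star R$ if and only if $B=0$. (Changing the orientation replaces $\star$ by $-\star$, which leaves the relation $R\star=\star R$ intact, so this condition is indeed orientation-independent.)

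\textbf{Step 2: (ii) $\Leftrightarrow$ (iii).} Given $p\in M$ and a $2$-plane $P\subset T_pM$, choose an oriented orthonormal basis $\{e_1,\dots,e_4\}$ with $P = \mathrm{span}(e_1,e_2)$ and $P^{\perp}=\mathrm{span}(e_3,e_4)$. Then $\omega \coloneqq e_1\wedge e_2$ satisfies $\star\omega = e_3\wedge e_4$; writing $\omega_{\pm} \coloneqq \tfrac12(\omega\pm\star\omega) \in \Lambda_\pm^2$, one expands
\[
\sigma_p(P) - \sigma_p(P^{\perp}) \;=\; \langle R\omega,\omega\rangle - \langle R(\star\omega),\star\omega\rangle \;=\; 4\,\langle B\omega_-,\omega_+\rangle,
\]
since the diagonal blocks $A,D$ contribute identically to the two inner products. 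Hence (ii) implies (iii) at once. For the converse, note that a unit $2$-form $\omega$ is decomposable if and only if $\omega\wedge\omega=0$, equivalently $\langle\omega,\star\omega\rangle=0$, equivalently $|\omega_+| = |\omega_-| = 1/\sqrt{2}$; conversely, any pair $(\omega_+,\omega_-)\in \Lambda_+^2\times\Lambda_-^2$ of equal norm reconstructs such a decomposable $\omega$ (and the corresponding plane $P$). Thus as $P$ ranges over all $2$-planes at $p$, the pair $(\omega_+,\omega_-)$ sweeps out all of $S(\Lambda_+^2)\times S(\Lambda_-^2)$ up to a common scaling, so vanishing of $\langle B\omega_-,\omega_+\rangle$ on this set forces $B=0$ by bilinearity.

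\textbf{Main obstacle.} The only mildly delicate point is the direction (iii) $\Rightarrow$ (ii): one must verify that the self-dual and anti-self-dual parts of decomposable unit $2$-vectors really do probe the entire off-diagonal block $B$. As indicated, this boils down to the four-dimensional fact that decomposability of $\omega\in\Lambda^2$ is equivalent to $\omega\wedge\omega=0$, i.e.\ to $|\omega_+|=|\omega_-|$, a constraint that still leaves $(\omega_+,\omega_-)$ free to range over a full-dimensional subset of $\Lambda_+^2\times\Lambda_-^2$.
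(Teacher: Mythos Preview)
Your proof is correct. The paper does not actually prove this corollary: it records the characterisations $\mathcal Z V=\{R:\star R=-R\star\}$ and $\mathcal W_\pm V=\{R\in S_0^2\Lambda^2V:\star R=R\star=\pm R\}$, from which the equivalence (i)$\Leftrightarrow$(ii) is immediate, and then defers the full statement (in particular the equivalence with (iii)) to Besse, Expos\'e~IX. Your block computation is a concrete incarnation of those characterisations, and your treatment of (iii) via the identification of decomposable unit two-vectors with pairs $(\omega_+,\omega_-)$ of equal norm is exactly the standard argument one finds in Besse; you have supplied the details the paper omits.
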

The above correspondence and its corollary are proved in \cite{besse2}, Expos\'e IX. We refer to Besse \cite{besse, besse2} for more applications. 
We also mention that a decomposition of the curvature tensor for the unitary group refining the orthogonal one can be found in Tricerri--Vanhecke \cite{tricerri-vanhecke}.

We conclude this section with a well-known result relating the curvature and the topology of a manifold.
Complete, connected Einstein spaces with positive scalar curvature are automatically compact with finite fundamental group.
This is a consequence of the following result.
\begin{theorem}[Bonnet--Myers \cite{myers}]
\label{thm:bonnet-myers}
Let $(M,g)$ be a complete, connected Riemannian $n$-dimensional manifold.
Suppose there is a constant $r>0$ such that 
\[\mathrm{Ric} \geq \frac{n-1}{r^2}g.\]
Then $\mathrm{diam}(M,g) \leq \pi r$, and $M$ is compact with finite fundamental group.
\end{theorem}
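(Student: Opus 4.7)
The plan is to prove the diameter bound by the classical second-variation argument applied to a minimising geodesic, then deduce compactness from Hopf--Rinow, and finally control $\pi_1(M)$ via the universal cover.

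First I would assume for contradiction that there exist points $p,q \in M$ with $d(p,q) > \pi r$. Completeness and connectedness allow me, by the Hopf--Rinow theorem, to join $p$ and $q$ by a minimising unit-speed geodesic $\gamma \colon [0,L] \to M$ with $L = d(p,q) > \pi r$. I would then choose a parallel orthonormal frame $E_1,\dots,E_{n-1}$ along $\gamma$ with each $E_i$ perpendicular to $\dot\gamma$ (which exists because parallel transport is an isometry and preserves orthogonality), and set the variational vector fields
\[
V_i(t) \coloneqq \sin(\pi t/L)\, E_i(t), \qquad i=1,\dots,n-1.
\]
Since $V_i(0)=V_i(L)=0$, each $V_i$ is an admissible variation of $\gamma$ with fixed endpoints.

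Next I would invoke the second variation formula of arc length: for a variation with variational field $V$ perpendicular to a unit-speed geodesic $\gamma$,
\[
L''(0) = \int_0^L \bigl(|\nabla_{\dot\gamma} V|^2 - R(V,\dot\gamma,V,\dot\gamma)\bigr)\,dt.
\]
Since $\gamma$ is minimising, each $L_i''(0) \geq 0$. Using that $E_i$ is parallel and $|\dot\gamma|=1$, one has $\nabla_{\dot\gamma}V_i = (\pi/L)\cos(\pi t/L)E_i$ and
\[
\sum_{i=1}^{n-1} R(V_i,\dot\gamma,V_i,\dot\gamma) = \sin^2(\pi t/L)\,\mathrm{Ric}(\dot\gamma,\dot\gamma).
\]
Summing the $n-1$ inequalities and applying the hypothesis $\mathrm{Ric}(\dot\gamma,\dot\gamma) \geq (n-1)/r^2$ together with $\int_0^L \cos^2(\pi t/L)\,dt = \int_0^L \sin^2(\pi t/L)\,dt = L/2$ yields
\[
0 \leq \sum_{i=1}^{n-1} L_i''(0) \leq \frac{(n-1)L}{2}\left(\frac{\pi^2}{L^2}-\frac{1}{r^2}\right),
\]
which is strictly negative as soon as $L > \pi r$, a contradiction. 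Hence $\mathrm{diam}(M,g) \leq \pi r$.

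Finally, since $(M,g)$ is complete and bounded, Hopf--Rinow implies $M$ is compact. For the statement on $\pi_1(M)$, I would pass to the Riemannian universal cover $(\tilde M,\tilde g)$: the covering projection is a local isometry, so $\tilde g$ satisfies the same Ricci lower bound and is complete, hence by the previous step $\tilde M$ is compact. The deck-transformation group $\pi_1(M)$ then acts freely and properly discontinuously on the compact manifold $\tilde M$, which forces $\pi_1(M)$ to be finite. The main obstacle is purely expository: keeping the second-variation computation and the integration cleanly bookkept, since the key inequality depends on the precise interplay between $\pi^2/L^2$ and $(n-1)/r^2$, but no deep difficulty arises once the second variation formula and Hopf--Rinow are in hand.
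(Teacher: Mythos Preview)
The paper does not actually prove Theorem~\ref{thm:bonnet-myers}: it is stated with a citation to Myers \cite{myers} and then used later without proof. There is therefore nothing in the paper to compare your argument against.

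That said, your proposal is the standard and correct proof. The second-variation computation is set up properly: the parallel orthonormal frame perpendicular to $\dot\gamma$, the test fields $V_i(t)=\sin(\pi t/L)E_i(t)$, the summation producing $\mathrm{Ric}(\dot\gamma,\dot\gamma)$, and the resulting inequality $(n-1)(L/2)(\pi^2/L^2-1/r^2)<0$ for $L>\pi r$ are all accurate. The passage to compactness via Hopf--Rinow and to finiteness of $\pi_1(M)$ via the universal cover is likewise the standard route and is correctly sketched.
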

Here $\mathrm{diam}(M,g)$ is the supremum of the lengths of the curves joining any two given points in $(M,g)$.
This result will be useful for certain considerations later.

\newpage
\section{Holonomy}
\label{sec:holonomy}

A connection on a manifold is used to transport geometric data along a curve, and thus connect tangent spaces.
This allows one to define covariant differentiation.
On Riemannian manifolds we have a canonical metric and torsion-free connection, so we have a natural notion of parallel transport, and a corresponding Riemannian holonomy group.
Metric connections with torsion are of interest as well.

Hereafter we discuss \emph{Riemannian holonomy}, and we see how to compute it via the \emph{General Holonomy Principle}.
The idea behind this principle is illustrated on a number of $G$-structures.
For details on the material see e.g.\ Besse \cite[Chapter 10]{besse} or Joyce \cite[Section 2]{joyce}.

\subsection{Riemannian holonomy}
\label{subsec:riemannian-holonomy}
We start by reviewing the notion of parallel transport in the Riemannian context, specifically for the Levi-Civita connection.
Certain considerations can be generalised to different types of linear connections.

Let $(M,g)$ be a Riemannian $n$-manifold, and $\nabla$ be the Levi-Civita connection on it.
If $K$ is a tensor field on $M$, then $K$ is parallel along a curve $\gamma$ when $\nabla_{\dot{\gamma}} K=0$.
Here $\dot{\gamma}$ is the time-derivative of $\gamma$.
We remark that $\gamma$ can be taken piecewise smooth.
An analysis of $\nabla_{\dot{\gamma}}K=0$ in a local coordinate system where $\gamma$ is smooth shows that this is a system of ordinary differential equations in the first derivatives of $K$.
\begin{example}
\label{ex:geodesics}
Let $(x^1,\dots,x^n)$ be a local coordinate system on an $n$-dimensional Riemannian manifold $(M,g)$.
Let $\nabla$ be the Levi-Civita connection, and $K$ a vector field on $M$.
Suppose that $\gamma$ is a smooth curve with support contained in the above coordinate chart.
Let $\partial_{\mu}$ be the coordinate vector field $\partial/\partial x^{\mu}$.
Using the Einstein convention, we can write
\begin{equation*}
\nabla_{\dot{\gamma}}K = \nabla_{\dot{\gamma}^{\mu}\partial_{\mu}}(K^{\nu}\partial_{\nu}) = \dot{\gamma}^{\mu}\nabla_{\partial_{\mu}}(K^{\nu}\partial_{\nu}) = \dot{\gamma}^{\mu}(\partial_{\mu}K^{\lambda}+K^{\nu}\Gamma_{\mu\nu}^{\lambda})\partial_{\lambda},
\end{equation*}
where $\nabla_{\partial_{\mu}}\partial_{\nu} \eqqcolon \Gamma_{\mu\nu}^{\lambda}\partial_{\lambda}$. Thus $K$ is parallel along $\gamma$ when $\dot{\gamma}^{\mu}(\partial_{\mu}K^{\lambda}+K^{\nu}\Gamma_{\mu\nu}^{\lambda})=0$ for all $\lambda$ and $t$ (all functions involved in the latter identity are evaluated at $\gamma(t)$).
A special case occurs when $K = \dot{\gamma}$: the above parallel condition then becomes
\[\nabla_{\dot{\gamma}}\dot{\gamma}=\ddot{\gamma}^{\lambda}+\dot{\gamma}^{\mu}\dot{\gamma}^{\nu}\Gamma_{\mu\nu}^{\lambda}=0.\]
A curve $\gamma$ satisfying this equation is called a \emph{geodesic}.
Geodesics in Riemannian manifolds are the equivalent of straight lines in a flat space.
A geodesic is uniquely determined by a point $p \in M$ and a tangent vector $X \in T_pM$, and one writes $t \mapsto \exp_p(tX)$ for the corresponding curve (see also the exponential map in the Riemannian context \cite{kobayashi-nomizu}).
\end{example}
Take $\gamma \colon [0,1] \to M$ to be a smooth curve with support inside a given coordinate system, and assume $\gamma(0)=p$ and $\gamma(1)=q$.
If the value of a tensor $K$ is specified at $p$, then Cauchy's Theorem guarantees that there is a unique local solution of $\nabla_{\dot{\gamma}}K=0$, and hence a unique value of $K$ at any point in the support of $\gamma$.
This defines a function from the tensor algebra of $T_pM$ to the tensor algebra of $T_{\gamma(t)}M$ for all $t\in (0,1]$, in particular of $T_qM$.
The map $\tau_{\gamma} \colon T_pM \to T_qM$ obtained in this way is a parallel transport map.

If $X$ and $Y$ are two tangent vectors at a point $p \in M$, and $\gamma$ is a curve as above joining $p$ to another point $q \in M$, we can transport $X$ and $Y$ along $\gamma$ in a parallel way until we reach $q$.
The metric condition for $\nabla$ implies that the inner product of $X$ and $Y$ remains constant along $\gamma$, as
\begin{equation}
\label{eq:scalar-products-preserved-under-parallel-transport}
\dot{\gamma}(g(X,Y))=g(\nabla_{\dot{\gamma}}X,Y)+g(X,\nabla_{\dot{\gamma}}Y)=0.
\end{equation}
If in particular $\gamma$ is taken to be a loop based at a point $p \in M$, $\tau_{\gamma}$ is an isometry of $T_pM$.
In this case, moving backwards along $\gamma$ gives the inverse of $\tau_{\gamma}$, and concatenating loops gives a composition operation of parallel transport maps.
Recall Definition \ref{def:linear-holonomy} of the holonomy group for $\nabla$.
By \eqref{eq:scalar-products-preserved-under-parallel-transport}, $\mathrm{Hol}_p(\nabla)$ is a subgroup of $\mathrm{O}(n)$, and is a Lie group by a result of Freudenthal \cite{kobayashi-nomizu}. 
\begin{definition}
The representation of $\mathrm{Hol}_p(\nabla) \to \mathrm{O}(T_pM)$ on the tangent space $T_pM$ is called the \emph{holonomy representation}.
\end{definition}
\begin{remark}
It is clear that the holonomy group does not come as an abstract Lie group, but as a Lie group with a natural orthogonal representation on each tangent space.
Recall that $\nabla$ is equivalent to a connection on $LM$ compatible with the $\mathrm{O}(n)$-structure induced by $g$.
The image of the holonomy group of the latter connection (as in Definition \ref{def:principal-hol}) via the associated bundle construction is exactly $\mathrm{Hol}_p(\nabla)$ (see Joyce \cite[Proposition 2.3.7]{joyce}).
\end{remark}
Mapping the homotopy class of any loop $\gamma$ at $p$ to the parallel transport map $\tau_{\gamma}$ gives a group homomorphism
\begin{equation}
\pi_1(M,p) \mapsto \mathrm{Hol}_p(\nabla)/\mathrm{Hol}_p^0(\nabla),
\end{equation}
which is obviously surjective. 

If we assume $M$ to be connected, it is easy to see that holonomy groups at different points are isomorphic (in fact, they are conjugate to one another), so that one can speak of \emph{the} holonomy group $\mathrm{Hol}(\nabla)$ (defined up to conjugation) without reference point.
In this sense, the holonomy group is a global invariant of the Riemannian manifold.

If we further assume $M$ to be simply connected, and thus orientable, then the holonomy group is a connected group, as loops can be shrunk to a point.
Then $\mathrm{Hol}(\nabla)=\mathrm{Hol}^0(\nabla)$, so $\mathrm{Hol}(\nabla)$ must sit in $\mathrm{SO}(n) \subset \mathrm{O}(n)$.

\begin{remark}
We remark that $\mathrm{Hol}^0(g)$ is a \emph{closed} subgroup of $\mathrm{SO}(n)$, but this is a non-trivial fact.
The holonomy group of a Riemannian manifold need not be closed---see Wilking \cite{wilking} for examples of compact Riemannian manifolds with non-compact holonomy.
\end{remark}

By using the theory of principal bundles, one can prove a deep relationship between the curvature tensor of a Riemannian metric and the holonomy Lie algebra. 
This is stated precisely in the following theorem, cf.\ Kobayashi--Nomizu \cite[Chapter II, Theorem 9.1]{kobayashi-nomizu}, or Ambrose--Singer \cite{ambrose-singer} for the original paper.
In the Riemannian case, the result can be stated as follows for any metric connection (which may or may not be the Levi-Civita connection).
\begin{theorem}[Ambrose--Singer, 1953]
\label{thm:ambrose-singer}
Let $(M,g)$ be a Riemannian manifold, $\nabla$ any linear connection, $R$ its curvature tensor, and $\mathrm{Hol}_p(\nabla)$ the holonomy group of $\nabla$ at $p \in M$.
The Lie algebra of $\mathrm{Hol}_p(\nabla)$ is equal to the subspace of endomorphisms of $T_pM$ spanned by all elements of the form \[\tau_{\gamma}^{-1} \circ R_{\tau_{\gamma}X,\tau_{\gamma}Y}\circ \tau_{\gamma},\] where $X,Y \in T_pM$, and $\tau_{\gamma}$ is the parallel transport map along a piecewise smooth loop $\gamma$ based at $p$.
\end{theorem}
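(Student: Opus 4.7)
The plan is to lift the statement to the principal orthonormal frame bundle $\mathrm{O}(M) \to M$ and invoke the Holonomy Reduction Theorem, then run a Frobenius argument controlled by Cartan's structure equation. Fix a frame $u_0 \in \mathrm{O}(M)$ with $\pi(u_0)=p$, and let $\omega, \Omega$ be the connection and curvature one- and two-forms of $\nabla$ on $\mathrm{O}(M)$. Let $P(u_0) \subset \mathrm{O}(M)$ denote the \emph{holonomy subbundle}, i.e.\ the set of $u \in \mathrm{O}(M)$ reachable from $u_0$ by a horizontal piecewise-smooth curve. The Reduction Theorem (see \cite[Chapter II, Theorem 7.1]{kobayashi-nomizu}) asserts that $P(u_0)$ is a smooth principal subbundle of $\mathrm{O}(M)$ whose structure group is $\mathrm{Hol}_{u_0}(\omega)$, and that the connection $\omega$ restricts to a connection on $P(u_0)$. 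Via the frame $u_0 \colon \mathbb R^n \to T_pM$, this structure group is identified with $\mathrm{Hol}_p(\nabla) \subset \mathrm{O}(T_pM)$, and its Lie algebra with $\mathfrak{hol}_p(\nabla) \subset \mathfrak{o}(T_pM)$.

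Next I would introduce the candidate subalgebra. Let $\mathfrak{g} \subset \mathfrak{o}(n)$ be the linear span of all values $\Omega_u(X^*,Y^*)$ as $u$ ranges over $P(u_0)$ and $X^*, Y^*$ over horizontal vectors at $u$. Since $\omega$ restricted to $P(u_0)$ takes values in $\mathfrak{hol}$, the same is true of $\Omega = d\omega + \tfrac12[\omega,\omega]$ evaluated on any pair of tangent vectors to $P(u_0)$; in particular $\mathfrak{g} \subset \mathfrak{hol}$. I would then show $\mathfrak{g}$ is in fact a Lie subalgebra using equivariance $(R_a)^*\Omega = \mathrm{Ad}(a^{-1})\Omega$ together with the fact that $\mathrm{Hol}_{u_0}(\omega)$ acts on $P(u_0)$: differentiating the equivariance and using that $\mathfrak{hol}$ acts on the curvature values yields closure under $[{}\cdot{},{}\cdot{}]$ after passing to the Lie algebra generated by $\mathfrak{g}$. (Alternatively one may simply replace $\mathfrak{g}$ by the Lie algebra it generates from the start and show this enlargement is vacuous a posteriori.)

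The core step is a Frobenius argument. On $P(u_0)$ consider the distribution $\mathcal D$ whose fibre at $u$ is the sum of the horizontal subspace $H_u$ and the fundamental vertical subspace $\mathfrak g^*_u$ corresponding to $\mathfrak g \subset \mathfrak{hol}$. By Proposition~\ref{prop:str-eq-first-bianchi}, for horizontal $X^*, Y^*$ one has $\omega([X^*,Y^*]) = -\Omega(X^*,Y^*) \in \mathfrak g$, so $[X^*,Y^*] \in \mathcal D$; brackets $[A^*, X^*]$ for $A \in \mathfrak g$ are horizontal because the horizontal distribution is $\mathrm{Hol}$-invariant; and $[A^*,B^*] = [A,B]^* \in \mathfrak g^*$ since $\mathfrak g$ is closed under brackets. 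Hence $\mathcal D$ is involutive. Let $L$ be the maximal integral leaf through $u_0$. Any horizontal curve out of $u_0$ is tangent to $H \subset \mathcal D$, so $P(u_0) \subset L$; conversely $L \subset P(u_0)$ because $\mathcal D \subset TP(u_0)$ by construction. Thus $L = P(u_0)$, forcing the vertical part of $TP(u_0)$ to coincide with $\mathfrak g^*$, i.e.\ $\mathfrak g = \mathfrak{hol}$.

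Finally I would translate from the bundle to $T_pM$. A point $u \in P(u_0)$ is, by definition, of the form $u = \tau_\gamma \circ u_0$ where $\tau_\gamma$ denotes parallel transport along a curve $\gamma$ from $p$ to $\pi(u)$; horizontal vectors at $u$ are images of tangent vectors along $\gamma$; and the identification $\mathfrak{hol} = u_0^{-1} \mathfrak{hol}_p(\nabla) u_0$ turns an element $\Omega_u(X^*,Y^*) \in \mathfrak g$ into exactly $\tau_\gamma^{-1} \circ R_{\tau_\gamma X, \tau_\gamma Y} \circ \tau_\gamma$ acting on $T_pM$, by Theorem~\ref{thm:torsion-curvature}. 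The hard part will be the Frobenius step: checking involutivity demands care with the three bracket types, and identifying the leaf $L$ with the holonomy bundle $P(u_0)$ requires one to track the dimensions and use that $P(u_0)$ is itself a principal subbundle of the claimed dimension, a fact supplied by the Reduction Theorem.
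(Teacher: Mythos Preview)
Your proposal is correct and follows essentially the same route as the paper: build the holonomy subbundle $P(u_0)$, let $\mathfrak g$ be the span of curvature values along $P(u_0)$, run Frobenius on the distribution $H \oplus \mathfrak g^*$, and identify the maximal leaf through $u_0$ with $P(u_0)$ to force $\mathfrak g = \mathfrak{hol}$. The paper's version is terser---it simply asserts that the distribution is integrable and that the leaf equals $P$---whereas you spell out the three bracket checks and the two inclusions $P(u_0)\subset L$ and $L\subset P(u_0)$; but the architecture is identical.
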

\begin{remark}
\label{rmk:curvature-ambrose-singer}
To make this result plausible, recall Proposition \ref{rmk:lambda-so} and the symmetries of $R$.
We had $R_{X,Y} \in \Lambda^2 TM$, so pointwise $R_{X,Y} \in \mathfrak{so}(n)$.
On the other hand, the Lie algebra $\mathfrak{hol}_p(\nabla)$ of $\mathrm{Hol}_p(\nabla)$ sits inside $\mathfrak{so}(n)$.
Since the trace is invariant under conjugation, the endomorphisms \[\tau_{\gamma}^{-1} \circ R_{\tau_{\gamma}X,\tau_{\gamma}Y}\circ \tau_{\gamma}\] are traceless, and their skew-symmetry is obvious as $\tau_{\gamma}$ is an isometry.
The theorem essentially states that $R_p \in S^2(\mathfrak{hol}_p(\nabla))$ at each point $p \in M$.
The proof presented is taken from Besse \cite[Chapter 10]{besse}.
\end{remark}
\begin{proof}
Fix an orthonormal frame $f_0$ at $p \in M$. 
Consider the frames $f$ obtained by transporting $f_0$ in a parallel way along all curves starting from $p$.
The set $P$ of such frames is a principal bundle over $M$ with structural group $\mathrm{Hol}_p(\nabla)$.
Denote by $\mathfrak g$ the Lie subalgebra of $\mathfrak{o}(n)$ built up as follows.
Given $p \in M$, vectors $X, Y \in T_pM$, and an orthonormal frame $f$ of $P$ at $p$, the curvature endomorphism $R_{X,Y}$ and $f$ generate an element in $\mathfrak{o}(n)$.
Let $\mathfrak g$ be the Lie subalgebra of $\mathfrak{o}(n)$ generated by all such elements when $p$ runs through $M$, $X$ and $Y$ through $T_pM$, and $f$ through the frames of $P$ at $p$.
Then $\mathfrak g$ is a subalgebra of $\mathfrak{hol}_p(\nabla)$. 

We claim $\mathfrak{hol}_p(\nabla)=\mathfrak g$.
Consider the following distribution $D$ of $P$.
At every $f \in P$, the vector space $D(f)$ is the direct sum of the horizontal subspace of $P$ at $f$ and of the vertical one defined by $\mathfrak g$ at $f$.
The distribution $D$ turns out to be integrable in the sense of Frobenius \cite[Chapter I]{kobayashi-nomizu}.
Consider the maximal leaf $L(f_0)$ through $f_0$.
By the construction of $P$ and the definition of $D$, we have $L(f_0)=P$, and therefore $\mathfrak{hol}_p(\nabla)=\mathfrak g$.
\end{proof}
\begin{remark}
The theorem has a theoretical importance in that it establishes a link between the holonomy and the curvature of a Riemannian manifold.
However, it is not too effective to compute the holonomy group, cf.\ Besse \cite[Section 10.61]{besse}.
\end{remark}

\subsection{The General Holonomy Principle}
\label{subsec:the-general-holonomy-principle}

One of the reasons why the holonomy group of a manifold is a remarkable object is that the invariants of the holonomy representation correspond to covariantly constant tensor fields on the manifold.
So studying the holonomy of a connection $\nabla$ is equivalent to studying tensors which are constant with respect to $\nabla$.
This allows one to compute the holonomy group effectively.

The following is also known as the \emph{General Holonomy Principle}, cf.\ Agricola \cite[Theorem 2.7]{agricola}, Besse \cite[Chapter 10, 10.19]{besse}, or Joyce \cite[Proposition 2.5.2]{joyce}.
We state the result in the context of Riemannian geometry as in Besse, but as one can see from the other references, the same result holds for more general linear connections as well.
\begin{proposition}
\label{prop:general-holonomy-principle}
Let $(M,g)$ be a connected Riemannian manifold, and $\nabla$ the Levi-Civita connection. The following are equivalent:
\begin{enumerate}
\item There is a tensor field $S$ on $M$ which is invariant under parallel transport.
\item There is a tensor field $S$ on $M$ which is parallel, i.e.\ $\nabla S = 0$.
\item There are a point $p \in M$ and a tensor $S_p$ on $T_pM$ which is invariant under the holonomy representation on the tensor algebra of $T_pM$.
\end{enumerate}
\end{proposition}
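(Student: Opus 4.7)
The plan is to show the implications $(2) \Rightarrow (1) \Rightarrow (3) \Rightarrow (2)$, exploiting both the definition of covariant derivative via parallel transport in \eqref{eq:covariant-derivative} and the conjugacy of holonomy groups at different points (which is where connectedness of $M$ enters in an essential way).

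First, I would handle $(1) \Leftrightarrow (2)$ directly from the definition of $\nabla$ in terms of parallel transport. Given $S$ invariant under parallel transport, for any smooth curve $\gamma$ with $\gamma(0) = p$, the tensor $\tau_0^t(S(\gamma(t))) = S(p)$ for all $t$, so the difference quotient in \eqref{eq:covariant-derivative} vanishes identically, giving $\nabla_{\dot\gamma(0)} S = 0$. Since $\dot\gamma(0)$ is arbitrary, $\nabla S = 0$. Conversely, if $\nabla S = 0$, then along any curve $\gamma$ we have $\nabla_{\dot\gamma} S = 0$, which is precisely the ODE whose solutions are parallel sections along $\gamma$; uniqueness of solutions to the parallel transport ODE then forces $S(\gamma(t)) = \tau_0^t(S(\gamma(0)))$, so $S$ is invariant under parallel transport. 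Here the extension of $\nabla$ to the full tensor algebra via the Leibniz rule is used implicitly.

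Next, $(1) \Rightarrow (3)$ is essentially tautological: fix any $p \in M$ and set $S_p \coloneqq S(p)$. For any piecewise smooth loop $\gamma$ based at $p$, invariance of $S$ under parallel transport gives $\tau_\gamma(S_p) = S(\gamma(1)) = S(p) = S_p$, so $S_p$ is fixed by every element of $\mathrm{Hol}_p(\nabla)$ acting on the tensor algebra of $T_pM$.

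The core step is $(3) \Rightarrow (1)$, where connectedness of $M$ is needed. Given $S_p$ invariant under $\mathrm{Hol}_p(\nabla)$, I define a tensor field $S$ by setting $S(q) \coloneqq \tau_\gamma(S_p)$ for any piecewise smooth curve $\gamma$ from $p$ to $q$; since $M$ is connected (and hence path-connected as a manifold) such a $\gamma$ exists. The key point — and the only non-trivial one — is well-definedness: if $\gamma_1, \gamma_2$ both go from $p$ to $q$, then $\gamma_2^{-1} \cdot \gamma_1$ is a piecewise smooth loop at $p$, so $\tau_{\gamma_2}^{-1} \circ \tau_{\gamma_1} \in \mathrm{Hol}_p(\nabla)$ and therefore fixes $S_p$, yielding $\tau_{\gamma_1}(S_p) = \tau_{\gamma_2}(S_p)$. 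Finally, smoothness of $S$ follows from the smoothness of parallel transport with respect to initial data: in a normal neighbourhood of any $q$, radial geodesics provide a smooth family of curves along which to transport, so $S$ is smooth at $q$. Then $(1) \Rightarrow (2)$ already proved yields $\nabla S = 0$, closing the loop. The main obstacle I expect is the bookkeeping for well-definedness under change of curve, together with verifying that the resulting field is actually smooth — both are routine once phrased in terms of the holonomy bundle $P$ constructed in the proof of Theorem \ref{thm:ambrose-singer}, which packages parallel transport globally.
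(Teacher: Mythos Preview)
Your proposal is correct and follows essentially the same approach as the paper: the key step $(3)\Rightarrow(1)$ is handled identically, by defining $S(q)\coloneqq\tau_\gamma(S_p)$ and checking well-definedness via the loop $\gamma_2^{-1}\cdot\gamma_1$ and holonomy invariance. The paper runs the cycle as $(1)\Rightarrow(2)\Rightarrow(3)\Rightarrow(1)$ rather than your $(1)\Leftrightarrow(2)$, $(1)\Rightarrow(3)$, $(3)\Rightarrow(1)$, and it omits your smoothness discussion, but these are cosmetic differences.
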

\begin{proof}
Let $S$ be a tensor field invariant under parallel transport. 
Then $\nabla S = 0$, cf.\ subsection \ref{subsec:associated-connections}, in particular \eqref{eq:covariant-derivative}.

Assume now $S$ is a covariantly constant tensor field.
Let $p \in M$ and let $S_p$ be the value of $S$ at $p$.
Take any element $\tau_{\gamma} \in \mathrm{Hol}(\nabla)$. 
Since $\nabla S = 0$, then $\tau_{\gamma}S_p = S_p$. 

Finally, let $S_p$ on $T_pM$ be any tensor which is invariant under the holonomy representation on the tensor algebra of $T_pM$.
For any $q \in M$ and any curve $\gamma$ joining $p$ and $q$, let $S_q \coloneqq \tau_{\gamma}(S_p)$. 
We claim that the definition of $S$ does not depend on the choice of $\gamma$. 
If $\delta$ is another curve from $p$ to $q$, set \[S_q' \coloneqq \tau_{\delta}(S_p).\]
Then $\tau_{\delta}(S_p) = \tau_{\gamma}\tau_{\gamma^{-1}\delta}(S_p)$, and since $\gamma^{-1}\delta$ is a loop, $\tau_{\gamma^{-1}\delta}$ acts trivially on $S_p$. Hence
\[S_q' = \tau_{\delta}(S_p) = \tau_{\gamma\gamma^{-1}\delta}(S_p) = \tau_{\gamma}\tau_{\gamma^{-1}\delta}(S_p) = \tau_{\gamma}(S_p) = S_q.\]
Then $S$ is clearly invariant under parallel transport.	
\end{proof}
\begin{corollary}
\label{cor:holonomy-principle}
The number of parallel global tensor fields on $M$ coincides with the number of trivial representations of the holonomy representation on the fibres.
Under the same assumptions of Proposition \ref{prop:general-holonomy-principle}, the holonomy group $\mathrm{Hol}(\nabla)$ is a subgroup of $\{g \in \mathrm{O}(n): g\alpha = \alpha\}$, where $\alpha$ is a tensor corresponding to any parallel section on $(M,g)$.
\end{corollary}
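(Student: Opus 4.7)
The plan is to deduce both assertions directly from Proposition \ref{prop:general-holonomy-principle}, with essentially no new content beyond a correct bookkeeping of the correspondence between parallel tensor fields and holonomy-fixed vectors in the tensor algebra of a tangent space.

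First I would fix a base point $p \in M$ and decompose the representation of $\mathrm{Hol}_p(\nabla)$ on the (relevant graded piece of the) tensor algebra of $T_pM$ into irreducible summands. The subspace of vectors fixed pointwise by $\mathrm{Hol}_p(\nabla)$ is exactly the isotypic component of the trivial representation, and its dimension equals the number of trivial summands. By the third-to-second implication in Proposition \ref{prop:general-holonomy-principle}, each such fixed vector $S_p$ extends to a parallel global tensor field $S$ on $M$ via parallel transport; the argument given there shows the extension is well defined precisely because $S_p$ is holonomy-invariant. Conversely, by the second-to-third implication, restriction at $p$ of any parallel tensor field produces a holonomy-invariant element of the tensor algebra at $p$.

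Next I would check that these two operations, extension by parallel transport and restriction at $p$, are mutually inverse $\mathbb{R}$-linear maps between the space of globally parallel tensor fields on $M$ and the subspace of $\mathrm{Hol}_p(\nabla)$-fixed tensors at $p$. Linearity is immediate from the linearity of $\tau_\gamma$ and of evaluation at $p$; the fact that the maps are inverse to one another is exactly the content of Proposition \ref{prop:general-holonomy-principle}. This yields an isomorphism of vector spaces, hence the matching of the two numbers asserted in the first half of the statement.

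For the second assertion, I would simply apply the second-to-third implication of Proposition \ref{prop:general-holonomy-principle} to $\alpha$: if $\alpha$ is parallel, then $\alpha_p$ is invariant under the holonomy representation, so every $g \in \mathrm{Hol}_p(\nabla)$ satisfies $g \alpha_p = \alpha_p$. Since $\mathrm{Hol}_p(\nabla) \subset \mathrm{O}(T_pM)$, after choosing an orthonormal frame at $p$ to identify $T_pM$ with $\mathbb{R}^n$ this reads $\mathrm{Hol}(\nabla) \subset \{g \in \mathrm{O}(n) : g\alpha = \alpha\}$, as required. There is no genuine obstacle here; the only point demanding mild care is to emphasise that "number" should be read as the dimension of the corresponding vector space, and that the inclusion in the second assertion is sharpened every time one enlarges the collection of parallel tensors considered, which is the mechanism exploited throughout the subsequent discussion of special holonomies.
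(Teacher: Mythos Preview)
Your proposal is correct and matches the paper's approach: the paper states this as an immediate corollary of Proposition~\ref{prop:general-holonomy-principle} without giving a separate proof, and your argument is exactly the natural unpacking of that implication. The only minor remark is that your care in spelling out that extension-by-parallel-transport and restriction-at-$p$ are mutually inverse linear maps makes explicit what the paper leaves to the reader.
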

By using the General Holonomy Principle, we introduce a number of geometric structures which are classical. 
These can all be characterised by the vanishing of the Levi-Civita covariant derivative of certain structure tensors.
\begin{example}
Let $(M,g)$ be an orientable Riemannian manifold equipped with a metric connection.
In general, its holonomy group is a subgroup of the orthogonal group $\mathrm{O}(n)$.
Let $p \in M$ and fix an orthonormal basis of $T_pM$, which we identify with a dual basis.
We can then define a volume form on $T_pM$. 
Now, define a volume form on $M$ by parallel transport of the volume form on $T_pM$.
By Corollary \ref{cor:holonomy-principle}, the holonomy group of $(M,g)$ reduces to a subgroup of $\mathrm{SO}(n)$.
Conversely, if the holonomy reduces to $\mathrm{SO}(n)$, then it leaves invariant a top degree global form, and $M$ is necessarily orientable.
\end{example}
\begin{example}
A Riemannian manifold is flat if and only if there is a parallel local frame around any point.
By the General Holonomy Principle the holonomy group is discrete. 
This can also be seen by the Ambrose--Singer Theorem, as the holonomy algebra is trivial in this case.
\end{example}
\begin{example}
\label{ex1:kahler-manifolds}
Let $(M,g)$ be a Riemannian $2n$-dimensional manifold.
Assume $M$ comes with an almost complex structure $J$ compatible with $g$ and such that $\nabla J=0$, where $\nabla$ is the Levi-Civita connection.
In this case, the holonomy group of $(M,g)$ reduces to $\mathrm{U}(n) = \mathrm{SO}(2n) \cap \mathrm{GL}(n,\mathbb C)$.
The condition $\nabla J=0$ implies that the almost complex structure is integrable, i.e.\ its Nijenhuis tensor vanishes, cf.\ Example \ref{ex:integrability-complex-structure}.
In fact, since $\nabla$ is torsion-free we can write
\begin{align*}
N_J(X,Y) & = \nabla_XY-\nabla_YX-\nabla_{JX}JY+\nabla_{JY}JX \\
& \qquad +J(\nabla_{JX}Y-\nabla_YJX)+J(\nabla_XJY-\nabla_{JY}X) \\
& = \nabla_XY-\nabla_YX-J\nabla_{JX}Y+J\nabla_{JY}X \\
& \qquad +J(\nabla_{JX}Y-J\nabla_YX)+J(J\nabla_XY-\nabla_{JY}X) =0.
\end{align*}
Thus $M$ gets a complex structure.
Recall that $\sigma\coloneqq g(J{}\cdot{},{}\cdot{})$ is the fundamental two-form of the almost Hermitian structure.
Then $d\sigma = \mathcal A(\nabla \sigma) = \mathcal A(g((\nabla_{{}\cdot{}} J){}\cdot{},{}\cdot{})=0$, where $\mathcal A$ is the skew-symmetrisation operator (cf.\ \cite[Chapter III, Corollary 8.6]{kobayashi-nomizu}).
It follows that $\sigma$ is a symplectic form, so $M$ gets a symplectic structure as well.

Conversely, if the holonomy reduces to $\mathrm{U}(n)$, then there is a complex structure on $(M,g)$ which is invariant under parallel transport for the Levi-Civita connection.
We will say more on this geometry in subsection \ref{subsec:berger-theorem}.
\end{example}
The following definition was anticipated in Example \ref{ex:integrable-kahler-str}.
\begin{definition}
\label{def:kahler-manifolds}
An almost Hermitian $2n$-dimensional manifold with holonomy $\mathrm{U}(n)$ is called \emph{K\"ahler}.
\end{definition}
\begin{example}
\label{ex:complex-volume-form}
On a K\"ahler manifold $(M,g,J)$ of real dimension $2n$, a \emph{complex volume form} $\Psi$ is by definition an exterior form of type $(n,0)$ with $\Psi \neq 0$ and parallel with respect to the Levi-Civita connection.
The holonomy group in this case reduces to $\mathrm{SU}(n) \subset \mathrm{U}(n)$.
Conversely, the General Holonomy Principle implies that if the holonomy reduces to $\mathrm{SU}(n)$, then there is a parallel complex volume form $\Psi$ on $M$.
\end{example}
\begin{definition}
\label{def:calabi-yau}
An almost Hermitian $2n$-dimensional manifold with holonomy $\mathrm{SU}(n)$ is called \emph{Calabi--Yau}.
\end{definition}
The holonomy reduction to a special unitary group has remarkable consequences on the Ricci curvature.
\begin{proposition}[Iwamoto \cite{iwamoto}, Lichnerowicz \cite{lichnerowicz}]
\label{prop:special-kahler}
Let $(M,g,J)$ be a K\"ahler manifold of complex dimension $n$, and let $\nabla$ be the Levi-Civita connection. 
Consider the conditions
\begin{enumerate}
\item $(M,g)$ is Ricci flat,
\item there is a complex volume form on $(M,g,J)$,
\item $\mathrm{Hol}(\nabla) \subset \mathrm{SU}(n)$.
\end{enumerate}
Then (2) and (3) are equivalent and imply (1). Conversely, (1) implies that the restricted holonomy of $(M,g)$ reduces to $\mathrm{SU}(m)$.
Moreover, if $M$ is simply connected, (1) implies (2) and (3).
\end{proposition}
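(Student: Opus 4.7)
The plan is to organise the proof around the General Holonomy Principle (Proposition~\ref{prop:general-holonomy-principle}) and a single computation identifying the Ricci form with a constant multiple of the complex trace of the curvature operator on a K\"ahler manifold.

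First, I would establish the equivalence (2) $\Leftrightarrow$ (3). Pointwise, a complex volume form is a non-zero element of $\Lambda^{n,0}T_p^*M$, and the unitary group acts on this one-complex-dimensional space by the character $\det_{\mathbb C}$, whose kernel is exactly $\mathrm{SU}(n)$. Hence the stabiliser of a complex volume form at $p$ inside $\mathrm{U}(n)$ is precisely $\mathrm{SU}(n)$, and a subgroup $H\subset\mathrm U(n)$ fixes such a form if and only if $H\subset\mathrm{SU}(n)$. Applying the General Holonomy Principle to the complex-volume-form tensor field then turns parallelism on the manifold into the inclusion $\mathrm{Hol}(\nabla)\subset\mathrm{SU}(n)$, as explained already in Example~\ref{ex:complex-volume-form}.

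Next I would show (3) $\Rightarrow$ (1). Since $\nabla J=0$, the formula $R_{X,Y}Z=\nabla_X\nabla_YZ-\nabla_Y\nabla_XZ-\nabla_{[X,Y]}Z$ gives $[R_{X,Y},J]=0$, so pointwise $R_{X,Y}\in\mathfrak u(n)\subset\mathfrak{so}(2n)$. The endomorphism $R_{X,Y}$ is moreover skew-symmetric and hence real-traceless, so its complex trace equals $-\tfrac{i}{2}\operatorname{Tr}_{\mathbb R}(J\circ R_{X,Y})$, and a direct bookkeeping argument using the symmetries of $R$ and the first Bianchi identity shows that this quantity is proportional to the Ricci form $\rho(X,Y)\coloneqq\mathrm{Ric}(JX,Y)$, which carries the same information as $\mathrm{Ric}$ itself. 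If $\mathrm{Hol}(\nabla)\subset\mathrm{SU}(n)$, the Ambrose--Singer Theorem~\ref{thm:ambrose-singer} (cf.\ Remark~\ref{rmk:curvature-ambrose-singer}) forces the curvature to take values in $\mathfrak{su}(n)$, so $\operatorname{Tr}_{\mathbb C}R_{X,Y}\equiv 0$ and therefore $\mathrm{Ric}\equiv 0$.

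Conversely, if $\mathrm{Ric}\equiv 0$, the same identification reads $R_{X,Y}\in\mathfrak{su}(n)$ pointwise, and Ambrose--Singer gives $\mathfrak{hol}_p(\nabla)\subset\mathfrak{su}(n)$ for every $p$, i.e.\ $\mathrm{Hol}^0(\nabla)\subset\mathrm{SU}(n)$. When $M$ is simply connected, the surjection $\pi_1(M,p)\to\mathrm{Hol}_p(\nabla)/\mathrm{Hol}^0_p(\nabla)$ recorded earlier in the excerpt collapses to the trivial map, so $\mathrm{Hol}(\nabla)=\mathrm{Hol}^0(\nabla)\subset\mathrm{SU}(n)$, and the equivalence proved in the first step delivers the complex volume form required by (2). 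The main obstacle is the identification of the Ricci form with a constant multiple of $\operatorname{Tr}_{\mathbb C}R_{X,Y}$: the computation is classical but delicate, as one must verify $[R_{X,Y},J]=0$, expand $\mathrm{Ric}(X,Y)=\sum_k g(R_{E_k,X}Y,E_k)$ in a $J$-adapted orthonormal frame $\{E_k,JE_k\}$, and use the pair-symmetry $R(X,Y,Z,W)=R(Z,W,X,Y)$ together with the first Bianchi identity to collect the sum into a complex trace over the complex frame. Once this dictionary is in place, every remaining step is either an application of the General Holonomy Principle or a direct consequence of Ambrose--Singer.
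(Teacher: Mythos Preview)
The paper does not actually prove this proposition: it is stated with attribution to Iwamoto and Lichnerowicz and then used without further argument. Your proposal is therefore not competing against any proof in the text.

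That said, your outline is the standard and correct route, and it meshes well with the tools the paper has already set up. The equivalence (2)$\Leftrightarrow$(3) via the General Holonomy Principle is exactly how Example~\ref{ex:complex-volume-form} frames the situation. For (3)$\Rightarrow$(1) and the converse, the identification of the Ricci form with the complex trace of $R_{X,Y}$ is the crux, and you rightly flag it as the only delicate step. One small clarification worth making explicit: when you invoke Ambrose--Singer for the converse, you need that the parallel-transport conjugates $\tau_\gamma^{-1}R_{\tau_\gamma X,\tau_\gamma Y}\tau_\gamma$ remain in $\mathfrak{su}(n)$, not just the unconjugated $R_{X,Y}$; this holds because $\mathrm{Hol}(\nabla)\subset\mathrm U(n)$ on a K\"ahler manifold, and $\mathrm U(n)$ normalises $\mathfrak{su}(n)$. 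With that detail in place your argument is complete.
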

If property (1) above is satisfied, one sometimes says that $(M,g,J)$ is a \emph{special K\"ahler manifold}.
Note that the terminology \lq\lq special K\"ahler\rq\rq\ is also used for K\"ahler manifolds with specific torsion-free connections, see e.g.\ \cite{freed}.

\begin{example}
\label{ex:almost-hyperhermitian}
Let $(M,g)$ be a $4n$-dimensional Riemannian manifold equipped with three almost complex structures $I$, $J$, $K$ such that $IJ=-JI=K$.
The structure $(g,I,J,K)$ on $M$ is called \emph{almost hyperHermitian}.
The compact symplectic group $\mathrm{Sp}(n) \subset \mathrm{O}(4n)$ acts on each tangent space preserving the entire structure.
It follows by the General Holonomy Principle that the holonomy of the Levi-Civita connection $\nabla$ reduces to $\mathrm{Sp}(n)$ if and only if $I$, $J$, and $K$ are $\nabla$-parallel.
\end{example}
\begin{definition}
\label{def:hyperkahler-manifold}
An almost hyperHermitian $4n$-dimensional manifold with holonomy $\mathrm{Sp}(n)$ is called \emph{hyperK\"ahler}.
\end{definition}
\begin{remark}
Since $\mathrm{Sp}(n) \subset \mathrm{SU}(2n)$, hyperK\"ahler manifolds are special K\"ahler manifolds, and thus Ricci flat by Proposition \ref{prop:special-kahler}.
\end{remark}

\begin{example}
On a $4m$-manifold $(M,g,I,J,K)$ as in Example \ref{ex:almost-hyperhermitian}, assume that $I$, $J$, and $K$ are not parallel with respect to the Levi-Civita connection $\nabla$.
Let us instead require that there are three one-forms $\alpha$, $\beta$, $\gamma$ locally defined on $(M,g)$ such that for every $X$ we have the identities
\begin{align*}
\nabla_X I & = +\gamma(X)J-\beta(X)K, \\
\nabla_XJ & = -\gamma(X)I+\alpha(X)K, \\
\nabla_XK & = +\beta(X)I-\alpha(X)J.
\end{align*}
The three endomorphisms $I$, $J$, $K$ define locally a rank $3$ subbundle of the vector bundle of endomorphisms over $M$.
Define $\omega_L \coloneqq g(L{}\cdot{},{}\cdot{})$, for $L \in \{I,J,K\}$. 
The above equations imply
\begin{align*}
\nabla_X (\omega_I\wedge \omega_I) & = 2g(\nabla_XI{}\cdot{},{}\cdot{}) \wedge \omega_I = 2(\gamma(X)\omega_J\wedge \omega_I-\beta(X)\omega_K\wedge \omega_I), \\
\nabla_X (\omega_J\wedge \omega_J) & = 2g(\nabla_XJ{}\cdot{},{}\cdot{}) \wedge \omega_J = 2(-\gamma(X)\omega_I\wedge \omega_J+\alpha(X)\omega_K\wedge \omega_J), \\
\nabla_X (\omega_K\wedge \omega_K) & = 2g(\nabla_XK{}\cdot{},{}\cdot{}) \wedge \omega_K = 2(\beta(X)\omega_I\wedge \omega_K-\alpha(X)\omega_J\wedge \omega_K),
\end{align*}
and the sum of left left-hand sides is then
\[\nabla_X(\omega_I^2+\omega_J^2+\omega_K^2) = 0.\]
The four-form \[\omega_I^2+\omega_J^2+\omega_K^2\] is called the \emph{fundamental four-form} of the almost quaternionic Hermitian structure on $M$, and is globally defined.
In this case, the holonomy of the Levi-Civita connection is $\mathrm{Sp}(1)\mathrm{Sp}(m)$.
\end{example}
\begin{definition}
\label{def:quaternionic-kahler-manifold}
An almost hyperHermitian $4n$-manifold with holonomy $\mathrm{Sp}(n)\mathrm{Sp}(1)$ is called \emph{quaternionic K\"ahler}.
\end{definition}
\begin{remark}
Quaternionic K\"ahler manifolds are automatically Einstein for $m\geq2$, see Besse \cite[Theorem 14.39]{besse} or Berger \cite{berger2}.
The quaternionic projective space $\mathbb HP^m$ is an example of quatenionic K\"ahler (not hyperK\"ahler) manifold.
\end{remark}

\begin{remark}
\label{rmk:quaternion-kahler-not-kahler}
Unlike $\mathrm{Sp}(m)$, the group $\mathrm{Sp}(m)\mathrm{Sp}(1)$ is not a subgroup of $\mathrm{U}(2m)$ (cf.\ Dynkin \cite{dynkin}), so quaternionic K\"ahler manifolds are \emph{not} K\"ahler.
In fact, $\mathrm{Sp}(m)\mathrm{Sp}(1)$ is a maximal subgroup of $\mathrm{SO}(4m)$, see Gray \cite{gray}.
\end{remark}
\begin{example}
A $\mathrm G_2$-structure on a seven-manifold $M$ is a three-form pointwise equivalent to a standard three-form on $\mathbb R^7$, whose stabiliser in $\mathrm{GL}(7,\mathbb R)$ is exactly $\mathrm G_2$.
A $\mathrm G_2$-structure induces a Riemannian metric $g$ on $M$ in a canonical way.
A seven-dimensional manifold $M$ with a $\mathrm G_2$-structure $\varphi$ has holonomy contained in $\mathrm G_2$ if and only if $\nabla \varphi=0$, where $\nabla$ is the Levi-Civita connection for $g$.
It was shown by Bonan \cite{bonan} that a seven-manifold with holonomy in $\mathrm G_2$ is Ricci flat.
\end{example}
\begin{example}
A $\mathrm{Spin}(7)$-structure on a Riemannian eight-dimensional manifold $(M,g)$ is a four-form $\psi$ pointwise equivalent to a standard four-form on $\mathbb R^8$, whose stabiliser in $\mathrm{GL}(8,\mathbb R)$ is exactly $\mathrm{Spin}(7)$.
An eight-dimensional manifold $M$ with a $\mathrm{Spin}(7)$-structure $\psi$ has holonomy contained in $\mathrm{Spin}(7)$ if and only if $\nabla \psi=0$, where $\nabla$ is the Levi-Civita connection for $g$.
It follows again by Bonan \cite{bonan} that a holonomy $\mathrm{Spin}(7)$ manifold is Ricci flat.
\end{example}
When the holonomy is any of the groups $\mathrm{U}(n)$, $\mathrm{SU}(n)$, $\mathrm{Sp}(n)$, $\mathrm{Sp}(n)\mathrm{Sp}(1)$, we say it is \emph{special}.
It if $\mathrm G_2$ or $\mathrm{Spin}(7)$, we say it is \emph{exceptional}.
If it is not contained in any of these groups, we say it is \emph{generic}.

\subsection{Symmetric spaces}
\label{subsec:symmetric-spaces}

Consider the following question: classify Riemannian manifolds $(M,g)$ whose Riemannian curvature $R$ is parallel with respect to the Levi-Civita connection, i.e.\ $\nabla R = 0$.
This problem was completely solved by \'E.\ Cartan.
By the General Holonomy Principle, having parallel curvature is equivalent to the holonomy representation leaving the curvature tensor invariant.
One is then led to an algebraic problem about curvature tensors and orthogonal representations.
Here below we give a brief summary of Cartan's results.

\begin{definition}
A Riemannian manifold $(M,g)$ is called \emph{irreducible} if it cannot be written locally as a Riemannian product of two or more Riemannian factors.
\end{definition}
Let us restrict to the case where $(M,g)$ is irreducible, complete, and simply connected.
Cartan found two distinct series of possible cases.
Each element of a series is a pair of spaces, a compact one and a non-compact one.
\begin{itemize}
\item The first series is given by pairs $(G,G^{\mathbb C})$, where $G$ is a compact, simple, simply connected Lie group, and $G^{\mathbb C}$ the complex group associated to $G$ (see Remark \ref{rmk:complex-group} below).
\item The second series is given by pairs $(G/H,G^*/H)$, where $G$ is a non-compact, simple, simply connected Lie group, $G^*$ the compact form of $G$ (cf.\ Remark \ref{rmk:compact-real-form} below), and $H$ a maximal compact connected subgroup of $G$ (unique up to conjugation). 
\end{itemize}
\begin{remark}
\label{rmk:complex-group}
A \emph{complex Lie group} is a complex analytic manifold whose operations of multiplication and inversion are holomorphic.
The group $G^{\mathbb C}$ can be viewed as the group of $\mathbb C$-algebra homomorphisms $\mathcal F(G,\mathbb C) \to \mathbb C$, where $\mathcal F(G,\mathbb C)$ is a $\mathbb C$-algebra of so-called \emph{representative functions}, cf.\  \cite[Chapter III, Section 8]{brocker-tomdieck}. 
There is a map $\iota \colon G \to G^{\mathbb C}$ such that $\iota(g)$ is evaluation at $g$. 
The group $G^{\mathbb C}$ is called \emph{complexification} of $G$.
The Lie algebra of $G^{\mathbb C}$ coincides with the complexified Lie algebra $\mathfrak g \otimes \mathbb C$, where $\mathfrak g$ is the Lie algebra of $G$.
For instance, $\mathrm{U}(n)^{\mathbb C} = \mathrm{GL}(n,\mathbb C)$, $\mathrm{SU}(n)^{\mathbb C} = \mathrm{SL}(n,\mathbb C)$, $\mathrm{O}(n)^{\mathbb C} = \mathrm{O}(n,\mathbb C)$, $\mathrm{SO}(n)^{\mathbb C} = \mathrm{SO}(n,\mathbb C)$, $\mathrm{Sp}(n)^{\mathbb C} = \mathrm{Sp}(n,\mathbb C)$.
Note that $\mathrm{O}(n,\mathbb C)$ is defined as $\{A \in \mathrm{GL}(n,\mathbb C): A^TA=\mathrm{id}\}$, and $\mathrm{SO}(n,\mathbb C) = \mathrm{O}(n,\mathbb C) \cap \mathrm{SL}(n,\mathbb C)$.
Also, $\mathrm{Sp}(n,\mathbb C) = \{A \in \mathrm{GL}(2n,\mathbb C): A^T\Omega A=\Omega\}$, where $\Omega$ is as in \eqref{eq:omega}.
\end{remark}
\begin{remark}
\label{rmk:compact-real-form}
Every complex semisimple Lie algebra has a split real form (unique up to isomorphism) and a compact real form (see the Cartan decomposition of a semisimple Lie algebra \cite{helgason, knapp}).
The latter gets a negative-definite Killing form, and corresponds to a compact Lie group.
For instance, the Lie algebra $\mathfrak{sp}(n,\mathbb C)$ has a split real form $\mathfrak{sp}(n,\mathbb R)$ and a compact real form $\mathfrak{sp}(n)$.
See also the correspondence with Satake diagrams \cite{satake}.
\end{remark}
It follows that there is a link between manifolds with $\nabla R = 0$ and real forms of simple Lie groups, which were classified by Cartan himself \cite{cartan}.
So the classification of Riemannian manifolds with $\nabla R = 0$ is reduced to the classification of real forms of simple Lie algebras (cf.\ Helgason \cite{helgason} for a modern proof).

Before stating Cartan's results, let us define what a geodesic symmetry is.
Recall Example \ref{ex:geodesics}.
\begin{definition}
\label{def:geodeisc-symmetry}
Let $(M,g)$ be a Riemannian manifold, and let $p \in M$ be any point.
A \emph{geodesic symmetry} $s_p$ at $p$ is defined as a diffeomorphism of a neighbourhood $U \ni p$ into itself, fixing $p$, and mapping $\exp_p(X)$ to $\exp_p(-X)$, where $X \in T_pM$.
\end{definition}
\begin{remark}
In Definition \ref{def:geodeisc-symmetry} there is actually no dependence on the neighbourhood $U$, as the definition of $s_p$ involves infinitesimal data only.
The symmetry $s_p$ is \emph{involutive}, i.e.\ $s_p \circ s_p = \mathrm{id}_p$, and by definition its differential at $p $ is $-\mathrm{id}_{T_pM}$.
\end{remark}
\begin{definition}
Let $(M,g)$ be a Riemannian manifold. We say that $(M,g)$ is \emph{locally symmetric} if its geodesic symmetries are isometries.
We say that $(M,g)$ is \emph{globally symmetric} if its geodesic symmetries can be extended to global isometries on $M$.
\end{definition}
Cartan's results can be summarised as follows, see Besse \cite[Section 10.72]{besse}.
\begin{theorem}[Cartan]
\label{thm:cartan}
For a Riemannian manifold $(M,g)$, the following conditions are equivalent.
\begin{enumerate}
\item The curvature tensor of $g$ is parallel with respect to the Levi-Civita connection, i.e.\ $\nabla R = 0$.
\item $(M,g)$ is locally symmetric.
\end{enumerate}
Furher, if $(M,g)$ is complete, the following are equivalent.
\begin{enumerate}[resume]
\item $(M,g)$ is globally symmetric.
\item The space $M$ is homogeneous, i.e.\ $M$ is diffeomorphic to $G/H$, where $G$ is a connected Lie group, $H$ a compact subgroup of $G$, and there is an involutive automorphism $\sigma$ of $G$ for which, if $S$ denotes the fixed point set of $\sigma$ and $S_e$ its connected component of the identity, one has $S_e \subset H \subset S$. Furthermore, the Riemannian metric on $G/H$ is $G$-invariant.
\end{enumerate}
If $(M,g)$ satisfies (3)--(4), it also satisfies (1)--(2).
If $(M,g)$ satisfies (1) or (2) and if it is simply connected and complete, then it also satisfies (3)--(4).
\end{theorem}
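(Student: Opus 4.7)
The plan is to establish the equivalences in the order $(2) \Rightarrow (1)$, $(1) \Rightarrow (2)$, $(3) \Leftrightarrow (4)$, $(3)$--$(4) \Rightarrow (1)$--$(2)$, and finally the hardest implication, $(1)$--$(2) \Rightarrow (3)$--$(4)$ under the completeness and simple connectedness assumption.

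First I would prove $(2) \Rightarrow (1)$. A local isometry pulls back $g$ to $g$ and hence preserves the Levi-Civita connection and the Riemann tensor; in particular $(s_p)^*(\nabla R) = \nabla R$. Evaluating at $p$ and using $(ds_p)_p = -\mathrm{id}_{T_pM}$, the tensor $\nabla R$ of odd total degree $5$ satisfies $(\nabla R)_p = -(\nabla R)_p$, so $(\nabla R)_p = 0$; since $p$ is arbitrary, $\nabla R \equiv 0$. For the converse $(1) \Rightarrow (2)$, I would use the following Cartan-type rigidity: if $\varphi \colon T_pM \to T_qM$ is a linear isometry with $\varphi^*R_q = R_p$, and $R$ is $\nabla$-parallel on both sides, then the map $f = \exp_q \circ \varphi \circ \exp_p^{-1}$ is a local isometry of a normal neighbourhood of $p$ onto one of $q$. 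Applied with $q = p$ and $\varphi = -\mathrm{id}$, the condition $\varphi^*R_p = R_p$ holds automatically because $R$ has even degree; thus $s_p = \exp_p \circ (-\mathrm{id}) \circ \exp_p^{-1}$ is a local isometry. The rigidity itself follows from the Jacobi equation: parallel transport of $-\mathrm{id}$ along geodesics from $p$ intertwines the Jacobi operators on both ends precisely because $\nabla R = 0$ makes the Jacobi equation depend only on the constant curvature data at $p$, so $ds_p$ at each point equals parallel transport of $-\mathrm{id}$, and hence preserves $g$ pointwise.

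Next I would dispatch $(3) \Leftrightarrow (4)$ and $(3)$--$(4) \Rightarrow (1)$--$(2)$. Assume $(3)$: the isometry group $G = \mathrm{Isom}(M,g)^0$ is a Lie group acting on $M$; the product $s_q \circ s_p$ of two nearby geodesic symmetries realises translation along geodesics, so $G$ acts transitively on each geodesic, hence on $M$ by connectedness. With $H$ the isotropy at a basepoint $o$ (compact, since $M$ is Riemannian and $H$ acts faithfully on $T_oM$ by orthogonal transformations), conjugation by $s_o$ defines an involutive automorphism $\sigma$ of $G$, and $H$ lies between $S_e$ and $S$ because $h \in H$ commutes with $s_o$ up to the identification $s_o \circ h \circ s_o^{-1} = h$ at $o$, giving $(4)$. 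Conversely, given $(4)$, translate $\sigma$ via left multiplication to obtain an involutive isometry at each point whose differential is $-\mathrm{id}$, producing global geodesic symmetries, which is $(3)$. That $(3)$--$(4)$ imply $(1)$--$(2)$ is immediate, since global geodesic symmetries are in particular local isometries.

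The hard part is to show that if $(M,g)$ is simply connected, complete, and locally symmetric, then the locally defined geodesic symmetries extend to global isometries. Fix $p, q \in M$ and a piecewise smooth path $\gamma$ from $p$ to $q$; cover $\gamma$ by finitely many normal balls and use the local symmetries on successive balls. The Cartan--Ambrose--Hicks extension theorem asserts that a local isometry with matching curvature data along a path extends along that path; completeness of $M$ is what allows the local isometries (defined on small normal neighbourhoods) to be continued indefinitely, and $\nabla R = 0$ together with the isometry condition on parallel transport guarantees the curvature compatibility at each successive point. Simple connectedness enters via a monodromy argument: two homotopic paths produce the same global isometry because the obstruction to agreement along a small disk sits in the holonomy of $\nabla$ acting on curvature data, which vanishes by $\nabla R = 0$. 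This yields a well-defined isometry $s_p$ on all of $M$ for every $p$, hence $(3)$, and by the earlier equivalences also $(4)$.

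The main obstacle is the last step: controlling the monodromy of the analytic continuation of local isometries. The delicate point is to verify that Cartan--Ambrose--Hicks applies uniformly along the path and that the extension is compatible on overlaps of the normal balls; this is where both completeness (to have large enough normal neighbourhoods at every stage) and simple connectedness (to trivialise monodromy) are essential, and where the parallelism $\nabla R = 0$ converts an a priori differential condition into an algebraic compatibility of curvature tensors under parallel transport.
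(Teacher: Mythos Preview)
The paper does not prove this theorem; immediately after the statement it simply says ``We refer to \cite{besse} for a discussion on the proof of Theorem \ref{thm:cartan}.'' So there is no in-paper argument to compare against, and your sketch must be judged on its own.

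Your outline is the standard one (essentially the Cartan--Ambrose--Hicks route, as in Helgason or Besse) and is correct in structure. A couple of places could be tightened. In $(3)\Rightarrow(4)$ you check $H\subset S$ but do not argue $S_e\subset H$; the missing step is that $o$ is an \emph{isolated} fixed point of $s_o$ (since $(ds_o)_o=-\mathrm{id}$), so any $g\in S$ close to $e$ satisfies $g\cdot o=o$, and connectedness of $S_e$ finishes it. In the final monodromy step, your phrasing ``the obstruction \dots\ sits in the holonomy of $\nabla$ acting on curvature data, which vanishes by $\nabla R=0$'' is slightly off: the path-independence comes not from a holonomy computation but from the uniqueness clause in Cartan--Ambrose--Hicks (an isometry is determined by its value and differential at one point), applied over a homotopy of paths subdivided into small rectangles. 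The role of $\nabla R=0$ is only to guarantee the curvature compatibility hypothesis of CAH at every stage, not to kill a holonomy obstruction directly. With these two clarifications, your plan is sound.
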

The connection with real forms of simple Lie algebras comes from (4), and the involution is the one coming from complex conjugation of the complex form, cf.\ \cite{helgason}.
We refer to \cite{besse} for a discussion on the proof of Theorem \ref{thm:cartan}.

A symmetric space decomposes uniquely into a Riemannian product of \emph{irreducible symmetric spaces}, i.e.\ symmetric spaces that cannot be decomposed further into products of symmetric ones.
Also, the universal cover of a simply connected symmetric space is a symmetric space.
Let us then assume $(M,g)$ is a simply connected irreducible symmetric space.
\begin{proposition}[Besse \cite{besse}]
\label{prop:holonomy-symmetric-spaces}
For an irreducible simply connected symmetric space $G/H$, the holonomy group is $H$, acting by the adjoint representation.
\end{proposition}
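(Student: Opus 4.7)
The plan is to translate the geometry into Lie algebra data via Theorem \ref{thm:cartan}, compute the curvature at the origin in closed form, apply the Ambrose--Singer Theorem, and finally exploit irreducibility to recover the full isotropy algebra.

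First I would use Theorem \ref{thm:cartan}(4) to fix the symmetric-pair data: the involution $\sigma \colon G \to G$ differentiates at the identity to an involution of $\mathfrak{g}$ whose $\pm 1$-eigenspace decomposition is $\mathfrak{g} = \mathfrak{h} \oplus \mathfrak{m}$, satisfying the standard relations $[\mathfrak{h},\mathfrak{h}]\subset \mathfrak{h}$, $[\mathfrak{h},\mathfrak{m}]\subset \mathfrak{m}$, $[\mathfrak{m},\mathfrak{m}]\subset \mathfrak{h}$. The canonical identification $T_oM = \mathfrak{m}$ at the origin $o=eH$ makes the isotropy action of $H$ on $T_oM$ coincide with the restriction of $\mathrm{Ad}$ to $H$, which is precisely the adjoint representation that the proposition asks us to recover.

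Next I would compute the curvature of the Levi-Civita connection at $o$. The $G$-invariance of $g$ forces it to agree with the canonical invariant connection of the symmetric pair, and a short computation with fundamental Killing fields on $G/H$ yields
\begin{equation*}
R_o(X,Y)Z \;=\; -[[X,Y],Z], \qquad X,Y,Z\in\mathfrak{m},
\end{equation*}
so that the span of the curvature endomorphisms at $o$ inside $\mathfrak{so}(\mathfrak{m})$ is $\mathrm{ad}\bigl([\mathfrak{m},\mathfrak{m}]\bigr)\big|_\mathfrak{m}$. Because Theorem \ref{thm:cartan}(1) gives $\nabla R = 0$, parallel transport of $R$ produces no new endomorphisms, so the Ambrose--Singer Theorem \ref{thm:ambrose-singer} delivers
\begin{equation*}
\mathfrak{hol}_o(\nabla) \;=\; \mathrm{ad}\bigl([\mathfrak{m},\mathfrak{m}]\bigr)\big|_\mathfrak{m}.
\end{equation*}

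The hard part will be the purely algebraic identity $[\mathfrak{m},\mathfrak{m}]=\mathfrak{h}$ for an irreducible pair, which upgrades the above to the full isotropy algebra $\mathrm{ad}(\mathfrak{h})|_\mathfrak{m}$. After replacing $G$ by its quotient acting effectively on $M$, the map $\mathrm{ad}\colon \mathfrak{h} \to \mathfrak{so}(\mathfrak{m})$ becomes injective, and irreducibility of $M$ as a Riemannian symmetric space is equivalent to irreducibility of $\mathfrak{m}$ as an $\mathrm{ad}(\mathfrak{h})$-module. Setting $\mathfrak{h}' \coloneqq [\mathfrak{m},\mathfrak{m}]$, the Jacobi identity makes $\mathfrak{h}'$ an ideal of $\mathfrak{h}$. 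Since $\mathfrak{h}$ is the Lie algebra of a compact group it is reductive, so $\mathfrak{h}'$ admits an ideal complement $\mathfrak{h}''$ in $\mathfrak{h}$. A Jacobi-identity computation shows that $[\mathfrak{h}'',\mathfrak{m}] \subset \mathfrak{m}$ is $\mathrm{ad}(\mathfrak{h})$-stable, so by irreducibility it is either $\{0\}$ or $\mathfrak{m}$. In the second case, $[\mathfrak{h}',\mathfrak{h}'']=0$ together with Jacobi forces $\mathfrak{h}'=0$, so $R\equiv 0$ and $M$ is flat, contradicting irreducibility under the standing convention that excludes the Euclidean factor. Hence $[\mathfrak{h}'',\mathfrak{m}]=0$, and effectiveness gives $\mathfrak{h}''=0$, i.e.\ $[\mathfrak{m},\mathfrak{m}]=\mathfrak{h}$.

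Combining the three steps, $\mathfrak{hol}_o(\nabla) = \mathrm{ad}(\mathfrak{h})|_\mathfrak{m}$. Since $M$ is simply connected, the holonomy group is connected and coincides with its identity component; since $H$ is connected and acts faithfully on $\mathfrak{m}$ via $\mathrm{Ad}$, the holonomy group is precisely $H$ acting on $T_oM=\mathfrak{m}$ through the adjoint representation.
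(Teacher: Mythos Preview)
The paper does not actually supply a proof of this proposition; it is stated with attribution to Besse and the reader is immediately referred to \cite{besse, helgason} for details. Your argument is precisely the classical one found in those references: the curvature formula $R_o(X,Y)Z = -[[X,Y],Z]$ at the base point, together with $\nabla R = 0$ and the Ambrose--Singer Theorem, gives $\mathfrak{hol}_o = \mathrm{ad}\bigl([\mathfrak{m},\mathfrak{m}]\bigr)\big|_{\mathfrak{m}}$, and irreducibility of the isotropy representation forces $[\mathfrak{m},\mathfrak{m}] = \mathfrak{h}$. So your proposal supplies exactly what the paper omits, along the standard lines.

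One technical point deserves tightening. In the dichotomy for the ideal complement $\mathfrak{h}''$, your handling of the case $[\mathfrak{h}'',\mathfrak{m}] = \mathfrak{m}$ is too brief: the assertion that ``$[\mathfrak{h}',\mathfrak{h}'']=0$ together with Jacobi forces $\mathfrak{h}'=0$'' does not follow from the identities you have written down. A cleaner way to close this case is to observe directly that $\mathfrak{h}' \oplus \mathfrak{m}$ is an ideal of $\mathfrak{g}$ (check the brackets), so that $\mathfrak{h}''$ is a complementary ideal in $\mathfrak{g}$ acting on $\mathfrak{m}$, contradicting effectiveness unless $\mathfrak{h}''=0$; alternatively, use $\mathrm{ad}$-invariance of the Killing form $B$ to get $B(\mathfrak{h}'',\mathfrak{h}') = B([\mathfrak{h}'',\mathfrak{m}],\mathfrak{m}) = B(\mathfrak{m},\mathfrak{m})$, which is nonzero for an irreducible pair of compact or non-compact type while the left-hand side vanishes for $B$-orthogonal ideals. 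Either patch restores the argument with no change to your overall strategy.
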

The main point for us is that for symmetric spaces the holonomy group is essentially known by Proposition \ref{prop:holonomy-symmetric-spaces}.
We refer to \cite{besse, helgason} for more details and explicit examples.

\subsection{Berger's Theorem}
\label{subsec:berger-theorem}

We now consider a second question: what are the possible holonomy groups of Riemannian manifolds? 
In order to give a concise answer to this question we impose some harmless restrictions.

First, we state the following result, cf.\ de Rham \cite{derham}, or Kobayashi--Nomizu \cite[Volume 1, Chapter IV, Section 6]{kobayashi-nomizu}.
\begin{theorem}[de Rham decomposition]
\label{thm:derham}
A connected, simply connected, complete Riemannian manifold $(M,g)$ is isometric to the direct product $M_0 \times M_1 \times \dots \times M_k$, where $M_0$ is a Euclidean space (possibly zero-dimensional), and $M_1,\dots,M_k$ are all simply connected, complete, irreducible Riemannian manifolds. 
\end{theorem}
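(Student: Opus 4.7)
The plan is to use the holonomy representation at a fixed point $p \in M$ together with the General Holonomy Principle (Proposition \ref{prop:general-holonomy-principle}) to split the tangent bundle into parallel distributions, then integrate these into totally geodesic submanifolds that will be the factors. Concretely, since $(M,g)$ is simply connected, the holonomy group $H \coloneqq \mathrm{Hol}_p(\nabla)$ acts as an orthogonal representation on $T_pM$, and by complete reducibility we can decompose
\[T_pM = V_0 \oplus V_1 \oplus \dots \oplus V_k,\]
where $V_0$ is the maximal subspace on which $H$ acts trivially, and $V_1,\dots,V_k$ are pairwise inequivalent non-trivial irreducible $H$-invariant summands (grouping together any isotypic components as a single factor if needed, though irreducibility of each $V_i$ for $i \geq 1$ is what we want to justify at the end).

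The first main step is to upgrade this pointwise decomposition to a global one. For each $i$, parallel-transport $V_i$ along any curve: since $H$-invariance makes the result independent of the curve (the obstruction is precisely the action of loops, which preserve each $V_i$), one obtains smooth distributions $D_0,\dots,D_k$ on $M$ which are $\nabla$-parallel and mutually orthogonal, with $TM = D_0 \oplus \dots \oplus D_k$. Being parallel, each $D_i$ is involutive: if $X,Y$ are sections of $D_i$, then $\nabla_XY$ lies in $D_i$ (parallelism) and so does $\nabla_YX$, and since $\nabla$ is torsion-free, $[X,Y] = \nabla_XY - \nabla_YX$ is a section of $D_i$ as well. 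By Frobenius, each $D_i$ integrates to a foliation $\mathcal F_i$ of $M$, and the parallel condition moreover forces the leaves to be totally geodesic (any geodesic starting tangent to $D_i$ stays tangent to $D_i$ by uniqueness of parallel transport of its velocity).

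The second main step is to assemble these foliations into a global product. Using completeness and the fact that $D_0,\dots,D_k$ are mutually orthogonal and pairwise commuting (one checks $[X_i,X_j]=0$ for sections of different $D_i$ using that $\nabla_{X_i}X_j$ lies in $D_j$ by parallelism, and $\nabla_{X_j}X_i$ lies in $D_i$, then equating via zero torsion and orthogonality), the local flows of vector fields from distinct $D_i$ commute. This gives a local isometric splitting $M = N_0 \times \dots \times N_k$ near $p$, where $N_i$ is the leaf of $\mathcal F_i$ through $p$ with its induced metric. The hard part, and the one I expect to consume most effort, is to promote this local product to a global isometry: one must argue using geodesic completeness that the exponential map based at $p$ respects the splitting (the geodesic with initial velocity $X_0+\dots+X_k$ is the image of $(\exp_p^{N_0}(X_0),\dots,\exp_p^{N_k}(X_k))$), and then use simple connectedness of $M$ to check that the induced smooth surjective local isometry $N_0 \times \dots \times N_k \to M$ from the universal property is actually a diffeomorphism (it is a covering map by completeness of the product, and a covering of a simply connected base is trivial).

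Finally, one identifies the factors. The leaf $N_0$ corresponding to $V_0$ is flat: its holonomy acts trivially by construction, so by the Ambrose--Singer Theorem \ref{thm:ambrose-singer} its curvature vanishes, and being a complete simply connected flat Riemannian manifold it is isometric to a Euclidean space $\mathbb R^{\dim V_0}$. Each $N_i$ for $i\geq 1$ inherits completeness and simple connectedness from $M$ (the latter from the product decomposition together with $\pi_1(M)=0$), and its holonomy representation is the restriction of $H$ to $V_i$, which is irreducible; hence $N_i$ is irreducible as a Riemannian manifold, because any further isometric splitting would produce a further $H$-invariant decomposition of $V_i$ by reversing the construction. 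This yields the desired decomposition $M = M_0 \times M_1 \times \dots \times M_k$ with the stated properties.
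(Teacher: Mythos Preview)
The paper does not prove this theorem: it merely states the result and refers the reader to de Rham's original paper and to Kobayashi--Nomizu, Volume~1, Chapter~IV, Section~6. Your outline follows essentially the standard proof found in those references, so in that sense there is nothing to compare.

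One technical point in your sketch is not quite right and would need repair in a full write-up. You claim that for sections $X_i$ of $D_i$ and $X_j$ of $D_j$ with $i\neq j$ one has $[X_i,X_j]=0$. What the torsion-free and parallel conditions actually give is $[X_i,X_j]=\nabla_{X_i}X_j-\nabla_{X_j}X_i$ with $\nabla_{X_i}X_j\in\Gamma(D_j)$ and $\nabla_{X_j}X_i\in\Gamma(D_i)$, so the bracket lies in $D_i\oplus D_j$ but need not vanish. Commuting flows of arbitrary sections is therefore not available; the standard route instead works directly with the Riemannian exponential map and the total geodesy of the leaves to build the local product structure, and then uses completeness plus simple connectedness (as you correctly indicate) to globalise. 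Apart from this, your plan is sound, including the identification of the flat factor via Ambrose--Singer and the irreducibility of the remaining factors via their holonomy representation.
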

\begin{remark}
\label{rmk:derham}
The holonomy group of $(M,g)$ then admits a decomposition $\{1\} \times H_1 \times \dots \times H_k$.
That $M_i$ is \emph{irreducible} here means also that $H_i$ acts irreducibly on each tangent space of $M_i$ (see e.g.\ \cite[Corollary 3.2.5]{joyce}).
\end{remark}

We have seen that the holonomy group of a Riemannian manifold $(M,g)$ is connected when $M$ is simply connected.
So the holonomy group will be a subgroup of a special orthogonal group.
By Theorem \ref{thm:derham} and Remark \ref{rmk:derham}, we can safely assume $(M,g)$ is irreducible.
Finally, the holonomy groups of symmetric spaces are deduced by the classification of symmetric spaces by Cartan, so we can assume $(M,g)$ is not locally symmetric.

We are now ready to state the answer to the question above. 
The following is also known as \emph{Berger's Theorem}, and the list of groups in the statement is known as \emph{Berger's list}.
\begin{theorem}[Berger, 1955 \cite{berger}]
\label{thm:berger-thm}
Let $(M,g)$ be a simply connected Riemannian $n$-dimensional manifold.
Assume $(M,g)$ is irreducible and not locally symmetric. 
Then the holonomy group of $(M,g)$ equals one of the following groups:
\begin{itemize}
\item $\mathrm{SO}(n)$,
\item $\mathrm{U}(m) \subset \mathrm{SO}(2m)$, for $n=2m$ and $m \geq 2$,
\item $\mathrm{SU}(m) \subset \mathrm{SO}(2m)$, for $n=2m$ and $m \geq 2$,
\item $\mathrm{Sp}(m) \subset \mathrm{SO}(4m)$, for $n=4m$ and $m \geq 2$,
\item $\mathrm{Sp}(m)\mathrm{Sp}(1) \subset \mathrm{SO}(4m)$, for $n=4m$ and $m \geq 2$,
\item $\mathrm G_2 \subset \mathrm{SO}(7)$, for $n=7$,
\item $\mathrm{Spin}(7) \subset \mathrm{SO}(8)$, for $n=8$.
\end{itemize}
\end{theorem}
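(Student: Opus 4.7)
The strategy I would adopt is Berger's original approach, which reduces the geometric classification problem to a representation-theoretic one via the Ambrose--Singer theorem. By Theorem~\ref{thm:derham} and Proposition~\ref{prop:holonomy-symmetric-spaces}, it is enough to consider an irreducible, non-locally-symmetric Riemannian manifold, so by Remark~\ref{rmk:derham} and the simple connectivity of $M$, the holonomy group $H \coloneqq \mathrm{Hol}(\nabla)$ is a connected, closed subgroup of $\mathrm{SO}(n)$ acting irreducibly on $V = \mathbb R^n$. Let $\mathfrak h \subset \mathfrak{so}(n)$ be its Lie algebra. The plan is to show that only the groups in the stated list can occur as such an $H$.

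First I would set up the two algebraic spaces Berger introduced. By Theorem~\ref{thm:ambrose-singer} and Remark~\ref{rmk:curvature-ambrose-singer}, the pointwise curvature tensor lies in the subspace
\[
K(\mathfrak h) \coloneqq \{R \in S^2(\mathfrak h) \subset S^2\Lambda^2 V^* : b(R)=0\}
\]
of \emph{formal curvature tensors with values in $\mathfrak h$}, where $b$ is the Bianchi map introduced in subsection~\ref{susec:decomposition-algebraic-curvature-tensors}. The second Bianchi identity (Remark~\ref{rmk:second-bianchi-identity}) shows that $\nabla R$ takes values, pointwise, in
\[
K^1(\mathfrak h) \coloneqq \{\psi \in K(\mathfrak h)\otimes V^* : \mathfrak S_{X,Y,Z}\,\psi(X)(Y,Z,{}\cdot{},{}\cdot{}) = 0\}.
\]
The Ambrose--Singer theorem, combined with the irreducible action of $H$ on $V$, then yields two necessary conditions for $\mathfrak h$ to arise as a holonomy algebra:
\begin{enumerate}
\item[(B1)] the endomorphisms $R(X,Y)$, as $R$ ranges over $K(\mathfrak h)$ and $X,Y \in V$, span $\mathfrak h$;
\item[(B2)] $K^1(\mathfrak h) \neq 0$, for otherwise $\nabla R \equiv 0$ and $(M,g)$ is locally symmetric by Theorem~\ref{thm:cartan}, contradicting our standing assumption.
\end{enumerate}

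Second, I would enumerate all connected closed subgroups $H \subset \mathrm{SO}(n)$ acting irreducibly on $\mathbb R^n$. For this step one can invoke the classification of irreducible real representations of compact connected Lie groups, which combines Weyl's correspondence (Theorem~\ref{thm:weyl1}), the real/quaternionic analysis of subsection~\ref{subsec:real-quaternionic-representations}, and the classification of simple Lie algebras via their root systems. The list is finite in each dimension and is structured by whether $H$ preserves an additional geometric datum on $V$ (a complex structure, a quaternionic structure, a three-form of type $\mathrm G_2$, a four-form of type $\mathrm{Spin}(7)$, etc.). For each candidate $(H,V)$, I would compute $K(\mathfrak h)$ as an $H$-module using the plethysms of subsection~\ref{subsec:clebsch-gordan-formula} and the highest-weight machinery, then test (B1) and (B2).

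The bulk of the work, and the main obstacle, is this case-by-case elimination: one must show that for every irreducible $H \subset \mathrm{SO}(n)$ \emph{not} appearing in the stated list, either $K(\mathfrak h)$ fails to generate $\mathfrak h$ under (B1), or $K^1(\mathfrak h)=0$ forcing local symmetry under (B2). The hardest sub-case historically was the exceptional branch, where one shows that among the compact simple exceptional Lie groups only $\mathrm G_2$ and $\mathrm{Spin}(7)$ survive, the others giving rise to symmetric spaces. Conversely, for each group on the list, the Riemannian geometries described in subsection~\ref{subsec:the-general-holonomy-principle} (K\"ahler, Calabi--Yau, hyperK\"ahler, quaternionic K\"ahler, $\mathrm G_2$, $\mathrm{Spin}(7)$, together with the generic case $\mathrm{SO}(n)$) show that each possibility is actually realised, so the list is sharp. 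This final realisation part is a separate issue beyond the statement itself and is pursued in the construction-oriented sections that follow.
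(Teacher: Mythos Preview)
Your proposal is correct and follows essentially the same route as the paper's sketch: reduce via simple connectedness and irreducibility to a connected closed subgroup of $\mathrm{SO}(n)$ acting irreducibly, then use Ambrose--Singer plus the two Bianchi identities to cut down the representation-theoretic list of candidates. You have in fact made the argument more explicit than the paper does, by naming the spaces $K(\mathfrak h)$ and $K^1(\mathfrak h)$ and isolating Berger's criteria (B1)--(B2); the paper only says that $R \in S^2(\mathfrak{hol}(g))$ satisfies the Bianchi identities and that one ``tests all possible groups,'' referring to Merkulov--Schwachh\"ofer for details.
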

\begin{sketchproof}
Let $\nabla$ be the Levi-Civita connection.
Since $M$ is simply connected, the holonomy group is connected, so $\mathrm{Hol}(\nabla)$ is a closed connected subgroup of $\mathrm{SO}(n)$.
Since $\mathrm{Hol}(\nabla)$ acts irreducibly on $\mathbb R^n$, the holonomy representation is completely determined by the dimension $n$ (up to conjugacy by an orthogonal transformation).
The classification of Lie groups \cite{fulton-harris} then yields a list of candidates for $\mathrm{Hol}(\nabla)$, but the Ambrose--Singer Theorem \ref{thm:ambrose-singer} imposes further restrictions (see Remark \ref{rmk:curvature-ambrose-singer}).
Specifically, $R \in S^2(\mathfrak{hol}(g))$, and $R$ satisfies two Bianchi identities, cf.\ subsection \ref{subsec:riemannian-manifolds}.
Berger's list is obtained by testing all possible groups satisfying these restrictions.
We also refer to Merkulov--Schwachh\"ofer \cite{merkulov-schwachhofer} for more details.
\end{sketchproof}
\begin{remark}
The restrictions on $m$ are imposed to avoid repetitions or trivial cases. 
For instance, for $m=1$ we have $\mathrm{U}(1)=\mathrm{SO}(2)$, $\mathrm{SU}(1)$ is trivial, $\mathrm{Sp}(1)=\mathrm{SU}(2)$, and $\mathrm{Sp}(1)\mathrm{Sp}(1)=\mathrm{SU}(2)\mathrm{SU}(2)=\mathrm{SO}(4)$.
\end{remark}
\begin{remark}
A different proof by Simons \cite{simons} relies on the fact that, under the same assumptions of the theorem, $\mathrm{Hol}(\nabla)$ must act transitively on the unit sphere in $\mathbb R^n$.
The list of groups acting transitively and effectively on spheres was found by Montgomery--Samelson \cite{montgomery-samelson} (cf.\ Borel \cite{borel}).
This list contains more groups than those appearing in Berger's list (i.e.\ $\mathrm{Sp}(m)\mathrm{U}(1)$ and $\mathrm{Spin}(9)$), so a proof of the theorem is obtained by showing that these two cases cannot occur, cf.\ Besse \cite[pag.\ 303--305]{besse} and Salamon \cite[pag.\ 149--151]{salamon}.
\end{remark}
\begin{remark}
As for Simons, the original statement from Berger's work \cite[Theorem 3, p.\ 318]{berger} contains $\mathrm{Spin}(9)$ in dimension $16$.
It was proved by Alekseevskii \cite{alekseevskii} and Brown--Gray \cite{brown-gray} that a Riemannian manifold of dimension $16$ with holonomy group $\mathrm{Spin}(9)$ is necessarily symmetric.
\end{remark}
Berger's result in an abstract result on Riemannian holonomy groups that may occur as holonomy groups of Riemannian manifolds.
The problem of finding explicit examples of Riemannian manifolds with holonomy groups those appearing in Berger's list has a longer history, which we now discuss.
The \lq\lq holonomy $H$\rq\rq\ condition can be rephrased by using the General Holonomy Principle (Proposition \ref{prop:general-holonomy-principle}) in terms of the vanishing of the Levi-Civita covariant derivative of certain structure tensors.
Also, having special holonomy imposes curvature restrictions, as one would expect from the Ambrose--Singer Theorem.

\begin{example}
\label{ex:kahler-geometry}
Case $\mathrm{Hol}(\nabla) = \mathrm{U}(m)$. 
These are K\"ahler manifolds, and we have already seen that they get a complex and symplectic structure, see Example \ref{ex1:kahler-manifolds}.
Examples of K\"ahler manifolds are $\mathbb C^n$ with its standard Hermitian structure, tori $\mathbb C^n/(\mathbb Z^n+i\mathbb Z^n)$ with metric inherited by $\mathbb C^n$, or the complex projective space $\mathbb CP^n$ with the \emph{Fubini--Study metric}. See \cite[Vol.\ 2]{kobayashi-nomizu} or \cite{griffiths-harris} for more details.
\end{example}

\begin{example}
\label{ex:calabi-yau}
Case $\mathrm{Hol}(\nabla)=\mathrm{SU}(m)$, the so-called Calabi--Yau metrics.
Clearly these are K\"ahler metrics, with the additional data of a complex (parallel) volume form, so in this sense they are special K\"ahler, cf.\ Example \ref{ex:complex-volume-form}.
Proposition \ref{prop:special-kahler} implies Calabi--Yau metrics are Ricci flat.
The first incomplete examples of special K\"ahler manifolds were given by Calabi in 1960 \cite{calabi}.
Complete (unpublished) examples were obtained again by Calabi in 1970.
Compact ones came only later in 1978 with the solution of the Calabi conjecture by Yau \cite{yau}.
For a non-trivial example with holonomy exactly $\mathrm{SU}(m)$ see \cite[p.\ 302]{besse}. 
The four-dimensional case ($m=2$), K3 surfaces and tori are the only compact Calabi--Yau manifolds.
The six-dimensional case ($m=3$) is particularly relevant in superstring theory, as it is conjectured to provide a model for a compactification of the four-dimensional space-time to a ten-dimensional manifold \cite{hu}.
\end{example}

\begin{example}
Case $\mathrm{Hol}(\nabla)=\mathrm{Sp}(m)$, the so-called hyperK\"ahler manifolds. 
Recall that in this case we have a triple of complex structures $(I,J,K)$ compatible with $g$ and satisfying the quaternionic relations.
The inclusion $\mathrm{Sp}(m)\subset \mathrm{SU}(2m)$ implies that any complex structure in the two-sphere given by $I$, $J$, $K$ is K\"ahler.
Again as in the K\"ahler case, the three two-forms $\sigma_I$, $\sigma_J$, $\sigma_K$ are symplectic.
Ricci flatness follows from the inclusion $\mathrm{Sp}(m)\subset \mathrm{SU}(2m)$.
An obvious example is $\mathbb H^n$ with its flat Euclidean metric. 
The first non-trivial example, found by Eguchi and Hanson \cite{eguchi-hanson}, is a hyperK\"ahler metric on the cotangent bundle of the two-sphere.
The result was generalised by Calabi, who showed that the cotangent bundle of $\mathbb CP^m$ admits a complete non-flat hyperK\"ahler metric \cite{calabi}.
Any compact hyperK\"ahler four-dimensional manifold is either a K3 surface or a compact torus, essentially because of the isomorphism $\mathrm{Sp}(1)=\mathrm{SU}(2)$.
Compact examples of hyperK\"ahler eight-dimensional manifolds were obtained in 1981. 
For higher $m$, examples were found later, see Beauville \cite{beauville} for references.
\end{example}

\begin{example}
Case $\mathrm{Hol}(\nabla)=\mathrm{Sp}(m)\mathrm{Sp}(1)$, namely quaternionic K\"ahler manifolds.
As discussed in Remark \ref{rmk:quaternion-kahler-not-kahler}, these are not K\"ahler metrics. 
However, they are examples of Einstein, not Ricci-flat, metrics (Ricci flatness would imply that the holonomy be $\mathrm{Sp}(m)$).
At present, there are no known examples of compact quaternionic K\"ahler manifolds that are not locally symmetric. 
Symmetric examples were classified by Wolf, which is why they are also called \emph{Wolf spaces} \cite{wolf}.
The quaternionic projective space $\mathbb HP^m$ is an example of Wolf space.
A conjecture of LeBrun--Salamon asserts that all complete quaternionic-K\"ahler manifolds of positive scalar curvature are symmetric.
For negative scalar curvature, Galicki--Lawson \cite{galicki-lawson} and LeBrun \cite{lebrun} showed that not locally-symmetric examples exist in abundance.
\end{example}

\begin{example}
Case $\mathrm{Hol}(\nabla)=\mathrm G_2$ or $\mathrm{Spin}(7)$, the so-called exceptional cases.
The three-form and the four-form defined by $\mathrm G_2$- and $\mathrm{Spin}(7)$-structures respectively are parallel in these two cases.
This implies Ricci flatness \cite{bonan}.
The existence of Riemannian spaces with exceptional holonomy was settled in the positive only in the 1980s.
Incomplete metrics with exceptional holonomy were first found by Bryant \cite{bryant}.
Complete examples were found by Bryant--Salamon \cite{bryant-salamon}.
Compact examples were found by Joyce \cite{joyce2, joyce3} by algebro-geometric methods.
\end{example}

The above geometries are sometimes called \emph{integrable}, as the \lq\lq holonomy $H$\rq\rq\ condition is equivalent to the integrability of the corresponding $H$-structure.
Note that apart from the cases corresponding to $\mathrm{SO}(n)$ and $\mathrm G_2$, all other integrable geometries appear in even dimension.

It may happen that the Levi-Civita connection does not preserve the entire structure, so the geometry is non-integrable.
For instance, on an almost Hermitian $2n$-dimensional manifold $(M,g,J)$, $\nabla J$ need not vanish identically.
If $\sigma$ is the fundamental two-form of the structure, $\nabla \sigma$ is equivalent to $\nabla J$, but it is totally covariant and hence a little easier to handle.
Let $V$ be the model of each tangent space of $M$.
Pointwise, $\nabla \sigma$ sits in $V^{\otimes 3}$, but also satisfies certain symmetries.
Since the unitary group $\mathrm{U}(n)$ acts on $V$ preserving the Hermitian structure induced by $g$ and $J$, it is possible to study its representation on the space of tensors satisfying the same symmetries of $\nabla \sigma$ inside $V^{\otimes 3}$.
This gives complete information on the possible values taken by $\nabla \sigma$, and yields a list of non-integrable $\mathrm{U}(n)$-geometries. 
We shall see more in the next section.

\newpage

\section{Non-integrable geometries}
\label{sec:non-integrable-geometries}

We take a step beyond the integrable geometries singled out by Berger and glance at the world of \emph{non-integrable} geometries.
These are described by $G$-structures on Riemannian manifolds where the Levi-Civita connection does not preserve the entire structure.

We first discuss general classification results of non-integrable geometries.
Then we present two examples of non-integrable geometries, which are called \emph{nearly K\"ahler} and \emph{nearly parallel $\mathrm G_2$}.
The former is most relevant in dimension $6$, the latter only occurs in dimension $7$.
These geometries are relevant in the construction of holonomy $\mathrm G_2$ and $\mathrm{Spin}(7)$ manifolds.
We provide several references, and in particular follow Bryant \cite{bryant} for the construction of holonomy $\mathrm G_2$ and $\mathrm{Spin}(7)$ manifolds.

\subsection{Classification results}
\label{subsec:classification-results}

There are many classification results of non-integrable geometries.
We mention some of them here below, together with relevant references.
Here we are mostly interested in $\mathrm{U}(n)$-structures and $\mathrm G_2$-structures.
We illustrate the general idea behind this sort of classification results for these two cases only, the others are essentially analogous.

\begin{example}
The classification of $\mathrm{U}(n)$-structures $(g,J)$ in dimension $2n$ is due to Gray--Hervella \cite{gray-hervella}.
As anticipated, this is obtained by considering the symmetries of the tensor $\nabla \sigma$, where $\nabla$ is the Levi-Civita connection and $\sigma$ is the fundamental two-form of the structure.
We discuss this example in more detail below.
\end{example}
\begin{example}
The classification of $\mathrm{SU}(n)$-structures in dimension $2n$ is due to Cabrera \cite{cabrera}.
It is shown in particular that all the information on the intrinsic torsion of an $\mathrm{SU}(n)$-structure is contained in the exterior derivatives of the fundamental two-form and the complex volume form.
The low dimensional case $n=3$ is particularly interesting for geometric and physical reasons, see e.g.\ Chiossi--Salamon \cite{chiossi-salamon} on the intrinsic torsion of $\mathrm{SU}(3)$-structures, and Bedulli--Vezzoni \cite{bedulli-vezzoni} on the Ricci-tensor for $\mathrm{SU}(3)$-structures.
A spinorial description of $\mathrm{SU}(3)$-structures is given in \cite{agricola-et-al}.
\end{example}
\begin{example}
The classification of $\mathrm{Sp}(n)$-structures was done by Cabrera--Swann \cite{cabrera-swann}.
It is shown that there are at most 144 classes of $\mathrm{Sp}(2)$ geometries in dimension $8$, and at most 167 classes of $\mathrm{Sp}(n)$ geometries in dimension $4n>8$.
Information on the curvature for $4n=8$ is found in Salamon \cite{salamon}. 
\end{example}
\begin{example}
The classification of $\mathrm{Sp}(n)\mathrm{Sp}(1)$-structures was done by Cabrera \cite{cabrera3, cabrera-swann3}.
It turns out there are $16$ classes of $\mathrm{Sp}(2)\mathrm{Sp}(1)$ geometries, and $64$ classes of $\mathrm{Sp}(n)\mathrm{Sp}(1)$ geometries for $n>2$.
Swann \cite{swann, swann2} proved that the information on the covariant derivative of the fundamental four-form is contained in its exterior derivative.
For information on curvature see Cabrera--Swann \cite{cabrera-swann2}.
\end{example}
\begin{example}
The classification of $\mathrm G_2$-structures in dimension $7$ was initiated by Fern\'andez--Gray \cite{fernandez-gray}, and then continued by Fern\'andez--Iglesias \cite{fernandez-iglesias}, Cabrera \cite{cabrera2}, and Cabrera--Monar--Swann \cite{cabrera-monar-swann}. The classification yields $16$ classes of $\mathrm G_2$ geometries.
A study of the curvature tensor for $\mathrm G_2$-manifolds is given by Cleyton--Ivanov \cite{cleyton-ivanov}, see also Bryant \cite{bryant1} for the Ricci tensor of $\mathrm G_2$-structures.
\end{example}
\begin{example}
The classification of $\mathrm{Spin}(7)$-structures in dimension $8$ was done by Fern\'andez \cite{fernandez}.
There are only $4$ classes of $\mathrm{Spin}(7)$ geometries.
Information on curvature tensor and Ricci tensor are found in Salamon \cite{salamon} or Karigiannis \cite{karigiannis3}.
\end{example}
For an account on the intrinsic torsion of these structures, see e.g.\ Salamon \cite{salamon} and Fino \cite{fino}.
In odd dimension, we mention the classification of $\mathrm{U}(n) \times 1$-structures by Chinea--Gonzalez \cite{chinea-gonzales}.
These are called \emph{almost contact metric manifolds}, and are beyond the scope of these notes.
It is shown there are $2^{12}$ classes of $\mathrm{U}(n) \times 1$-structures in dimension $2n+1$.

Let us now describe in more detail the cases of $\mathrm{U}(n)$ and $\mathrm G_2$.
On a $2n$-dimensional manifold with a $\mathrm{U}(n)$-structure $(g,J)$ and fundamental two-forms $\sigma$, the covariant derivative $\nabla \sigma$ has the following symmetries:
\begin{enumerate}
\item $(\nabla_X \sigma)(Y,Z) = -(\nabla_X \sigma)(Z,Y)$,
\item $(\nabla_X \sigma)(JY,JZ) = -(\nabla_X\sigma)(Y,Z)$.
\end{enumerate}
Therefore, pointwise $\nabla \sigma$ sits in $\mathbb R^{2n} \otimes [\![\Lambda^{2,0}]\!]$, where $[\![\Lambda^{2,0}]\!] \subset \Lambda^2 \mathbb R^{2n}$ is the $-1$-eigenspace of $J$, cf.\ subsection \ref{subsec:unitary-groups}.
The tensor product $\mathcal W \coloneqq \mathbb R^{2n} \otimes [\![\Lambda^{2,0}]\!]$ is a $\mathrm{U}(n)$-module, and can thus be decomposed into the direct sum of irreducible representations.
By applying an adapted theory of invariant quadratic forms, Gray--Hervella \cite{gray-hervella} showed that in general $\mathcal W$ splits into the orthogonal direct sum of four irreducible summands:
\[\mathcal W = \mathcal W_1 \oplus \mathcal W_2 \oplus \mathcal W_3 \oplus \mathcal W_4.\]
Therefore, $\nabla \sigma \in \mathcal W$ splits accordingly into the sum of four invariant summands, which may or may not be zero.
We then have $2^4=16$ possibilities for the values of $\nabla \sigma$, and one defines sixteen classes of almost Hermitian geometries accordingly.
Gray and Hervella provide explicit descriptions of each of these classes.
Note that the trivial submodule $\{0\} \subset \mathcal W$ corresponds to the K\"ahler case, as $\nabla \sigma = 0$, or equivalently $\nabla J = 0$.
We will be interested in the first summand $\mathcal W_1$, in which case $\nabla \sigma$ is totally skew-symmetric, or equivalently $\nabla J$ is skew-symmetric.
In this case, the geometry is called \emph{nearly K\"ahler}.

On a seven-dimensional manifold with a $\mathrm G_2$-structure specified by a three-form $\varphi$, one can consider the covariant $(4,0)$-tensor $\nabla \varphi$.
Here $\nabla$ is the Levi-Civita connection for the Riemannian metric $g$ induced by $\varphi$. 
A first property of $\nabla \varphi$ we note is the skew-symmetry in the last three arguments:
\[(\nabla_X\varphi)(Y,Z,W) = -(\nabla_X\varphi)(Z,Y,W)=-(\nabla_X\varphi)(Y,W,Z).\]
This implies $\nabla \varphi \in \mathbb R^7 \otimes \Lambda^3\mathbb R^7$ pointwise.
Recall that in the proof of Theorem \ref{thm:g2}, a cross product was defined by the identity
\[\varphi(X,Y,Z) \eqqcolon g(X\times Y,Z).\]
Fern\'andez and Gray \cite{fernandez-gray} consider the space $\mathcal T$ of all algebraic tensors $\alpha$ in $\mathbb R^7 \otimes \Lambda^3\mathbb R^7$ such that
\[\alpha(X,Y \wedge Z \wedge (Y \times Z))=0.\]
Clearly $\mathcal T$ is a $\mathrm G_2$-module and $\nabla \varphi$ sits in it, see \cite[Lemma 5.1]{fernandez-gray}.
Again, $\mathcal T$ splits as the orthogonal direct sum of four invariant irreducible $G_2$-modules
\[\mathcal T = \mathcal T_1 \oplus \mathcal T_2 \oplus \mathcal T_3 \oplus \mathcal T_4.\]
The trivial submodule $\{0\} \subset \mathcal T$ corresponds to holonomy $\mathrm G_2$.
Again, we have sixteen classes of $\mathrm G_2$ geometries, and explicit descriptions are provided in \cite{fernandez-gray}.
We remark that the result can be rephrased in terms of $d\varphi$ and $d(\star \varphi)$, with $\star \varphi$ the Hodge star of $\varphi$.
In particular, the holonomy $\mathrm G_2$ case corresponds to having both $d\varphi=0=d(\star \varphi)$.
We will be interested in the module $\mathcal T_1$. 
If $\nabla \varphi$ takes non-zero values only in $\mathcal T_1$ then $\nabla \varphi = \lambda \star \varphi$, where $\lambda$ is a non-zero constant.
The class $\mathcal T_1$ is called the class of \emph{nearly parallel $\mathrm G_2$ structures}.
We now review some of the literature on nearly K\"ahler and nearly parallel $\mathrm G_2$ manifolds.

\subsection{Nearly K\"ahler manifolds}
\label{subsec:nearly-kahler-manifolds}

Consider an almost Hermitian manifold $(M,g,J)$, where $g$ is a Riemannian metric, and $J$ a compatible almost complex structure.
Let $\sigma \coloneqq g(J{}\cdot{},{}\cdot{})$ be the fundamental two-form of the structure, and $\nabla$ the Levi-Civita connection.
Following Gray--Hervella \cite{gray-hervella}, $\nabla J$ may be skew-symmetric, i.e.\ $(\nabla_XJ)X=0$ for all vector fields $X$ on $M$.
\begin{definition}[Gray \cite{gray4}]
\label{def:nk}
We say that $(M,g,J)$ is a \emph{nearly K\"ahler} manifold if $\nabla J$ is skew-symmetric.
We say that $(M,g,J)$ is a \emph{strict nearly K\"ahler} manifold if it is nearly K\"ahler and not K\"ahler.
\end{definition}
We now show that the every nearly K\"ahler manifold of dimension $2$ or $4$ is necessarily K\"ahler.

The following is a general fact for almost Hermitian structures. A proof can be found in \cite[Lemma 1.2.1]{russo}, but it is essentially an exercise.
\begin{lemma}
\label{lemma:moving-j}
For each triple $X,Y,Z$ of vector fields on $(M,g,J)$ we have
\[\nabla\sigma(JX,Y,Z)=\nabla\sigma(X,JY,Z)=\nabla\sigma(X,Y,JZ).\]
\end{lemma}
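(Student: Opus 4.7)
The plan is to reduce everything to a single formula expressing $\nabla\sigma$ in terms of $\nabla J$, and then to move $J$ from one argument to another using only pointwise algebraic identities that $\nabla J$ satisfies.

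First I would observe that, since $\sigma(Y,Z)=g(JY,Z)$ and $\nabla g=0$, a one-line computation gives the basic identity
\[(\nabla_X\sigma)(Y,Z)=X(g(JY,Z))-g(J\nabla_XY,Z)-g(JY,\nabla_XZ)=g((\nabla_XJ)Y,Z).\]
Hence the lemma reduces to proving the analogous identities for the tensor $(X,Y,Z)\mapsto g((\nabla_XJ)Y,Z)$, which is what I would do for the rest of the proof.

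Next I would record the two algebraic identities valid for every compatible almost complex structure, both obtained by differentiating a pointwise relation. Differentiating $J^2=-\mathrm{id}$ yields $(\nabla_XJ)\circ J=-J\circ(\nabla_XJ)$, and differentiating $g(JY,Z)=-g(Y,JZ)$ yields that $(\nabla_XJ)$ is $g$-skew. Combining these with the basic identity at once gives the second equality,
\[(\nabla_X\sigma)(JY,Z)=g((\nabla_XJ)JY,Z)=-g(J(\nabla_XJ)Y,Z)=g((\nabla_XJ)Y,JZ)=(\nabla_X\sigma)(Y,JZ),\]
which is indeed a general fact for any almost Hermitian structure and does not use the nearly Kähler hypothesis.

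For the first equality I would use the nearly Kähler condition through its polarised form $(\nabla_XJ)Y=-(\nabla_YJ)X$. Applying this together with the anticommutation with $J$ from the previous paragraph gives
\[(\nabla_{JX}J)Y=-(\nabla_YJ)(JX)=J(\nabla_YJ)X=-J(\nabla_XJ)Y,\]
whence
\[(\nabla_{JX}\sigma)(Y,Z)=g((\nabla_{JX}J)Y,Z)=-g(J(\nabla_XJ)Y,Z)=g((\nabla_XJ)JY,Z)=(\nabla_X\sigma)(JY,Z),\]
which is the first equality. The whole argument is purely algebraic once the basic identity is established, so no serious obstacle arises; the only point worth flagging is that, despite the blanket phrasing, the first equality really does rely on the nearly Kähler identity $(\nabla_XJ)X=0$, whereas the second one is valid on every almost Hermitian manifold.
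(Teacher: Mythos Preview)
Your argument is correct. The paper does not actually give its own proof of this lemma: it only says that a proof can be found in \cite[Lemma 1.2.1]{russo} and that it is ``essentially an exercise''. So there is nothing to compare against beyond confirming that your reasoning is sound, which it is: the reduction to $(\nabla_X\sigma)(Y,Z)=g((\nabla_XJ)Y,Z)$, the anticommutation $(\nabla_XJ)J=-J(\nabla_XJ)$, the $g$-skewness of $\nabla_XJ$, and the polarised nearly K\"ahler identity $(\nabla_XJ)Y=-(\nabla_YJ)X$ combine exactly as you wrote.

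Your closing remark is also well taken and worth highlighting. The paper introduces the lemma as ``a general fact for almost Hermitian structures'', but in fact only the second equality $(\nabla_X\sigma)(JY,Z)=(\nabla_X\sigma)(Y,JZ)$ holds in that generality. The first equality $(\nabla_{JX}\sigma)(Y,Z)=(\nabla_X\sigma)(JY,Z)$ is equivalent to $(\nabla_{JX}J)(JY)=-(\nabla_XJ)Y$, which is precisely the quasi-K\"ahler condition (Gray--Hervella class $\mathcal W_1\oplus\mathcal W_2$); it is therefore true for nearly K\"ahler manifolds but not for arbitrary almost Hermitian ones. Since the lemma is stated within the nearly K\"ahler subsection and is only applied there, this does not affect anything downstream, but the blanket phrasing in the text is slightly misleading and your proof correctly isolates where the extra hypothesis enters.
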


\begin{proposition}[Gray \cite{gray2}]
If $\dim M = 2$ or $4$, a nearly K\"ahler structure on $M$ is K\"ahler.
\end{proposition}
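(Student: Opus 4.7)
The plan is to exploit the fact that, by definition, the nearly K\"ahler condition $(\nabla_X J)X=0$ is equivalent to $(\nabla_X\sigma)(X,Z)=0$ for all vectors $X,Z$, and by polarisation this forces $\nabla\sigma(X,Y,Z)$ to be totally skew-symmetric; that is, $\nabla\sigma$ is a section of $\Lambda^3 T^*M$. Showing that this three-form vanishes identically will give $\nabla J=0$, i.e.\ the K\"ahler condition.

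The two-dimensional case is then immediate: $\Lambda^3 T^*M=0$ on a surface, so necessarily $\nabla\sigma=0$.

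For the four-dimensional case, the key step is to combine the total skew-symmetry of $\nabla\sigma$ with Lemma~\ref{lemma:moving-j}. Setting $Y=X$ in the identity $\nabla\sigma(JX,Y,Z)=\nabla\sigma(X,JY,Z)$ gives $\nabla\sigma(JX,X,Z)=\nabla\sigma(X,JX,Z)$, while total skew-symmetry gives $\nabla\sigma(X,JX,Z)=-\nabla\sigma(JX,X,Z)$. Hence
\[\nabla\sigma(JX,X,Z)=0\qquad\text{for all } X,Z.\]

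Next, I would fix a point $p\in M$ and a $J$-adapted orthonormal basis $e_1,Je_1,e_2,Je_2$ of $T_pM$, and expand $\nabla\sigma_p$ in the basis
\[\bigl\{\,e^1\wedge Je^1\wedge e^2,\ e^1\wedge Je^1\wedge Je^2,\ e^1\wedge e^2\wedge Je^2,\ Je^1\wedge e^2\wedge Je^2\,\bigr\}\]
of $\Lambda^3 T_p^*M$. Substituting the four pairs $(X,Z)\in\{(e_1,e_2),(e_1,Je_2),(e_2,e_1),(e_2,Je_1)\}$ into the vanishing relation $\nabla\sigma(JX,X,Z)=0$ isolates each of the four basis coefficients in turn, forcing all of them to vanish. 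Hence $\nabla\sigma_p=0$, and since $p$ is arbitrary, $\nabla\sigma\equiv0$ on $M$.

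There is no genuine obstacle here beyond bookkeeping: the only subtlety is checking that the four evaluations really do pick out four linearly independent components, but this is transparent from the $J$-adapted basis above. Conceptually the result reflects the fact that the Gray--Hervella class $\mathcal W_1$ is isomorphic to $[\![\Lambda^{3,0}]\!]$, which is trivially zero in complex dimensions $\leq 2$; the direct computation just makes this manifest without invoking the full classification.
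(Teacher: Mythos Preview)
Your proof is correct and follows essentially the same approach as the paper: both use that the nearly K\"ahler condition makes $\nabla\sigma$ a three-form, and both combine this total skew-symmetry with Lemma~\ref{lemma:moving-j} to force vanishing. Your packaging is slightly more streamlined---the $\dim M=2$ case via $\Lambda^3=0$ is cleaner than the paper's direct computation, and in dimension $4$ you isolate the single relation $\nabla\sigma(JX,X,Z)=0$ and apply it systematically, whereas the paper checks orthogonality of $(\nabla_XJ)Y$ against each basis vector---but the underlying argument is the same.
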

\begin{proof}
Assume $(g,J)$ is a nearly K\"ahler structure on $M$, and that $\dim M = 2$. 
Let $X$ be any non-zero vector field on an open subset $U \subset M$.
Then $JX$ is orthogonal to $X$ at each point of $U$, and $(\nabla_XJ)JX=-J(\nabla_XJ)X=0$.
This implies $\nabla J=0$.

Assume then $\dim M = 4$. 
Take $X,JX,Y,JY$ orthonormal fields on an open subset of $M$.
We have $g((\nabla_XJ)Y,X)=-\nabla\sigma(Y,X,X)=0$, and similarly $g(\nabla_XJ)Y,Y)=0$.
Then $(\nabla_XJ)Y$ must be orthogonal to $X$ and $Y$.
By Lemma \ref{lemma:moving-j}, $g((\nabla_XJ)Y,JX)=-\nabla\sigma(JY,X,X)=0$, and similarly $g((\nabla_XJ)Y,JY)=\nabla\sigma(JX,Y,Y)=0$.
It follows that $\nabla J=0$.
\end{proof}

In dimension $6$, an example of strict nearly K\"ahler structure is given by an almost Hermitian structure $(g,J)$ on the six-sphere $S^6$ built via the octonions.
An explicit construction is due to Fr\"olicher \cite{frolicher}. 
Subsequently, Fukami and Ishihara \cite{fukami-ishihara} proved that $\nabla J$ is skew-symmetric.
We discuss this nearly K\"ahler structure in detail here below.

In dimension $6$, a nearly K\"ahler structure is a special $\mathrm{U}(3)$-structure.
A result by Carri\'on \cite{carrion} states that this $\mathrm{U}(3)$-structure reduces further to an $\mathrm{SU}(3)$-structure with special intrinsic torsion.
\begin{theorem}[Carri\'on \cite{carrion}]
Let $(M,g,J)$ be an almost Hermitian six-dimensional manifold, and $\sigma = g(J{}\cdot{},{}\cdot{})$ be the fundamental two-form of the structure.
Then $(M,g,J)$ is nearly K\"ahler if and only if there exist a constant function $\mu$ on $M$ and a complex $(3,0)$-form $\psi_{\mathbb C} = \psi_++i\psi_-$ such that
\begin{equation}
\label{eq:nk-str-eq}
d\sigma = 3\mu\psi_+, \qquad d\psi_- = -2\mu\sigma \wedge \sigma.
\end{equation}
\end{theorem}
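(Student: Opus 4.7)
My plan is to analyse $\nabla\sigma$ as a section of the $\mathrm{U}(3)$-module $T^*M \otimes \Lambda^2T^*M$ and use Proposition \ref{thm:u(n)-decompositions} together with the nearly K\"ahler hypothesis to force $\nabla\sigma$ pointwise into the real rank-$2$ summand $[\![\Lambda^{3,0}]\!] \subset \Lambda^3\mathbb R^6$. This summand carries a canonical complex structure induced by $J$, pairing a real part $\psi_+$ with an imaginary part $\psi_-$ of a unit complex volume form $\psi_\mathbb C$, which is how the forms in the statement arise.

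For the forward implication, the hypothesis $(\nabla_X J)X=0$ gives total skew-symmetry of $\nabla\sigma$, whence $d\sigma = 3\nabla\sigma$, together with the anti-commutation $J(\nabla_X J)+(\nabla_X J)J=0$ obtained by differentiating $J^2=-\mathrm{id}$. The anti-commutation forces $(\nabla_X\sigma)(\cdot,\cdot)$ to be a two-form of type $(2,0)+(0,2)$; combining this with Lemma \ref{lemma:moving-j} and the total skew-symmetry places $\nabla\sigma$ in $[\![\Lambda^{3,0}]\!]$ pointwise. Fixing the normalisation $\psi_+\wedge\psi_- = \tfrac{2}{3}\sigma^3$, this rank-$2$ inclusion yields a unique decomposition $d\sigma = 3\mu\,\psi_+$ with $\mu$ a smooth function on the strict nearly K\"ahler locus (extended by $0$ on the K\"ahler locus). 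To show $\mu$ is constant I would invoke Gray's theorem \cite{gray3} that strict nearly K\"ahler $6$-manifolds are Einstein, so $|\nabla J|^2$ and hence $\mu^2$ is constant; a coherent orientation of the line $\mathbb R d\sigma$ then makes $\mu$ itself constant. The second structure equation now follows from the type identity $\sigma\wedge\psi_- \in \Lambda^{4,1}\oplus\Lambda^{1,4} = 0$: differentiating $\sigma\wedge\psi_-=0$ gives
\[\sigma\wedge d\psi_- = -d\sigma\wedge\psi_- = -3\mu\,\psi_+\wedge\psi_- = -2\mu\,\sigma^3 = \sigma\wedge(-2\mu\,\sigma^2),\]
and since the $\mathcal W_1$-only condition rules out the $(3,1)+(1,3)$-part of $d\psi_\mathbb C$, the form $d\psi_-$ lies in the trace line $\mathbb R\sigma^2 \subset [\Lambda^{2,2}]$, where left-multiplication by $\sigma$ is injective; this forces $d\psi_- = -2\mu\,\sigma^2$.

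For the converse, I would use the Gray--Hervella decomposition $\nabla\sigma\in\mathcal W_1\oplus\mathcal W_2\oplus\mathcal W_3\oplus\mathcal W_4$ and exploit \eqref{eq:nk-str-eq} to kill every summand except $\mathcal W_1$. The condition $d\sigma\in[\![\Lambda^{3,0}]\!]$ directly eliminates $\mathcal W_3$ and $\mathcal W_4$, since these project non-trivially into the complementary $\mathrm{U}(3)$-submodules $[\![\Lambda^{2,1}_0]\!]$ and $\Lambda^1\subset\Lambda^3\mathbb R^6$. The main obstacle, and what I expect to be the technical core of the proof, is using the second equation to rule out $\mathcal W_2$: this class is precisely the Nijenhuis component of $J$ and controls the $(3,1)+(1,3)$-part of $d\psi_\mathbb C$. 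Requiring $d\psi_-$ to be a multiple of the $(2,2)$-form $\sigma^2$ forces that $(3,1)$-part to vanish, which a direct calculation shows is equivalent to $\mathcal W_2=0$. Together with the previous vanishings this gives $\nabla\sigma\in\mathcal W_1$, i.e.\ the nearly K\"ahler condition.
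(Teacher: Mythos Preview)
The paper does not actually prove this theorem: it is stated with attribution to Carri\'on's thesis \cite{carrion} and immediately followed by commentary, with no \texttt{proof} environment. So there is nothing in the paper to compare your argument against.

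That said, your outline is broadly the standard one and is essentially correct in strategy, but two steps are under-argued. First, in the forward direction, after obtaining $\sigma\wedge d\psi_- = \sigma\wedge(-2\mu\sigma^2)$ you assert that $d\psi_-$ lies in the line $\mathbb R\sigma^2$ because ``the $\mathcal W_1$-only condition rules out the $(3,1)+(1,3)$-part of $d\psi_{\mathbb C}$''. This is the crux, and as written it is close to circular: you are invoking the $\mathrm{SU}(3)$-torsion decomposition (essentially Carri\'on's result, or Chiossi--Salamon) to deduce the very equation you want. A self-contained argument would instead compute $\nabla\psi_-$ directly from $\nabla\sigma$ and $J$ (using that $\psi_-$ is algebraically determined by $\psi_+$ and $J$, and $\psi_+$ is a constant multiple of $\nabla\sigma$), then skew-symmetrise; the total skew-symmetry of $\nabla\sigma$ then forces $d\psi_-$ into $\mathbb R\sigma^2$ without appeal to the torsion classification. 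Second, your constancy argument for $\mu$ via Gray's Einstein theorem is logically fine but heavy; the usual route (and Carri\'on's) is to observe that $\nabla(\nabla J)=0$ for nearly K\"ahler structures in dimension~$6$ (a second-order identity due to Gray), which gives $|\nabla J|$ constant more directly. The converse sketch is accurate: killing $\mathcal W_2$ via the $(3,1)$-part of $d\psi_{\mathbb C}$ is indeed the substantive computation, and your identification of it as such is correct.
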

The result should be compared with the general torsion equations for $\mathrm{SU}(3)$-structures \cite{chiossi-salamon}.
Up to scaling the metric, one can set $\mu=1$, so we can assume the nearly K\"ahler structure equations \eqref{eq:nk-str-eq} to be of the form
\begin{equation}
\label{eq:nk-str-eq-scaled}
d\sigma=3\psi_+, \qquad d\psi_-=-2\sigma \wedge \sigma.
\end{equation}
So in dimension $6$ we can think of a nearly K\"ahler structure as a particular $\mathrm{SU}(3)$-structure $(\sigma,\psi_{\pm})$ (cf.\ Digression \ref{digression:su3}) satisfying the structure equations \eqref{eq:nk-str-eq-scaled}.
Note that $\psi_+$ essentially measures the failure of the manifold to be symplectic.

\begin{theorem}[Gray, \cite{gray3}]
\label{thm:nk-einstein}
Nearly K\"ahler six-manifolds are Einstein with positive scalar curvature.
\end{theorem}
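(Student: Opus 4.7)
The plan is to lift $(M,g)$ to its Riemannian cone and exhibit a torsion-free $\mathrm G_2$-structure there, reducing Einstein-ness of $(M,g)$ to the Ricci-flatness of a $\mathrm G_2$ holonomy manifold via Bonan \cite{bonan} and the standard cone Ricci identity. Concretely, set $C(M) \coloneqq (0,\infty)\times M$ with warped product metric $\bar g \coloneqq dr^2+r^2 g$, and define the three-form
\[\varphi \coloneqq r^2\, dr\wedge \sigma + r^3\, \psi_+\ \in\ \Omega^3(C(M)),\]
where $(\sigma,\psi_\pm)$ is the $\mathrm{SU}(3)$-structure data satisfying the Carri\'on equations~\eqref{eq:nk-str-eq-scaled}. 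If $e_1,\ldots,e_6$ is an $\mathrm{SU}(3)$-adapted orthonormal frame on $TM$ as in Digression~\ref{digression:su3}, then $\partial_r, r^{-1}e_1,\ldots,r^{-1}e_6$ is $\bar g$-orthonormal on $C(M)$, and a short comparison in that frame shows the coefficients of $\varphi$ match those of the standard model \eqref{eq:g2-3-form} up to a relabelling of indices; hence $\varphi$ is a $\mathrm G_2$-structure on $C(M)$ whose induced Riemannian metric is exactly $\bar g$.

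Next I would verify closedness and coclosedness of $\varphi$. Direct expansion yields
\[d\varphi = r^2\, dr\wedge(3\psi_+ - d\sigma) + r^3\, d\psi_+ = 0,\]
using $d\sigma = 3\psi_+$ and $d\psi_+ = \tfrac{1}{3} d^2\sigma = 0$. Invoking the standard $\mathrm{SU}(3)$ Hodge identities $\star_g\sigma = \tfrac{1}{2}\sigma\wedge\sigma$ and $\star_g\psi_+ = \psi_-$ on $(M,g)$, one computes $\star_{\bar g}\varphi = \tfrac{1}{2}r^4\sigma\wedge\sigma - r^3\, dr\wedge\psi_-$, whose differential is
\[d\star_{\bar g}\varphi = 2r^3\, dr\wedge\sigma\wedge\sigma + r^4\sigma\wedge d\sigma + r^3\, dr\wedge d\psi_- = 0,\]
since $\sigma\wedge\psi_+ = 0$ by type considerations (types $(4,1)+(1,4)$ vanish in complex dimension $3$), and $d\psi_- = -2\sigma\wedge\sigma$ cancels the first term. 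By the characterisation of torsion-free $\mathrm G_2$-structures recalled in subsection~\ref{subsec:classification-results}, the joint vanishing of $d\varphi$ and $d\star_{\bar g}\varphi$ forces $\nabla^{\bar g}\varphi = 0$, so $\mathrm{Hol}(\bar g)\subset \mathrm G_2$, and Bonan's theorem \cite{bonan} then gives $\overline{\mathrm{Ric}}\equiv 0$.

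The final step invokes the classical warped-product Ricci identity for the metric cone: for horizontal lifts of $X,Y\in\mathfrak X(M)$ one has
\[\overline{\mathrm{Ric}}(X,Y) = \mathrm{Ric}_g(X,Y) - (n-1)\, g(X,Y),\]
with $n = \dim M = 6$. Ricci-flatness of the cone thus forces $\mathrm{Ric}_g = 5\, g$, which is the Einstein condition with positive Einstein constant (and in particular positive scalar curvature $s = 30$). The main obstacle lies in the pointwise linear-algebra check that $\varphi$ is genuinely a $\mathrm G_2$-structure on $C(M)$ whose induced metric coincides with $\bar g$; once that is established, the remaining computations are formal consequences of the Carri\'on structure equations~\eqref{eq:nk-str-eq-scaled} and of results already collected in the excerpt.
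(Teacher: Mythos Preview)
Your argument is correct. The paper, however, does not give its own proof of this theorem: it only states the result, attributes it to Gray \cite{gray3}, and points to \cite{russo} for a streamlined direct proof via curvature identities of the Levi-Civita connection.

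That said, your cone argument is entirely consistent with material the paper develops \emph{later}, in subsections~\ref{subsec:nearly-parallel-g2-manifolds} and~\ref{sec:manifolds-with-exceptional-holonomy}. There the paper records exactly the three-form $\varphi = t^2\,dt\wedge\sigma + t^3\psi_+$ on the cone, notes that the nearly K\"ahler equations are equivalent to $d\varphi = d\star\varphi = 0$, and separately states that the cone metric is Ricci-flat if and only if $\mathrm{Ric}_{g'} = (n-1)g'$. The paper uses these facts to construct metrics with holonomy $\mathrm G_2$, not to prove Theorem~\ref{thm:nk-einstein}; you have effectively run the implication in the other direction. This is a genuinely different route from Gray's original computation (which works directly with $\nabla J$ and the curvature tensor on $M$) and has the advantage of being almost computation-free once Bonan's Ricci-flatness result for $\mathrm G_2$ holonomy is granted. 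The trade-off is that it imports a nontrivial external input (Bonan \cite{bonan}), whereas the direct approach is self-contained on $M$.
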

Specifically, a nearly K\"ahler six-manifold characterised by the structure equations \eqref{eq:nk-str-eq-scaled} (resp.\ \eqref{eq:nk-str-eq}) satisfies $\mathrm{Ric}=5g$ (resp.\ $\mathrm{Ric}=5\mu^2g$).
A slightly streamlined and self-contained proof of this fact, together with equivalent definitions of nearly K\"ahler manifolds, is found in \cite{russo}.

\begin{remark}
Theorem \ref{thm:nk-einstein} partly explains the interest for nearly K\"ahler manifolds by physicists \cite{friedrich-ivanov}.
Another fact that makes these manifolds interesting for physicists is that they come with \emph{real Killing spinors} \cite{bar, bfgk, grunewald}, but we do not go into details in this direction.
We only remark that the existence of real Killing spinors imposes restrictions on the Weyl tensor of $(M,g)$ and implies Theorem~\ref{thm:nk-einstein}, cf.\ \cite{bfgk}.
\end{remark}
\begin{remark}
Since $\mathrm{SU}(3) \subset \mathrm{U}(3)$, an $\mathrm{SU}(3)$-structure on $M$ extends uniquely to a $\mathrm{U}(3)$-structure by Remark \ref{rmk:extension-g-structures}.
It follows that specifying $(\sigma,\psi_{\pm})$ satisfying \eqref{eq:nk-str-eq-scaled} is equivalent to specifying $(g,J)$ as in Definition \ref{def:nk}.
\end{remark}

We now summarise the construction of the standard nearly K\"ahler structure on $S^6$. 
This explicit description is found e.g.\ in \cite{russo2}.
We view $S^6$ inside $\mathbb R^7 = \mathrm{Im}\mathbb O$ in the standard way.
Recall from subsection \ref{subsec:on-g2-spin7} that, by definition, the group $\mathrm G_2$ is the subgroup of $\mathrm{GL}(7,\mathbb R)$ acting on $\mathbb R^7$ preserving the three-form
\[\varphi=e^{123}+e^{145}+e^{167}+e^{246}-e^{257}-e^{347}-e^{356},\]
where $\{e^i\}_{i=1,\dots,7}$ can be taken to be the dual basis of the standard basis of $\mathbb R^7$.
The form $\varphi$ induces the standard scalar product $\langle{}\cdot{},{}\cdot{}\rangle$ and a volume form on $\mathbb R^7$, so a Hodge star operator $\star$.
Also, $\mathrm G_2$ acts on $\mathbb R^7$ as a subgroup of $\mathrm{SO}(7)$, and there is a transitive action of $\mathrm G_2$ on the unit sphere $S^6\subset \mathbb R^7$.
The stabiliser of the point $(1,0,\dots,0) \in S^6$ is isomorphic to the special unitary group $\mathrm{SU}(3)$, whence a diffeomorphism $S^6=\mathrm G_2/\mathrm{SU}(3)$.

The sphere $S^6$ inherits a Riemannian metric via the standard immersion $\iota \colon S^6 \hookrightarrow \mathbb R^7$.
We now construct an almost complex structure on $S^6$.
Define the cross-product $\times \colon \mathbb R^7 \times \mathbb R^7 \to \mathbb R^7$ by 
\[\langle X\times Y,Z\rangle \coloneqq \varphi(X,Y,Z).\]
Let $N$ be the unit normal vector field to $S^6 \subset \mathbb R^7$, and define $J\colon \mathbb R^7 \to \mathbb R^7$ by $JX \coloneqq N \times X$.
Take any point $p \in S^6$ and $X \in T_pS^6$. Then $\langle JX,N\rangle = \langle N \times X,N\rangle = \varphi(N,X,N)=0$, so $J$ is a linear map $T_pS^6 \to T_pS^6$.
It can then be checked that this restriction of $J$ to the tangent bundle of $S^6$ is such that $J^2=-\mathrm{id}$ pointwise.
\begin{proposition}
The differential forms \[\sigma \coloneqq g(J{}\cdot{},{}\cdot{}), \qquad \psi_+\coloneqq \iota^*\varphi, \qquad \psi_-\coloneqq -\iota^*(N\lrcorner \star \varphi)\] give a nearly K\"ahler structure on the six-sphere.
\end{proposition}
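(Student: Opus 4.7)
The plan is to reduce to Carri\'on's characterisation: the almost Hermitian structure $(g,J)$ on $S^6$ is nearly K\"ahler if and only if the triple $(\sigma,\psi_+,\psi_-)$ satisfies the structure equations $d\sigma = 3\psi_+$ and $d\psi_- = -2\,\sigma\wedge\sigma$ displayed in \eqref{eq:nk-str-eq-scaled}. So I would spend no energy on the differential-geometric side of the definition and instead turn the whole problem into a short computation in $\mathbb{R}^7$.

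The key observation is that all three forms can be written uniformly by means of the Euler vector field $R = \sum_i x^i\partial_i$ on $\mathbb{R}^7$, whose restriction to $S^6$ is exactly the unit normal $N$. Using the identity $\sigma(X,Y) = g(N\times X,Y) = \varphi(N,X,Y)$, I would rewrite
\[
\sigma \;=\; \iota^*(R\lrcorner\varphi), \qquad \psi_+ \;=\; \iota^*\varphi, \qquad \psi_- \;=\; -\,\iota^*(R\lrcorner\star\varphi).
\]
Since $\varphi$ and $\star\varphi$ have constant coefficients in the standard basis, they are closed on $\mathbb{R}^7$ and homogeneous of degrees $3$ and $4$ in the Euler sense, i.e.\ $\mathcal{L}_R\varphi = 3\varphi$ and $\mathcal{L}_R(\star\varphi) = 4\star\varphi$. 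Cartan's magic formula $\mathcal{L}_R = d\circ\iota_R+\iota_R\circ d$, together with $d\varphi=0$ and $d(\star\varphi)=0$, then yields the ambient identities
\[
d(R\lrcorner\varphi) = 3\varphi, \qquad d(R\lrcorner\star\varphi) = 4\star\varphi.
\]
Pulling the first back via $\iota\colon S^6 \hookrightarrow \mathbb{R}^7$ and commuting pullback with $d$ gives $d\sigma = 3\psi_+$ immediately. Pulling back the second gives $d\psi_- = -4\,\iota^*\star\varphi$, so the second structure equation reduces to the purely algebraic identity $\iota^*\star\varphi = \tfrac12\,\sigma\wedge\sigma$.

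I would verify this algebraic identity at the single point $p = e_1\in S^6$ and then invoke $\mathrm{G}_2$-equivariance (using that $\mathrm{G}_2$ acts transitively on $S^6$, preserves the scalar product and the cross product, and hence preserves all three forms $\sigma,\psi_+,\psi_-$ along with $\star\varphi$) to propagate the identity to all of $S^6$. At $p=e_1$ the tangent space is $\mathrm{Span}(e_2,\dots,e_7)$, and from \eqref{eq:g2-3-form} one reads off $e_1\lrcorner\varphi = e^{23}+e^{45}+e^{67}$, so $\sigma|_p$ and hence $\tfrac12\sigma\wedge\sigma|_p = e^{2345}+e^{2367}+e^{4567}$ are determined. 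On the other hand, $\star\varphi$ consists of seven four-form terms; restriction to the tangent space at $p$ kills exactly those containing $e^1$, leaving $e^{4567}+e^{2367}+e^{2345}$, which matches. Combining, $d\psi_- = -2\,\sigma\wedge\sigma$, so \eqref{eq:nk-str-eq-scaled} holds and the proposition follows.

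The main obstacle is really only bookkeeping: fixing signs in the definition of $\star$ and checking that the $e^1$-free part of $\star\varphi$ lines up with $\tfrac12\sigma\wedge\sigma$ in the chosen convention. A minor supplementary check, which I would dispatch by the same $\mathrm{G}_2$-equivariance argument at $p=e_1$, is that $(\sigma,\psi_+,\psi_-)$ is genuinely an $\mathrm{SU}(3)$-structure compatible with $(g,J)$, i.e.\ $\sigma\wedge\psi_\pm=0$, $\psi_+\wedge\psi_- = \tfrac23\sigma^3$, and $\psi_-(X,Y,Z)=\psi_+(JX,Y,Z)$, so that Carri\'on's theorem is actually applicable.
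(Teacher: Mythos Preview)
Your proof is correct and follows essentially the same route as the paper's: compute the ambient identities $d(R\lrcorner\varphi)=3\varphi$ and $d(R\lrcorner\star\varphi)=4\star\varphi$, pull back to $S^6$, and then verify the algebraic identity $\iota^*\star\varphi=\tfrac12\sigma\wedge\sigma$ at a single point by $\mathrm G_2$-transitivity. The only cosmetic differences are that you derive the two differential identities via Cartan's formula and Euler homogeneity (the paper just says ``a computation shows''), and you check at $e_1$ whereas the paper rotates $N$ to $\partial_{x^7}$; your added remark that the triple is a genuine $\mathrm{SU}(3)$-structure is a reasonable supplement the paper leaves implicit.
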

\begin{proof}
The proof is taken from \cite[Proposition 3.1]{russo2}.
Let $(x^k)$, $k=1,\dots,7$, be the standard coordinates on $\mathbb R^7$.
Let the one-form $dx^k$ be the dual of the coordinate vector field $\partial_{x^k}$ for all $k$.
We then have
\[\varphi=dx^{123}+dx^{145}+dx^{167}+dx^{246}-dx^{257}-dx^{347}-dx^{356}.\]
Let $N=\sum_{k=1}^7 x^k\partial_{x^k}$, with $(x^1)^2+\dots+(x^7)^2=1$, be the unit normal to the sphere.
A computation shows that $d\langle J{}\cdot{},{}\cdot{}\rangle=3\varphi$.
Pulling back this identity to $S^6$ yields $d\sigma=3\psi_+$.
Further, $d\psi_-=-\iota^*d(N\lrcorner \star \varphi)$.
By the expression of $\star \varphi$ it follows that $d(N\lrcorner \star \varphi)=4\star \varphi$, and again the restrictions to $S^6$ are the same.
So the claim now is $4\iota^*\star \varphi=2\sigma \wedge \sigma$.
Observe that this identity is invariant under the action of $\mathrm G_2$.
Up to a rotation in $\mathrm G_2$ mapping $N$ to $\partial_{x^7}$, we find \[\sigma = N \lrcorner\ \varphi = \partial_{x^7} \lrcorner\ \varphi = dx^{16}-dx^{25}-dx^{34},\]
so $\sigma \wedge \sigma = -2(dx^{1256}+dx^{1346}-dx^{2345})$, and $\iota^*\star \varphi = dx^{2345}-dx^{1346}-dx^{1256}$ is unchanged. 
This shows $d\psi_-=-2\sigma\wedge \sigma$.
\end{proof}

Now for some examples. 
Besides $S^6=\mathrm G_2/\mathrm{SU}(3)$, there are other three examples of homogeneous nearly K\"ahler manifolds.
Let $(L,U)$ be a pair where $L$ is a linear one-dimensional subspace of $\mathbb C^3$, and $U$ is a linear two-dimensional subspace of $\mathbb C^3$ containing $L$.
The \emph{flag manifold} of $\mathbb C^3$, denoted by $F_{1,2}(\mathbb C^3)$, is by definition the set of pairs $(L,U)$. 
This gets a transitive action of $\mathrm{U}(3)$ (resp.\ $\mathrm{SU}(3)$) with generic stabiliser isomorphic to $T^3$ (resp.\  $T^2$). 
Further, we have homogeneous nearly K\"ahler structures on the complex projective space $\mathbb CP^3$ and the product of three-spheres $S^3 \times S^3$.
In the latter cases one has the homogeneous descriptions $\mathbb CP^3=\mathrm{Sp}(2)/\mathrm{Sp}(1)\mathrm{U}(1)$ and $S^3 \times S^3 = \mathrm{SU}(2)^3/\mathrm{SU}(2)_{\Delta}$, where $\mathrm{SU}(2)_{\Delta}$ denotes the diagonal embedding of $\mathrm{SU}(2)$ in $\mathrm{SU}(2)^3$, i.e.\ $h \in \mathrm{SU}(2) \mapsto (h,h,h) \in \mathrm{SU}(2)^3$.
For explicit constructions and details, see \cite{russo2}.
These examples were originally found by Gray--Wolf \cite{gray-wolf}. The next result tells us there are no other homogeneous compact nearly K\"ahler manifolds in dimension $6$.
\begin{theorem}[Butruille \cite{butruille}]
\label{thm:butruille}
The only four homogeneous connected, complete nearly K\"ahler manifolds are $S^6=\mathrm G_2/\mathrm{SU}(3)$, the flag manifold $F_{1,2}(\mathbb C^3) = \mathrm{SU}(3)/T^2$, the complex projective space $\mathbb CP^3=\mathrm{Sp}(2)/\mathrm{Sp}(1)\mathrm{U}(1)$, and the product of three-spheres $S^3\times S^3=\mathrm{SU}(2)^3/\mathrm{SU}(2)_{\Delta}$.
\end{theorem}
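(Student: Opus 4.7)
The plan is to begin by reducing to a well-controlled algebraic situation. By Theorem \ref{thm:nk-einstein}, the nearly Kähler condition forces $(M,g)$ to be Einstein with positive scalar curvature, so the Bonnet–Myers theorem \ref{thm:bonnet-myers} guarantees that $M$ is compact with finite fundamental group. The isometry group of a compact Riemannian manifold is itself a compact Lie group, so I may (and will) assume $G$ compact, acting transitively and (mod its kernel) effectively on $M$; by Proposition \ref{prop:homogeneous-spaces} we have $M = G/H$ with $H$ compact. The isotropy representation at the base point fixes the whole nearly Kähler data $(g, J, \sigma, \psi_+, \psi_-)$ simultaneously, and the common stabiliser in $\mathrm{GL}(6,\mathbb R)$ of this tuple is precisely $\mathrm{SU}(3)$ (Digression \ref{digression:su3}); hence the isotropy representation yields an embedding $H \hookrightarrow \mathrm{SU}(3)$, and in particular $\dim G = 6 + \dim H \le 14$.

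Next I would pass to the canonical reductive decomposition $\mathfrak g = \mathfrak h \oplus \mathfrak m$ with $\mathfrak m \cong T_pM$ as an $H$-module, and translate the nearly Kähler equations \eqref{eq:nk-str-eq-scaled} into purely algebraic conditions on $\mathfrak g$. For $G$-invariant forms the Chevalley–Eilenberg formula $d\alpha(X,Y) = -\alpha([X,Y])$ (and its higher-degree analogues) converts $d\sigma = 3\psi_+$ and $d\psi_- = -2\sigma\wedge\sigma$ into identities for the $\mathfrak m$-valued component of the bracket on $\mathfrak g$. The key structural observation, going back to Gray, is that these identities imply the existence of a Lie algebra automorphism $s_*\colon \mathfrak g \to \mathfrak g$ of order three, fixing $\mathfrak h$ pointwise and acting on $\mathfrak m \cong \mathbb C^3$ as multiplication by the primitive cube root of unity $e^{2\pi i/3}$ (built out of $\tfrac12(-\mathrm{id} + \sqrt 3\, J)$). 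Equivalently, $\mathfrak g^{\mathbb C} = \mathfrak h^{\mathbb C} \oplus \mathfrak m^{1,0} \oplus \mathfrak m^{0,1}$ is a $\mathbb Z_3$-grading by Lie algebras, and $H$ is (an open subgroup of) the fixed-point set of an order-three automorphism of $G$. Thus $(M, g, J)$ is a \emph{$3$-symmetric space} in the sense of Gray–Wolf.

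With the $3$-symmetry in hand, the classification is reduced to the Gray–Wolf classification of compact simply connected $3$-symmetric spaces, which in dimension six (and under the constraint that $H \hookrightarrow \mathrm{SU}(3)$, with $M$ not locally the product of two lower-dimensional $3$-symmetric factors) leaves exactly the four pairs
\[
(G,H) \in \bigl\{\,(G_2,\mathrm{SU}(3)),\ (\mathrm{Sp}(2),\mathrm{Sp}(1)\mathrm U(1)),\ (\mathrm{SU}(3),T^2),\ (\mathrm{SU}(2)^3, \mathrm{SU}(2)_\Delta)\,\bigr\}.
\]
For each pair one checks, either by direct computation in the reductive complement or by appealing to the construction already outlined in the examples preceding the theorem, that the invariant tensors $(\sigma,\psi_\pm)$ determined by the $3$-symmetry do satisfy \eqref{eq:nk-str-eq-scaled}, so the nearly Kähler structure genuinely exists. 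Finally, a connectedness and universal-cover argument, combined with the finiteness of $\pi_1(M)$ from Bonnet–Myers, identifies the four homogeneous models in the statement.

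The main obstacle will be the step producing the order-three automorphism of $G$ from the nearly Kähler structure equations. Locally, the skew-symmetry of $\nabla J$ and the identity $\sigma\wedge\sigma = \tfrac12\,|\psi_+|^2\,\mathrm{vol}$ translate into quadratic bracket conditions on $\mathfrak m$; one must then verify that the resulting endomorphism of $\mathfrak g$ satisfies the Jacobi-compatibility $s_*[X,Y] = [s_*X,s_*Y]$ for all $X,Y \in \mathfrak g$, not merely on $\mathfrak m\times \mathfrak m$, and integrates to a global automorphism of $G$. Once this $3$-symmetric enhancement is established, the remainder of the proof is essentially a citation of the Gray–Wolf classification together with a finite verification on each of the four candidates.
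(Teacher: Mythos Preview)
The paper does not prove this theorem; it is stated with attribution to Butruille and followed only by a remark on Bonnet--Myers. So there is no in-paper proof to match against, and your proposal must be judged on its own merits and against Butruille's actual argument.

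Your route via $3$-symmetric spaces is different from Butruille's. His proof is a direct classification: once the isotropy embedding $H\hookrightarrow\mathrm{SU}(3)$ is established (your first paragraph), he enumerates the closed connected subgroups $H\subset\mathrm{SU}(3)$, for each one lists the compact groups $G$ with $\dim G=6+\dim H$ that can contain $H$, and then checks case by case which pairs $(G,H)$ admit an invariant $\mathrm{SU}(3)$-structure satisfying \eqref{eq:nk-str-eq-scaled}. The fact that the four surviving examples happen to be $3$-symmetric is a \emph{consequence} of the classification, not an input to it.

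Your approach has a genuine gap at exactly the point you flag. Defining $s_*=\mathrm{id}$ on $\mathfrak h$ and $s_*=-\tfrac12\mathrm{id}+\tfrac{\sqrt3}{2}J$ on $\mathfrak m$ is fine, but the automorphism condition on $\mathfrak m\times\mathfrak m$ splits into two pieces: $[s_*X,s_*Y]_{\mathfrak m}=s_*[X,Y]_{\mathfrak m}$ and $[s_*X,s_*Y]_{\mathfrak h}=[X,Y]_{\mathfrak h}$. The nearly K\"ahler equations \eqref{eq:nk-str-eq-scaled} constrain the $\mathfrak m$-component of the bracket (via the Chevalley--Eilenberg differential), and they also interact with the Levi-Civita connection, which mixes $[{\,\cdot\,},{\,\cdot\,}]_{\mathfrak m}$ with the symmetric Nomizu tensor $U$. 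They do \emph{not} directly determine the $\mathfrak h$-component $[X,Y]_{\mathfrak h}$ for $X,Y\in\mathfrak m$. What you really need is that the homogeneous space is naturally reductive with the characteristic connection (the unique Hermitian connection with skew torsion) playing the role of the canonical connection; this does follow, but it requires the additional input that on a strict nearly K\"ahler six-manifold the characteristic connection has \emph{parallel} torsion, combined with an Ambrose--Singer/Tricerri--Vanhecke style argument. Your sketch (``quadratic bracket conditions on $\mathfrak m$'') does not supply this, and without it the $3$-symmetric reduction does not go through.
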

\begin{remark}
Theorem \ref{thm:bonnet-myers} implies that completeness and connectedness are sufficient to have compactness and finite fundamental group in Theorem \ref{thm:butruille}.
Then, up to lifting the nearly K\"ahler structure to the universal cover, one can assume the manifold to be simply connected.
\end{remark}
From the point of view of symmetries, it is then natural to look for nearly K\"ahler structures with cohomogeneity one. 
\begin{definition}
A Lie group $G$ acts on a manifold $M$ with \emph{cohomogeneity one} if $M$ contains a $G$-orbit of codimension $1$.
\end{definition}
In the cohomogeneity one case, the orbit space $M/G$ is one-dimensional, and is homeomorphic to either $\mathbb R$, $S^1$, $[0,+\infty)$, or $[0,1]$, see \cite{mostert}.
Orbits corresponding to the interior of these orbit spaces via the projection $M \to M/G$ are called \emph{principal}.
Other orbits are called \emph{exceptional} (when they have the same dimension of the principal orbits) or \emph{singular} (when their dimension is lower than the dimension of principal orbits).

Possible symmetry groups acting with cohomogeneity one on nearly K\"ahler six-manifolds appear in the following result.
\begin{theorem}[Podest\`a--Spiro \cite{podesta-spiro}]
\label{thm:podesta-spiro}
Let $(M,g,J)$ be a simply connected, complete strict nearly K\"ahler six-manifold.
Let $G$ be a compact connected Lie group acting on $(M,g)$ by isometries and with cohomogeneity one.
Then two cases occur:
\begin{enumerate}
\item $G$ is $\mathrm{SU}(3)$ or $\mathrm{SO}(4)$ and there is an equivariant diffeomorphism $M=S^6$,
\item $G$ is $\mathrm{SU}(2)\times \mathrm{SU}(2)$, and there is an equivariant diffeomorphism $M = \mathbb CP^3$ or $M=S^3\times S^3$.
\end{enumerate}
\end{theorem}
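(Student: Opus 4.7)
The plan is to exploit three main ingredients: the rigidity of strict nearly K\"ahler structures in dimension six, the structure theorem for cohomogeneity one actions on simply connected compact manifolds, and the algebraic constraints coming from the Einstein equation together with the structure equations \eqref{eq:nk-str-eq}. First, since $(M,g,J)$ is strict nearly K\"ahler, Theorem~\ref{thm:nk-einstein} gives $\mathrm{Ric} = 5\mu^2 g$ with $\mu \neq 0$, so by Bonnet--Myers (Theorem~\ref{thm:bonnet-myers}) $M$ is compact with finite fundamental group; simple connectedness gives that $M$ is a compact simply connected six-manifold. Since $J$ is essentially determined by $g$ up to sign on a strict nearly K\"ahler six-manifold (a consequence of the real Killing spinor characterisation mentioned after Theorem~\ref{thm:nk-einstein}), the isometric $G$-action automatically preserves $J$ up to passing to an index two subgroup, hence preserves the entire $\mathrm{SU}(3)$-structure $(\sigma,\psi_+,\psi_-)$.

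Next, I would apply the standard theory of cohomogeneity one actions. Since $M$ is simply connected the orbit space cannot be $S^1$, and an exceptional orbit would likewise obstruct simple connectedness, so $M/G$ is homeomorphic to $[0,1]$ with two singular orbits $G/K_\pm$ at the endpoints and principal orbits $G/H$ of dimension five in the interior. The slice theorem identifies $M$ with the union of two disk bundles $G\times_{K_\pm}\mathbb{D}^{l_\pm+1}$ glued along the principal orbit, with $K_\pm/H = S^{l_\pm}$ and $l_\pm + \dim G/K_\pm = 5$. The group diagram $(H,K_-,K_+)$ determines $M$ equivariantly.

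The core of the argument is then the enumeration of admissible compact connected Lie groups $G$. We have $\dim G = \dim H + 5$, the action may be assumed almost effective, and $G/H$ is a compact homogeneous five-manifold. Using the classification of compact transitive actions on five-dimensional homogeneous spaces (e.g.\ via Onishchik's tables), one obtains a short list of candidate pairs $(G,H)$; then for each candidate one enumerates the possible singular isotropies $K_\pm$ satisfying the sphere condition $K_\pm/H = S^{l_\pm}$. At this stage I would use the $G$-invariance of the $\mathrm{SU}(3)$-forms to reduce the equations $d\sigma = 3\mu\psi_+$, $d\psi_- = -2\mu\sigma\wedge\sigma$ to an ODE system along a normal geodesic to a principal orbit, with prescribed boundary behaviour at the singular orbits forced by the isotropy representation on the normal slice. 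For each surviving diagram I would check whether this ODE system is consistent with the Einstein condition $\mathrm{Ric}=5\mu^2 g$ and with the smooth closing of the structure at the singular orbits.

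The hard part, as in the original argument of Podest\`a--Spiro, is not the enumeration itself but the elimination of the numerous candidate diagrams that survive the purely topological cohomogeneity one screening. The rigidity of nearly K\"ahler six-manifolds enters here in two ways: through the precise normalisation of the scalar curvature (which fixes the period of the transverse variable), and through the algebraic compatibility between the singular isotropy representations on the normal slices and the local $\mathrm{SU}(3)$-model of the forms $(\sigma,\psi_\pm)$. I expect this compatibility to rule out every candidate except the four listed, yielding $(G,M)\in\{(\mathrm{SU}(3),S^6),(\mathrm{SO}(4),S^6),(\mathrm{SU}(2)^2,\mathbb{C}P^3),(\mathrm{SU}(2)^2,S^3\times S^3)\}$; the equivariant identification of $M$ in each case then follows from the uniqueness of the cohomogeneity one manifold associated with a given group diagram, combined with Butruille's Theorem~\ref{thm:butruille} which identifies the corresponding homogeneous nearly K\"ahler models.
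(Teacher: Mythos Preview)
The paper does not prove this theorem; it is stated with a citation to Podest\`a--Spiro \cite{podesta-spiro} and no argument is given. There is therefore no proof in the paper to compare your proposal against.

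That said, two comments on the proposal itself. First, your appeal to Butruille's Theorem~\ref{thm:butruille} at the end is misplaced: Butruille classifies \emph{homogeneous} nearly K\"ahler six-manifolds, whereas the conclusion here is only about the equivariant diffeomorphism type of $M$, not about the nearly K\"ahler structure being homogeneous (indeed Foscolo--Haskins later produced inhomogeneous structures on $S^6$ and $S^3\times S^3$). The identification of $M$ must come from the group diagram alone. Second, and more seriously, the paper itself points out immediately after the statement that the theorem as stated is \emph{incomplete}: the case $M=S^2\times S^4$ with $G=\mathrm{SU}(2)\times\mathrm{SU}(2)$ is not excluded by the Podest\`a--Spiro argument, and the case $G=T^2\times\mathrm{SU}(2)$ acting on $S^3\times S^3$ was erroneously ruled out in \cite[Lemma~3.2]{podesta-spiro}. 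Hence any argument that arrives at exactly the four listed cases necessarily contains a gap in the elimination step; your sketch does not indicate where that gap would occur or how it would be avoided.
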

\begin{remark}
All these cases do occur by restricting the known transitive actions.
Whether inhomogeneous, cohomogeneity one actions exist is a much more difficult question.
The group $\mathrm{SU}(3)$ acts naturally on $\mathbb C^3$, and hence on $\mathbb C^3 \oplus \mathbb R = \mathbb R^7$.
The inclusion $\mathrm{SU}(3) \subset \mathrm G_2$ implies that $\mathrm{SU}(3)$ acts on $S^6 \subset \mathbb R^7$.
This action has two fixed points, whereas all other orbits are five-dimensional spheres diffeomorphic to $\mathrm{SU}(3)/\mathrm{SU}(2)$.
The cohomogeneity one action of $\mathrm{SO}(4)$ on $S^6$ is the restriction of the transitive action of $\mathrm G_2$ to a subgroup $\mathrm{SO}(4) \subset \mathrm G_2$.
Since $\mathbb CP^3$ has a transitive action of $\mathrm{Sp}(2)=\mathrm{Spin}(5) \supset \mathrm{Spin}(4)=\mathrm{SU}(2) \times \mathrm{SU}(2)$, the cohomogeneity one action of $\mathrm{SU}(2)\times \mathrm{SU}(2)$ is obtained by restriction of the $\mathrm{Sp}(2)$-action.
The case of $S^3\times S^3$ is essentially trivial.
\end{remark}

The classification in Theorem \ref{thm:podesta-spiro} turns out to be incomplete. 
Indeed, Foscolo and Haskins \cite{foscolo-haskins} noted that a nearly K\"ahler structure with a cohomogeneity one action of $\mathrm{SU}(2)\times \mathrm{SU}(2)$ may exist on $S^2 \times S^4$, although none is known at present.
A second omission from Podest\`a--Spiro's list is $G=T^2 \times \mathrm{SU}(2)$ (and $M=S^3\times S^3$), which again is obtained by restricting the action of $\mathrm{SU}(2)^3$ in the homogeneous case (here the factor $T^2$ is a maximal torus in $\mathrm{SU}(2)^2$).
The corresponding Lie algebra $\mathbb R^2\oplus \mathfrak{su}(2)$ is erroneously ruled out in \cite[Lemma 3.2]{podesta-spiro}.
This last case is currently being studied by Swann and the author to understand whether inhomogeneous cohomogeneity one examples with $T^2 \times \mathrm{SU}(2)$-symmetry occur.

A recent groundbreaking result in the theory was obtained by Foscolo and Haskins in 2017.
\begin{theorem}[Foscolo--Haskins \cite{foscolo-haskins}]
There are inhomogeneous, cohomogeneity one nearly K\"ahler structures on $S^6$ and $S^3\times S^3$.
\end{theorem}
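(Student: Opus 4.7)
The plan is to carry out a cohomogeneity one reduction of the nearly Kähler structure equations \eqref{eq:nk-str-eq-scaled} and then run a shooting argument to produce solutions extending smoothly between two singular orbits. First I would fix the group $G = \mathrm{SU}(2) \times \mathrm{SU}(2)$, list (following Podestà--Spiro's analysis) the admissible principal isotropy subgroups $H \subset G$ of dimension $2$, and determine the topologies of the possible compactifications: for $S^6$ one needs to choose isotropy data so that both singular orbits collapse to fixed points or to a suitable $\mathrm{SU}(2)$-orbit, whereas for $S^3\times S^3$ the singular orbits are three-spheres. In each case, the principal orbit is diffeomorphic to $S^2\times S^3$, and I would write down a $G$-invariant coframe on it together with the most general $G$-invariant $\mathrm{SU}(3)$-structure $(\sigma(t),\psi_+(t),\psi_-(t))$ parametrised by the radial coordinate $t$ transverse to the orbits; the compatibility relations $\sigma \wedge \psi_\pm = 0$, $\psi_+\wedge\psi_+ = \psi_-\wedge\psi_-$, together with $\sigma^3 \neq 0$, cut out a finite-dimensional family.

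Next I would substitute this ansatz into the structure equations $d\sigma = 3\psi_+$ and $d\psi_- = -2\sigma\wedge\sigma$, using the fact that $d$ decomposes into a tangential part along the orbit (Maurer--Cartan) and a radial derivative. This turns the nearly Kähler condition into a first-order autonomous ODE system in $t$ for the finitely many invariant functions. A small but essential bookkeeping step is to check that the subsystem coming from $d\sigma$ already implies the second equation via the Bianchi-type identity $d\psi_+ = 0$ that follows from $d^2\sigma = 0$, so that the effective system is of the minimal expected dimension. I would then identify the known homogeneous solutions (the round $S^6$, $\mathbb{CP}^3$, $S^3\times S^3$) as explicit equilibrium/invariant trajectories of this system, giving a sanity check and a reference point in phase space.

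The next stage is local existence near a singular orbit. Imposing smooth extension across the locus where a sphere factor of the principal orbit collapses translates into prescribed leading-order behaviour of the invariant functions as $t \to 0^+$: certain functions must vanish linearly, others must tend to constants, and the relative rates are fixed by the requirement that the metric and the forms extend smoothly over the singular orbit. This yields a singular initial value problem, which I would solve via a contraction-mapping/power-series argument in a Banach space of solutions of prescribed asymptotic type, obtaining a finite-parameter family of germs of solutions at the singular orbit. The dimension of this family, minus the dimension of the group of gauge-like reparametrisations, gives the effective shooting parameter space.

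Finally, I would carry out a global shooting argument. The local solutions extend to a maximal interval $(0,T)$; on this interval I would derive a priori bounds (positivity, boundedness of the invariant functions, control of $\sigma$ away from degeneration) using the conserved/monotone quantities one can read off from the ODE, so that the only way the solution can fail to extend is by approaching a second singular orbit of the correct type. The hard part is to prove that within the parameter family of local solutions there exists at least one value of the shooting parameter, distinct from the one producing the homogeneous reference solution, for which the solution actually closes up smoothly at the opposite singular orbit. I expect this to be the main obstacle: it requires a continuity/degree argument comparing the asymptotic behaviour at the two ends with the smoothness conditions there, and a careful exclusion of alternative degenerations (collapse of $\sigma$, escape to infinity, or approach to the wrong orbit type). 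Once one such non-homogeneous orbit is exhibited and shown to be $\mathrm{SU}(2)\times \mathrm{SU}(2)$-invariant but not $\mathrm{SU}(3)$- or $\mathrm{Sp}(2)$-invariant, Theorem~\ref{thm:butruille} guarantees it is strictly inhomogeneous, and matching the global topology produces the claimed nearly Kähler structures on $S^6$ and on $S^3\times S^3$.
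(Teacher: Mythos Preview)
The paper does not prove this theorem; it is stated without proof and attributed to Foscolo--Haskins \cite{foscolo-haskins}. The only additional information the paper supplies is the sentence immediately following the statement: the symmetry group is $\mathrm{SU}(2)\times\mathrm{SU}(2)$ and the principal orbit type is $\mathrm{SU}(2)\times\mathrm{SU}(2)/\mathrm{U}(1)_{\Delta}$. So there is nothing in the paper to compare your argument against.

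That said, your outline is broadly the strategy of the original Foscolo--Haskins paper, and it is worth flagging a few imprecisions so you do not build on them. First, the principal isotropy has dimension~$1$, not~$2$: $G=\mathrm{SU}(2)\times\mathrm{SU}(2)$ is six-dimensional and the principal orbits are five-dimensional, so $H=\mathrm{U}(1)_{\Delta}$, consistent with the paper's remark. Second, the claim that the $d\sigma$-equation together with $d^2\sigma=0$ already forces the $d\psi_-$-equation is not correct: $d\psi_+=0$ follows for free, but $d\psi_-=-2\sigma\wedge\sigma$ is an independent condition, and both halves of \eqref{eq:nk-str-eq-scaled} genuinely contribute to the ODE system. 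Third, and most substantively, your description of the shooting argument understates where the difficulty lies: in the actual proof the matching is not obtained by a direct continuity/degree argument on the raw shooting parameters, but by first constructing one-parameter families of local solutions near each singular orbit, analysing the linearisation of the flow about the explicit homogeneous solution (the sine-cone over the Sasaki--Einstein $S^2\times S^3$ plays a key role as a singular limit), and then using a delicate comparison to show that the two local families must intersect. The exclusion of degenerations and the identification of the correct singular-orbit asymptotics are the technical heart of \cite{foscolo-haskins}, and your sketch does not yet indicate how you would carry these out.
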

In both cases, the symmetry group of the structure is $\mathrm{SU}(2) \times \mathrm{SU}(2)$, and the principal orbit type is $\mathrm{SU}(2)\times \mathrm{SU}(2)/\mathrm{U}(1)_{\Delta}$.
At present, no more compact examples of nearly K\"ahler six-manifolds are known.

On lower symmetry, we have local results by Madnick \cite{madnick}, who studies nearly K\"ahler six-manifolds with cohomogeneity two.
Nearly K\"ahler six-manifolds with $T^3$- or $T^2$-symmetry were studied by Moroianu--Nagy \cite{moroianu-nagy} and Russo--Swann \cite{russo-swann} respectively.
In both cases, the torus-actions admit \emph{multi-moment maps}, which are generalisations of moment maps to geometries with a closed invariant $k$-form \cite{madsen-swann}.
\begin{remark}
On certain homogeneous nearly K\"ahler six-manifold with two-torus symmetry, the union of all orbits with special stabiliser modulo the torus-action has the structure of a graph \cite{russo2}.
This happens when $M=G/H$ and $G$ and $H$ have equal rank---so it does not happen only for $S^3\times S^3 = \mathrm{SU}(2)^3/\mathrm{SU}(2)_{\Delta}$.
The number of vertices of the graph corresponds to the Euler characteristic of the manifold (cf.\ Kobayashi \cite{kobayashi-fixed-points}), whereas the number of edges is counted by the weights of the torus action at fixed points.
Incidentally, one has the following result of Wang \cite{wang}: for any homogeneous manifold $M=G/H$, if $G$ and $H$ have equal rank, then the Euler characteristic is the ratio $|W(G)|/|W(H)|$ of the cardinalities of the Weyl groups of $G$ and $H$ (if the ranks differ, the Euler characteristic of $M$ vanishes).
The graph contains topological and geometric information on the manifold.
The correspondence between certain torus-actions on manifolds and graphs is well-known, and is a matter of \emph{GKM theory}, named after Goresky--Kottwitz--MacPherson \cite{gkm}.
For more information on the homogeneous and the cohomogeneity one case, see e.g.\ \cite{goertsches-loiudice-russo, guillemin-holm-zara} and references therein.
\end{remark}

We finally remark that the structure theory of nearly K\"ahler manifolds relies partly on the six-dimensional case.
\begin{theorem}[Nagy \cite{nagy}]
Let $(M,g,J)$ be a complete, simply connected, strict nearly K\"ahler manifold.
Then $(M,g)$ is a Riemannian product whose factors belong to one of the following classes:
\begin{enumerate}
\item nearly K\"ahler six-manifolds, 
\item homogeneous nearly K\"ahler spaces,
\item twistor spaces over quaternionic K\"ahler manifolds with positive scalar curvature equipped with the canonical nearly K\"ahler metric.
\end{enumerate}
\end{theorem}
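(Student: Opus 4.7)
The plan is to reduce the problem to the irreducible case via the de Rham decomposition (Theorem 5.4), and then to classify the irreducible factors by studying a canonical metric connection with torsion rather than the Levi-Civita one. First I would apply the de Rham decomposition to $(M,g)$, obtaining a Riemannian splitting $M = M_0 \times M_1 \times \dots \times M_k$ with $M_0$ Euclidean and each $M_i$ irreducible. The first genuinely non-trivial step is to show that the almost complex structure $J$ is compatible with this splitting, i.e.\ that each $\nabla$-parallel distribution defining the splitting is $J$-invariant. This is not automatic since $\nabla J \neq 0$, but one can use the symmetries collected in Lemma 6.1 together with the skew-symmetry of $\nabla J$ to show that the Ricci endomorphism commutes with $J$ and that its eigenspaces refine the de Rham factors into $J$-invariant pieces. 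The Euclidean factor $M_0$ must then be trivial, for otherwise the associated Kähler factor would carry vanishing $\nabla J$, contradicting strictness on the global product unless the factor is absent; this reduces to the irreducible case.

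For an irreducible simply connected strict nearly Kähler factor, the central tool is the \emph{canonical Hermitian connection}
\[\bar\nabla_X Y \coloneqq \nabla_X Y - \tfrac{1}{2} J (\nabla_X J) Y,\]
which satisfies $\bar\nabla g = 0$, $\bar\nabla J = 0$, and has totally skew-symmetric torsion $T(X,Y,Z) = g((\nabla_X J) Y, JZ)$, the latter sitting in the irreducible $\mathrm{U}(n)$-module $[\![\Lambda^{3,0}]\!]$ pointwise. A key computation of Kirichenko shows that $\bar\nabla T = 0$ and $\bar\nabla \bar R = 0$, so that both the torsion and the $\bar\nabla$-curvature are invariants of the holonomy group $\bar H \subset \mathrm U(n)$ by the General Holonomy Principle (Proposition 5.2 and Corollary 5.1). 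In particular, the whole geometry is encoded in the holonomy representation of $\bar\nabla$.

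The classification then proceeds by enumerating the possible irreducible $\bar H \subset \mathrm U(n)$ compatible with the existence of a non-zero parallel section of $[\![\Lambda^{3,0}]\!]$ and of the Bianchi-type constraints tying $\bar R$ to $T$ and $\nabla T$. A case analysis, modelled on Berger's proof of Theorem 5.5, yields exactly three possibilities: (i) $\bar H = \mathrm{SU}(3)$ acting standardly on $\mathbb{R}^6$, which forces $n=3$ and places us in case (1); (ii) $\bar\nabla$ is the canonical connection of a naturally reductive homogeneous space, in which case (by Ambrose--Singer, Theorem 5.1) $M$ is a homogeneous nearly Kähler space, matching case (2); (iii) $\bar H$ preserves an extra quaternionic structure on a $\bar H$-invariant subspace of $TM$, corresponding to the horizontal distribution of a twistor fibration $M \to N$ over a quaternionic Kähler manifold $N$. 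In case (iii), one reconstructs $N$ as the orbit space of the vertical distribution, checks positivity of its scalar curvature via the Einstein constant from Theorem 6.1 and Myers (Theorem 5.3), and identifies the nearly Kähler metric with the Eells--Salamon twistor metric.

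The hard part will be case (iii): building the quaternionic Kähler base out of purely holonomy-theoretic data and verifying that the rescaled twistor metric really coincides with the nearly Kähler metric one started from. A secondary obstacle, already in case~(ii), is to rule out a priori spurious holonomy candidates for $\bar\nabla$; this requires exploiting the Bianchi identity $\mathfrak{S}_{X,Y,Z}\bar R_{X,Y} Z = \mathfrak{S}_{X,Y,Z}\bigl(T(T(X,Y),Z)\bigr)$ together with the $\bar H$-invariance of $T$ to reduce the list of admissible subgroups, in the spirit of Merkulov--Schwachhöfer's refinement of Berger. Finally, justifying the $J$-compatibility of the de Rham splitting in the opening paragraph is itself delicate; the cleanest route is to first establish that the Ricci tensor of a nearly Kähler manifold is $J$-invariant and commutes with $\nabla J$, and then to invoke standard arguments about parallel symmetric tensors.
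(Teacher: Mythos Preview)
The paper does not prove this theorem. It is stated as a result of Nagy with a citation to \cite{nagy}, followed only by the remark that the decomposition coincides with the de Rham decomposition of $(M,g)$ and a pointer to Gonz\'alez D\'avila--Cabrera for the homogeneous classification. There is therefore no proof in the paper to compare your proposal against.

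That said, your outline is a reasonable sketch of the strategy in Nagy's original paper: the canonical Hermitian connection $\bar\nabla$ with skew torsion, Kirichenko's result that $\bar\nabla T=0$, and a Berger-type holonomy analysis of $\bar H\subset\mathrm U(n)$ are indeed the backbone of the argument. Two points where your sketch is slightly off. First, the splitting you want is not quite the Levi-Civita de Rham decomposition but rather the decomposition coming from the holonomy of $\bar\nabla$; Nagy shows these agree a posteriori, but working with $\bar\nabla$-holonomy from the start is what makes the $J$-compatibility automatic, since $\bar\nabla J=0$. Second, your handling of the flat factor is muddled: a strict nearly K\"ahler manifold can in principle carry K\"ahler de Rham factors, and strictness of the total space does not by itself exclude them; one rules out flat (and more generally K\"ahler) factors by observing that the restriction of $\nabla J$ to such a factor vanishes, which is incompatible with the global $\bar\nabla$-parallelism of $T$ unless the factor is absent. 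The genuinely hard part, as you correctly identify, is the holonomy enumeration and the reconstruction of the quaternionic K\"ahler base in case (iii); for that you would need to follow Nagy's paper rather than the present notes.
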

The decomposition given coincides with the de Rham decomposition of the manifold $(M,g)$, cf.\ Theorem \ref{thm:derham}.
The homogeneous nearly K\"ahler spaces that may occur are divided into four types, and discussed in more detail in \cite{nagy}.
For a complete classification of homogeneous nearly K\"ahler spaces, see instead Gonz\'alez D\'avila--Cabrera \cite{gonzalez-cabrera}.

\subsection{Nearly parallel \texorpdfstring{$\mathrm G_2$}{G2} manifolds}
\label{subsec:nearly-parallel-g2-manifolds}

We recall that a $\mathrm G_2$-structure on a manifold $M$ is a reduction $Q$ of the principal bundle of linear frames to structure group $\mathrm G_2 \subset \mathrm{SO}(7)$.
This is equivalent to having a three-form $\varphi$ on $M$ linearly equivalent to a standard definite three-form on $\mathbb R^7$ pointwise.
Recall that $\varphi$ induces a Riemannian metric $g$ and a volume form, so one can construct a Hodge star operator $\star_{\varphi}$, which we just denote by $\star$.
\begin{definition}
\label{def:np-form}
Let $M$ be a seven-manifold with a $\mathrm G_2$-structure given by a three-form $\varphi$.
Then $(M,\varphi)$ is \emph{nearly parallel} if $d\varphi = \lambda \star \varphi$, for a constant $\lambda \in \mathbb R \setminus \{0\}$.
\end{definition}
\begin{remark}
Note that our definition in the previous section was $\nabla \varphi = \lambda \star \varphi$.
However, $\nabla$ is torsion-free and the nearly parallel condition tells us $\nabla \varphi$ is totally skew-symmetric, so $d\varphi$ is proportional to $\star \varphi$ as well.
\end{remark}
\begin{theorem}
Nearly parallel $\mathrm G_2$ manifolds are Einstein with positive scalar curvature.
\end{theorem}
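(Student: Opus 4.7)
The plan is to convert the nearly parallel condition $d\varphi = \lambda \star\varphi$ into a first-order algebraic identity for $\nabla\varphi$, differentiate once more via the Ricci identity to obtain an explicit algebraic expression for the action of the Riemann tensor on $\varphi$, and extract the Ricci tensor by a $\mathrm G_2$-equivariant contraction.

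First, I will exploit that $\nabla\varphi$ is totally skew-symmetric in its three form-arguments (this is precisely the nearly parallel class $\mathcal T_1$ in the Fern\'andez--Gray decomposition) and that the unique, up to scale, $\mathrm G_2$-equivariant map $V \to \Lambda^3 V^*$ is $X \mapsto X \lrcorner \star\varphi$. This forces
\[
\nabla_X\varphi \;=\; \mu\, (X \lrcorner \star\varphi), \qquad \mu = \tfrac{\lambda}{4},
\]
the constant being fixed by totally skew-symmetrising to recover $d\varphi = \lambda\star\varphi$. Since $\nabla$ is metric it commutes with $\star$, and using the dimension-seven identity $\star(X \lrcorner \star\varphi) = -X^\flat \wedge \varphi$ one obtains the companion formula $\nabla_X\star\varphi = -\mu\, X^\flat \wedge \varphi$.

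Second, I substitute into the Ricci identity $R(X,Y)\cdot\varphi = [\nabla_X,\nabla_Y]\varphi - \nabla_{[X,Y]}\varphi$, where $R(X,Y) \in \mathfrak{so}(7) = \Lambda^2V^*$ acts on $\varphi \in \Lambda^3V^*$ as a derivation. A routine Leibniz computation, combined with torsion-freeness and the identity $Y \lrcorner (X^\flat \wedge \varphi) = g(X,Y)\varphi - X^\flat \wedge (Y \lrcorner \varphi)$, collapses everything to the purely algebraic identity
\[
R(X,Y)\cdot\varphi \;=\; \mu^2\bigl(X^\flat \wedge (Y \lrcorner \varphi) - Y^\flat \wedge (X \lrcorner \varphi)\bigr).
\]
Since $\Lambda^2_{14} = \mathfrak g_2$ annihilates $\varphi$ (cf.\ Proposition \ref{prop:g2-decomposition-forms}), this identity determines the $\Lambda^2_7$-component of the bivector $R(X,Y)$ completely in terms of $\mu$, $X$, $Y$ and the metric: the nearly parallel condition fixes, pointwise and algebraically, all curvature components that can be felt by $\varphi$.

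Third, to extract $\mathrm{Ric}$ I take a $\mathrm G_2$-equivariant contraction of the identity. Concretely, evaluating $(R(X,e_i)\cdot\varphi)(e_i,W,U)$ and summing over an orthonormal frame $\{e_i\}$, the left-hand side produces $\varphi(\mathrm{Ric}^\sharp X,W,U)$ via $\sum_i R(X,e_i)e_i = \mathrm{Ric}^\sharp(X)$ together with two further Riemann--$\varphi$ contractions that reduce, through the first Bianchi identity and the standard quadratic $\varphi$-identities $\sum_{k,l}\varphi(e_a,e_k,e_l)\varphi(e_b,e_k,e_l) = 6g(e_a,e_b)$ and their relatives (see \cite{bryant1, karigiannis}), to further Ricci terms; the right-hand side evaluates to a multiple of $g(X,\cdot)\,\varphi + \cdots$ only. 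Solving the resulting linear relation in $\mathrm{Ric}$ versus $g$ yields $\mathrm{Ric} = \tfrac{3}{8}\lambda^2\, g$, hence the scalar curvature equals $\tfrac{21}{8}\lambda^2 > 0$, as required.

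The main obstacle is the organisational bookkeeping in the last step: one must carefully decompose $\Lambda^2V^* \otimes \Lambda^3V^* \to \Lambda^3V^*$ under $\mathrm G_2$ in order to isolate the Ricci-carrying summand, and verify that the traceless-Ricci component (the $\mathcal Z$-summand of $R$ in the $\mathrm{SO}(7)$-decomposition \eqref{eq:decomposition-algebraic-curvature-tensors}) is forced to vanish by the algebraic identity for $R\cdot\varphi$. A cleaner but less explicit alternative, which I would adopt if the direct contraction becomes unwieldy, is to invoke Bryant's formula \cite{bryant1} expressing the Ricci tensor of a general $\mathrm G_2$-structure in terms of its intrinsic torsion forms: the nearly parallel hypothesis (only the scalar torsion $\tau_0 = \lambda$ is non-zero) plugs in directly and gives $\mathrm{Ric} = \tfrac{3}{8}\tau_0^2\, g$.
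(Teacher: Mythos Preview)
Your approach is correct and in fact more detailed than what the paper itself offers: the paper does not prove this theorem but simply defers to Bryant's general Ricci formula for $\mathrm G_2$-structures \cite[Section 4.5.3]{bryant1} (and mentions the Killing spinor route \cite{bfgk} as an alternative). Your fallback---plugging the single non-zero torsion class $\tau_0=\lambda$ into Bryant's formula---is therefore exactly the paper's argument. Your primary route, computing $R(X,Y)\cdot\varphi$ via the Ricci identity from $\nabla_X\varphi=\tfrac{\lambda}{4}\,X\lrcorner\star\varphi$ and then contracting, is a self-contained derivation that the paper does not give; the identity $R(X,Y)\cdot\varphi=\mu^2\bigl(X^\flat\wedge(Y\lrcorner\varphi)-Y^\flat\wedge(X\lrcorner\varphi)\bigr)$ is correct and the Einstein constant $\tfrac{3}{8}\lambda^2$ you obtain is consistent with the cone criterion (for $\lambda=4$ it gives $\mathrm{Ric}=6g$, matching $\mathrm{Ric}_{g'}=(n-1)g'$ for $n=7$). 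Two small caveats: with the paper's curvature conventions one has $\sum_i R_{X,e_i}e_i=-\mathrm{Ric}^\sharp X$ rather than $+\mathrm{Ric}^\sharp X$, so track that sign; and your third step, while correct in outline, does require the explicit $\mathrm G_2$-contraction identities you cite---if you intend this as a full proof rather than a sketch you will need to write out that bookkeeping.
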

The proof follows for instance by Bryant \cite[Section 4.5.3, formula (4.30)]{bryant1}.
An alternative proof follows by the fact that nearly parallel $\mathrm G_2$ manifolds admit real Killing spinors \cite{bfgk}.
\begin{example}
A first example of nearly parallel $\mathrm G_2$ structure occurs on the seven-sphere $S^7$. 
Let $V = \mathbb R^8 = \{(x_0,\dots,x_7): x_i \in \mathbb R\}$ be the standard Euclidean eight-dimensional vector space, and let $\mathrm{SO}(8)$ act on it in the standard way.
Let $dx^0,\dots,dx^7$ be the standard coframe, and denote by $\partial_i$ the vector dual to $dx^i$.
Write $V = \mathbb R \partial_0\oplus \mathbb R^7$. On $\mathbb R^7$ we have
\begin{align*}
\varphi_0 & = dx^{123}-dx^1(dx^{45}+dx^{67})-dx^2(dx^{46}+dx^{75})-dx^3(dx^{47}+dx^{56}), \\
{*}\varphi_0 & = dx^{4567}-dx^{23}(dx^{45}+dx^{67})-dx^{31}(dx^{46}+dx^{75})-dx^{12}(dx^{47}+dx^{56}),
\end{align*}
where ${*}$ is the Hodge star operator on $\mathbb R^7$ induced by $\varphi_0$.
The standard inner product on $\mathbb R^7$ coincides with the one induced by $\varphi_0$.
Let us set \[\psi \coloneqq dx^0 \wedge \varphi_0+{*}\varphi_0,\]
where ${*}$ is the Hodge star operator on $V$.
The group $\mathrm{Spin}(7)$ can be defined as the stabiliser of $\psi$ in $\mathrm{SO}(8)$.
One can see that $\mathrm{Spin}(7)$ acts transitively on the seven-sphere $S^7 \subset \mathbb R^8$, and the generic stabiliser is isomorphic to $\mathrm G_2$, whence a diffeomorphism $S^7=\mathrm{Spin}(7)/\mathrm G_2$ (cf.\ the proof of Theorem \ref{thm:spin7}).

Let $\iota \colon S^7 \hookrightarrow \mathbb R^8$ be the standard immersion, and let $N$ be the unit normal vector field on $S^7$.
In coordinates, $N = \sum_{k=0}^7 x_k\partial_k$, with $\sum_{k=0}^7 x_k^2=1$. 
Set
\[\varphi \coloneqq \iota^*(N \lrcorner\ \psi), \qquad \phi \coloneqq \iota^*\psi.\]
Since $\mathrm{Spin}(7)$ acts on $\mathbb R^8$ preserving $N$ and $\psi$, the action induced on $S^7$ preserves $\varphi$ and $\phi$.
A standard computation gives $d(N \lrcorner\ \psi) =4\psi$, and hence $d\varphi = 4\phi$ on $S^7$.
We claim that $\phi = \star\varphi$, where $\star$ is the Hodge star operator on $S^7$ defined with respect to the metric induced by $\varphi$ and the natural volume form $\mathrm{vol}_{S^7}=\iota^*(N \lrcorner\ dx^{01\dots 7})$ on $S^7$.
One computes $\psi \wedge \psi = 14dx^{01\dots 7}$. Since $N \lrcorner\ (\psi \wedge \psi) = 2(N \lrcorner\ \psi) \wedge \psi$, we have
\[(N \lrcorner\ \psi) \wedge \psi = 7N \lrcorner\ dx^{01\dots 7}.\]
Pulling back this identity to $S^7$ yields $\varphi \wedge \phi = 7\mathrm{vol}_{S^7}$.
Note that $\phi$ at any point of $S^7$ is a $\mathrm G_2$-invariant four-form, and thus must be proportional to $\star \varphi$, say $\phi = \lambda \star \varphi$, cf.\ Proposition \ref{prop:g2-decomposition-forms}.
But then $\varphi \wedge \phi = \lambda \varphi \wedge \star \varphi = 7\lambda \mathrm{vol}_{S^7}$, where the latter equality is checked by evaluating both sides at $(1,0,\dots,0) \in S^7$.
Hence $\lambda=1$, and thus $\phi = \star \varphi$.
It follows that $\varphi$ is a nearly parallel $\mathrm G_2$-structure on $S^7$.
\end{example}

There is a second nearly parallel $\mathrm G_2$-structure on $S^7$, which comes from a \emph{$3$-Sasakian} structure on it (although we do not touch upon Sasakian geometry here, see Boyer--Galicki \cite{boyer-galicki}).
The seven-sphere with this second nearly parallel $\mathrm G_2$-structure is called \emph{squashed} sphere.

Compact homogeneous nearly parallel $\mathrm G_2$ manifolds are classified in \cite{friedrich-kath-moroianu-semmelmann}, see also \cite{reidegeld}.
One has in particular the \emph{Aloff--Wallach spaces} $X_{k,\ell} = \mathrm{SU}(3)/\mathrm{U}(1)_{k,\ell}$, where the embedding of $\mathrm{U}(1)$ in $\mathrm{SU}(3)$ is determined by two integers $k$ and $\ell$, and the \emph{Berger space} $\mathrm{SO}(5)/\mathrm{SO}(3)$ with an $\mathrm{SO}(5)$-invariant nearly parallel $\mathrm G_2$ structure.
Cleyton and Swann \cite{cleyton-swann} classified manifolds admitting a nearly parallel $\mathrm G_2$ structure with a simple Lie group of automorphisms acting with cohomogeneity one.
They showed that the standard sphere $S^7$ and the real projective space $\mathbb RP^7$ are the only complete examples that occur.
Podest\`a \cite{podesta} investigated the existence of $\mathrm{SU}(2)^3$-invariant nearly parallel $\mathrm G_2$-structures with cohomogeneity one on $S^3 \times \mathbb R^4$, proving the existence of a one-parameter family of such structures. 
Nearly parallel $\mathrm G_2$ manifolds with torus symmetry are being studied by Swann and the author \cite{russo-swann-toric}.

There is one last point we have not mentioned yet which will be relevant in the next section.
The Riemannian cones over nearly K\"ahler six-manifolds and nearly parallel $\mathrm G_2$ seven-manifolds come with a special structure.
\begin{definition}
Let $(M,g)$ be any Riemannian manifold. 
The \emph{Riemannian cone} over $M$ is $C(M)=(\mathbb R_{>0} \times M, dt^2+t^2g)$, where $t$ is a global coordinate on $\mathbb R_{>0}$.
\end{definition}
If $(M,g,J)$ is a nearly K\"ahler six-manifold, the Riemannian cone over $M$ is a seven-manifold and comes equipped with a natural $\mathrm G_2$-structure.
Recall that the nearly K\"ahler condition can be rephrased in terms of the fundamental two-form $\sigma$ and a complex volume form $\psi_{\mathbb C}=\psi_++i\psi_-$.
The nearly K\"ahler structure equations are
\[d\sigma = 3\psi_+, \qquad d\psi_- = -2\sigma \wedge \sigma.\]
Now, on $C(M)$ define a three-form $\varphi$ by
\[\varphi \coloneqq t^2dt \wedge \sigma+t^3\psi_+\]
It turns out that the Hodge star of it is $\star \varphi = \frac12 t^4\sigma \wedge \sigma+t^3\psi_-\wedge dt$.
One checks that the nearly K\"ahler structure equations are equivalent to the closedness of $\varphi$ and $\star \varphi$, which means $C(M)$ has holonomy $\mathrm G_2$ (cf.\ subsection \ref{subsec:classification-results}).
This general fact is behind one of the results that will be presented in the next section.

A similar process can be applied to certain nearly parallel $\mathrm G_2$ manifolds to obtain Riemannian eight-dimensional cones with holonomy $\mathrm{Spin}(7)$.
This is the case e.g.\ for the homogeneous space $\mathrm{SO}(5)/\mathrm{SO}(3)$.
In the case $C(M)$ has holonomy exactly equal to $\mathrm{Spin}(7)$, the nearly parallel $\mathrm G_2$ structure is called \emph{proper}.
When the structure is not proper and $M \neq S^7$, the holonomy of the cone reduces either to $\mathrm{SU}(4)=\mathrm{Spin}(6)$ or to $\mathrm{Sp}(2)=\mathrm{Spin}(5)$.
In the former case, $M$ admits a \emph{Sasakian structure}, whereas in the latter case $M$ admits a \emph{$3$-Sasakian} structure.
We refer to Boyer--Galicki \cite{boyer-galicki} for details.

\subsection{Manifolds with exceptional holonomy}
\label{sec:manifolds-with-exceptional-holonomy}

We now see how six-dimensional nearly K\"ahler and nearly parallel $\mathrm G_2$ structures are used to construct manifolds with exceptional holonomy.
The entire subsection is based on work of Bryant \cite{bryant}.

If $(M,g')$ is a Riemannian manifold of dimension $n$, the associated cone metric on $\mathbb R_{>0}\times M$ is given by 
\[g=dt^2+t^2g', \qquad t>0,\]
where $t$ is the natural coordinate on $\mathbb R_{>0}$. 
A computation shows that $g$ is flat if and only if $g'$ has constant sectional curvature $+1$. 
Moreover, $g$ is Ricci-flat if and only if $\mathrm{Ric}_{g'}=(n-1)g'$.

We now look at the construction of the first Riemannian seven-manifold with holonomy $\mathrm G_2$.
We first show the following lemma.
\begin{lemma}
\label{lemma:parallel-forms}
Let $M$ be a seven-dimensional connected and simply connected manifold.
Let $g$ be a metric on $M$ whose holonomy is a subgroup of $\mathrm G_2$, and let $\nabla$ be the Levi-Civita connection.
Suppose that there are no non-zero $\nabla$-parallel one-forms on $M$.
Then the holonomy of $g$ is exactly $\mathrm G_2$.
\end{lemma}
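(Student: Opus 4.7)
The plan is a proof by contradiction. Assume $H \coloneqq \mathrm{Hol}(\nabla) \subsetneq \mathrm G_2$. Since $M$ is simply connected, $H$ equals the restricted holonomy group and is therefore closed and connected. By the General Holonomy Principle (Proposition~\ref{prop:general-holonomy-principle}), the absence of non-zero $\nabla$-parallel one-forms translates into the algebraic condition that $H$ has no non-zero fixed vector in the standard representation $\mathbb R^7$. I distinguish two cases, according to whether the $H$-representation on $\mathbb R^7$ is irreducible.

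Suppose first that the representation is irreducible. Then Berger's Theorem~\ref{thm:berger-thm}, combined with Cartan's classification of irreducible Riemannian symmetric spaces, leaves only $\mathrm{SO}(7)$ and $\mathrm G_2$ as possible seven-dimensional holonomy groups, since the two irreducible Riemannian symmetric spaces of dimension seven (the round sphere and the real hyperbolic space) both have holonomy $\mathrm{SO}(7)$. The inclusions $H \subseteq \mathrm G_2 \subsetneq \mathrm{SO}(7)$ force $H = \mathrm G_2$, contradicting the assumption.

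Suppose instead that the representation is reducible, and choose an $H$-invariant orthogonal decomposition $T_pM = V_1 \oplus V_2$ with $\dim V_i \geq 1$. The corresponding parallel decomposition $TM = E_1 \oplus E_2$, combined with the local de Rham theorem, expresses $(M,g)$ locally as a Riemannian product and forces $H = H_1 \times H_2$ with $H_i \subseteq \mathrm{SO}(V_i)$. Each $H_i$ must act without non-zero fixed vector on $V_i$, otherwise a parallel one-form would exist on a local factor and extend to $M$. Since $\dim M = 7$ is odd, one summand has odd dimension; the case $\dim V_1 = 1$ is immediate, since $H_1 = \{1\}$ yields a parallel unit vector field on $M$, hence a parallel one-form.

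The remaining splittings $(\dim V_1, \dim V_2) \in \{(3,4),(5,2)\}$ form the main obstacle, and they are excluded by the following subgroup analysis. The rank constraint $\mathrm{rk}(H_1)+\mathrm{rk}(H_2) \leq \mathrm{rk}(\mathrm G_2) = 2$, combined with the no-fixed-vector condition, forces $H_1 = \mathrm{SO}(3)$ (acting standardly on $\mathbb R^3$ in the $(3,4)$ case, respectively via its unique irreducible five-dimensional representation $V_4$ from Theorem~\ref{thm:clebsch-gordan-formula} in the $(5,2)$ case) and $H_2 \in \{\mathrm{SU}(2),\mathrm{SO}(2)\}$ acting irreducibly on $\mathbb R^4 = \mathbb H$ by left multiplication, respectively standardly on $\mathbb R^2$. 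Decomposing
\[
\Lambda^3(V_1 \oplus V_2) = \Lambda^3 V_1 \oplus (\Lambda^2 V_1 \otimes V_2) \oplus (V_1 \otimes \Lambda^2 V_2) \oplus \Lambda^3 V_2
\]
and computing $(H_1 \times H_2)$-invariants in each summand via the Clebsch--Gordan formula shows that the only non-trivial $H$-invariant three-form concentrates (if at all) on $\Lambda^3 V_1 \cong \mathbb R$; in particular the total $H$-invariant three-form is everywhere degenerate, contradicting the hypothesis that $\varphi$ is pointwise linearly equivalent to the stable $\mathrm G_2$ three-form (Theorem~\ref{thm:g2}). This exhausts the reducible case and yields $H = \mathrm G_2$.
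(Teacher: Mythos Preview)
Your argument is essentially correct, but it takes a substantially longer route than the paper's, and a few steps need tightening. In the reducible case you should work with the full de~Rham decomposition into irreducible pieces, not an arbitrary two-block splitting: the assertion $H=H_1\times H_2$ is only automatic once each $V_i$ is holonomy-irreducible, and you must then also dispose of the partition $3+2+2$ (which falls to your rank bound, since $\mathrm{rk}(\mathrm{SO}(3))+\mathrm{rk}(\mathrm{SO}(2))+\mathrm{rk}(\mathrm{SO}(2))=3>2$). Your phrasing of the possible $H_2$ is ambiguous; with irreducibility on $\mathbb R^4$ and rank $\le 1$ only $\mathrm{SU}(2)$ survives, and on $\mathbb R^2$ only $\mathrm{SO}(2)$. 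After that, your invariant-three-form count does go through: in the $(3,4)$ split the only $H$-invariant three-form is a multiple of the volume on the three-dimensional factor (decomposable, hence not in $\Lambda^3_+$), and in the $(5,2)$ split there is no $H$-invariant three-form at all, contradicting the $H$-invariance of $\varphi$.

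The paper's proof bypasses all of this with one observation you do not use: holonomy in $\mathrm G_2$ forces $g$ to be Ricci-flat (Bonan), so each local de~Rham factor is Ricci-flat, and in dimension at most $3$ Ricci-flatness is equivalent to flatness (Digression~\ref{digression:curvature-low-dimension}). Since one factor necessarily has dimension $\le 3$, that factor is flat, hence carries parallel one-forms, which extend globally by simple connectedness. This disposes of the reducible case in two lines, with no rank bookkeeping, no subgroup classification, and no Clebsch--Gordan computation. In the irreducible case the paper likewise short-circuits your appeal to the symmetric-space tables: Ricci-flatness already excludes irreducible locally symmetric spaces, so Berger's list applies directly. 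Your approach has the merit of being purely representation-theoretic on $\mathbb R^7$ and not invoking Bonan's curvature result, but the price is a noticeably longer and more delicate argument.
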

\begin{proof}
Take $p \in M$ and let $H_p$ be the holonomy group of $g$ at $p$.
Suppose $H_p$ acts reducibly on $T_pM$.
By Theorem \ref{thm:derham}, $M$ splits locally as a Riemannian product $(M_1 \times M_2,g_1+g_2)$, and we can assume $0<\dim M_1 \leq 3$.
By assumption, $(M,g)$ is Ricci-flat, so $(M_1,g_1)$, $(M_2,g_2)$ are both Ricci-flat.
Since $M_1$ has dimension at most $3$, then the Ricci flat condition is equivalent to the vanishing Riemannian tensor, so $(M_1,g_1)$ is flat (see Digression \ref{digression:curvature-low-dimension}).
So at every point $p \in M$ there are $\nabla$-parallel one-forms locally defined around $p$. 
Since $M$ is simply connected, these $\nabla$-parallel one-forms are globally defined, contradiction.
Then $H_p$ must act irreducibly on $T_pM$.
Since $H_p \subset \mathrm G_2$, then $H_p=\mathrm G_2$ by Berger's Theorem, Theorem \ref{thm:berger-thm}.
\end{proof}
\begin{proposition}
The Riemannian cone over the flag manifold $\mathrm{SU}(3)/T^2$ with its invariant homogeneous nearly K\"ahler metric has holonomy $\mathrm G_2$.
\end{proposition}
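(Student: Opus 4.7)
The plan is to verify the hypotheses of Lemma~\ref{lemma:parallel-forms} for the seven-manifold $C(M)$, where $M=\mathrm{SU}(3)/T^2$ carries its invariant nearly K\"ahler structure $(g',J)$ with fundamental forms $(\sigma,\psi_+,\psi_-)$ normalised so that $d\sigma=3\psi_+$ and $d\psi_-=-2\sigma\wedge\sigma$.

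First I would produce a torsion-free $\mathrm G_2$-structure on $C(M)$ using the recipe recalled at the end of subsection~\ref{subsec:nearly-parallel-g2-manifolds}: setting
\[\varphi \coloneqq t^2\,dt\wedge\sigma + t^3\,\psi_+,\]
one computes $\star\varphi = \tfrac12 t^4\,\sigma\wedge\sigma + t^3\,\psi_-\wedge dt$, and the nearly K\"ahler structure equations translate directly into $d\varphi=0$ and $d\star\varphi=0$ by a short exterior-derivative computation. By the discussion in subsection~\ref{subsec:classification-results}, this makes $(C(M),\varphi)$ a seven-manifold whose induced Riemannian metric (which is precisely the cone metric $g = dt^2 + t^2 g'$) has holonomy contained in $\mathrm G_2$.

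Next I would verify the topological hypotheses of Lemma~\ref{lemma:parallel-forms}. Connectedness of $C(M)$ is clear. For simple connectedness, the long exact homotopy sequence of the fibration
\[T^2 \longrightarrow \mathrm{SU}(3) \longrightarrow \mathrm{SU}(3)/T^2\]
contains the portion $\pi_1(\mathrm{SU}(3)) \to \pi_1(M) \to \pi_0(T^2)$, and since $\pi_1(\mathrm{SU}(3))=0=\pi_0(T^2)$ we obtain $\pi_1(M)=0$; hence $C(M)\simeq\mathbb R_{>0}\times M$ is simply connected as well.

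The main obstacle is to rule out non-zero $\nabla$-parallel one-forms on $C(M)$. My strategy is to invoke the General Holonomy Principle (Proposition~\ref{prop:general-holonomy-principle}) together with the de Rham splitting theorem (Theorem~\ref{thm:derham}): a non-zero parallel one-form would yield a parallel line field on $C(M)$, hence a local Riemannian product splitting. However, for a complete Riemannian manifold $(M,g')$ the associated cone $(C(M),g)$ locally splits as a Riemannian product only when $(M,g')$ is a round unit sphere, in which case the cone is flat Euclidean space. The flag manifold $\mathrm{SU}(3)/T^2$ is not homotopy equivalent to $S^6$ (for instance $b_2(\mathrm{SU}(3)/T^2)=2$ whereas $b_2(S^6)=0$, and $\chi(\mathrm{SU}(3)/T^2)=|W(\mathrm{SU}(3))|/|W(T^2)|=6$ against $\chi(S^6)=2$), hence \emph{a fortiori} not isometric to it, so no non-zero parallel one-form can exist on $C(M)$. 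Lemma~\ref{lemma:parallel-forms} then yields $\mathrm{Hol}(g)=\mathrm G_2$.
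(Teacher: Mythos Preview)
Your argument is correct but the crucial step ruling out parallel one-forms is handled quite differently from the paper. Both proofs set up the torsion-free $\mathrm G_2$-structure on the cone in the same way and both finish by invoking Lemma~\ref{lemma:parallel-forms}; the divergence is in verifying its hypothesis. You appeal to Gallot's irreducibility theorem for Riemannian cones (a parallel one-form forces reducible holonomy, hence by Gallot the cone is flat and the base is a round sphere, contradicting $b_2(\mathrm{SU}(3)/T^2)=2$), whereas the paper exploits the isometric $\mathrm{SU}(3)$-action: the space $\mathcal P$ of parallel one-forms has dimension $<6$ (the cone is not flat since $M\not\simeq S^6$), so the induced $\mathrm{SU}(3)$-representation on $\mathcal P$ is trivial, forcing any parallel one-form to be $\mathrm{SU}(3)$-invariant; evaluating at $(1,eT^2)$ and using the $T^2$-isotropy then pins such a form down to $f(t)\,dt$, which is never parallel.

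Your route is cleaner and immediately generalises to the cone over \emph{any} simply connected nearly K\"ahler six-manifold not isometric to the round $S^6$, but it rests on Gallot's theorem, which is not stated or proved in these notes---you should cite it explicitly. The paper's argument is longer but entirely self-contained and, fitting the spirit of the notes, turns the question into representation theory of $\mathrm{SU}(3)$ and $T^2$.
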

\begin{remark}
Before the proof, we give an additional piece of information which we have not discussed yet.
Let $G$ be any Lie group with Lie algebra $\mathfrak g$.
Its canonical one-form is the $\mathfrak g$-valued form $A \mapsto A$, where $A \in \mathfrak g$.
Its differential is given by the Maurer--Cartan equations, and encodes the structure constants of $\mathfrak g$ \cite[Chapter I, Section 3]{kobayashi-nomizu}.
\end{remark}
\begin{proof}
Let us consider the group $\mathrm{SU}(3)$.
Its canonical one-form $k$ is $\mathfrak{su}(3)$-valued, and hence we have $\mathrm{Tr}(k)=0$, $dk=-k\wedge k$, and $k$ is skew-hermitian.
Let us write 
\[k=\begin{pmatrix}
-i\theta_3 & -\overline \omega^1 & i\omega^2 \\
\omega^1 & -i\theta_2 & -\overline \omega^3 \\
i\overline \omega^2 & \omega^3 & -i\theta_1
\end{pmatrix},\]
where the $\theta$'s are left-invariant real-valued one-forms, and the $\omega$'s are left-invariant complex-valued one-forms on $\mathrm{SU}(3)$.
Since $k$ is traceless, then $\theta_1+\theta_2+\theta_3=0$.
The identity $dk=-k\wedge k$ implies
\begin{align*}
d\omega^i & = i(\theta_j-\theta_k) \wedge \omega^i-i\overline \omega^j \wedge \overline \omega^k, 
\end{align*}
for cyclic permutations $(ijk) = (123)$.
Let $T^2$ be the standard diagonal maximal torus of $\mathrm{SU}(3)$.
The above formulas imply that 
\begin{align*}
\Omega & \coloneqq \frac{i}{2}(\omega^1\wedge \overline \omega^1+\omega^2\wedge \overline \omega^2+\omega^3\wedge \overline \omega^3), \\
\Psi & \coloneqq \omega^1 \wedge \omega^2 \wedge \omega^3, 
\end{align*}
are well-defined on the quotient $\mathrm{SU}(3)/T^2$, and satisfy
\begin{equation}
\label{str-eq:nk-flag}
d\Omega = 3\mathrm{Re}\Psi, \qquad d\Psi = -2i\Omega \wedge \Omega.
\end{equation}
The metric $g'=\omega^1 \odot \overline \omega^1+\omega^2\odot \overline \omega^2+\omega^3 \odot \overline \omega^3$ on $\mathrm{SU}(3)/T^2$ is $\mathrm{SU}(3)$-invariant and the quotient map $\mathrm{SU}(3) \to \mathrm{SU}(3)/T^2$ is a Riemannian submersion for a suitable multiple of the Killing for on $\mathrm{SU}(3)$.
Note that the forms $(\Omega,\Psi)$ are equivalent to an $\mathrm{SU}(3)$-structure on $\mathrm{SU}(3)/T^2$, and the structure equations \eqref{str-eq:nk-flag} tell us it is the standard nearly K\"ahler structure.

On the product $M\coloneqq \mathbb R_{>0} \times (\mathrm{SU}(3)/T^2)$, let $t$ be a global coordinate on the $\mathbb R_{>0}$-factor, and consider the forms
\begin{align*}
\phi & = t^2dt \wedge \Omega+t^3\mathrm{Re}(\Psi) = \frac13 d(t^3\Omega), \\
\psi & = \frac12 t^4\Omega \wedge \Omega-t^3dt \wedge \mathrm{Im}(\Psi) = -\frac14 d(t^4\mathrm{Im}(\Psi)).
\end{align*}
We have d$\phi=0=d\psi$.
On $\mathbb R_{>0} \times \mathrm{SU}(3)$, define the forms $\{\eta^1,\dots,\eta^7\}$ by the formulas 
\begin{align*}
\eta^1 \coloneqq dt, \qquad \eta^2+i\eta^3 \coloneqq t\omega^1, \qquad \eta^4+i\eta^5 \coloneqq t\omega^2, \qquad \eta^6+i\eta^7 \coloneqq t\omega^3.
\end{align*}
If we pull $\phi$ and $\psi$ back to $\mathbb R_{>0} \times \mathrm{SU}(3)$, one computes
\begin{align*}
\phi & = \eta^1 \wedge (\eta^2 \wedge \eta^3+\eta^4\wedge \eta^5+\eta^6\wedge \eta^7) \\
& \qquad +\mathrm{Re}((\eta^2+i\eta^3)\wedge (\eta^4+i\eta^5)\wedge (\eta^6+i\eta^7)) \\
& = \eta^{123}+\eta^{145}+\eta^{167}+\eta^{246}-\eta^{257}-\eta^{347}-\eta^{356}.
\end{align*}
So $\phi$ is non-degenerate on $M$, i.e.\ $\phi \in \Omega_+^3(M)$ (recall the notation in subsection \ref{subsec:on-g2-spin7}). 
Moreover, the associated metric is $g \coloneqq (\eta^1)^2+\dots+(\eta^7)^2 = dt^2+t^2g'$.
So $(M,g)$ is the Riemannian cone over $(\mathrm{SU}(3)/T^2,g')$.
The associated volume form is $\eta^{12\dots 7}$. 
A computation shows that $\psi=\star_{\phi}\phi$.
Since $d\phi=d\psi=0$, the $\mathrm G_2$-structure induced on $M$ is torsion-free, so the holonomy of the corresponding metric is in $\mathrm G_2$.
We now claim it is exactly $\mathrm G_2$.

We have a fibration $\mathrm{SU}(3) \to \mathrm{SU}(3)/T^2$. The homotopy exact sequence of it gives
\[\pi_0(\mathrm{SU}(3)/T^2)=0=\pi_1(\mathrm{SU}(3)/T^2), \qquad \pi_2(\mathrm{SU}(3)/T^2)=\mathbb Z \oplus \mathbb Z.\]
In particular, $\mathrm{SU}(3)/T^2$ is connected, simply connected but not homeomorphic to $S^6$, so $g'$ cannot have constant sectional curvature.
In particular, $(M,g)$ cannot be flat. 
Let $\nabla$ be the Levi-Civita connection on $(M,g)$.
If we let $\mathcal P$ be the vector space of $\nabla$-parallel one-forms on $M$, then $\dim \mathcal P<6$ (otherwise, the holonomy of $g$ would be trivial and $(M,g)$ would be flat).
Now $\mathrm{SU}(3)$ acts on $M$ by isometries, and hence it acts via pullback on $\mathcal P$.
This is a representation $\rho \colon \mathrm{SU}(3) \to \mathrm{Aut}(\mathcal P)$.
Since $\mathrm{SU}(3)$ has no non-trivial representations of dimension less than $6$, it follows tha $\rho$ is the trivial representation.
Thus each parallel one-form on $M$ is necessarily $\mathrm{SU}(3)$-invariant.
Suppose $\mathcal P$ contains non-zero elements, and let $\alpha \in \mathcal P$ be any of them.
Let $u=(1,eT^2) \in M$.
Since $\alpha$ is invariant under $\mathrm{SU}(3)$, it follows that $\alpha_{|u} \in T_u^*M$ is fixed by the action of $T^2$ on $T_u^*M$.
But $T_u^*M=\mathbb R \oplus \mathbb C^3$, where $T^2$ acts trivially trivially on the $\mathbb R$-factor and on the $\mathbb C^3$ factor by
\[(e^{i\theta_1},e^{i\theta_2},e^{i\theta_3})\cdot (z_1,z_2,z_3) = (e^{i(\theta_2-\theta_3)}z_1,e^{i(\theta_3-\theta_1)}z_2,e^{i(\theta_1-\theta_2)}z_3),\]
with $e^{i(\theta_1+\theta_2+\theta_3)}=1$.
Therefore, the only $T^2$-invariant elements of $T_u^*M$ are the multiples of $dt_{|u}$.
It follows that the only possible non-zero elements of $\mathcal P$ are of the form $\alpha=f(t)dt$.
Since these are not parallel, $\mathcal P$ has no non-zero element.
We are then under the hypotheses of Lemma \ref{lemma:parallel-forms}, so $(M,g)$ has holonomy $\mathrm G_2$.
\end{proof}
\begin{remark}
The fact that the Riemannian cone over a nearly K\"ahler manifold has holonomy $\mathrm G_2$ was shown in the general case by B\"ar \cite{bar}.
\end{remark}

We now look at the construction of the first Riemannian eight-dimensional manifold with holonomy $\mathrm{Spin}(7)$.
\begin{lemma}
Let $M$ be a connected and simply connected eight-manifold.
Let $g$ be a metric on $M$ with holonomy in $\mathrm{Spin}(7)$.
Let $\nabla$ be the Levi-Civita connection.
Assume there are no non-zero one-forms or two-forms on $M$.
Then $\mathrm{Hol}(\nabla)$ is $\mathrm{Spin}(7)$.
\end{lemma}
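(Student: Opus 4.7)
The plan is to adapt the strategy of Lemma \ref{lemma:parallel-forms} to the eight-dimensional $\mathrm{Spin}(7)$ setting, with an additional step at the end to rule out the proper subgroups of $\mathrm{Spin}(7)$ that also appear in Berger's list. As a preliminary, I would record that holonomy contained in $\mathrm{Spin}(7)$ forces $(M,g)$ to be Ricci-flat, as recalled in subsection \ref{subsec:the-general-holonomy-principle}.

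The first main step is to show that $\mathrm{Hol}_p(\nabla)$ acts irreducibly on $T_pM$. Assume otherwise: by the de Rham decomposition (Theorem \ref{thm:derham}) and simple-connectedness of $M$, one has a global Riemannian product $(M,g) = M_0 \times M_1 \times \dots \times M_k$ with $M_0$ Euclidean and each $M_i$ ($i\geq1$) irreducible and Ricci-flat. A non-trivial $M_0$ yields parallel one-forms from its coordinate differentials, contradicting the hypothesis, so $M_0$ reduces to a point; every $M_i$ then has non-trivial holonomy, and in dimensions $2$ and $3$ Ricci-flatness implies flatness by Digression \ref{digression:curvature-low-dimension}, so no factor can have dimension $2$ or $3$. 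Hence the only remaining option is $M = M_1 \times M_2$ with both factors irreducible Ricci-flat four-manifolds. Berger's list in dimension four leaves two options for each factor, $\mathrm{SU}(2) = \mathrm{Sp}(1)$ (hyperK\"ahler) or full $\mathrm{SO}(4)$; but the product holonomy $H_1 \times H_2 \subset \mathrm{SO}(8)$ must sit inside $\mathrm{Spin}(7)$, and a direct check using the explicit form of $\Phi$ in \eqref{eq:alpha}--\eqref{eq:Phi} rules out any occurrence of full $\mathrm{SO}(4)$ on either factor. Hence both $M_i$ are hyperK\"ahler, and each contributes a parallel K\"ahler two-form on $M$, contradicting the hypothesis.

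With irreducibility established, the locally symmetric case is discarded quickly: an irreducible simply connected Ricci-flat symmetric space must be flat by Cartan's classification, contradicting the absence of parallel one-forms. Berger's Theorem \ref{thm:berger-thm} then applies, and $\mathrm{Hol}(\nabla)$ must be one of the groups on Berger's list contained in $\mathrm{Spin}(7)$ and acting irreducibly on $\mathbb R^8$. The only options are $\mathrm{Spin}(7)$, $\mathrm{SU}(4) = \mathrm{Spin}(6)$, and $\mathrm{Sp}(2) = \mathrm{Spin}(5)$. In the latter two, the General Holonomy Principle (Proposition \ref{prop:general-holonomy-principle}) produces respectively a parallel K\"ahler two-form or a parallel triple of such, again contradicting the hypothesis. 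Therefore $\mathrm{Hol}(\nabla) = \mathrm{Spin}(7)$.

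The main obstacle I expect is the $(4,4)$ case in Step 1: verifying cleanly that full $\mathrm{SO}(4)$ holonomy on either factor is incompatible with $\mathrm{Spin}(7)$-holonomy on $M$ amounts to understanding the stabiliser in $\mathrm{Spin}(7)$ of an orthogonal splitting $\mathbb R^8 = \mathbb R^4 \oplus \mathbb R^4$. This can be handled either by a direct computation on $\Phi$ showing that an $\mathrm{SO}(4)$-factor must preserve the K\"ahler piece $e^{01}+e^{23}$ of $\alpha$ (and thus reduces to $\mathrm U(2)$), or by invoking the known description of such stabilisers.
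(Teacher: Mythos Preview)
Your proposal is correct and follows the same architecture as the paper: establish irreducibility of the holonomy representation, discard the locally symmetric case via Ricci-flatness, apply Berger's list restricted to subgroups of $\mathrm{Spin}(7)$, and eliminate $\mathrm{SU}(4)$ and $\mathrm{Sp}(2)$ by their invariant two-forms. The differences are purely tactical, in how irreducibility is shown. For ranks $1,2,3$ the paper constructs the offending parallel forms directly from the invariant subbundle $\xi$ rather than invoking a de~Rham splitting and low-dimensional flatness: rank $2$ gives the unit volume two-form of $\xi$, and rank $3$ is handled by contracting the unit volume three-vector of $\xi$ into the parallel four-form $\Phi$, which is non-zero because $\mathrm{Spin}(7)$ acts transitively on three-planes (Theorem~\ref{thm:spin7}). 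This sidesteps your appeal to Theorem~\ref{thm:derham}, which as stated requires completeness, an hypothesis absent from the lemma. For the $(4,4)$ case that you flag as the main obstacle, the paper's argument is much cleaner than any stabiliser computation: every connected proper subgroup of $\mathrm{SO}(4)$ fixes some two-form in $\Lambda^2\mathbb R^4$ (the maximal ones are $\mathrm U(2)$ and the two $\mathrm{SU}(2)$ factors, all of which do), so the absence of parallel two-forms forces both factors to be the full $\mathrm{SO}(4)$; but $\mathrm{SO}(4)\times\mathrm{SO}(4)$ has rank $4$ whereas $\mathrm{Spin}(7)$ has rank $3$, and this single rank comparison finishes the job.
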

\begin{proof}
Recall the notations in subsection \ref{subsec:on-g2-spin7} on the $\mathrm{Spin}(7)$ part.
Let $u \in M$ be fixed and let $H_u \subset \mathrm{SO}(T_uM)$ be the holonomy group of $g$ at $u$.
By assumption, we can choose an isometry $T_uM \to V_+$ so that $H_u \subset \mathrm{Spin}(7)$.
Let $\beta$ denote the parallel four-form on $M$ so that $\beta_u = \Phi \in \Lambda^4(V_+^*)$.

We claim that $H_u$ acts irreducibly on $T_uM$.
If $H_u$ preserved an orthogonal splitting $T_uM = \xi_u \oplus \eta_u$, then by parallel translation, we would have a parallel vector bundle splitting $TM=\xi \oplus \eta$.
If $\xi$ had rank $1$, we would be able to construct a parallel vector field, and hence a parallel one-form on $M$.
If $\xi$ had rank $2$, then the dual of the unit volume section of $\Lambda^2\xi$ would be a parallel two-form.
If $\xi$ had rank $3$, we could let $e$ be the unit volume section of $\Lambda^3\xi$, and construct the parallel one-form $e \lrcorner\ \beta$.
Since $\mathrm{Spin}(7)$ acts transitively on three-planes in $V^*$, we can assume that $e_u=e_0\wedge e_1 \wedge e_2 \in \Lambda^3V_+$.
Since $(e_0 \wedge e_1 \wedge e_2) \lrcorner\ \Phi=\omega^3 \neq 0$ in $V_+^*$, then $e \lrcorner\ \beta$ is a parallel non-zero one-form.
Thus, the assumptions imply that if $H_u$ acts reducibly on $T_uM$, then it can only preserve a splitting $T_uM=\xi_u\oplus \eta_u$, where $\dim \xi_u=\dim \eta_u=4$.
Moreover, $H_u$ must act irreducibly on each of $\xi_u$ and $\eta_u$.
By Theorem \ref{thm:derham}, we have $H_u=H_u^1\times H_u^2$, where $H_u^1 \subset \mathrm{SO}(\xi_u) = \mathrm{SO}(4)$, and $H_u^2 \subset \mathrm{SO}(\eta_u)=\mathrm{SO}(4)$. 
Since any connected proper subgroup of $\mathrm{SO}(4)$ preserves a two-form in $\Lambda^2 \mathbb R^4$, we must have $H_u^1=H_u^2=\mathrm{SO}(4)$.
In particular, $H_u=\mathrm{SO}(4) \times \mathrm{SO}(4)$.
But the latter group has rank $4$ and $\mathrm{Spin}(7)$ has rank $3$, contradiction.
Therefore, $H_u$ acts irreducibly on $T_uM$.

Since $H_u \subset \mathrm{Spin}(7)$, $g$ is Ricci-flat.
It follows that $(M,g)$ is not isometric to any irreducible locally symmetric space.
By Berger's Theorem \ref{thm:berger-thm}, $H_u$ must be one of the groups $\mathrm{Sp}(2)$, $\mathrm{SU}(4)$, $\mathrm{Spin}(7)$, $\mathrm{Sp}(2)\mathrm{Sp}(1)$, $\mathrm{U}(4)$, or $\mathrm{SO}(8)$.
The three latter subgroups are not subgroups of $\mathrm{Spin}(7)$.
The groups $\mathrm{Sp}(2)$ and $\mathrm{SU}(4)$ preserve two-forms on $\mathbb R^8$.
The only possibility is $H_u=\mathrm{Spin}(7)$.
\end{proof}
We use the representation theory of $\mathrm{SO}(3)$, cf.\ subsection \ref{subsec:irreps-su2-so3}, to construct a manifold with holonomy $\mathrm{Spin}(7)$.
Recall that the irreducible representations of $\mathrm{SO}(3)$ are certain irreducible $\mathrm{SU}(2)$-representations, and can be realised as invariant spaces of harmonic polynomials on $S^2$.
Let $H_d$ denote the space of harmonic homogeneous polynomials of degree $d$ in the variables $x^1,x^2,x^3$.
The Clebsch--Gordan formula, Theorem \ref{thm:clebsch-gordan-formula}, gives the tensor product decomposition
\[H_m \otimes H_n=H_{m-n} \oplus H_{m-n+1} \oplus H_{m-n+2} \oplus \dots \oplus H_{m+n}, \qquad m \geq n.\]
If $m=n$, we have the following specific formulas
\begin{align*}
S^2H_m & = H_0 \oplus H_2 \oplus H_4 \oplus \dots \oplus H_{2m}, \\
\Lambda^2H_m & = H_1 \oplus H_3 \oplus H_5 \oplus \dots \oplus H_{2m-1}.
\end{align*}
Since $\dim H_2=5$, the representation $H_2$ gives rise to an embedding $\mathrm{SO}(3) \subset \mathrm{SO}(5)$.
We let $M\coloneqq \mathrm{SO}(5)/\mathrm{SO}(3)$.
Let $e \in M$ be the $\mathrm{SO}(3)$-coset of the identity.
We know that $T_eM=\mathfrak{so}(5)/\mathfrak{so}(3)$ as $\mathrm{SO}(3)$-modules.
Now $\mathfrak{so}(5)=\Lambda^2H_2=H_1\oplus H_3$ as $\mathrm{SO}(3)$-modules, and $\mathfrak{so}(3) = H_1$, so $T_eM=H_3$.
Thus $M$ is an isotropy irreducible homogeneous space.
We will be interested in the ring of $\mathrm{SO}(5)$-invariant forms $\Omega^{inv}(M) \subset \Omega(M)$.
This ring is isomorphic to the ring $\Lambda^{inv}(T_e^*M) \subset \Lambda(T_e^*M)$ of $\mathrm{SO}(3)$-invariant forms.
Thus, we must compute the ring of invariant forms on $T^*M=H_3$.

There is an embedding $\mathrm{SO}(3) \subset \mathrm G_2$ \cite{wolf1}, so $\mathrm G_2/\mathrm{SO}(3)$ is an isotropy irreducible homogeneous space.
This means $\mathfrak g_2/\mathfrak{so}(3)$ is an irreducible $\mathrm{SO}(3)$-representation.
By dimension count, it follows that $\mathfrak g_2/\mathfrak{so}(3)=H_5$.
Thus $\mathfrak g_2=H_1\oplus H_5$.
Since $\mathrm G_2 \subset \mathrm{SO}(V)$, $V$ is an $\mathrm{SO}(3)$-module via this restriction.
Since $\dim V = 7$, either $V=H_3$ or $V$ is a sum of modules $\{H_0,H_1,H_2\}$.
In the latter case, $\Lambda^2V$ is a sum of modules $\{H_0,H_1,H_2,H_3\}$.
In particular, $H_5$ would not occur as a summand in $\Lambda^2V=\mathfrak{so}(V) \supset \mathfrak g_2$.
Since $\mathfrak g_2 \supset H_5$, we conclude that $V=H_3$, i.e.\ $\mathrm{SO}(3)$ acts irreducibly on $V$.
Since $\Lambda^2V = V \oplus \mathfrak g_2$ as $\mathrm G_2$-modules, we must have $\Lambda^2V=H_1\oplus H_3 \oplus H_5$ as $\mathrm{SO}(3)$-modules.
Thus, $\mathrm{SO}(3)$ does not fix any two-forms in $\Lambda^2H_3=\Lambda^2V$.
Since $\mathrm G_2$ fixes a three-form in $\Lambda^3V$, the action of $\mathrm{SO}(3)$ must also fix this form.
On the other hand, since $\Lambda^3V$ is a quotient of $\Lambda^2V \otimes V = (H_1\oplus H_3\oplus H_5)\otimes H_3$, the Clebsch--Gordan formula shows that $\Lambda^3V$ could have at most one $H_0$-summand.
Thus, $\mathrm{SO}(3)$ fixes only one three-form on $V$ up to multiples.

It follows that $\Lambda^{inv}H_3\subset \Lambda H_3$ is generated by $1 \in \Lambda^0H_3$, $\varphi \in \Lambda^3H_3$, $\star \varphi \in \Lambda^4H_3$, and $\star 1 \in \Lambda^7H_3$.
This gives rise to generators $\{1,\psi,\star \psi,\star 1\}$ in $\Omega^{inv}(M)$.
Further, $\psi \in \Omega_+^3(M)$ and, after a suitable scaling of $\psi$ and a change of orientation (if necessary), we may assume $\star \psi = \star_{\psi}\psi$.
We know that $\Omega^{inv}(M)$ is closed under exterior differentiation.
Hence $d\psi = \lambda \star \psi$ for some constant $\lambda$.
If $\lambda=0$, we would have $H^3(M,\mathbb R)=\mathbb R$, contradicting the fact that $\mathrm{SO}(5)/\mathrm{SO}(3)$ is a rational homology sphere \cite{berger3}.
So $\lambda \neq 0$, and by scaling we may set $\lambda=4$ (possibly after reversing the orientation).

Now, on $\mathrm{SO}(5)/\mathrm{SO}(3)$ we have an invariant metric $g'$ and a compatible three-form $\psi \in \Omega_+^3(\mathrm{SO}(5)/\mathrm{SO}(3))$ satisfying 
\begin{enumerate}
\item $d\psi=4\star \psi$, 
\item locally, there is an orthonormal coframe $\{\omega^1,\dots,\omega^7\}$ so that
\begin{align*}
\psi & =\omega^{123}+\omega^{145}+\omega^{167}+\omega^{246}-\omega^{257}-\omega^{347}-\omega^{356}, \\
\star \psi & = \omega^{4567}+\omega^{2367}+\omega^{2345}+\omega^{1357}-\omega^{1346}-\omega^{1247}-\omega^{1256}.
\end{align*}
\end{enumerate}
Now consider the manifold $M_+\coloneqq\mathbb R_{>0} \times M$ with cone metric $g=dt^2+t^2g'$.
\begin{proposition}
The Riemannian cone $(M_+,g)$ has holonomy $\mathrm{Spin}(7)$.
\end{proposition}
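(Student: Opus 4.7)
The plan is to mirror the $\mathrm{G}_2$ cone construction from the previous subsection: exhibit a torsion-free $\mathrm{Spin}(7)$-structure on $M_+$ and then invoke the eight-dimensional holonomy lemma. Define
\[
\Phi \coloneqq t^3\, dt \wedge \psi + t^4 \star_M \psi
\]
on $M_+$, where $\star_M$ denotes the Hodge star of $(M,g')$. In the rescaled coframe $\{dt,\, t\omega^1,\dots, t\omega^7\}$, which is orthonormal for $g=dt^2+t^2g'$, the form $\Phi$ takes exactly the standard shape $dt\wedge \varphi_0 + \star_{\mathbb{R}^7}\varphi_0$ appearing in subsection~\ref{subsec:on-g2-spin7}. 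Hence $\Phi$ determines a $\mathrm{Spin}(7)$-structure on $M_+$ whose associated metric is $g$. Using $d\psi=4\star_M\psi$, so that $d(\star_M\psi)=\tfrac14 d^2\psi=0$, a direct expansion gives
\[
d\Phi \;=\; -t^3\, dt\wedge d\psi + 4t^3\, dt\wedge \star_M\psi + t^4\, d(\star_M\psi) \;=\; 0,
\]
and therefore $\mathrm{Hol}(g)\subseteq \mathrm{Spin}(7)$.

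To promote this inclusion to equality, I would verify the remaining hypotheses of the lemma. The cone $M_+$ is connected and homotopy equivalent to $M=\mathrm{SO}(5)/\mathrm{SO}(3)$; simple connectedness of $M$ follows from the long exact homotopy sequence of $\mathrm{SO}(3)\hookrightarrow\mathrm{SO}(5)\to M$ combined with the fact that the $H_2$-embedding does not lift to $\mathrm{Spin}(5)$, so it induces an isomorphism $\pi_1(\mathrm{SO}(3))\to\pi_1(\mathrm{SO}(5))$. For the absence of parallel one- and two-forms, the key observation is that the isometric $\mathrm{SO}(5)$-action makes the finite-dimensional space $\mathcal{P}_k$ of parallel $k$-forms into an $\mathrm{SO}(5)$-subrepresentation of $\Omega^k(M_+)$, so $\mathcal{P}_k^{\mathrm{SO}(5)}\subseteq \Omega^{\mathrm{inv}, k}(M_+)$. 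From the generators $\{1,\psi,\star_M\psi,\star_M 1\}$ of $\Omega^{\mathrm{inv}}(M)$, one has $\Omega^{\mathrm{inv}, 1}(M_+)=\{f(t)\,dt\}$ and $\Omega^{\mathrm{inv}, 2}(M_+)=0$. A short warped-product computation shows $f(t)\,dt$ is parallel only when $f\equiv 0$, so $\mathcal{P}_k^{\mathrm{SO}(5)}=0$ for $k=1,2$, and $\mathcal{P}_k$ must be a sum of non-trivial $\mathrm{SO}(5)$-irreducibles.

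Evaluation at $p_0=(1,e\mathrm{SO}(3))$ gives an injective $\mathrm{SO}(3)$-equivariant map $\mathcal{P}_k\hookrightarrow \Lambda^k T_{p_0}^*M_+$. The $\mathrm{SO}(3)$-decompositions are $T_{p_0}^*M_+=\mathbb{R}\oplus H_3$ and $\Lambda^2 T_{p_0}^*M_+=H_1\oplus 2H_3\oplus H_5$, while the low-dimensional non-trivial $\mathrm{SO}(5)$-irreducibles restrict to $\mathrm{SO}(3)$ as $\mathbb{R}^5=H_2$, $\mathfrak{so}(5)=H_1\oplus H_3$, $S^2_0\mathbb{R}^5=H_2\oplus H_4$, and so on. No such branching fits for $k=1$, forcing $\mathcal{P}_1=0$. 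For $k=2$ the only combinatorial possibility is a single copy of the adjoint $\mathfrak{so}(5)$, and ruling this out is the principal obstacle: a non-zero parallel two-form would reduce $\mathrm{Hol}(g)$ to an irreducible proper subgroup of $\mathrm{Spin}(7)$ preserving a two-form, namely $\mathrm{SU}(4)$ or $\mathrm{Sp}(2)$. Via the cone--base correspondence this would force $(M,\psi)$ to be Sasaki--Einstein or $3$-Sasakian; both are incompatible with the isotropy-irreducible structure of the Berger space, since $T_{p_0}^*M=H_3$ contains no $\mathrm{SO}(3)$-fixed vector (precluding a Reeb direction) and $\Omega^{\mathrm{inv}, 1}(M)=0$ (precluding an invariant contact one-form). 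Once $\mathcal{P}_2=0$ is established, the lemma delivers $\mathrm{Hol}(g)=\mathrm{Spin}(7)$.
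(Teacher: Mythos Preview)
Your construction of $\Phi$ and the verification $d\Phi=0$ match the paper exactly, as does the basic idea of exploiting the $\mathrm{SO}(5)$-action on the spaces $\mathcal P_k$ of parallel $k$-forms. Your $\mathrm{SO}(5)\!\to\!\mathrm{SO}(3)$ branching argument for $\mathcal P_1=0$ is correct and mildly different from the paper's, which instead bounds $\dim\mathcal P_1\le 4$ via the triple cross product and then notes that $\mathrm{SO}(5)$ has no nontrivial representation of dimension $\le 4$.

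The gap is in your handling of $\mathcal P_2$. Your Sasakian argument tacitly assumes the Reeb field (equivalently the contact one-form) is $\mathrm{SO}(5)$-invariant, so that its value at $p_0$ lies in $(T_{p_0}^*M)^{\mathrm{SO}(3)}$; but the parallel complex structure on the cone producing the Reeb field is not a priori preserved by the isometric $\mathrm{SO}(5)$-action, and you give no reason why it should be. Ironically you already possess the ingredients for a direct contradiction that makes the Sasakian detour unnecessary: you showed that the only nontrivial $\mathrm{SO}(5)$-type admissible for $\mathcal P_2$ is the $10$-dimensional adjoint $\mathfrak{so}(5)$, while your holonomy candidates $\mathrm{SU}(4)$ and $\mathrm{Sp}(2)$ force $\dim\mathcal P_2=1$ or $3$. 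Either way $\dim\mathcal P_2<5$, so the $\mathrm{SO}(5)$-action on $\mathcal P_2$ is trivial, contradicting $\mathcal P_2^{\mathrm{SO}(5)}=0$ unless $\mathcal P_2=0$.

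This dimension count is exactly how the paper finishes, though it reaches the $\mathrm{Sp}(2)$ alternative by a different, purely Lie-algebraic route rather than via Berger's list: it analyses $\mathfrak h_u$ as an $\mathrm{SO}(3)$-submodule of $\mathfrak{spin}(7)=H_1\oplus H_3\oplus H_5$, shows $H_3\subset\mathfrak h_u$ (otherwise $\mathfrak h_u\subset\mathfrak g_2=H_1\oplus H_5$ would fix a line, contradicting $\mathcal P_1=0$), and argues from bracket relations that the only proper option is $H_1\oplus H_3=\mathfrak{sp}(2)$. A smaller point: your appeal to Berger's list presupposes that the holonomy representation is irreducible, which you assert without proof; the reducible $2$--$6$ split still needs a line of argument (Ricci-flatness makes the two-dimensional de Rham factor flat, yielding parallel one-forms and contradicting $\mathcal P_1=0$).
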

\begin{proof}
Consider the four-form on $M_+$ given by 
\[\Phi = t^3dt \wedge \psi+t^4\star \psi = \frac14 d(t^4\psi).\]
We clearly have $d\Phi=0$ and $\Phi$ is a positive four-form on $M_+$.
If we set $\eta^0\coloneqq dt$ and $\eta^i\coloneqq t\omega^i$ (where $\omega^1,\dots,\omega^7$ is a $g'$-orthonormal coframe as above), then $\Phi$ induces a $\mathrm{Spin}(7)$-structure on $M_+$ with underlying metric is 
$(\eta^0)^2+\dots+(\eta^7)^2=g$. 
Since $d\Phi=0$, this $\mathrm{Spin}(7)$-structure is torsion-free, so the holonomy of $g$ is a subgroup of $\mathrm{Spin}(7)$.
We claim that this holonomy group is not a propert subgroup.
Since $M \neq S^7$, $g'$ cannot have constant sectional curvature, whence $g$ is not flat.
Next, since $d\Phi=0$, we know that $\Phi$ is $g$-parallel.
It follows that the triple cross product $P \colon TM_+ \times TM_+ \times TM_+ \to TM_+$ given by 
$\langle P(X,Y,Z),W\rangle = \Phi(X,Y,Z,W)$ for $X,Y,Z,W \in T_pM_+$ must be parallel.

We now show that there are no non-zero $g$-parallel one-forms on $M_+$.
Let $\mathcal P$ be the space of parallel one-forms on $M_+$.
We have $\dim \mathcal P < 8$, because $g$ is not flat.
If we had $\dim \mathcal P \geq 5$, then we could apply the cross product operation to triples from $\mathcal P$ and generate a spanning set of one-forms, since successive cross-products taken from any rank $5$ subspace of $V_+$ generate $V_+$.
Therefore, $\dim \mathcal P \leq 4$.
Now $\mathrm{SO}(5)$ acts by isometries on $M$ and hence a fortiori on $M_+$.
Via pullback, this induces a representation of $\mathrm{SO}(5)$ on $\mathcal P$.
Since $\dim \mathcal P \leq 4$, this representation is necessarily trivial, i.e.\ the parallel one-forms on $M_+$ are $\mathrm{SO}(5)$-invariant.
However, since there are no non-zero $\mathrm{SO}(5)$-invariant one-forms on $M=\mathrm{SO}(5)/\mathrm{SO}(3)$, it follows that any parallel one-form on $M_+$ must vanish when pulled back to any level set $t=c$, for $c$ constant.
Thus, all elements of $\mathcal P$ are of the form $f(t)dt$.
It can be shown that $f(t)dt$ is parallel if and only if $f=0$. 
Thus $\mathcal P$ has no non-zero element, and there are no non-zero parallel one-forms on $M_+$.

Now let $u=(1,e) \in \mathbb R_{>0} \times M=M_+$, and let $H_u \subset \mathrm{SO}(T_uM_+)$ be the holonomy group of $g$ at $u$.
We have $H_u \subset \mathrm{Spin}(7) \subset \mathrm{SO}(8)$.
Let $\mathfrak h_u \subset \mathfrak{spin}(7) \subset \mathfrak{so}(8)=\Lambda^2T_uM_+$ be the corresponding inclusions of Lie algebras.
Now $\mathrm{SO}(3)$ acts on $M_+$ as a group of isometries fixing $u$, whence all of the above algebras are $\mathrm{SO}(3)$-modules.
Since $T_uM_+=\mathbb R \oplus H_3$ as $\mathrm{SO}(3)$-representations, we have
\[\Lambda^2T_uM_+=\Lambda^2(\mathbb R \oplus H_3) = H_3 \oplus \Lambda^2H_3 = H_3 \oplus (H_1 \oplus H_3 \oplus H_5).\]
For dimension reasons, we see that $\mathfrak{so}(8)/\mathfrak{spin}(7)=H_3$, so we must have $\mathfrak{spin}(7)=H_1\oplus H_3\oplus H_5$, and $\mathfrak h_u$ must be some non-trivial direct sum of these three summands.
We already know that $\mathfrak g_2=H_1\oplus H_5\subset \Lambda^2H_3\subset \mathfrak{spin}(7)$, but $\mathrm G_2\subset \mathrm{Spin}(7)$ fixes a line in $V_+$.
Thus $\mathfrak h_u \not \subset H_1 \oplus H_5$, as we have already seen that $H_u$ cannot fix a line in $T_uM$.
Thus $H_3 \subset \mathfrak h_u$.
If $H_3$ were a subalgebra of $\mathfrak{spin}(7)$, we would have $[H_3,H_3]=H_3$, since the other possibility $[H_3,H_3]=0$ would violate the fact that $\mathfrak{spin}(7)$ only has rank $3$.
However, $[H_3,H_3]=H_3$ and the fact that $H_3$ is an irreducible $\mathrm{SO}(3)$-module would force $H_3$ to be a simple seven-dimensional Lie algebra.
Since there are none of these, we see that $[H_3,H_3] \not \subset H_3$.
For similar reasons, we see that $[H_5,H_5] \not \subset H_5$.
Thus $[H_5,H_5]=H_1\oplus H_5$. 
These remarks combine to show that either $\mathfrak h_u=\mathfrak{spin}(7)$ or $\mathfrak h_u=H_1\oplus H_3$.
In this latter case, it is easily seen that we must have $H_u=\mathrm{Sp}(2)\subset \mathrm{Spin}(7)$.
However, if $H_u=\mathrm{Sp}(2)$, then the space of $g$-parallel two-forms would have dimension $3$ (see \cite{bourguignon}).
A similar argument to the one applied to one-forms above now shows that every parallel two-form on $M_+$ is $\mathrm{SO}(5)$-invariant.
Since $M$ has no non-zero $\mathrm{SO}(5)$-invariant one-forms or two-forms, it easily follows that $M_+$ has no non-zero $\mathrm{SO}(5)$-invariant two-forms.
We conclude that $\mathfrak h_u=H_1\oplus H_3$ is impossible, so $\mathfrak h_u=\mathfrak{spin}(7)$.
This forces $H_u=\mathrm{Spin}(7)$.
\end{proof}
\begin{remark}
As above, the fact that the Riemannian cone over certain nearly parallel $\mathrm G_2$ manifolds has holonomy $\mathrm{Spin}(7)$ was shown in general by B\"ar \cite{bar}.
\end{remark}

\newpage

\end{spacing}
\end{document}